\definecolor{mycolor}{HTML}{F7F8E0}
\definecolor{myorange}{RGB}{245,156,74}
\definecolor{cadetgrey}{rgb}{0.57, 0.64, 0.69}
\definecolor{calpolypomonagreen}{rgb}{0.12, 0.3, 0.17}
\newcommand\cyr{%
\renewcommand\rmdefault{wncyr}%
\renewcommand\sfdefault{wncyss}%
\renewcommand\encodingdefault{OT2}%
\normalfont
\selectfont}
\DeclareTextFontCommand{\textcyr}{\cyr}
\newcommand{\Mod}[1]{\ (\text{mod}\ #1)}
\numberwithin{equation}{section}
\newtheorem{thm}{Theorem}[section]
\newtheorem{cor}[thm]{Corollary}
\newtheorem{lem}[thm]{Lemma}
\newtheorem{prop}[thm]{Proposition}
\newtheorem{assu}[thm]{Assumption}
\newtheorem{conj}[thm]{Conjecture}
\theoremstyle{definition}
\newtheorem{defn}[thm]{Definition}
\newtheorem{choice}[thm]{Choice}
\newtheorem{rem}[thm]{Remark}
\newcommand{\KS}{\mathbf{KS}}
\newcommand{\ks}{\boldsymbol{\kappa}}
\newcommand{\kn}{\widetilde{\boldsymbol{\delta}}}
\newcommand{\sha}{\textrm{{\cyr SH}}}
\begin{document}
\title{Refined applications of Kato's Euler systems for modular forms}
\author{Chan-Ho Kim}
\address{Center for Mathematical Challenges, Korea Institute for Advanced Study, 85 Hoegiro, Dongdaemun-gu, Seoul 02455, Republic of Korea}
\email{chanho.math@gmail.com}
\date{\today}
\subjclass[2010]{11F67, 11G40, 11R23}
\keywords{Iwasawa theory, refined Iwasawa theory, Iwasawa main conjectures, Bloch--Kato conjectures, Birch and Swinnerton-Dyer conjectures, Mazur--Tate conjectures, Kato's Euler systems, Kolyvagin systems, explicit reciprocity laws, modular symbols}
\thanks{Competing interests: The author declares none.}
\begin{abstract}
We discuss refined applications of Kato's Euler systems for modular forms of \emph{higher} weight at  good primes (with more emphasis on the non-ordinary ones) beyond the one-sided divisibility of the main conjecture and the finiteness of Selmer groups.
These include a proof of the Mazur--Tate conjecture on Fitting ideals of Selmer groups over $p$-cyclotomic extensions
 and a new interpretation of the Iwasawa main conjecture via the non-triviality of Kato's Kolyvagin systems with structural applications.
Some applications to Birch and Swinnerton-Dyer conjecture are also discussed.
\end{abstract}
\maketitle

\setcounter{tocdepth}{1}
\tableofcontents

\section{Introduction} \label{sec:intro}
In this article, we study the more refined nature of Kato's Euler systems and its applications to the Iwasawa theory for modular forms of higher weight at good primes over the cyclotomic tower.
In particular, we focus on 
the Mazur--Tate conjecture on Fitting ideals of Selmer groups \cite{mazur-tate} (Theorem \ref{thm:main_thm_mazur_tate_conj_simple}) and the Iwasawa main conjecture \`{a} la Kato\footnote{We follow Kato's formulation for the \emph{main conjecture} in this article if there is no further specification.} \cite{kato-euler-systems} and Conjecture A \`{a} la Coates--Sujatha \cite{coates-sujatha-fine-selmer} (Theorem \ref{thm:main_thm_main_conj_simple}).
Before stating our main results, we need to introduce some notation.

Let $p$ be an odd  prime  and fix embeddings $\iota_p : \overline{\mathbb{Q}}  \hookrightarrow \overline{\mathbb{Q}}_p$ and $\iota_{\infty} : \overline{\mathbb{Q}}   \hookrightarrow \mathbb{C}$.
Let $f = \sum_{n \geq 1} a_n(f) q^n \in S_k( N, \psi )$ be a newform with character $\psi$ and assume that $(N,p)=1$.
Let $F_\pi = \mathbb{Q}_p( \iota_p(a_n(f)); n \geq 1)$, $\mathcal{O}_\pi$ the ring of integers of $F_\pi$, $\pi$ a uniformizer (which is compatible with $\iota_p$) lying above $p$, and $\mathbb{F}_\pi = \mathcal{O}_\pi/ \pi \mathcal{O}_\pi$ the residue field of $F_{\pi}$.
Write $\overline{f} = \sum_{n\geq 1} a_n(\overline{f}) q^n \in S_k( N, \overline{\psi})$ where $a_n(\overline{f})$ and $\overline{\psi}$ are the complex conjugates of $a_n(f)$  and $\psi$, respectively.
For any field $L$, denote by $G_L$ the absolute Galois group of $L$.

Let 
$\rho_f : G_{\mathbb{Q}} \to \mathrm{Aut}_{F_\pi}(V_f) \simeq \mathrm{GL}_2(F_\pi)$
 be the $\pi$-adic continuous cohomological Galois representation associated to $f$  \cite[$\S$14.10]{kato-euler-systems}.
We say that \textbf{$\rho_f$ has large image} if the image of $G_{\mathbb{Q}(\zeta_{p^\infty})}$ under $\rho_f$ contains a conjugate of $\mathrm{SL}_2(\mathbb{Z}_p)$,
We fix a Galois-stable $\mathcal{O}_\pi$-lattice $T_f \subseteq V_f$ and 
the residual representation is denoted by $\overline{\rho} = \overline{\rho}_f$.
Denote by $N(\overline{\rho})$ the prime-to-$p$ Artin conductor of $\overline{\rho}$ and by $W_f := V_f/T_f$ the discrete Galois module associated to $f$.
Let $\omega$ be the Teichm\"{u}ller character and write $T_{f, i} := T_f \otimes \omega^i$ and $W_{\overline{f},-i} = W_{\overline{f}} \otimes \omega^{-i}$.
Write $M(r) := M \otimes_{\mathbb{Z}_p} \mathbb{Z}_p(r)$ to be the $r$-th Tate twist of a Galois module $M$  for $r \in \mathbb{Z}$. From now on, we assume that $1 \leq r \leq k-1$.

\subsection{Mazur--Tate conjecture}
In \cite{mazur-tate}, Mazur and Tate studied ``Iwasawa theory for elliptic curves over \emph{finite} abelian extensions over $\mathbb{Q}$" and formulated the ``weak" main conjecture for elliptic curves (Conjecture \ref{conj:mazur-tate}).
The \emph{finite layer} Iwasawa theory due to Mazur and Tate is more complicated than the usual Iwasawa theory (over the \emph{infinite} extension) since neither we can directly apply the theory of Iwasawa modules to finite layer objects nor we can ignore ``finite errors".
In addition, the structure of the Iwasawa algebra is much simpler than that of the group rings for the finite layer objects.
This finite layer Iwasawa theory with the use of Fitting ideals is  a part of \emph{refined Iwasawa theory} \cite{kurihara-fitting, kurihara-plms, kurihara-iwasawa-2012}.

In order to consider the most interesting case, we put the following condition in this subsection.
\begin{assu} \label{assu:even-weight-central-critical}
$k$ is even and $r = k/2$, i.e. we focus on the central critical $L$-values.
\end{assu}
Since it is known that the non-central critical $L$-values of modular forms do not vanish, the central critical one is only interesting from the point of view of the ``weak" vanishing conjecture \cite[Conj. 1]{mazur-tate}, which is an equivariant analogue of the rank part of Birch and Swinnerton-Dyer conjecture.

The following conjecture is known as the ``weak" main conjecture \cite[Conj. 3]{mazur-tate}\footnote{The original conjecture concerns general abelian extensions of $\mathbb{Q}$.}, which basically claims Mazur--Tate elements are contained in the Fitting ideals of dual Selmer groups of modular forms over finite extensions.
\begin{conj}[Mazur--Tate] \label{conj:mazur-tate}
Assume $k$ is even and $\overline{\rho}$ is irreducible.
Then 
$$ \theta_{ \mathbb{Q}(\zeta_{p^n}), k/2 }(\overline{f}) \in \mathrm{Fitt}_{\mathcal{O}_\pi[\mathrm{Gal}(\mathbb{Q}(\zeta_{p^n})/\mathbb{Q})]} \left( \mathrm{Sel}( \mathbb{Q}(\zeta_{p^n}), W_{\overline{f}}(k/2) )^\vee \right) $$
where
 $\theta_{\mathbb{Q}(\zeta_{p^n}), k/2}(\overline{f})$ is the Mazur--Tate element of $\overline{f}$ ``at $s=k/2$" over $\mathbb{Q}(\zeta_{p^n})$ normalized by the optimal periods (reviewed in $\S$\ref{subsec:optimal_periods}), and $(-)^\vee$ is the Pontryagin dual.
\end{conj}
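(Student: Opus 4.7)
The plan is to combine three ingredients: (i) the Kolyvagin system derived from Kato's Euler system for $T_f(k/2)$, now considered at the finite layer $\mathbb{Q}(\zeta_{p^n})/\mathbb{Q}$; (ii) a Kurihara-type Fitting ideal bound for the dual Selmer group over the finite group ring $R_n := \mathcal{O}_\pi[\mathrm{Gal}(\mathbb{Q}(\zeta_{p^n})/\mathbb{Q})]$; and (iii) Kato's explicit reciprocity law at finite level, which identifies the local image of the initial class of the Kolyvagin system with the Mazur--Tate element.

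First, from Kato's zeta elements I would derive, via the Kolyvagin derivative construction, a Kolyvagin system $\ks = \{\kappa_{\mathfrak{n}}\}$ taking values in $H^1(\mathbb{Q}, T_f(k/2)/\pi^m)$, indexed by squarefree products $\mathfrak{n}$ of Kolyvagin primes $\ell \nmid Np$ with $\ell \equiv 1 \pmod{p^m}$ satisfying the usual eigenvalue condition for $\mathrm{Frob}_\ell$ on $T_f(k/2)/\pi^m$. The hypotheses $p \geq 5$, $(N,p)=1$, and irreducibility of $\overline{\rho}$ are enough to guarantee that the Kolyvagin system machinery applies and that $\ks$ is non-trivial.

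Second, viewing $\ks$ as a Kolyvagin system over $R_n$, I would invoke Kurihara's refinement of the Mazur--Rubin formalism, tailored to produce Fitting ideal bounds over non-regular group rings such as $R_n$, to conclude that the image of the initial class $\kappa_1^{(n)}$ under the local map at $p$ lies in $\mathrm{Fitt}_{R_n}\bigl( \mathrm{Sel}(\mathbb{Q}(\zeta_{p^n}), W_{\overline{f}}(k/2))^\vee \bigr)$. Third, Kato's explicit reciprocity law at the finite layer then identifies this local image, taken under the dual exponential into $\mathrm{Fil}^0 \DdR(V_f(k/2)) \otimes R_n$, with the normalized Mazur--Tate element $\theta_{\mathbb{Q}(\zeta_{p^n}), k/2}(\overline{f})$; the optimal-period normalization matches the chosen lattice $T_f$ so that no extraneous fudge factor appears.

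The main obstacle will be step two. The Mazur--Rubin theorem in its original form is cleanest over regular local rings such as the Iwasawa algebra $\Lambda$, whereas $R_n$ has torsion and is not a principal ideal domain, so the usual divisibility bound must be upgraded to a Fitting ideal bound. A further delicate point is the treatment of the local condition at $p$ in the non-ordinary setting, where the Bloch--Kato condition is neither the full $H^1$ nor the unramified one; one must verify the compatibility of the Kolyvagin derivative construction with this condition at each level. Finally, a careful accounting of the Euler factor at $p$ that is removed in Kato's construction of the zeta element is needed so that no spurious $L$-factor survives in the final inclusion.
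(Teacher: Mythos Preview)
The statement is a \emph{conjecture}; the paper proves it only under several additional hypotheses (Theorem \ref{thm:main_thm_mazur_tate_conj}): local irreducibility of $\overline{\rho}$ at $p$, absence of nontrivial finite $\Lambda$-submodules in the dual fine Selmer group over $\mathbb{Q}(\zeta_{p^\infty})$, and Conjecture \ref{conj:comparison-zeta-elements} (e.g.\ $2\le k\le p-1$). Your proposal treats irreducibility of $\overline{\rho}$ alone as sufficient, which is not what the paper establishes.

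More seriously, your step (ii) invokes a result that does not exist in the form you state. The Mazur--Rubin Kolyvagin system machinery bounds \emph{orders} (or characteristic ideals over $\Lambda$) of Selmer groups in terms of the divisibility of the Kolyvagin system; it does not produce a statement of the form ``the local image at $p$ of $\kappa_1^{(n)}$, read as an element of $R_n$, lies in $\mathrm{Fitt}_{R_n}(\mathrm{Sel}^\vee)$.'' Over $R_n$, which is far from a DVR, there is no direct passage from a Kolyvagin system to a Fitting-ideal containment of this shape. You also assert that non-triviality of $\ks$ is automatic from irreducibility of $\overline{\rho}$; in fact, mod-$\pi$ non-vanishing of Kato's Kolyvagin system is \emph{equivalent} to the main conjecture under the running hypotheses (Theorem \ref{thm:main_thm_main_conj}), and is a priori unknown.

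The paper's route is entirely different and does not pass through the Kolyvagin derivative construction at all. It compares $\mathrm{Sel}$ with the fine Selmer group $\mathrm{Sel}_0$ via Poitou--Tate (sequence (\ref{eqn:comparison-sel-sel0})); applies the control theorem for $\mathrm{Sel}_0$; and uses Kato's one-sided divisibility (Theorem \ref{thm:kato-divisibility}) over $\Lambda$ to control $\mathrm{char}_\Lambda(\mathrm{Sel}_0^\vee)$. The genuinely new local input is Proposition \ref{prop:local-computation-algebraic-theta}: one proves that the pairing $P_{\mathbb{Q}(\zeta_{p^n}),k/2}(-,c)$ against the equivariant local points $c$ is \emph{injective} on $\mathrm{H}^1_{/f}$, whence the algebraic $\theta$-elements land in $\mathrm{Fitt}_{R_n}$ of the local quotient. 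The finite-layer Coleman map (Theorem \ref{thm:main_thm_finite_layer_coleman}) then converts this to the analytic Mazur--Tate elements, with the constant $C_f$ disappearing precisely under Conjecture \ref{conj:comparison-zeta-elements}. Your step (iii) gestures at this Coleman-map identification but underestimates the work: building it integrally and equivariantly, with the correct period normalization, is the technical heart of the paper ($\S$\ref{sec:mazur-tate-etale}).
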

It is known that the ``weak" vanishing conjecture is an easy corollary of the ``weak" main conjecture (Corollary \ref{cor:mazur-tate-weak-vanishing}).
When an elliptic curve has good ordinary reduction at $p$, Conjecture \ref{conj:mazur-tate} follows from the standard ingredients of Iwasawa theory \cite[$\S$2]{kim-kurihara}. Notably, the control theorem applies well in the good ordinary setting.
In \cite{kim-kurihara}, the author and Kurihara gave a simple proof of the Mazur--Tate conjecture for elliptic curves with supersingular reduction with $a_p(E) = 0$ by using the $\pm$-Iwasawa theory of elliptic curves. See also \cite{kataoka-thesis}.
In fact, the first significant progress towards the Mazur--Tate conjecture for modular forms (at non-ordinary primes) was made in the work of Emerton--Pollack--Weston \cite{epw2}, which was first announced around 2010, and their argument relies heavily on the $p$-adic local Langlands correspondence \`{a} la Colmez.

Recall that the \textbf{fine Selmer group of $W_{\overline{f}}(k/2)$ over $\mathbb{Q}(\zeta_{p^n})$} 
$$\mathrm{Sel}_0( \mathbb{Q}(\zeta_{p^n}), W_{\overline{f}}(k/2) ) \subseteq \mathrm{Sel}( \mathbb{Q}(\zeta_{p^n}), W_{\overline{f}}(k/2) )$$
is defined by putting the strict local condition  at $p$.
The \emph{simplified} version of the first main theorem of this article is as follows.
\begin{thm}[Theorem \ref{thm:main_thm_mazur_tate_conj}] \label{thm:main_thm_mazur_tate_conj_simple}
Let $p$ be an odd prime and $f \in S_k( N, \psi )$ a newform such that $(N,p)=1$ and $\rho_f$ has large image. 
We assume that 
\begin{itemize}
\item[(FL)] $2 \leq k \leq p-1$ and $k$ is even,
\item[($\alpha \neq \beta$)] $X^2 - a_p(f) X + \psi(p)p^{k-1}$  has distinct roots,
\item[(non-ord)] $\overline{\rho} \vert_{G_{\mathbb{Q}_p}}$  is irreducible, and
\item[(fin)] $\mathrm{Sel}_0( \mathbb{Q}(\zeta_{p^\infty}), W_{\overline{f}}(k/2) )^\vee$ has no non-trivial finite Iwasawa submodule.
\end{itemize}
Then, for $n \geq 1$, Conjecture \ref{conj:mazur-tate} holds
\begin{equation} \label{eqn:mazur-tate-conjecture}
 \left( \theta_{\mathbb{Q}(\zeta_{p^{n}}), k/2}(\overline{f}) \right) 
  \subseteq \mathrm{Fitt}_{\mathcal{O}_\pi[\mathrm{Gal}(\mathbb{Q}(\zeta_{p^n})/\mathbb{Q})]} \left( \mathrm{Sel}(\mathbb{Q}(\zeta_{p^n}), W_{\overline{f}}(k/2))^\vee \right) .
\end{equation}
\end{thm}
Our approach is completely different from that of our former work.
No signed Iwasawa theory is applicable since the notion of finite layer signed objects are unclear  yet when $a_p(f) \neq 0$.
It is known that there are many examples to satisfy the condition (fin) on fine Selmer groups \cite[Ex. 1.21]{kim-kurihara}.

We first study the failure of control theorem of Selmer groups in the non-ordinary case.
We explicitly analyze the error term in the control theorem and relate the Fitting ideals of the error terms with Perrin-Riou's $\theta$-elements, the finite layer analogue of Perrin-Riou's algebraic $p$-adic $L$-functions. 
Perrin-Riou's $\theta$-elements and Mazur--Tate elements are also related by Kato's Euler system divisibility and the Euler system construction of Mazur--Tate elements.

The Euler system construction of Mazur--Tate elements comes from the construction of the \emph{integral} finite layer Coleman maps, which send \emph{integral} Kato's Euler systems to \emph{integral} Mazur--Tate elements.
As an expert can expect, our approach is based on an integral refinement of Perrin-Riou's local Iwasawa theory via integral $p$-adic Hodge theory and Wach modules.

However, a delicate integrality question naturally arises from the generalization and is not precisely detected before.
How to make \emph{three different} integral properties arising from finite layer Coleman maps, Kato's Euler systems, and Mazur--Tate elements compatible (Conjecture \ref{conj:comparison-zeta-elements})?
We partially resolve this problem (Theorem \ref{thm:comparison-zeta-elements}).
For example, the construction of the integral finite layer Coleman maps given in \cite{epw2} is not strong enough to deduce (\ref{eqn:mazur-tate-conjecture}) since a certain constant error term is inevitable in their work (Remark \ref{rem:main-theorem-coleman}).
Our construction removes their constant error term when $2 \leq k \leq p-1$ and precisely controls the error term even for modular forms of arbitrary weight (Theorem \ref{thm:main_thm_mazur_tate_conj} and Remark \ref{rem:main_thm_mazur_tate_conj}).
This shows the novelty of our local Iwasawa theory which is extensively discussed in $\S$\ref{subsec:mazur-tate-optimal-kato-zeta}.

\subsection{Kurihara numbers and the main conjecture}
Let $E/\mathbb{Q}$ be an elliptic curve.
In a series of his papers \cite{kurihara-fitting, kurihara-plms, kurihara-iwasawa-2012, kurihara-munster}, Kurihara introduced numerical invariants $\widetilde{\delta}_{m}(E) \in \mathbb{F}_p$ attached to $E$, which we call \emph{Kurihara numbers}, in the context of \emph{Kolyvagin systems of Gauss sum type} and proved a number of remarkable theorems relating these invariants to the size and structure of the Selmer group of $E$ over $\mathbb{Q}$.  This is another part of \emph{refined Iwasawa theory}.

In \cite{kks}, the author and his collaborators recognized the direct connection between Kato's Euler systems and Kurihara numbers.
We established a simple numerical criterion for the verification of the main conjecture for weight two modular forms at good primes via Kurihara numbers.
The argument of \cite{kks} relied heavily on working with weight two modular forms because the formal groups of modular abelian varieties plays a crucial role in the computation of  the integral image of the dual exponential map. 
In this present work, we extend the work of \cite{kks} to higher weight modular forms by instead using the integral finite layer Coleman maps mentioned earlier.
The construction of the integral finite layer Coleman maps completely replaces the computation of the dual exponential map.
By using the rigidity of $\Lambda$-adic Kolyvagin systems, we also show the converse to the main result of \cite{kks}, so the numerical criterion itself is equivalent to the Iwasawa main conjecture with no Tamagawa defect\footnote{``No Tamagawa defect" means the $\pi$-indivisibility of Fontaine--Perrin-Riou's local Tamagawa numbers \cite{fontaine-perrin-riou}.}. See also \cite{sakamoto-p-selmer} for a different approach.
This equivalence yields two applications.
First, we can use the numerical criterion to verify the Iwasawa main conjecture.
Conversely, by establishing the Iwasawa main conjecture, we obtain the non-triviality of Kato's Kolyvagin systems and the non-vanishing of Kurihara numbers, so we are able to describe the structures of $p$-strict Selmer groups and usual Selmer groups in terms of Kato's Kolyvagin systems and (a higher $p$-power version of) Kurihara numbers, respectively.
In particular, the latter one provides a completely new structural application of the Iwasawa main conjecture. See \cite{mazur-rubin-book, kim-structure-selmer, kim-refined-tamagawa} for details.

Unfortunately, the mod $p$ numerical criterion \emph{must fail} when the Tamagawa defect is non-trivial.
In order to handle this issue, we use congruences of modular forms.
In the ordinary case, we can spread knowledge of the main conjecture of Mazur--Greenberg from one form to all congruent forms if we know the vanishing of $\mu$-invariant of the $p$-adic $L$-function of one form and the one-sided divisibility of the main conjecture \cite{gv, epw}.
The same type of result for Kato's main conjecture is proved in \cite{kim-lee-ponsinet} under the vanishing of $\mu$-invariant of Kato's zeta elements.
We also provide another numerical criterion to verify the $\mu=0$ for Kato's zeta elements.
Since modular forms of minimal level (in the sense of \cite{serre-serre-conjecture}) have no Tamagawa defect \cite{kim-ota}, these two numerical criteria are enough to verify the main conjecture for families of modular forms.

We say that a prime $\ell$ is a \textbf{Kolyvagin prime} if $\ell \nmid Np$, $\ell \equiv 1 \pmod{p}$, and $a_\ell(f) \equiv \psi(\ell) +1 \pmod{\pi}$.
Denote by $\mathcal{N}$ the set of square-free products of Kolyvagin primes.
For each Kolyvagin prime $\ell$, we fix a primitive root $\eta_\ell$ and define the corresponding discrete logarithm
$\mathrm{log}_{\eta_\ell} (a) \in \mathbb{Z}/(\ell - 1)\mathbb{Z}$ by $\eta^{ \mathrm{log}_{\eta_\ell} (a) }_{\ell} \equiv a \Mod{\ell}$. 
Denote by $\overline{ \mathrm{log}_{\eta_\ell} (a) } \in \mathbb{Z}/p\mathbb{Z}$ the mod $p$ reduction of $\mathrm{log}_{\eta_\ell} (a)$.
For $1 \leq r \leq k-1$, $a \in \mathbb{Q}$, and $m \in \mathbb{Q}_{>0}$, $\lambda^{\pm, \mathrm{opt}} (\overline{f}, z^{r-1}; a, m)$ is the optimally normalized modular symbol defined in $\S$\ref{sec:mazur-tate-betti} and $\overline{\lambda^{\pm, \mathrm{opt}} (\overline{f}, z^{r-1}; a, m)}$ its mod $\pi$ reduction.
\begin{defn}
Let $m \in \mathcal{N}$.
\begin{enumerate}
\item
When $i =0$, we define the \textbf{(mod $\pi$) Kurihara numbers for $W_{\overline{f}}(r)$ at $m$} by
\begin{equation*}
\widetilde{\delta}_{\mathbb{Q}(\zeta_m)}(\overline{f},r) := 
{ \displaystyle \sum_{a \in (\mathbb{Z}/m\mathbb{Z})^\times} \left( \prod_{\ell \vert m}  \overline{ \mathrm{log}_{\eta_\ell} (a) } \right)  \cdot \overline{ \lambda^{\pm, \mathrm{opt}} (\overline{f}, z^{r-1}; a, m) }} \in \mathbb{F}_\pi .
\end{equation*}
where the sign of modular symbols coincides with that of $(-1)^{r-1}$.
\item
When $i \neq 0$, we also define the \textbf{(mod $\pi$) Kurihara numbers for $W_{\overline{f},-i}(r)$ at $m$} by
\begin{equation*}
\widetilde{\delta}_{\mathbb{Q}(\zeta_m)}(\overline{f},i,r) := 
{ \displaystyle \sum_{a \in (\mathbb{Z}/mp\mathbb{Z})^\times} \left( \prod_{\ell \vert m} \overline{ \mathrm{log}_{\eta_\ell} (a_{(m)}) } \right) \cdot \overline{\omega^{i}( a_{(p)} )} \cdot \overline{ \lambda^{\pm, \mathrm{opt}} (\overline{f}, z^{r-1}; a, mp) } } \in \mathbb{F}_\pi 
\end{equation*}
where
$a_{(m)} \in (\mathbb{Z}/m\mathbb{Z})^\times$ is the mod $m$ reduction of $a$, 
 $a_{(p)} \in (\mathbb{Z}/p\mathbb{Z})^\times$ is the mod $p$ reduction of $a$, and the sign of modular symbols coincides with that of $(-1)^{r-1} \cdot \omega^i(-1)$.
Here, $\overline{\omega^{i}( a_{(p)} )}$ means the mod $p$ reduction of $\omega^{i}( a_{(p)} )$.
\end{enumerate}
\end{defn}
It is known that the non-vanishing question of Kurihara numbers is well-defined although the values depend on various choices.

The \textbf{collection of Kurihara numbers for $W_{\overline{f}}(r)$ or $W_{\overline{f},-i}(r)$} is defined by 
$$\kn(\overline{f},r) = \left \lbrace \widetilde{\delta}_{\mathbb{Q}(\zeta_m)}(\overline{f},r) \in \mathbb{F}_\pi : m \in \mathcal{N} \right\rbrace \textrm{ or } \ \kn(\overline{f},i,r) = \left \lbrace \widetilde{\delta}_{\mathbb{Q}(\zeta_m)}(\overline{f},i,r) \in \mathbb{F}_\pi : m \in \mathcal{N} \right\rbrace,$$
respectively. We say that either one does not vanish if 
\begin{equation} \label{eqn:kurihara_number}
\left \lbrace
    \begin{array}{ll}
     \widetilde{\delta}_{\mathbb{Q}(\zeta_m)}(\overline{f},r) \neq 0  & \textrm{ when } i=0 , \\ 
     \widetilde{\delta}_{\mathbb{Q}(\zeta_m)}(\overline{f},i,r) \neq 0   & \textrm{ when } i \neq 0
    \end{array}
    \right.
\end{equation}
for some square-free product $m$ of Kolyvagin primes.

We are ready to state the \emph{simplified} form of the second main result of this article, which summarizes
Theorem \ref{thm:main_thm_main_conj}, Theorem \ref{thm:main_thm_mu_zero_conj}, and Corollary \ref{cor:main_thm_main_conj_families}.
\begin{thm} \label{thm:main_thm_main_conj_simple}
Let $p \geq 5$ be a prime  and $f \in S_k( N, \psi )$ a newform such that $(N,p)=1$ and $\rho_f$ has large image.
Let $r$ be an integer with $1 \leq r \leq k-1$ such that $r \neq \frac{k-1}{2}$ (e.g. $k$ is even).
We assume that 
\begin{itemize}
\item[(FL)] $2 \leq k \leq p-1$,
\item[($\alpha\neq\beta$)] $X^2 - a_p(f) X + \psi(p)p^{k-1}$  has distinct roots, and
\item[(``non-CM")] $\overline{\rho} \vert_{G_{\mathbb{Q}_p}}$ is not a direct sum of two characters. 
\end{itemize}
Then the following three statements are equivalent.
\begin{enumerate}
\item $\kn(\overline{f},r)$ or $\kn(\overline{f},i,r)$ does not vanish as in (\ref{eqn:kurihara_number}).
\item Kato's Kolyvagin system for $T_{f,i}(k-r)$ is primitive. 
\item The main conjecture holds for $T_{f,i}(k-r)$ (Conjecture \ref{conj:kato-main-conjecture}) and the local Tamagawa numbers are not divisible by $\pi$.
\end{enumerate}
In this case, the module structure of the $p$-strict Selmer group $\mathrm{Sel}_0(\mathbb{Q}, W_{\overline{f},-i}(r) )$ is determined by Kato's Kolyvagin system for $T_{f,i}(k-r)$.
\end{thm} 
\begin{cor} \label{cor:main_thm_main_conj_simple}
Keep all the assumptions in Theorem \ref{thm:main_thm_main_conj_simple}.
\begin{enumerate}
\item 
 If
\begin{equation} \label{eqn:mu=0}
\sum_{a \in (\mathbb{Z}/p\mathbb{Z})^\times} \overline{ \lambda^{\pm, \mathrm{opt}} (\overline{f}, z^{r-1}; a(1+bp), p^n) } \cdot \overline{ \omega^{i}(a)  } \neq 0
\end{equation}
where the sign of modular symbols coincide with that of $(-1)^{r-1} \cdot \omega^{i}(-1)$
 for some $b \in \mathbb{Z}/p^{n-1}\mathbb{Z}$ with $n \geq 1$, then the $\mu$-part of the main conjecture and Conjecture A of Coates--Sujatha (Conjecture \ref{conj:coates-sujatha}) for $T_{f,i}(k-r)$ hold.
\item 
 If both mod $p$ numerical criteria (\ref{eqn:kurihara_number}) and (\ref{eqn:mu=0}) hold for $T_{f, i}(k-r)$, 
then 
 the main conjecture  and
Conjecture A of Coates--Sujatha
for $T_{f_0, i}(k-r)$ hold
 where $f_0$ runs over all newforms of weight $k$ and level $N_{f_0}$ such that $p \nmid N_{f_0}$ and $\overline{\rho}_{f_0} \simeq \overline{\rho}$.
\end{enumerate}
\end{cor}
\begin{rem} 
When $f$ is $p$-ordinary, Corollary \ref{cor:main_thm_main_conj_simple}.(2) extends to all members in the Hida family of $\overline{\rho}$ \cite{epw}.
Indeed, the $r = \frac{k-1}{2}$ case can be also treated by using Tate twists \cite[$\S$6.5]{rubin-book}.
It is expected that ($\alpha\neq \beta$)  always holds  and the large image assumption implies (``non-CM"). See $\S$\ref{subsec:further_remarks} for the detail.
\end{rem}
Theorem \ref{thm:main_thm_main_conj_simple} establishes the equivalence among three different statements. 
Since the non-triviality of Kolyvagin systems has the application to the structures of $p$-strict Selmer groups \cite{mazur-rubin-book}, Theorem \ref{thm:main_thm_main_conj_simple} provides a new application of the Iwasawa main conjecture to the structures of Selmer groups.
The application to the structure of Bloch--Kato Selmer groups is studied in subsequent works \cite{kim-structure-selmer, kim-refined-tamagawa} in a more general setting.
Theorem \ref{thm:main_thm_main_conj_simple} can also be interpreted as the equivalence between the strong version of ``Kolyvagin's conjecture for Kato's Kolyvagin systems" \cite{kolyvagin-selmer, wei-zhang-mazur-tate} and the Iwasawa main conjecture with no Tamagawa defect.
Kurihara first formulated the numerical criterion (\ref{eqn:kurihara_number}) as a conjecture for elliptic curves with good ordinary reduction \cite[Conj. 1, page 320]{kurihara-iwasawa-2012}.

Corollary \ref{cor:main_thm_main_conj_simple} reduces the main conjecture and Conjecture A of Coates--Sujatha \emph{for families of modular forms} to
 two easily verifiable and effectively computable\footnote{An efficient algorithm for the numerical verification of the Iwasawa main conjecture for elliptic curves is implemented at \href{https://github.com/aghitza/kurihara_numbers}{https://github.com/aghitza/kurihara\_numbers} due to Alexandru Ghitza \cite[Appendix A]{kim-survey}. Extensive numerical examples for the main conjecture are computed in \cite[$\S$8]{kks}.
An algorithm for the Iwasawa invariants for elliptic curves with good reduction is implemented at \href{https://github.com/rpollack9974/Iwasawa-invariants}{https://github.com/rpollack9974/Iwasawa-invariants} due to Robert Pollack and is used in \href{http://www.lmfdb.org}{http://www.lmfdb.org}.}
mod $p$ numerical criteria (\ref{eqn:kurihara_number}) and (\ref{eqn:mu=0}) \emph{for one modular form} by upgrading the one-sided divisibility of the main conjecture.
Theorem \ref{thm:main_thm_main_conj_simple} covers ordinary and non-ordinary cases on equal footing.
In other words, all the applications of the main conjecture and Conjecture A of Coates--Sujatha become numerically verifiable.
These include Birch and Swinnerton-Dyer conjecture (\S\ref{subsec:appendix-applications} and \ref{subsec:appendix-heegner}), Bloch--Kato's Tamagawa number conjecture for modular forms (Corollary \ref{cor:bloch-kato}), and other main conjectures\footnote{e.g. the (equivalent) main conjecture of Mazur--Greenberg \cite[$\S$17]{kato-euler-systems}, the (equivalent) signed main conjectures \cite{lei-loeffler-zerbes_wach}, and the Heegner point main conjecture for elliptic curves  \cite{wei-zhang-mazur-tate, burungale-castella-kim}.}.

\subsection{Mazur--Tate elements and the ``optimal" choice of Kato's zeta elements} \label{subsec:mazur-tate-optimal-kato-zeta}
The common key feature of the proofs of 
Theorem \ref{thm:main_thm_mazur_tate_conj_simple}
and Theorem \ref{thm:main_thm_main_conj_simple}
 is the construction of the integral finite layer Coleman maps, which turn Kato's Euler systems into Mazur--Tate elements over \emph{general} cyclotomic extensions of $\mathbb{Q}$.
Let $\alpha$ and $\beta$ be the roots of $X^2 - a_p(f)X + \psi(p)p^{k-1}$ with $\mathrm{ord}_p(\alpha) \leq  \mathrm{ord}_p(\beta)$
where $\mathrm{ord}_p$ is normalized by $\mathrm{ord}_p(p) = 1$.
It is known than the complex absolute values of $\alpha$ and $\beta$ (under any complex embedding) are $p^{\frac{k-1}{2}}$ \cite[(14.10.5)]{kato-euler-systems}.
\begin{assu} \label{assu:standard} 
As we have seen, the following conditions are assumed throughout this article.
\begin{enumerate}
\item[(LI)] $\rho_f$ has large image.
\item[($\alpha\neq\beta$)] When $f$ is non-ordinary at $p$, we assume $\alpha \neq \beta$.
\item[(``non-CM")]  When $f$ is ordinary at $p$, we assume that $\overline{\rho} \vert_{G_{\mathbb{Q}_p}}$ is not a direct sum of two characters.
\item[(ENV)]  When $r = \frac{k-1}{2}$ is an integer, $\overline{\alpha} \neq \zeta \cdot p^{\frac{k-1}{2}}$ where $\zeta$ is a root of unity.  
\end{enumerate}
It is expected that ($\alpha\neq\beta$) always holds and (non-split) also holds under (LI) (Remark \ref{rem:standard}).
\end{assu}
Let $m$ be a positive integer with $(m, Np)=1$.
In $\S$\ref{sec:canonical-Kato-Euler-systems},
for $\gamma \in T_f$, we construct the \emph{canonical} Kato's Euler system 
$z_{\mathbb{Q}(\zeta_{mp^n}), \gamma}(f ,k-r)$
 and its variation
$\mathfrak{z}_{\mathbb{Q}(\zeta_{mp^n}), \gamma}(f ,k-r)$
 in $\mathrm{H}^1( \mathbb{Q}(\zeta_{mp^n}), T_{f}(k-r) )$
 under the large image assumption.
 If $\gamma \in V_f$, then these cohomology classes lie in 
 $\mathrm{H}^1( \mathbb{Q}(\zeta_{mp^n}), V_{f}(k-r) )$ in general.
 The choice of $\gamma$ corresponds to the choice of the periods in Kato's explicit reciprocity law (Theorem \ref{thm:kato_interpolation_original}).  
Denote by
$$\theta_{\mathbb{Q}(\zeta_{mp^n}), r}(\overline{f})  \in \mathcal{O}_\pi[\mathrm{Gal}(\mathbb{Q}(\zeta_{mp^n})/\mathbb{Q})]$$
 the Mazur--Tate element of $\overline{f}$ ``at $s=r$" over $\mathbb{Q}(\zeta_{mp^n})$  normalized by the optimal periods (reviewed in $\S$\ref{sec:mazur-tate-betti}). 


The technical heart of this article is the following Euler system construction of Mazur--Tate elements.
\begin{thm}[Mazur--Tate elements] \label{thm:main_thm_finite_layer_coleman}
Keep all the statements in Assumption \ref{assu:standard}.
Then there exist the finite layer Coleman map over $\mathbb{Q}(\zeta_{mp^n})$
$$\mathrm{Col}_{\mathbb{Q}(\zeta_{mp^n}), r} : \mathrm{H}^1( \mathbb{Q}(\zeta_{mp^n}) \otimes \mathbb{Q}_p, V_{f}(k-r) ) \to F_\pi [ \mathrm{Gal}(\mathbb{Q}(\zeta_{mp^n})/\mathbb{Q}) ]$$
and the crystalline normalization of the optimal periods via Kato's period integral
$$\delta_{f, \mathrm{cris}} \in V_f$$
such that 
$$
\theta_{\mathbb{Q}(\zeta_{mp^n}), r}(\overline{f}) = \mathrm{Col}_{\mathbb{Q}(\zeta_{mp^n}), r} \left( \mathrm{loc}_p \left( \mathfrak{z}_{\mathbb{Q}(\zeta_{mp^n}) , \delta_{f, \mathrm{cris}}}(f, k-r) \right) \right) \in \mathcal{O}_\pi[\mathrm{Gal}(\mathbb{Q}(\zeta_{mp^n})/\mathbb{Q})] 
$$
where $r$ is an integer with $1 \leq r \leq k-1$.\\
Furthermore, the restriction of $\mathrm{Col}_{\mathbb{Q}(\zeta_{mp^n}), r}$ to $T_f$ becomes integral, i.e. we have
\begin{equation} \label{eqn:full-integrality-coleman}
\mathrm{Col}_{\mathbb{Q}(\zeta_{mp^n}), r}  : \mathrm{H}^1( \mathbb{Q}(\zeta_{mp^n}) \otimes \mathbb{Q}_p , T_{f}(k-r) ) \to \mathcal{O}_\pi [ \mathrm{Gal}(\mathbb{Q}(\zeta_{mp^n})/\mathbb{Q}) ] .
\end{equation}
\end{thm}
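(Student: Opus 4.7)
The plan is to build the Coleman map from the top down: first at the infinite cyclotomic level using the integral Wach module of $V_f(k-r)$, then descend finitely. Let $L = \mathbb{Q}(\zeta_{mp^n})$, $L_\infty = \mathbb{Q}(\zeta_{mp^\infty})$, and $\Gamma_L = \mathrm{Gal}(L_\infty/L)$. Because $f$ is crystalline at $p$ with distinct Hecke roots and, by Assumption \ref{assu:standard}, is non-trianguline of the bad type, Berger's theory gives an $\mathcal{O}_\pi[[\pi]]$-lattice $\mathbf{N}(T_f(k-r))$ inside the $(\varphi,\Gamma)$-module of $V_f(k-r)$ whose reduction produces a canonical integral basis for $\DdR$ compatible with the Hodge filtration. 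I would use the Perrin-Riou--Berger big logarithm
\[
\mathcal{L}_{V_f(k-r)} : \mathrm{H}^1\bigl(L_\infty \otimes \mathbb{Q}_p, T_f(k-r)\bigr) \longrightarrow \mathcal{H}_{F_\pi}(\Gamma_L) \otimes_{\mathcal{O}_\pi} \mathbf{N}(T_f(k-r))^{\psi=0}
\]
where $\mathcal{H}_{F_\pi}(\Gamma_L)$ is the distribution algebra, and then pair against the coordinate of $\mathbf{N}(T_f(k-r))^{\psi=0}$ dual to a chosen element $\delta \in \DdR(V_f)$. By integrality of the Wach module, the resulting scalar-valued map lands in the Iwasawa algebra $\mathcal{O}_\pi[[\Gamma_L]]$ when $\gamma \in T_f$ and $2 \leq k \leq p-1$, and in the distribution algebra in general; modding out by the ideal cutting out $L$ inside $L_\infty$ produces the finite-layer map $\mathrm{Col}_{L,r}$ taking values in $F_\pi[\mathrm{Gal}(L/\mathbb{Q})]$, and the integrality inside $\mathcal{O}_\pi[\mathrm{Gal}(L/\mathbb{Q})]$ is inherited from the lattice level.

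Next I would pin down the optimal crystalline period $\delta_{f,\mathrm{cris}}$. The integral Wach basis provides a canonical crystalline vector $\delta^{\mathrm{Wach}} \in V_f$ up to a unit; Kato's explicit reciprocity law (Theorem \ref{thm:kato_interpolation_original}) applied to $\mathrm{loc}_p(z_{L_\infty,\gamma}(f,k-r))$ evaluates this vector via Kato's period integrals against modular symbols. Comparing with the purely Betti optimal periods used in the definition of $\theta_{L,r}(\overline{f})$ (reviewed in \S\ref{sec:mazur-tate-betti}), there is a unique scaling $\delta_{f,\mathrm{cris}} \in V_f$ of $\delta^{\mathrm{Wach}}$ --- the ``crystalline optimal period'' --- making Kato's reciprocity law at every finite layer match the complex-analytic definition of Mazur--Tate elements. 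Concretely, this matching is done character by character: for each finite-order character $\chi$ of $\mathrm{Gal}(L/\mathbb{Q})$ with conductor dividing $mp^n$, the $\chi$-component of $\mathrm{Col}_{L,r}(\mathrm{loc}_p(\mathfrak{z}_{L,\delta_{f,\mathrm{cris}}}(f,k-r)))$ is essentially $L_{\{p\}}(f,\chi^{-1},r)$ divided by the optimal period, while the $\chi$-component of $\theta_{L,r}(\overline{f})$ is the same $L$-value by definition of the Mazur--Tate element. Because the characters of $\mathrm{Gal}(L/\mathbb{Q})$ separate points, the two group-ring elements agree.

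The main obstacle is the integrality assertion \eqref{eqn:full-integrality-coleman}, because the naive Perrin-Riou construction only yields $F_\pi$-valued Coleman maps in the non-ordinary case. To secure integrality on $T_f$, I would work entirely inside $\mathbf{N}(T_f(k-r))$ and its integral structure. The Euler factor variant $\mathfrak{z}_{L,\gamma}$ is designed (see \S\ref{sec:canonical-Kato-Euler-systems}) so that the local image lies in the image of the Coleman map, and the non-trivial input is Berger's theorem that $\mathbf{N}(T_f(k-r))^{\psi=0}$ is free over $\mathcal{O}_\pi[[\pi]]^{\psi=0} \cong \mathcal{O}_\pi[[\Gamma]]$, combined with the hypothesis $2 \leq k \leq p-1$ which ensures no denominators arise from the Hodge--Tate weights in the twist by $\omega^r$. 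Assumption \ref{assu:standard}.(ENV) is needed precisely at the central twist to prevent a trivial zero of the interpolation polynomial at $r = (k-1)/2$ from forcing a denominator; its exclusion eliminates the constant error term appearing in \cite{epw2}. With integrality of the Wach-module Coleman map established and the crystalline period normalization fixed, the identity in the theorem, together with integrality of $\theta_{L,r}(\overline{f})$ via optimal periods, yields \eqref{eqn:full-integrality-coleman}.
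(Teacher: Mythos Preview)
Your outline diverges from the paper's construction in a way that leaves real gaps. The paper does \emph{not} build $\mathrm{Col}_{\mathbb{Q}(\zeta_{mp^n}),r}$ from the big logarithm on the $V_f(k-r)$ side and then project to finite level. Instead it works on the dual side: it constructs explicit equivariant \emph{local points} $c^{\varepsilon_m^{-1}}_{\mathbb{Q}(\zeta_{mp^n}),r,\eta} \in \mathrm{H}^1_f(\mathbb{Q}(\zeta_{mp^n})\otimes\mathbb{Q}_p, V_{\overline f}(r))$ by applying Benois' integral refinement $\Sigma^{\varepsilon}_{T_{\overline f},r}$ of the Perrin-Riou exponential to solutions of $(1-\varphi)G=g$ for $\varphi$-eigenvectors $\eta_{\overline\alpha},\eta_{\overline\beta}$ in the Wach lattice $M_{\overline f}\subset\mathbf{D}_{\mathrm{cris}}(V_{\overline f})$. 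The Coleman map is then \emph{defined} as the finite-layer cup-product pairing $P_{\mathbb{Q}(\zeta_{mp^n}),r}(-,c^{\varepsilon_m^{-1}}_{\mathbb{Q}(\zeta_{mp^n}),r,\eta_{\overline f}})$. Integrality on $\mathrm{H}^1(T_f(k-r))$ follows because the local points are shown (Proposition \ref{prop:normalization_local_points_first}, via local Tate duality and Lemma \ref{lem:normalization_local_points_first}) to lie in $\mathrm{H}^1_{/f}(T_f(k-r))^*\otimes\mathcal{O}_\pi[\alpha]$, so pairing with anything integral gives an integral value. Your sketch also omits the ``$p$-destabilization'' step: the eigenvectors $\eta_{\overline\alpha},\eta_{\overline\beta}$ produce local points whose interpolation matches the $p$-stabilized forms, and to recover the Mazur--Tate element of $\overline f$ itself one must take the specific combination $\eta_{\overline f}=\frac{\overline\alpha}{\overline\alpha-\overline\beta}\eta_{\overline\alpha}+\frac{\overline\beta}{\overline\beta-\overline\alpha}\eta_{\overline\beta}$ (this is where Assumption \ref{assu:standard}.($\alpha\neq\beta$) is used).

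There are two concrete errors. First, you invoke $2\leq k\leq p-1$ to secure \eqref{eqn:full-integrality-coleman}, but the theorem assumes only Assumption \ref{assu:standard}; the entire point (cf.\ Remark \ref{rem:main-theorem-coleman}) is that the dual-side Wach-lattice construction gives integrality of the Coleman map for \emph{arbitrary} weight. The Fontaine--Laffaille range enters only later, in Theorem \ref{thm:comparison-zeta-elements}, to show $\delta_{f,\mathrm{cris}}\in T_f$ --- a separate question. Second, your final paragraph is circular: knowing that $\theta_{L,r}(\overline f)$ is integral tells you only the value of $\mathrm{Col}$ on one specific class, not on all of $\mathrm{H}^1(T_f(k-r))$. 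Finally, the definition of $\delta_{f,\mathrm{cris}}$ is not a rescaling of a Wach basis of $V_f$: the paper fixes an $\mathcal{O}_\pi$-generator $\omega_{f,\mathrm{cris}}$ of the rank-one lattice $(M_{\overline f}/M^r_{\overline f})^*$ on the \emph{dual} side, then sets $\delta^{\pm}_{f,\mathrm{cris}}=\Omega^{\pm,-1}_{f,\mathrm{Kato}}\cdot\mathrm{per}^{\pm}_f(\omega_{f,\mathrm{cris}})$. Since the Wach-module functor is not known to commute with duals outside the Fontaine--Laffaille range (Remark \ref{rem:fontaine-laffaille-functor}), working only on the $T_f$ side does not produce this normalization.
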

\begin{proof}
See $\S$\ref{sec:mazur-tate-etale} for the proof.
Especially,
 the recipe of  $\delta_{f, \mathrm{cris}}$ is given in  $\S$\ref{subsec:crystalline-integral-normalization},
and
 the construction of $\mathrm{Col}_{\mathbb{Q}(\zeta_{mp^n}), r}$ is given in $\S$\ref{subsec:finite-layer-coleman}. 
See also $\S$\ref{sec:canonical-Kato-Euler-systems} for the construction of $\mathfrak{z}_{\mathbb{Q}(\zeta_{mp^n}) , \gamma}(f, k-r)$ for $\gamma \in V_f$.
\end{proof}
Theorem \ref{thm:main_thm_finite_layer_coleman} is the finite layer, integral, and equivariant refinement of the local Iwasawa theory \`{a} la Perrin-Riou \cite{perrin-riou-local-iwasawa, benois-crystalline-duke}.
A substantial amount of this article is devoted to the construction of the finite layer Coleman maps and the study of their integrality property.
Although we follow the well-known philosophy\footnote{``The $p$-adic $L$-functions of modular forms can be obtained by putting Kato's Euler systems into Perrin-Riou's local machinery."} of Perrin-Riou's local Iwasawa theory,
 the technical details become extremely delicate if we aim to obtain this level of generality and precision.
As mentioned before, the main difficulty arises from the comparison among three different integrality properties:
\begin{enumerate}
\item
The uniform integrality of Mazur--Tate elements over general cyclotomic extensions depending only on the \emph{optimal} choice of complex periods $\Omega^{\pm}_{\overline{f}}$. This is the content of $\S$\ref{sec:mazur-tate-betti}.
\item 
The uniform integrality of the canonical Kato's Euler systems over general cyclotomic extensions depending only on the choice of $\gamma \in V_f$. This is the content of $\S$\ref{sec:canonical-Kato-Euler-systems} (e.g. Definition \ref{defn:canonical-kato-euler-systems}).
\item 
The uniform integrality of the finite layer Coleman maps over general cyclotomic extensions depending only on the choice of $\eta \in \mathbf{D}_{\mathrm{cris}}(V_{\overline{f}})$. This is the content of $\S$\ref{sec:mazur-tate-etale} (e.g. Proposition \ref{prop:normalization_local_points_first}).
\end{enumerate}
The constructions of the (non-signed and infinite layer) Coleman maps for non-ordinary higher weight modular forms usually do not care the integrality and cover the $p$-cyclotomic extensions only \cite{perrin-riou-local-iwasawa, kato-euler-systems, lei-loeffler-zerbes_wach, lei-compositio}.
On the other hand, the constructions based on the formal groups of modular abelian varieties \cite{kurihara-invent, kobayashi-thesis, otsuki, ota-thesis, kks, kataoka-thesis} can control the integrality of the finite layer Coleman maps (e.g. \cite[$\S$3]{kurihara-invent})  but these methods do not generalize to higher weight modular forms.
This integrality comparison issue is also noticed in the work of Emerton--Pollack--Weston \cite{epw2}, but they just leave the difference as a constant error term. See Remark \ref{rem:main-theorem-coleman}.

We deal with this integrality issue as follows.
We first choose the complex periods $\Omega^{\pm}_{\overline{f}}$ by the optimal periods in modular symbols (``integral" complex Hodge theory)
 since the optimal integrality of Mazur--Tate elements is essential for the arithmetic applications.
All the complex periods are controlled \emph{up to $p$-adic units}.
Then we choose a suitable $\varphi$-stable lattice in the Dieudonn\'{e} module associated to $V_{\overline{f}}$ via Wach modules (integral $p$-adic Hodge theory) to have the \emph{integral} finite layer Coleman maps.
As a result, the notion of the crystalline normalization $\delta_{f, \mathrm{cris}} \in V_f$ of the optimal periods naturally appears in the Euler system side.
In other words, the choice of $\delta_{f, \mathrm{cris}} \in V_f$ makes the complex periods in the Mazur--Tate element \emph{optimal} and the finite layer Coleman map \emph{integral}  \emph{simultaneously}. 
The cost is that we do not know whether $\delta_{f, \mathrm{cris}} \in T_f$ in general; in other words, the integrality of the canonical Kato's Euler systems associated to the optimal periods is \emph{unclear}.
This obscurity leads us to formulate Conjecture \ref{conj:comparison-zeta-elements} below, and we prove it in many cases (Theorem \ref{thm:comparison-zeta-elements}).

We now need to introduce more notation for the precise description.
Let 
$\Gamma =  \mathrm{Gal}( \mathbb{Q}(\zeta_{p^\infty}) /\mathbb{Q} )$
and $\Lambda_{\mathcal{I}w} = \mathcal{O}_\pi \llbracket \Gamma \rrbracket$ be the Iwasawa algebra.
Decompose 
$\Lambda_{\mathcal{I}w} = \bigoplus_{i=0}^{p-2} \Lambda^{(i)}_{\mathrm{Iw}}
$
where $\Lambda^{(i)}_{\mathrm{Iw}} := \Lambda_{\mathcal{I}w} / (\sigma - \omega^i(\sigma) : \sigma \in \Gamma)$.
Let
$j : \mathrm{Spec}(\mathbb{Q}(\zeta_{p^n})) \to \mathrm{Spec}(\mathbb{Z}[\zeta_{p^n}, 1/p])$
be the natural map.
For $a \in \lbrace 1, 2\rbrace$, the $a$-th Iwasawa cohomology for $T_{f}(k-r)$ over $\Lambda_{\mathcal{I}w}$ is defined by
$$\mathrm{H}^a_{\mathcal{I}w}(   T_{f}(k-r)) := \varprojlim_{n} \mathrm{H}^a_{\mathrm{\acute{e}t}}( \mathrm{Spec}(\mathbb{Z}[\zeta_{p^n}, 1/p]), j_{*}T_{f}(k-r)) $$
and $\mathrm{H}^a_{\mathrm{Iw}}(   T_{f, i}(k-r))$ is defined by the $\omega^i$-isotypic component of $\mathrm{H}^a_{\mathcal{I}w}(  T_{f}(k-r))$
where  $i \in \lbrace 0, 1, \cdots, p-3, p-2 \rbrace$.
We use $\mathrm{H}^a_{\mathcal{I}w}$ for the extension $\mathbb{Q}(\zeta_{p^\infty})/\mathbb{Q}$ and $\mathrm{H}^a_{\mathrm{Iw}}$ for the cyclotomic $\mathbb{Z}_p$-extension $\mathbb{Q}_{\infty}$ of $\mathbb{Q}$.

Let 
$\mathcal{Z}^{\mathrm{prim}}(T_f(k-r)) := \left\langle \mathbf{z}_{\mathbb{Q}, \gamma}(f, k-r) : \gamma \in T_f \right\rangle \subseteq \mathrm{H}^1_{\mathcal{I}w}(T_f(k-r))$
be the module generated by primitive Kato's zeta elements recalled in $\S$\ref{sec:canonical-Kato-Euler-systems}.
Denote by $\mathcal{Z}^{\mathrm{prim}}(T_{f,i}(k-r))  \subseteq\mathrm{H}^1_{\mathrm{Iw}}(T_{f, i}(k-r))$ the $\omega^i$-isotypic component of $\mathcal{Z}^{\mathrm{prim}}(T_f(k-r))$.

Let $T^{\pm}_f$ be the $\mathcal{O}_\pi$-submodule of $T_f$ whose eigenvalue is $\pm 1$ with respect to the complex conjugation  via (\ref{eqn:identification-betti-etale}) below, and $\gamma^{\pm}_{0 }$ a $\mathcal{O}_\pi$-generator of $T^{\pm}_f$, respectively.
Write $\gamma_{0 } = \gamma^+_{0 } + \gamma^-_{0 }$.
Then it is well-known that
$\mathcal{Z}^{\mathrm{prim}}(T_f(k-r))  = \Lambda_{\mathcal{I}w}\mathbf{z}_{\mathbb{Q}, \gamma_0 }(f, k-r)$
and $\mathcal{Z}^{\mathrm{prim}}(T_{f, i}(k-r))  = \Lambda^{(i)}_{\mathrm{Iw}} \mathbf{z}_{\mathbb{Q}, \gamma_0 }(f,i, k-r)$ (Proposition \ref{prop:kato-generated-by-one}).

When $p$ does not divide $N$, we also have the zeta element $\mathbf{z}_{\mathbb{Q}, \delta_{f, \mathrm{cris}} }(f, k-r)$ associated to $\delta_{f, \mathrm{cris}} \in V_f$
described in Theorem \ref{thm:main_thm_finite_layer_coleman} and denote by 
$\mathbf{z}_{\mathbb{Q},  \delta_{f, \mathrm{cris}} }(f, i, k-r)$ the image of $\mathbf{z}_{\mathbb{Q},  \delta_{f, \mathrm{cris}}}(f, k-r)$ in $\mathrm{H}^1_{\mathrm{Iw}}(V_{f, i}(k-r))$.
Since we do not know whether $\delta_{f, \mathrm{cris}} \in T_f$, 
it is even unclear for us whether
 $\mathbf{z}_{\mathbb{Q}, \delta_{f, \mathrm{cris}} }(f, k-r) \in \mathcal{Z}^{\mathrm{prim}}(T_f(k-r))$.
However, since $\delta_{f, \mathrm{cris}}$ is determined by optimal periods, we \emph{believe} that $\mathbf{z}_{\mathbb{Q}, \delta_{f, \mathrm{cris}} }(f, k-r)$ is  the ``most correct" zeta element for the formulation and the verification of the Iwasawa main conjecture, at least when $p \nmid N$.
Since Kato's main conjecture is formulated with $\mathbf{z}_{\mathbb{Q}, \gamma_0 }(f, k-r)$, it is natural to ask the following question.
\begin{conj} \label{conj:comparison-zeta-elements}
Assume that $\rho_f$ has large image (Assumption \ref{assu:standard}).
Let $T_f$ be a Galois-stable $\mathcal{O}_\pi$-lattice of $V_f$ and $r \in \mathbb{Z}$.  
If $p$ does not divide $N$, then
$$
\Lambda^{(i)}_{\mathrm{Iw}}\mathbf{z}_{\mathbb{Q}, \delta_{f, \mathrm{cris}} }(f, i, k-r) = \Lambda^{(i)}_{\mathrm{Iw}}\mathbf{z}_{\mathbb{Q}, \gamma_0 }(f,i, k-r) 
$$
 in $\mathrm{H}^1_{\mathrm{Iw}}(T_{f,i}(k-r))$ for all $i = 0, \cdots , p-2$.
\end{conj}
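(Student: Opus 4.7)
The plan is to reduce the conjecture to showing that a ratio of two period normalizations is a $p$-adic unit. Since the construction of Kato zeta elements recalled in Section~\ref{sec:canonical-Kato-Euler-systems} is $F_\pi$-linear in $\gamma \in V_f$, writing $\gamma_0 = \gamma_0^+ + \gamma_0^-$ and $\delta_{f,\mathrm{cris}} = \delta^+ + \delta^-$ for the decompositions under complex conjugation, there exist scalars $c^\pm \in F_\pi^\times$ with $\delta^\pm = c^\pm \gamma_0^\pm$, so
\[
\mathbf{z}_{\mathbb{Q},\delta_{f,\mathrm{cris}}}(f,k-r) = c^+ \mathbf{z}_{\mathbb{Q},\gamma_0^+}(f,k-r) + c^- \mathbf{z}_{\mathbb{Q},\gamma_0^-}(f,k-r).
\]
Complex conjugation acts on the $\omega^i$-isotypic component of $\Gamma$ through a single sign $\epsilon = \epsilon(i,r) \in \{+,-\}$ determined by the parity of $i-r$, so passing to the $\omega^i$-isotypic piece collapses the sum to $\mathbf{z}_{\mathbb{Q},\delta_{f,\mathrm{cris}}}(f,i,k-r) = c^{\epsilon} \cdot \mathbf{z}_{\mathbb{Q},\gamma_0}(f,i,k-r)$ in $\mathbb{H}^1(j_*V_{f,i}(k-r))$. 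Therefore Conjecture~\ref{conj:comparison-zeta-elements} is equivalent to showing $c^{\epsilon(i,r)} \in \mathcal{O}_\pi^\times$ for every $i$.

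\textbf{The key identity via Kato's reciprocity.} Next I would exploit Kato's explicit reciprocity law (Theorem~\ref{thm:kato_interpolation_original}) to pin down $c^\pm$ intrinsically. The interpolation formula evaluates the dual exponential image of $\mathbf{z}_{\mathbb{Q},\gamma}(f,k-r)$ at a finite-order character $\chi$ of $\Gamma$ to a product of a $p$-adic Euler factor, a Gauss-sum factor, and the critical $L$-value $L(\overline{f},\chi,r)$ divided by the Betti period of $f$ attached to $\gamma$. Applying this identity simultaneously with $\gamma = \gamma_0^{\epsilon}$ and with $\gamma = \delta^{\epsilon}$, and taking the ratio, the $L$-values and Euler factors cancel and leave
\[
c^{\epsilon} = \Omega^{\epsilon}(\gamma_0^\epsilon)\big/\Omega^{\epsilon}(\delta^\epsilon),
\]
namely the ratio between the optimal modular-symbol period attached to $\gamma_0^\epsilon$ and the period attached to the Wach-normalized crystalline generator $\delta^\epsilon$ constructed in Section~\ref{subsec:crystalline-integral-normalization}. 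This identifies $c^\epsilon$ with an intrinsic object, decoupled from any single $\chi$.

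\textbf{Showing the ratio is a $p$-adic unit.} The third step is to bound $c^\epsilon$ from both sides using the two integrality constraints already in place: on one side, the optimal period normalization produces Mazur--Tate elements $\theta_{\mathbb{Q}(\zeta_{mp^n}),r}(\overline{f}) \in \mathcal{O}_\pi[\mathrm{Gal}(\mathbb{Q}(\zeta_{mp^n})/\mathbb{Q})]$ whose relevant isotypic components do not all vanish modulo $\pi$, and on the other side the finite-layer Coleman map (\ref{eqn:full-integrality-coleman}) is integral on $T_f$. Feeding $\mathrm{loc}_p\bigl(\mathfrak{z}_{\mathbb{Q}(\zeta_{mp^n}),\gamma_0}(f,k-r)\bigr)$ into $\mathrm{Col}_{\mathbb{Q}(\zeta_{mp^n}),r}$ and comparing the output with $(c^{\epsilon})^{-1} \cdot \theta_{\mathbb{Q}(\zeta_{mp^n}),r}(\overline{f})$ via Theorem~\ref{thm:main_thm_finite_layer_coleman} yields that $(c^{\epsilon})^{-1}\theta$ is integral, so $\mathrm{ord}_\pi(c^\epsilon) \leq 0$, while swapping roles and using the relation between the Wach-lattice and $T_f$ to bound the integrality of $\delta^\epsilon$ from below gives the reverse inequality. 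Combined, these force $c^\epsilon \in \mathcal{O}_\pi^\times$.

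\textbf{Expected obstacle.} The genuinely hard step is the reverse direction, i.e.\ showing $\mathrm{ord}_\pi(c^\epsilon) \geq 0$: one must compare the $\varphi$-stable Wach lattice inside $\mathbf{D}_{\mathrm{cris}}(V_{\overline{f}})$ with the Betti lattice $T_f^\pm$ through the Betti--de~Rham--crystalline comparison, and complex conjugation does not act naturally on the Wach module side, so matching the $\pm$-splittings costs control of a priori unknown denominators. The weight condition $2 \leq k \leq p-1$ together with Assumption~\ref{assu:standard} should suffice to make this Wach--Betti comparison integral and produce $c^\epsilon \in \mathcal{O}_\pi^\times$; in borderline local situations (e.g.\ when the residual representation at $p$ degenerates, or in larger weight ranges) one expects only $c^\epsilon \in \mathcal{O}_\pi$ rather than a unit, which is presumably why Theorem~\ref{thm:comparison-zeta-elements} resolves the conjecture only in ``many cases'' instead of in full generality.
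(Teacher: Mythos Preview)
Your reduction to showing $c^{\pm}\in\mathcal{O}_\pi^\times$ (where $\delta^{\pm}_{f,\mathrm{cris}}=c^{\pm}\gamma_0^{\pm}$) is correct and is exactly how the question should be framed. But you should be aware that this statement is a \emph{conjecture}: the paper does not prove it in general, only under the hypotheses of Theorem~\ref{thm:comparison-zeta-elements}, and its proof there follows a route entirely different from yours.

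Your argument for $\mathrm{ord}_\pi(c^{\epsilon})\le 0$ via the integrality of $\mathrm{Col}$ on $T_f$ is a nice observation that the paper does not make explicit. There is, however, a gap: you assert that the relevant isotypic pieces of $\theta_{\mathbb{Q}(\zeta_{mp^n}),r}(\overline{f})$ ``do not all vanish modulo $\pi$'', but optimality of $\Omega^{\pm}_{\overline{f}}$ only guarantees that \emph{some} value $[a/m]^{\pm}_{\overline{f},r}$ is a unit for \emph{some} $a,m,r$. Theorem~\ref{thm:main_thm_finite_layer_coleman} applies only for $(m,Np)=1$, so you must know that the unit value can be realised at such an $m$; this requires an extra argument.

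The real problem is your ``reverse direction''. The phrase ``swapping roles and using the relation between the Wach-lattice and $T_f$'' is not a method: the relation between the Wach lattice $M_{\overline{f}}$ and $T_f$ under the Betti--crystalline comparison is precisely what is at stake, and there is no dual Coleman-map statement that produces $\mathrm{ord}_\pi(c^{\epsilon})\ge 0$. Your candidate swap (showing $\mathrm{Col}(\mathfrak{z}_{\gamma_0})$ has a unit coefficient) would require knowing Kato's class is $\pi$-primitive, which is equivalent to the main conjecture and hence circular. The paper's proof of Theorem~\ref{thm:comparison-zeta-elements} (in \S\ref{subsec:canonical_periods}) does something completely different: in the Fontaine--Laffaille range it invokes the argument of \cite[\S14.22]{kato-euler-systems} based on Fontaine--Messing theory, together with Shokurov's identification of the integral de~Rham cohomology of the Kuga--Sato variety with the space of integral cuspforms, to show directly that $\mathcal{O}_\pi\omega_{f,\mathrm{cris}}=\mathcal{O}_\pi\omega_f$; in the ordinary case it uses Hida theory and the ordinary filtration as in \cite[\S17.5]{kato-euler-systems}. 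It then shows that the resulting canonical periods are optimal (Proposition~\ref{prop:canonical-periods-optimal}) via Vatsal's mod~$p$ multiplicity one (Theorem~\ref{thm:mod-p-multi-one}) and an Ash--Stevens weight-reduction argument. None of this is visible from the Euler-system side; it is a direct integral Hodge-theoretic comparison, and it is exactly why the result is only available for $2\le k\le p-1$ or in the $p$-ordinary $p$-distinguished case.
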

Conjecture \ref{conj:comparison-zeta-elements} claims that two different integral normalizations of Kato's zeta elements coming from modular symbols and Galois representations should coincide up to units; especially, $\delta_{f, \mathrm{cris}} \in T_f$.
Conjecture \ref{conj:comparison-zeta-elements} allows us to make the precise connection between Theorem \ref{thm:main_thm_finite_layer_coleman} and the main applications (Theorem \ref{thm:main_thm_mazur_tate_conj_simple} and Theorem \ref{thm:main_thm_main_conj_simple}).

We say that
 $\overline{\rho}$ is \textbf{$p$-ordinary} if the restriction of $\overline{\rho}$ to $G_{\mathbb{Q}_p}$ is reducible, 
 and $\overline{\rho}$ is \textbf{$p$-distinguished} if the semi-simplification of the restriction of $\overline{\rho}$ to $G_{\mathbb{Q}_p}$ is not a scalar.
\begin{thm} \label{thm:comparison-zeta-elements}
Suppose that $\overline{\rho}$ is irreducible.
If 
\begin{enumerate}
\item $(N,p)=1$ and $2 \leq k \leq p-1$ or
\item  $\overline{\rho}$ is $p$-ordinary and $p$-distinguished,
\end{enumerate}
then $\delta^+_{f, \mathrm{cris}}$ and $\delta^-_{f, \mathrm{cris}}$ generate $T_f$ over $\mathcal{O}_\pi$. In particular, Conjecture \ref{conj:comparison-zeta-elements} follows under Condition (1) or (2). In this case, the canonical periods are optimal periods.
\end{thm}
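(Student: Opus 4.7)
The plan is to show that both the optimal Betti generators $\gamma_0^{\pm}$ of $T_f^{\pm}$ and the crystalline vectors $\delta^{\pm}_{f,\mathrm{cris}}$ produce the same $\mathcal{O}_\pi$-lattice $T_f = T_f^+ \oplus T_f^-$ inside $V_f$. Since both sets of vectors are characterized by integrality conditions (modular symbols on one side, Kato's period integral against a $\varphi$-stable integral basis $\eta \in \mathbf{D}_{\mathrm{cris}}(V_{\overline{f}})$ on the other, as recalled in $\S$\ref{subsec:crystalline-integral-normalization}), the assertion reduces to comparing the Betti lattice in $V_f$ with the lattice obtained from integral $p$-adic Hodge theory through the de Rham/\'etale comparison isomorphism. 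Once $\mathcal{O}_\pi \delta^+_{f,\mathrm{cris}} \oplus \mathcal{O}_\pi \delta^-_{f,\mathrm{cris}} = T_f$ is established, Conjecture \ref{conj:comparison-zeta-elements} follows at once from the $\mathcal{O}_\pi$-linearity of the zeta element construction in the $\gamma$-variable after projecting onto $\omega^i$-isotypic components.

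Under hypothesis (1), namely $(N,p)=1$ and $2 \leq k \leq p-1$, the representation $V_f$ falls inside the Fontaine--Laffaille range. I would apply the Fontaine--Laffaille / Faltings integral comparison to the parabolic cohomology $H^1_{\mathrm{par}}(X_1(N), \mathrm{Sym}^{k-2})$ and then localize at the $f$-component of the Hecke algebra. Irreducibility of $\overline{\rho}$ ensures that the localized Hecke module is free of rank two over $\mathcal{O}_\pi$, and that the localization preserves integrality on both the Betti and crystalline sides. The $\varphi$-stable integral basis $\eta$ feeding the Wach-module construction of $\mathrm{Col}_{\mathbb{Q}(\zeta_{mp^n}),r}$ (Theorem \ref{thm:main_thm_finite_layer_coleman}) is precisely the integral crystalline generator supplied by Faltings' theorem, so the values of Kato's period integral against $\gamma_0^{\pm}$ are $\mathcal{O}_\pi^{\times}$-multiples of $\gamma_0^{\pm}$ themselves, yielding $\delta^{\pm}_{f,\mathrm{cris}} = u^{\pm}\gamma_0^{\pm}$ with $u^{\pm} \in \mathcal{O}_\pi^{\times}$.

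Under hypothesis (2), the ordinary and $p$-distinguished conditions together produce a \emph{canonical} short exact sequence $0 \to F^+V_f \to V_f \to F^-V_f \to 0$ of $G_{\mathbb{Q}_p}$-modules; distinguishedness rules out the scalar semisimplification that would leave $F^+V_f$ ambiguous. In this setup, Hida's canonical periods are defined via the congruence module and Vatsal's theorem identifies them with the optimal periods up to $\mathcal{O}_\pi^{\times}$ under absolute irreducibility and $p$-distinguishedness of $\overline{\rho}$. One then checks that the crystalline normalization produced through Kato's period integral relative to the ordinary filtration reproduces Hida's canonical periods, so again $\delta^{\pm}_{f,\mathrm{cris}}$ and $\gamma_0^{\pm}$ differ by a $p$-adic unit in each $\pm$-component, and $T_f$ is generated by $\delta^{\pm}_{f,\mathrm{cris}}$.

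The main technical obstacle is precisely the integral matching of these two lattices in $V_f$. In case (1) one must carefully propagate Fontaine--Laffaille integrality through the $f$-isotypic localization of parabolic cohomology, where control over potential Eisenstein contamination comes from irreducibility of $\overline{\rho}$; and one must verify that the Wach-module integral structure used to force (\ref{eqn:full-integrality-coleman}) agrees with the integral crystalline cohomology on the nose (not merely up to a lattice factor). In case (2) the hard point is the appeal to the identification of Hida's canonical periods with Vatsal's optimal periods, and the $p$-distinguished hypothesis is essential here: dropping it would allow the integral choice of the ordinary filtration to vary, and the resulting discrepancy would produce a genuine non-unit factor in the comparison $\delta^{\pm}_{f,\mathrm{cris}} \leftrightarrow \gamma_0^{\pm}$, so that the conclusion of Theorem \ref{thm:comparison-zeta-elements} (and hence Conjecture \ref{conj:comparison-zeta-elements}) would no longer follow by the present method.
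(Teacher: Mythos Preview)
Your overall strategy matches the paper's: in both cases the goal is to show $\mathcal{O}_\pi\omega_{f,\mathrm{cris}} = \mathcal{O}_\pi\omega_f$ inside $S(f)\otimes F_\pi$ (whence $\delta^{\pm}_{f,\mathrm{cris}}=\delta^{\pm}_f$ up to units), and then separately to check that the canonical periods are optimal (whence $\delta^{\pm}_f$ generate $T_f^{\pm}$). For case (1) the paper does not invoke Faltings directly but rather Kato's argument in \cite[\S14.22]{kato-euler-systems} via Fontaine--Messing together with Shokurov's integral identification of the de Rham cohomology of the Kuga--Sato variety with the space of cuspforms when $k-2<p$; for case (2) it uses Hida's $\Lambda$-adic theory and the ordinary filtration as in \cite[\S17.5]{kato-euler-systems}, exactly as you outline.

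Where your sketch is thin is the second step: you pass directly from the integral $p$-adic Hodge comparison to $\delta^{\pm}_{f,\mathrm{cris}}=u^{\pm}\gamma_0^{\pm}$, but the comparison only gives $\delta^{\pm}_{f,\mathrm{cris}}=\delta^{\pm}_f$. That $\delta^{\pm}_f$ already generate $T_f^{\pm}$ is the statement that the \emph{optimal} periods (defined purely by $\pi$-indivisibility of modular-symbol values) coincide with the \emph{canonical} periods (defined via Vatsal's mod $p$ multiplicity one, Theorem~\ref{thm:mod-p-multi-one}). The paper isolates this as Proposition~\ref{prop:canonical-periods-optimal}, and its proof in the non-ordinary case with $2<k\leq p-1$ is not a formality: one reduces to weight two via the Ash--Stevens map $\mathrm{H}^1_c(\Gamma_1(N),\mathrm{Sym}^{k-2}(\mathbb{F}_\pi^2))\hookrightarrow \mathrm{H}^1_c(\Gamma_1(Np),\mathbb{F}_\pi)^{\omega^{k-2}}$ and then invokes Stevens' mod $p$ non-vanishing result for weight-two modular symbols. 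Your proposal should make this two-step structure explicit and supply the weight-reduction argument; otherwise the passage from ``integral comparison holds'' to ``$\delta^{\pm}_{f,\mathrm{cris}}$ generate $T_f$'' is not justified.
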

\begin{proof}
See $\S$\ref{subsec:canonical_periods}.
\end{proof}
Theorem \ref{thm:comparison-zeta-elements} shows that all three integral properties mentioned above are compatible
 when $2 \leq k \leq p-1$ or the residual representation is $p$-ordinary and $p$-distinguished.
Indeed, the Fontaine--Laffaille range assumption in 
 Theorem \ref{thm:main_thm_mazur_tate_conj_simple} and Theorem \ref{thm:main_thm_main_conj_simple}
can be replaced by Conjecture \ref{conj:comparison-zeta-elements} (with some modification for the latter).
See Theorem \ref{thm:main_thm_mazur_tate_conj}, Theorem \ref{thm:main_thm_main_conj}, and Theorem \ref{thm:main_thm_mu_zero_conj} for their general versions.
See also (\ref{eqn:betti-etale-comparison-extended-integral-renormalized}) for the picture of the integrality comparison.
\subsection{Further remarks} \label{subsec:further_remarks}
We rephrase Corollary \ref{cor:main_thm_main_conj_simple}.(2) from the viewpoint of the residual representation and discuss its generalization.
\begin{cor} \label{cor:main_thm_main_conj_families}
Let $p \geq 5$ be a prime and $2 \leq k \leq p-1$ an integer.
Let 
$$\overline{\rho} : \mathrm{Gal}(\overline{\mathbb{Q}}/\mathbb{Q}) \to \mathrm{GL}_2(\overline{\mathbb{F}}_p)$$
be a continuous, odd, and irreducible Galois representation such that
\begin{itemize}
\item the image of $\overline{\rho}$ contains a conjugate of $\mathrm{SL}_2(\mathbb{F}_p)$;
\item the restriction of $\overline{\rho}$ to $\mathrm{Gal}(\overline{\mathbb{Q}}_p/\mathbb{Q}_p)$ does not admit the trivial subrepresentation.
\end{itemize}
Let $S_k(\overline{\rho})$ be the set of newforms of weight $k$ defined by the following criterion:
\begin{quote} a newform $f$ of level $N_f$ lies in $S_k(\overline{\rho})$ if and only if $p \nmid N_f$ and $\overline{\rho} \simeq \overline{\rho}_f$.
\end{quote}
If there exists a newform $f_0 \in S_k(\overline{\rho})$ such that
\begin{enumerate}
\item $N_{f_0} = N(\overline{\rho})$,
\item $\alpha_0 \neq \beta_0$ in the sense of Assumption \ref{assu:standard},
\item $f_0$ and $r$ satisfy Assumption \ref{assu:standard}.(ENV),
\item the numerical criterion in Theorem \ref{thm:main_thm_main_conj_simple}.(1) holds, and
\item the numerical criterion in Corollary \ref{cor:main_thm_main_conj_simple}.(1) holds,
\end{enumerate}
then the main conjecture and Conjecture A of Coates--Sujatha hold for $T_{f, i}(k-r)$ where $f$ runs over all forms in $S_k(\overline{\rho})$.
\end{cor}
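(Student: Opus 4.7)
The plan is to reduce the corollary directly to Theorem \ref{thm:main_thm_main_conj_simple}.(3) applied to the single newform $f_0$, after verifying that the residual-representation-level hypotheses in the corollary translate to the form-level hypotheses required by the theorem.

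First I would check the bookkeeping: for $f_0 \in S_k(\overline{\rho})$ with $N_{f_0} = N(\overline{\rho})$, condition (1) makes $f_0$ \emph{minimal}, so the hypothesis $N = N(\overline{\rho})$ of Theorem \ref{thm:main_thm_main_conj_simple}.(1) is met. The image condition in (Im0) follows from the assumption on $\mathrm{Im}(\overline{\rho})$ combined with the standard lifting of big image from residual to $\pi$-adic; condition (2) of the corollary is exactly $(\alpha \neq \beta)$; condition (3) is (ENV). The hypothesis in Theorem \ref{thm:main_thm_main_conj_simple} that $\overline{\rho}|_{G_{\mathbb{Q}_p}}$ is not a direct sum of two characters would be extracted from ``does not admit the trivial subrepresentation'' after the Tate twist by $\omega^{k-r-1}$ built into the formulation (using that $2 \leq k \leq p-1$ keeps us in the Fontaine--Laffaille range where the decomposition group action is controlled).

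Next, I would apply Theorem \ref{thm:main_thm_main_conj_simple}.(1) to $f_0$: condition (4) of the corollary is the non-vanishing of a Kurihara number for $f_0$, so part (a) of the equivalence in Theorem \ref{thm:main_thm_main_conj_simple}.(1) holds and thus (c) gives the Iwasawa main conjecture for $T_{f_0,i}(k-r)$. Then I would apply Theorem \ref{thm:main_thm_main_conj_simple}.(2) using condition (5) (to $f_0$ itself, or to a congruent form as allowed by the parenthetical remark of the corollary) to obtain the $\mu=0$ conjecture of Coates--Sujatha for $T_{f_0,i}(k-r)$.

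Finally, I would propagate both conclusions from $f_0$ to every $f \in S_k(\overline{\rho})$ by the congruence-of-modular-forms machinery \emph{à la} Emerton--Pollack--Weston in the ordinary case and Kim--Lee--Ponsinet in general, which is precisely the input upgrading the one-sided Euler system divisibility to full equality once $\mu = 0$ is known for one congruent form and Kato's divisibility (always available by Kato's Euler system machine) for the others. The hypothesis that $\overline{\rho}|_{G_{\mathbb{Q}_p}}$ has no trivial subrepresentation is exactly what is needed to rule out exceptional Tamagawa-type obstructions in the congruence transfer at primes dividing the level; concretely, it guarantees that the local factors being compared across congruent forms match up correctly. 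I expect this last propagation step to be the main technical bottleneck, since it requires checking that the relevant local terms entering the characteristic ideals on both sides are stable across the forms in $S_k(\overline{\rho})$; this is treated by the cited references, so it suffices to verify that our hypotheses place us within their scope.
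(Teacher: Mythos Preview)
Your approach is essentially the same as the paper's, which dispatches the corollary in one line: ``It follows from the combination of Theorem \ref{thm:main_thm_main_conj_simple}.(1) and (2) and \cite{kim-lee-ponsinet}.'' You have correctly unpacked this into the three steps (verify hypotheses for $f_0$, run parts (1) and (2) to obtain the main conjecture and $\mu=0$ for $f_0$, then propagate via the congruence machinery of \cite{kim-lee-ponsinet}), and your identification of \cite{kim-lee-ponsinet} as the key propagation input is exactly right.

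One small caution on the bookkeeping: your extraction of Assumption~\ref{assu:standard}.(non-split) from ``does not admit the trivial subrepresentation'' via a Tate twist is not quite the right mechanism. The condition (non-split) is that $\overline{\rho}\vert_{G_{\mathbb{Q}_p}}$ is not a direct sum of two characters, which by Remark~\ref{rem:standard}.(non-split) is equivalent to $\mathrm{H}^0(\mathbb{Q}_p, W_{\overline{f},-i}(r))=0$ for all $i$; the corollary's hypothesis is phrased so as to feed directly into this vanishing (and into the hypotheses of \cite{kim-lee-ponsinet}), not via a single twist by $\omega^{k-r-1}$. This is a cosmetic issue and does not affect the structure of your argument.
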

\begin{proof}
It follows from the combination of Theorem \ref{thm:main_thm_main_conj_simple}, Corollary \ref{cor:main_thm_main_conj_simple}, and \cite{kim-lee-ponsinet}.
\end{proof}

\begin{rem} \label{rem:nakamura}
Combining Corollary \ref{cor:main_thm_main_conj_families} with the recent work\footnote{There are also some related work by Colmez--Wang \cite{colmez-wang}, Fouquet--Wan \cite{fouquet-wan}, and Yiwen Zhou \cite{yiwen-zhou-thesis}.} of Nakamura \cite{nakamura-kato-deformation}, which extends \cite{kim-lee-ponsinet} to the universal deformation ring, we obtain the following statement.
We keep the following assumptions.
\begin{itemize}
\item $p \geq 5$,
\item the image of $\overline{\rho}$ contains a conjugate of $\mathrm{SL}_2(\mathbb{F}_p)$,
\item $\mathrm{End}_{\mathbb{F}_\pi[G_{\mathbb{Q}_p}]}(\overline{\rho} \vert_{G_{\mathbb{Q}_p}}) = \mathbb{F}_\pi$, and
\item $\left( \overline{\rho} \vert_{G_{\mathbb{Q}_p}} \right)^{ss} \not\simeq \left( \mathbf{1} \oplus \omega^{\pm 1} \right) \otimes \chi$
for any character $\chi : G_{\mathbb{Q}_p} \to \mathbb{F}^\times_\pi$.
\end{itemize}
Let $S(\overline{\rho})$ be the set of newforms of arbitrary weight and arbitrary level whose residual representations are isomorphic to $\overline{\rho}$.
If there exists a newform $f_0 \in S(\overline{\rho})$ satisfying all the conditions in Corollary \ref{cor:main_thm_main_conj_families},
then 
the  main conjecture and Conjecture A of Coates--Sujatha hold for $T_{f, i} (k-r)$ where $f$ runs over all forms in $S(\overline{\rho})$.
It is remarkable that this result is insensitive to the range of weight and the type of primes.
In other words, 
\emph{the Iwasawa main conjecture and the $\mu=0$ conjecture for modular forms of arbitrary weight and level at an arbitrary prime could be verified from two numerical computations on one specific form (or possibly two forms)}.
Nakamura's work depends heavily on Emerton's local-global compatibility \cite{emerton-local-global}.
\end{rem}

\begin{rem} \label{rem:imc-progresses}
In order to prove various types of main conjectures, there are two approaches in general: the Eisenstein congruence method and the Euler system method \cite[$\S$2.4 and $\S$2.5]{kato-icm}.
Kato himself proved one-sided divisibility of his main conjecture via the Euler system method \cite{kato-euler-systems} (Theorem \ref{thm:kato-divisibility}).
Recent years, there has been significant progresses towards the opposite divisibility via the Eisenstein congruence method.
In \cite{skinner-urban}, Skinner--Urban prove the opposite divisibility of the main conjecture for $T_{f, i}(k-r)$ when
 $f$ is good ordinary at $p$, $\psi = \mathbf{1}$, $k \equiv 2 \pmod{p-1}$, $r = k-1$, $i=0$, and there exists a prime $\ell$ exactly dividing the tame level such that $\overline{\rho}$ is ramified at $\ell$.
Building on this work, the multiplicative case is also carried out in \cite{skinner-pacific}.
In \cite{wan_hilbert}, the condition on the level is replaced by the existence of a real quadratic field satisfying a certain integral property.
In all the literature on the Eisenstein congruence approaches, the conditions $\psi = \mathbf{1}$ and $i=0$ are imposed as well as the critical point $r$ is fixed. In addition, the ordinary and non-ordinary cases are studied separately and there are certain tame level assumptions therein.
On the other hand, Theorem \ref{thm:main_thm_main_conj_simple} does \emph{not} require $\psi = \mathbf{1}$ and does cover all critical points and the \emph{full} cyclotomic extension $\mathbb{Q}(\zeta_{p^\infty})$ by varying $r$ and $i$. We also do \emph{not} impose any condition on the tame level and deal with ordinary and non-ordinary cases on equal footing.
\end{rem}

\begin{rem} \label{rem:main-theorem-mc}
If the Tamagawa defect is non-trivial, then all Kurihara numbers must vanish \cite[Prop. 6.2.6]{mazur-rubin-book}.
The minimal level condition $N = N(\overline{\rho})$ ensures that there is no Tamagawa defect \cite[Lem. 4.1.2]{epw}.
See also \cite[(H.T), Page 3150]{kazim-Lambda-adic}.
A congruent form of minimal level can be always found in the set of congruent newforms via level lowering, at least in the Fontaine--Laffaille range when $p\geq 5$ \cite{diamond-taylor-non-optimal}. See \cite{kim-ota} for a more quantitative analysis of this aspect.
\end{rem}

\begin{rem} \label{rem:standard}
We discuss the motivation of Assumption \ref{assu:standard}.
\begin{enumerate}
\item[(LI)]
The large image assumption prevents CM forms.
It is satisfied for all Galois-stable lattices if it is satisfied for one lattice for non-CM forms \cite[Rem. 12.8]{kato-euler-systems}.
Thus, the choice of $T_f$ does not affect any of result of this article. This condition is important in the construction of integral canonical Kato's Euler systems.
\item[($\alpha\neq\beta$)]
This assumption is used to ``$p$-destabilize" the local points \`{a} la Perrin-Riou constructed in \cite{perrin-riou-local-iwasawa} in order to construct Mazur--Tate elements of modular forms.
This holds when $k=2$ and is expected to hold in general because it follows from the Tate conjecture on algebraic cycles \cite{coleman-edixhoven}.
\item[(``non-CM")]
This condition is equivalent to the non-existence of mod $p$ companion forms \cite{gross-tameness, coleman-voloch}, and it  is also equivalent that $\mathrm{H}^0(\mathbb{Q}_p , W_{\overline{f}, -i}(r)) = 0$ for $i = 0, \cdots, p-2$ \cite[Prop. 13.2]{gross-tameness}\footnote{The unchecked compatibility in \cite{gross-tameness} is confirmed in \cite{bryden-cais-thesis}.}.
Since this means that $f$ is not congruent to an anomalous ordinary CM form, this must always hold under (LI).
See \cite{pollack-stevens-critical,loeffler-zerbes-wach-critical,castella-wang-erickson} for this topic.
\item[(ENV)] This condition naturally appears in the computation of local Iwasawa theory (e.g. Proposition \ref{prop:normalization_local_points_first}) and the root of unity $\zeta$ comes from the equivariant refinement. This condition also implies the non-vanishing of the $p$-th Euler factor of $\overline{f}$ at $r$, and it is automatic when $k$ is even. 
\end{enumerate}
\end{rem}

\begin{rem} \label{rem:main-theorem-coleman}
We discuss some remarks on Theorem \ref{thm:main_thm_finite_layer_coleman}.
\begin{enumerate}
\item
In \cite{epw2}, a completely different construction of the finite layer Coleman maps over $\mathbb{Q}(\zeta_{p^n})$ is given by using the $p$-adic local Langlands correspondence for $\mathrm{GL}_2(\mathbb{Q}_p)$ (e.g.~\cite{colmez-phi-gamma-mirabolic, colmez-gl2qp-phi-gamma}) for de Rham representations when  $\rho_f \vert_{G_{\mathbb{Q}_p}}$ is irreducible. It does not require  the ($\alpha\neq \beta$) assumption. 
However, the integrality of the finite layer Coleman map is \emph{less} controlled therein; as a result, the constant error term $C_f$ in Theorem \ref{thm:main_thm_mazur_tate_conj} is not specified in the main theorem of \cite{epw2}.
\item
In \cite{colmez-local-iwasawa-de-rham} and \cite{nakamura-local-iwasawa}, Colmez and Nakamura generalized Perrin-Riou's local Iwasawa theory to de Rham representations and de Rham $(\varphi, \Gamma)$-modules, respectively, but it seems that the construction of the local points is missing in their work.
\item Theorem \ref{thm:main_thm_finite_layer_coleman} shows that the Euler system and modular symbol constructions of Mazur--Tate elements \emph{precisely} coincide. On the way, we also confirm that the Kato--Perrin-Riou $p$-adic $L$-function \emph{precisely} coincides with the Mazur--Tate--Teitelbaum $p$-adic $L$-function at good primes (Theorem \ref{thm:comparison-kato-perrin-riou-mtt}). See \cite{ota-rank-part} for a similar construction.
As far as we know, this level of precision was not known before at least for higher weight non-ordinary modular forms.
\end{enumerate}
\end{rem}

\begin{rem}
It seems reasonable to expect that the Euler system construction of Mazur--Tate elements over general cyclotomic extensions has a variety of arithmetic applications as that of $p$-adic $L$-functions has. We expect this idea would apply to other Euler systems (e.g. see \cite{loeffler-zerbes-arizona, loeffler-zerbes-iwasawa-2017} for Loeffler--Zerbes' program on the construction of new Euler systems for higher rank groups).
\end{rem}

\subsection{Structure of this article}

The first half (from $\S$\ref{sec:galois-cohomology} to $\S$\ref{sec:local-iwasawa-theory-perrin-riou}) of this article can be regarded as a preliminary part except the construction of canonical Kato's Euler systems in $\S$\ref{sec:canonical-Kato-Euler-systems}. In the second half (from $\S$\ref{sec:mazur-tate-etale} to $\S$\ref{sec:mu-conjecture}), we prove the main theorems in this article. 

In $\S$\ref{sec:galois-cohomology}, we recall the convention and the basic facts on Galois representations arising from modular forms, Galois cohomology and Selmer groups, and Iwasawa cohomology.
In $\S$\ref{sec:mazur-tate-betti}, the relevant comparison isomorphisms among the realizations of modular motives and Kato's period maps are reviewed. Then we discuss the modular symbol construction of Mazur--Tate elements following \cite{mtt} and pin down the optimal periods depending only on the values of modular symbols. Then we compare Kato's period integrals with the optimal periods.
In $\S$\ref{sec:canonical-Kato-Euler-systems}, we review two kinds of Kato's Euler systems and construct the canonical Kato's Euler systems.
We also prove that the canonical Kato's Euler system is integral under the large image assumption and recall the statements of the main conjecture and Conjecture A of Coates--Sujatha.
In $\S$\ref{sec:p-adic-hodge-theory}, we briefly review $p$-adic Hodge theory with emphasis on its integral aspect via Wach modules. Especially, we discuss an integral $\varphi$-stable lattice of the crystalline Dieudonn\'{e} module constructed by Benois.
In $\S$\ref{sec:local-iwasawa-theory-perrin-riou}, we study Perrin-Riou's local Iwasawa theory and its integral refinement following Benois' approach.

In $\S$\ref{sec:mazur-tate-etale}, we construct $p$-stabilized equivariant local points and equivariant $p$-adic $L$-functions of $p$-stabilized modular forms from Kato's Euler systems via the Perrin-Riou's local machinery. 
Especially, we confirm these Kato--Perrin-Riou $p$-adic $L$-functions and the $p$-adic $L$-functions of Mazur--Tate--Teitelbaum constructed from modular symbols are the same ones. 
Then we ``$p$-destabilize" $p$-stabilized equivariant local points and construct the finite layer Coleman maps (Theorem \ref{thm:main_thm_finite_layer_coleman}).
In $\S$\ref{sec:mazur-tate}, we prove the ``weak" main conjecture of Mazur--Tate (Theorem \ref{thm:main_thm_mazur_tate_conj}). We review how the control theorem fails when $f$ is non-ordinary at $p$ and recall some properties of Fitting ideals. After doing some local computation, we give a step-by-step proof of Theorem \ref{thm:main_thm_mazur_tate_conj}.
In $\S$\ref{sec:IMC}, we prove the mod $p$ numerical criterion for Kato's main conjecture (Theorem \ref{thm:main_thm_main_conj}) by the explicit computation of mod $p$ Kolyvagin derivatives of Mazur--Tate elements.
In $\S$\ref{sec:mu-conjecture}, we prove the mod $p$ numerical criterion for Conjecture A of Coates--Sujatha (Theorem \ref{thm:main_thm_mu_zero_conj}) and discuss its application to elliptic curves (Corollary \ref{cor:mu-elliptic-curves}) by the explicit computation of modular symbols and Mazur--Tate elements.

In $\S$\ref{sec:app-bsd}, we discuss arithmetic applications of our main results to Birch and Swinnerton-Dyer  (BSD)  conjecture.
There have been significant progresses in BSD conjecture in recent years.
The $p$-converse to a theorem of Gross--Zagier and Kolyvagin and the $p$-part of the BSD formula for rank one elliptic curves are enormously developed \cite{kobayashi-gross-zagier, kobayashi-iwasawa-2012, wei-zhang-mazur-tate, venerucci-converse, berti-bertolini-venerucci, jetchev-skinner-wan, castella-cambridge, wan-rankin-selberg, skinner-converse, burungale-tian-p-converse, wan-main-conj-ss-ec, sprung-main-conj-ss, kazim-pollack-sasaki, castella-ciperiani-skinner-sprung,  castella-wan-perrin-riou-ss, kim-p-converse}.
 We discuss the applications of these mod $p$ numerical criteria to the $p$-part of the BSD formula 
 for elliptic curves of analytic rank one at a good supersingular prime $p$ following Kobayashi's $p$-adic Gross--Zagier formula \cite{kobayashi-gross-zagier}.
We also study the applications of these mod $p$ numerical criteria to the Heegner point context based on the work of Wei Zhang on Kolyvagin's conjecture \cite{wei-zhang-mazur-tate}.
More precisely, we are able to numerically verify Kolyvagin's conjecture for any good prime $p>3$.
\emph{In other words, we offer the numerical verification method of all the results of Wei Zhang for modular abelian varieties at any (non-ordinary) good prime $p> 3$.}

\subsection*{Acknowledgement}
We would like to thank
Denis Benois,
K\^{a}z{\i}m B\"{u}y\"{u}kboduk,
Ralph Greenberg,
Takenori Kataoka,
Mark Kisin,
Masato Kurihara,
Antonio Lei,
Yong Suk Moon,
Kentaro Nakamura,
Amanda Nam,
Kazuto Ota,
Robert Pollack,
Gautier Ponsinet,
Takamichi Sano, 
Ryotaro Sakamoto, and
Sug Woo Shin for helpful comments, discussions, and encouragement.
We especially thank Robert Pollack for making a preliminary version of \cite{epw2} available and detailed comments; Takenori Kataoka for pointing out several errors in an earlier version and for extremely helpful discussions.
This research was partially supported 
by a KIAS Individual Grant (SP054102) via the Center for Mathematical Challenges at Korea Institute for Advanced Study and
by the National Research Foundation of Korea(NRF) grant funded by the Korea government(MSIT) (No. 2018R1C1B6007009).

\section{Galois and Iwasawa cohomologies} \label{sec:galois-cohomology}
\subsection{Construction of Galois representations} \label{subsec:galois-representations}
We follow \cite[(4.5.1) and $\S$8.3]{kato-euler-systems}.
Let $N \geq 4$ and $\varpi : E \to Y_1(N)$ the universal elliptic curve over the modular curve and
$\mathcal{H}^1_p := \mathrm{R}^1\varpi_*\mathbb{Z}_p$ the \'{e}tale $\mathbb{Z}_p$-sheaf on $Y_1(N)$.
We define
$ V_{k,\mathbb{Z}_p}(Y_1(N)) := \mathrm{H}^1_{\mathrm{\acute{e}t}}(Y_1(N)_{\overline{\mathbb{Q}}}, \mathrm{Sym}^{k-2}_{\mathbb{Z}_p}(\mathcal{H}^1_p)) $,
$V_{\mathcal{O}_\pi}(f) := V_{k, \mathbb{Z}_p} (Y_1(N)) \otimes_{\mathbb{T}} \mathbb{T}/\wp_f$, and
$V_{F_\pi}(f) := V_{\mathcal{O}_\pi}(f) \otimes_{\mathbb{Z}_p}\mathbb{Q}_p$
where $\mathbb{T}$ is the Hecke algebra over $\mathbb{Z}_p$ faithfully acting on $V_{k,\mathbb{Z}_p}(Y_1(N))$ and $\wp_f$ is the height one prime ideal of $\mathbb{T}$ generated by the Hecke eigensystem of $f$
following \cite[$\S$6.3]{kato-euler-systems}.
Here, $V_{F_\pi}(f)$ is our $V_f$.
More explicitly, $\rho_f$ satisfies the following properties:
\begin{enumerate}
\item $\mathrm{det} ( \rho_f ) = \chi^{1-k}_{\mathrm{cyc}} \cdot \psi^{-1}$
where $\chi_{\mathrm{cyc}}$ is the $p$-adic cyclotomic character;
\item for any prime $\ell$ not dividing $Np$, we have
\begin{equation*}
\mathrm{det} \left( 1- \rho_f ( \mathrm{Frob}^{-1}_\ell ) \cdot u  : \mathrm{H}^0(I_\ell,  V_f ) \right) = 1 - a_{\ell}(f) \cdot u +  \psi  (\ell) \cdot \ell^{k-1} \cdot u^2
\end{equation*}
where $\mathrm{Frob}_\ell$ is the arithmetic Frobenius at $\ell$ in $G_{\mathbb{Q}_\ell} / I_\ell$ and $I_\ell$ is the inertia subgroup of $G_{\mathbb{Q}_\ell}$;
\item for the prime number $p$ lying under $\pi$,  we have
$$\mathrm{det} \left( 1- \varphi \cdot u  : \mathbf{D}_{\mathrm{cris}}  ( V_f ) \right) = 1 - a_{p}(f) \cdot u +  \psi (p) \cdot p^{k-1} \cdot u^2$$
where $\varphi$ is the Frobenius operator acting on $\mathbf{D}_{\mathrm{cris}}  ( V_f ) := ( V_f \otimes_{\mathbb{Q}_p} \mathbf{B}_{\mathrm{cris}})^{G_{\mathbb{Q}_p}}$ \cite{fontaine-semi-stable}. 
Here, $\mathbf{B}_{\mathrm{cris}}$ is the crystalline period ring recalled in $\S$\ref{sec:p-adic-hodge-theory}. 
\end{enumerate}
When $N <4$, we are still able to construct Galois representations using the $N \geq 4$ case and trace maps. 


\subsection{Galois cohomology and Selmer groups}
Let $\ell$ be any rational prime and $K_v$ a finite extension of $\mathbb{Q}_\ell$.
Let $M = T_f$, $V_f$, or $W_f$ or their Tate twists.
We write
$\mathrm{H}^1(K_v, M) := \mathrm{H}^1(\mathrm{Gal}(\overline{K}_v/K_v), M) $.
Also, for a number field $K$ and a rational prime $\ell$, we define the semi-local cohomology groups by
$\mathrm{H}^1(K \otimes \mathbb{Q}_\ell, M) := \bigoplus_{v\vert \ell} \mathrm{H}^1(K_v, M)$
where $v$ runs over the primes of $K$ dividing $\ell$.
The localization map at $p$ is denoted by
$\mathrm{loc}_p : \mathrm{H}^1(K, M) \to \mathrm{H}^1(K \otimes \mathbb{Q}_p, M) $.
For a place $v$ of $K$ dividing $p$, we define 
\begin{align*}
\mathrm{H}^1_f(K_v, V_f(j)) & := \mathrm{ker} \left( \mathrm{H}^1(K_v, V_f(j)) \to \mathrm{H}^1(K_v, V_f(j) \otimes \mathbf{B}_{\mathrm{cris}}) \right) , \\
\mathrm{H}^1_f(K_v, W_f(j)) & := \mathrm{im} \left( \mathrm{H}^1_f(K_v, V_f(j)) \to \mathrm{H}^1(K_v, W_f(j)) \right) , \textrm{ and} \\
\mathrm{H}^1_f(K_v, T_f(j)) & := \mathrm{preim} \left( \mathrm{H}^1(K_v, T_f(j)) \to \mathrm{H}^1_f(K_v, V_f(j)) \right) .
\end{align*}
For a place $v$ of $K$ not dividing $p$, we define 
\begin{align*}
\mathrm{H}^1_f(K_v, V_f(j)) & := \mathrm{H}^1_\mathrm{ur}(K_v, V_f(j)), \\
\mathrm{H}^1_f(K_v, W_f(j)) & := \mathrm{im} \left( \mathrm{H}^1_f(K_v, V_f(j)) \to \mathrm{H}^1(K_v, W_f(j)) \right) ,  \textrm{ and} \\
\mathrm{H}^1_f(K_v, T_f(j)) & := \mathrm{preim} \left( \mathrm{H}^1(K_v, T_f(j)) \to \mathrm{H}^1_f(K_v, V_f(j)) \right) .
\end{align*}
Since we assume $p>2$, we ignore the infinite place. See \cite[Rem. 1.3.7]{rubin-book}.
For notational convenience, we write $\mathrm{H}^1_{/f}(-) = \dfrac{\mathrm{H}^1(-)}{\mathrm{H}^1_{f}(-)}$.
The natural finite-singular quotient of the localization map at $p$ is denoted by
$\mathrm{loc}^s_p : \mathrm{H}^1(K, M) \to \mathrm{H}^1(K \otimes \mathbb{Q}_p, M ) \to \mathrm{H}^1_{/f}(K \otimes \mathbb{Q}_p, M )$.

Let $\Sigma$ be a finite set of places of $\mathbb{Q}$ containing $p$, $\infty$, and the bad primes for $\overline{f}$.
For a number field $K$, $K_{\Sigma}$ is the maximal extension of $K$ unramified outside the places lying over the places in $\Sigma$. Let $r$ be an integer.
We define the (discrete) \textbf{Selmer group of $W_{\overline{f}}(r)$ over $K$} by
$$\mathrm{Sel}(K, W_{\overline{f}}(r)) := \mathrm{ker} \left( \mathrm{H}^1(K_\Sigma/K, W_{\overline{f}}(r)) \to \prod_{\ell \in \Sigma} \dfrac{\mathrm{H}^1(K \otimes \mathbb{Q}_\ell, W_{\overline{f}}(r) )}{\mathrm{H}^1_f(K \otimes \mathbb{Q}_\ell, W_{\overline{f}}(r))}  \right) $$
where $\mathrm{H}^i(F/K, M) = \mathrm{H}^i(\mathrm{Gal}(F/K), M)$.

Let $\mathbb{Q}_{\infty} \subseteq \mathbb{Q}(\zeta_{p^\infty})$ be the cyclotomic $\mathbb{Z}_p$-extension of $\mathbb{Q}$ and $\mathbb{Q}_n$ the subextension of $\mathbb{Q}$ in $\mathbb{Q}_{\infty}$ of degree $p^n$.
We define the Selmer group over the cyclotomic tower $\mathbb{Q}_{\infty}$ or $\mathbb{Q}(\zeta_{p^\infty})$ by taking the direct limit.

\subsection{Structure of global and local Iwasawa cohomology} \label{subsec:iwasawa-cohomology}
We recall  the comparison of Galois and \'{e}tale cohomologies over the cyclotomic tower with a slight equivariant generalization.

Let $m$ be an integer $\geq 1$ with $(m,p)=1$, and $F$ a finite extension of $\mathbb{Q}$ in $\mathbb{Q}(\zeta_{mp^\infty})$.
Let $j_F : \mathrm{Spec}(F) \to \mathrm{Spec}(\mathcal{O}_F[1/p])$ be the natural map and
write
$$\mathrm{H}^1_{\mathcal{I}w}(\mathbb{Q}(\zeta_{m}),   T_{f}(k-r)) := \varprojlim_{F} \mathrm{H}^1_{\mathrm{\acute{e}t}}( \mathrm{Spec}(\mathcal{O}_F[1/p]), j_{F, *}T_{f}(k-r)) $$
where $F$ runs over finite extension of $\mathbb{Q}$ in $\mathbb{Q}(\zeta_{mp^\infty})$.
\begin{prop}[Kurihara] \label{prop:kurihara-etale-galois}
Let $m \geq 1$ be an integer and $\Sigma_m = \Sigma \cup \lbrace q, \textrm{ a prime}: q \vert m \rbrace$.
Then we have an isomorphism
$$\mathrm{H}^1_{\mathcal{I}w}(\mathbb{Q}(\zeta_{m}),   T_{f}(k-r)) \simeq  \varprojlim_{F} \mathrm{H}^1( \mathbb{Q}_{\Sigma_m}/F ,T_{f}(k-r)) $$
where $F$ runs over finite extensions of $\mathbb{Q}$ in $\mathbb{Q}(\zeta_{mp^\infty})$.
\end{prop}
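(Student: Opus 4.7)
The plan is to establish the stated comparison at each finite level and then take the inverse limit. Fix a finite subextension $F$ of $\mathbb{Q}(\zeta_{mp^\infty})/\mathbb{Q}$ and factor $j_F$ as $j_F = i_F \circ j'_F$, where $Y_F := \mathrm{Spec}(\mathcal{O}_F[1/\Sigma_m])$, the map $j'_F : \mathrm{Spec}(F) \hookrightarrow Y_F$ is the inclusion of the generic point, and $i_F : Y_F \hookrightarrow X_F := \mathrm{Spec}(\mathcal{O}_F[1/p])$ is the open immersion removing the closed points of $\Sigma_m$ not lying above $p$. Since $T_f(k-r)$ is unramified outside $\Sigma_m$, it corresponds to a lisse \'etale $\mathcal{O}_\pi$-sheaf $\mathcal{T}$ on $Y_F$, and the factorization gives $j_{F,*}T_f(k-r) = i_{F,*}\mathcal{T}$.

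The key input at finite level is the Leray spectral sequence for $i_F$:
\[ E_2^{p,q} = \mathrm{H}^p_{\mathrm{\acute{e}t}}(X_F, R^q i_{F,*}\mathcal{T}) \Longrightarrow \mathrm{H}^{p+q}_{\mathrm{\acute{e}t}}(Y_F, \mathcal{T}) . \]
For $q \geq 1$ the sheaf $R^q i_{F,*}\mathcal{T}$ is a skyscraper supported on the finite set $Z_F$ of closed points of $X_F \setminus Y_F$, with stalk at $v \in Z_F$ canonically identified with $\mathrm{H}^q(I_v, T_f(k-r))$. Combined with the standard identification $\mathrm{H}^1_{\mathrm{\acute{e}t}}(Y_F, \mathcal{T}) = \mathrm{H}^1(\mathbb{Q}_{\Sigma_m}/F, T_f(k-r))$, the low-degree five-term exact sequence reads
\[ 0 \to \mathrm{H}^1_{\mathrm{\acute{e}t}}(X_F, j_{F,*}T_f(k-r)) \to \mathrm{H}^1(\mathbb{Q}_{\Sigma_m}/F, T_f(k-r)) \to \bigoplus_{v \in Z_F} \mathrm{H}^1(I_v, T_f(k-r))^{G_{k(v)}} . \]

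It remains to pass to the inverse limit over finite $F \subset \mathbb{Q}(\zeta_{mp^\infty})$ with transition maps induced by corestriction. Since $\varprojlim$ is left exact, everything reduces to showing that the error term on the right dies in the limit:
\[ \varprojlim_F \bigoplus_{v \in Z_F} \mathrm{H}^1(I_v, T_f(k-r))^{G_{k(v)}} = 0 . \]
This is what I expect to be the main obstacle. The idea is that each $v \in Z_F$ lies above a rational prime $\ell \neq p$, and past some finite level the tower $\mathbb{Q}(\zeta_{mp^\infty})/\mathbb{Q}(\zeta_m)$ is unramified at $v$ with infinite pro-$p$ residue degree, while $\mathrm{H}^1(I_v, T_f(k-r))$ remains a fixed finitely generated $\mathcal{O}_\pi$-module. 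Each corestriction step then acts on the Frobenius-invariant part through the norm for the residual extension and hence by multiplication by a growing $p$-power residue degree, which annihilates every element in the limit. Feeding this vanishing back into the inverse limit of the finite-level five-term sequences yields the required isomorphism.
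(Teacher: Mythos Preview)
Your proposal is correct and follows essentially the same approach the paper indicates via its references: the paper's ``localization exact sequence of \'etale cohomology'' is precisely what you extract from the Leray spectral sequence for $i_F$, and the vanishing of the local terms in the inverse limit is the content of \cite[Lemma 8.5]{kato-euler-systems} that the paper invokes to handle the general $m$. Your final paragraph sketches exactly that lemma; the only minor imprecision is that the norm is literal multiplication by the residue degree only on the $\mathrm{Frob}_v$-fixed part, but since $\mathrm{H}^1(I_v,T_f(k-r))$ is a finitely generated $\mathcal{O}_\pi$-module on which some fixed $p$-power of $\mathrm{Frob}_v$ eventually acts trivially, the transition maps become multiplication by $p$ past a finite stage and the limit still vanishes.
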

\begin{proof}
It follows from the localization exact sequence of \'{e}tale cohomology. 
See \cite[$\S$6]{kurihara-invent}, \cite[Prop. 7.1.i)]{kobayashi-thesis}, and \cite[Prop. 5.1]{kim-lee-ponsinet}.
Although the $m=1$ case is treated in the reference, the proof goes through verbatim due to \cite[Lem. 8.5]{kato-euler-systems}.
\end{proof}
As a consequence of Proposition \ref{prop:kurihara-etale-galois},  $\varprojlim_{F} \mathrm{H}^1( \mathbb{Q}_{\Sigma_m}/F ,T_{f}(k-r))$  is independent of $\Sigma$.
We also have the decomposition
$\mathrm{H}^1_{\mathcal{I}w}(\mathbb{Q}(\zeta_{m}),  T_{f}(k-r))
= 
\bigoplus_{i=0}^{p-2} \mathrm{H}^1_{\mathrm{Iw}}(\mathbb{Q}(\zeta_{m}), T_{f, i}(k-r)) $.
Here, 
$\mathrm{H}^1_{\mathrm{Iw}}(\mathbb{Q}(\zeta_{m}), T_{f, i}(k-r)) :=  \varprojlim_F \mathrm{H}^1(\mathbb{Q}_{\Sigma_m}/F, T_{f, i}(k-r)) $
where $F$ runs over finite extensions of $\mathbb{Q}$ in $\mathbb{Q}_\infty(\zeta_{m})$.

\begin{lem} \label{lem:freeness-iwasawa-algebra}
If $\rho_f$ has large image, then $\mathrm{H}^1_{\mathcal{I}w}(\mathbb{Q}(\zeta_{m}),  T_{f}(k-r) )$ is free over $\Lambda_{\mathcal{I}w}$.
\end{lem}
\begin{proof}
It suffices to prove the statement for any integer $r$. Put  $r=k$.
If the image of $\overline{\rho}$ contains a conjugate of $\mathrm{SL}_2(\mathbb{F}_p)$, we first claim
\begin{equation} \label{eqn:vanishing-H0}
\mathrm{H}^0(\mathbb{Q}(\zeta_m), \overline{\rho}) = 0 .
\end{equation}
\begin{enumerate}
\item Suppose that $\#\mathbb{F}_\pi > 4$.
Let $\overline{\rho} : \mathrm{Gal}(\overline{\mathbb{Q}}/\mathbb{Q}) \to \mathrm{GL}_2(\mathbb{F}_\pi)$ and $H :=  \mathrm{Gal}(\overline{\mathbb{Q}}/\mathbb{Q}) / \mathrm{ker} (\overline{\rho})$. Thus, we have $\mathrm{SL}_2(\mathbb{F}_p) \subseteq H \subseteq \mathrm{GL}_2(\mathbb{F}_\pi)$ by assumption. Since $\mathrm{PSL}_2(\mathbb{F}_\pi)$ is simple if $\#\vert \mathbb{F}_\pi \vert > 4$, the maximal abelian quotient of $\mathrm{GL}_2(\mathbb{F}_\pi)$ is $\mathbb{F}^\times_\pi$ via the determinant. Thus, for any abelian extension $F$ of $\mathbb{Q}$, the image of $ \mathrm{Gal}(\overline{\mathbb{Q}}/F)$ under $\overline{\rho}$ contains $\mathrm{SL}_2( \mathbb{F}_p)$. There is no fixed point of $(\mathbb{F}_\pi)^{\oplus 2}$ under $\mathrm{SL}_2(\mathbb{F}_p)$-action.
\item Suppose that $\mathbb{F}_\pi = \mathbb{F}_3$. Since $\overline{\rho}$ is odd, $\overline{\rho}$ is surjective.
The image of  $\mathrm{Gal}(\overline{\mathbb{Q}}/\mathbb{Q}(\zeta_m))$ under the map
\[
\xymatrix{
\mathrm{Gal}(\overline{\mathbb{Q}}/\mathbb{Q}) \ar@{->>}[r]^-{\overline{\rho}} & \mathrm{GL}_2(\mathbb{F}_3) \ar@{->>}[r]^-{\mathrm{pr}} & \mathrm{PGL}_2(\mathbb{F}_3) \simeq S_4
}
\]
is isomorphic to $S_4$, $A_4$, $V_4$ (the Klein four group), or the trivial group.
Because $$
(\mathrm{pr} \circ \overline{\rho} ) ( \mathrm{Gal}(\overline{\mathbb{Q}}/\mathbb{Q}) ) / ( \mathrm{pr} \circ \overline{\rho} ) ( \mathrm{Gal}(\overline{\mathbb{Q}}/\mathbb{Q}(\zeta_m)) )$$
is abelian, $S_4$ and $A_4$ are only possibilities and $\mathrm{PSL}_2(\mathbb{F}_3) \simeq A_4$. Since $A_4$ is a unique subgroup of $S_4$ of index 2, the image of $\mathrm{Gal}(\overline{\mathbb{Q}}/\mathbb{Q}(\zeta_m))$ contains $\mathrm{PSL}_2(\mathbb{F}_3)$ in $\mathrm{PGL}_2(\mathbb{F}_3)$. Due to \cite[Lem. 2, IV-23]{serre-abelian-l-adic}, the image of $\mathrm{Gal}(\overline{\mathbb{Q}}/\mathbb{Q}(\zeta_m))$ contains $\mathrm{SL}_2(\mathbb{F}_3)$ in $\mathrm{GL}_2(\mathbb{F}_3)$.
Thus, there is no fixed point of $(\mathbb{F}_3)^{\oplus 2}$ under the action of $\mathrm{Gal}(\overline{\mathbb{Q}}/\mathbb{Q}(\zeta_m))$ via $\overline{\rho}$.
\end{enumerate}
Due to (\ref{eqn:vanishing-H0}),
$\mathrm{H}^1_{\mathcal{I}w}(\mathbb{Q}(\zeta_{m}), T_f ) \otimes_{\Lambda_{\mathcal{I}w}} \Lambda_{\mathcal{I}w} / (\gamma -1)$
is a free $\mathcal{O}_\pi$-module 
 where $\gamma$ is a generator of $\Gamma = \mathrm{Gal}(\mathbb{Q}(\zeta_{p^\infty})/\mathbb{Q} )$.
 Then the map defined by multiplication by $\gamma - 1$
$$\times ( \gamma - 1) : \mathrm{H}^1_{\mathcal{I}w}(\mathbb{Q}(\zeta_{m}), T_f ) \to \mathrm{H}^1_{\mathcal{I}w}(\mathbb{Q}(\zeta_{m}), T_f )$$
is injective.
Thus, $\mathrm{H}^1_{\mathcal{I}w}(\mathbb{Q}(\zeta_{m}), T_f )$ is free over $\Lambda_{\mathcal{I}w}$. See also \cite[Rem. 6.5]{sakamoto-stark-systems}.
\end{proof}

\section{Mazur--Tate elements I: Betti construction} \label{sec:mazur-tate-betti}
We review the modular symbol construction of Mazur--Tate elements for modular forms.
This construction works for modular forms of arbitrary weight and level at any odd prime.
We closely follow \cite{mtt}.

\subsection{Realizations of modular motives} \label{subsec:realization_of_modular_motives}
We recall several realizations of motives from modular forms and their identifications in \cite{kato-euler-systems}. 
Let $N \geq 4$ for this moment.

We first recall the Betti cohomology of modular curves in \cite[$\S$4.5, $\S$5.4, and $\S$6.3]{kato-euler-systems}.
Let $\varpi : E^{\mathrm{univ}} \to Y(M,N)$ be the universal elliptic curve and we define a local system $\mathcal{H}^1$ on $Y(M,N)(\mathbb{C})$ by
$\mathcal{H}^1 = \mathrm{R}^1\varpi_*(\mathbb{Z})$,
which is locally isomorphic to $\mathbb{Z}^2$.
See \cite[$\S$2.1]{kato-euler-systems} for the definition of $Y(M,N)$.
For any commutative ring $A$, we define
\begin{align*}
V_{k,A}(Y(M,N)) & = \mathrm{H}^1(Y(M,N)(\mathbb{C}), \mathrm{Sym}^{k-2}_{\mathbb{Z}}(\mathcal{H}^1) \otimes_{\mathbb{Z}} A), \\
V_{k,A}(Y_1(N)) & = \mathrm{H}^1(Y_1(N)(\mathbb{C}), \mathrm{Sym}^{k-2}_{\mathbb{Z}}(\mathcal{H}^1) \otimes_{\mathbb{Z}} A)
\end{align*}
 by using the natural quotient map $Y(M,N)$ to $Y_1(N)$ for the latter.

We recall the \'{e}tale cohomology of modular curves in \cite[$\S$8.3]{kato-euler-systems}.
We define a smooth \'{e}tale sheaf $\mathcal{H}^1_p$ on $Y(M,N)_{\mathrm{\acute{e}t}}$ by $\mathrm{R}^1\varpi_*(\mathbb{Z}_p)$.
Then $\mathcal{H}^1_p \simeq \mathcal{H}^1 \otimes \mathbb{Z}_p$ as sheaves on $Y(M,N)(\mathbb{C})$.
For $A = \mathbb{Z}_p$, $\mathbb{Q}_p$, or $\mathbb{Z}/p^n\mathbb{Z}$ ($n \geq 1$), we consider
\[
\xymatrix{
\mathrm{H}^1_{\mathrm{\acute{e}t}}(Y(M,N)_{\overline{\mathbb{Q}}}, \mathrm{Sym}^{k-2}_{\mathbb{Z}_p}(\mathcal{H}^1_p) \otimes_{\mathbb{Z}_p} A), &
\mathrm{H}^1_{\mathrm{\acute{e}t}}(Y_1(N)_{\overline{\mathbb{Q}}}, \mathrm{Sym}^{k-2}_{\mathbb{Z}_p}(\mathcal{H}^1_p) \otimes_{\mathbb{Z}_p} A)
}
\]
where the latter is defined by using the natural quotient map as before.

We identify Betti cohomology with \'{e}tale cohomology
\begin{equation} \label{eqn:identification-betti-etale}
V_{k,A}(Y_1(N)) \simeq \mathrm{H}^1_{\mathrm{\acute{e}t}}(Y_1(N)_{\overline{\mathbb{Q}}}, \mathrm{Sym}^{k-2}_{\mathbb{Z}_p}(\mathcal{H}^1_p) \otimes_{\mathbb{Z}_p} A)
\end{equation}
for $A = \mathbb{Z}_p$, $\mathbb{Q}_p$, or $\mathbb{Z}/p^n\mathbb{Z}$ ($n \geq 1$).
Via (\ref{eqn:identification-betti-etale}), $V_{k,A}(Y_1(N))$ admits a continuous action of $\mathrm{Gal}(\overline{\mathbb{Q}}/\mathbb{Q})$ for such $A$'s. 
One can construct the above objects even for $N <4$ by using the $N \geq 4$ case and the trace map as in $\S$\ref{subsec:galois-representations}.

Now we focus on one cuspidal eigenform via the projection \cite[$\S$6.3]{kato-euler-systems}.
Let 
$F = \mathbb{Q}(a_n(f); n \geq 1) \subseteq \mathbb{C}$,
which is a finite extension of $\mathbb{Q}$ and is stable under the complex conjugation.

Let $M_k(\Gamma_1(N))$ be the space of modular forms of weight $k$ and level $\Gamma_1(N)$ as a $\mathbb{Q}$-vector space.
We define the $F$-submodule $\wp^{\mathrm{dR}}_f$ of $M_k(\Gamma_1(N)) \otimes_{\mathbb{Q}} F$ generated by the images of $T_n \otimes 1 - 1 \otimes a_n(f)$ for all $n \geq 1$ where $T_n$ is the $n$-th Hecke operator.
Let $S(f) := \left( M_k(\Gamma_1(N)) \otimes_{\mathbb{Q}} F \right) / \wp^{\mathrm{dR}}_f $ which is a one-dimensional $F$-vector space.

We define the $F$-submodule $\wp^{\mathrm{B}}_f$ of $V_{k,F}(Y_1(N))$ generated by the images of $T_n \otimes 1 - 1 \otimes a_n(f)$ for all $n \geq 1$.
Let $V_F(f) = V_{k,F}(Y_1(N)) / \wp^{\mathrm{B}}_f $ which is a two dimensional $F$-vector space.

For any $F$-algebra $A$, we define
$V_A(f) := V_F(f) \otimes_F A$
and
$V_{F_{\pi}}(f)$ admits a continuous Galois action as in $\S$\ref{subsec:galois-representations} via (\ref{eqn:identification-betti-etale}), i.e. the notation is compatible with that in $\S$\ref{subsec:galois-representations}.

For any $x \in V_{F}(f)$, $x^{\pm} := \frac{1}{2} (1\pm \iota) x$ where $\iota$ is induced from the complex conjugation.
Let  $V_{F}(f)^\pm :=  V_{F}(f)^{\iota = \pm 1}$ and then $\mathrm{dim}_F \ V_{F}(f)^\pm = 1$, respectively.

Following $\S$\ref{subsec:realization_of_modular_motives}, the realizations of modular motives have the following comparisons
\begin{equation} \label{eqn:betti-etale-comparison}
\xymatrix{
V_{F_\pi}(f) & V_F(f) \otimes F_\pi \ar[l]_-{\simeq} & V_F(f) \ar[l] \ar[r] & V_F(f) \otimes \mathbb{C} & S(f) \ar[l]_-{\mathrm{per}_f} 
}
\end{equation}
and $\mathrm{per}_f$ is Kato's period map reviewed in $\S$\ref{subsec:canonical-katos-euler-systems}.

\subsection{Modular symbols, Mazur--Tate elements, and optimal periods} \label{subsec:optimal_periods}
We follow the convention of \cite[$\S$1, $\S$3, and $\S$4]{mtt}.

Let $S_k$ be the space of all cuspforms of weight $k$ and level $\Gamma_1(N) $ for some $N$, and $\mathcal{P}_k(\mathbb{C}) = \bigoplus_{r=1}^{k-1}\mathbb{C} \cdot z^{r-1}$ be the space of polynomials in the variable $z$ of degree $\leq k-2$ with $\mathbb{C}$-coefficients. More generally, $\mathcal{P}_k(R)$ is defined for any commutative ring $R$ in the same way.

As in \cite[$\S$3]{mtt}, for 
$f \in S_k$, $P\in \mathcal{P}_k(\mathbb{C})$, and $a, m \in \mathbb{Q}$ with $m>0$, we define the \textbf{modular symbol} by
$$
\lambda (f, z^{r-1}, -a, m)  = 2\pi \sqrt{-1} \cdot \int^{a/m}_{a/m+ i \infty} f(z) \cdot (mz-a)^{r-1} dz .
$$
We set
$\lambda^{\pm} (f, z^{r-1}, a, m) =  \lambda (f, z^{r-1}; a, m) \pm \lambda (f, z^{r-1}, -a, m) $, and
 we regard it as a function
$\lambda^{\pm} (f, -, -, -) : P_k(\mathbb{C}) \times \mathbb{Q} \times \mathbb{Q}_{>0} \to \mathbb{C}$.
By the work of Manin and Shimura \cite{manin-parabolic, shimura-periods}, there exist periods $\Omega^{\pm}_f$ such that
$$\lambda^{\pm} (f, -, -, -) : P_k(\mathcal{O}_{(\pi)})  \times \mathbb{Q} \times \mathbb{Q}_{>0} \twoheadrightarrow \mathcal{O}_{(\pi)} \cdot \Omega^{\pm}_f $$
where $\mathcal{O}_{(\pi)} = \mathcal{O}_{\pi} \cap \overline{\mathbb{Q}}$.
We call these periods the \textbf{optimal periods} of $f$.
\begin{rem} 
\begin{enumerate}
\item In \cite{pw-mt}, Pollack--Weston defined the cohomological periods of $f$, which are similar to our optimal periods.
Their normalization is equivalent to ours when $\lambda^{\pm} (f, 1; a, m)$ is considered.
\item The optimal periods are determined by \emph{several} values of modular symbols in general, so it seems difficult to deform the optimal periods integrally in $p$-adic families of modular forms. The notion of optimal periods does not require any mod $p$ multiplicity one type result.
\end{enumerate}
\end{rem}
We define the \textbf{optimally normalized modular symbol}
by
$$\lambda^{\pm, \mathrm{opt}}(f, z^{r-1}, a, m) = \dfrac{1}{\Omega^{\pm}_f} \cdot \left( \lambda(f, z^{r-1}, a, m) \pm \lambda(f, z^{r-1}, -a, m) \right) .$$
Following \cite[$\S$8]{mtt}, the \textbf{Mazur--Tate element of $f$ over $\mathbb{Q}(\zeta_m)$ at $s=r$} is defined by
$$
\theta_{\mathbb{Q}(\zeta_m), r}(f)  := \sum_{a \in (\mathbb{Z}/m\mathbb{Z})^\times} \lambda^{\pm, \mathrm{opt}}(f, z^{r-1}, a, m)  \cdot \sigma_a  \in  \mathcal{O}_{\pi}[\mathrm{Gal}( \mathbb{Q}(\zeta_m)/\mathbb{Q} )] .
$$
where the sign of modular symbols is that of $(-1)^{r-1}$. See also \cite{otsuki-rims}.

For $n \vert m$ and a commutative ring $R$, 
the $R$-linear map 
$$\pi_{m,n} : R[\mathrm{Gal}( \mathbb{Q}(\zeta_m)/\mathbb{Q} )] \to R[\mathrm{Gal}( \mathbb{Q}(\zeta_n)/\mathbb{Q} )]$$
is induced by the natural projection $\mathrm{Gal}( \mathbb{Q}(\zeta_m)/\mathbb{Q} ) \to \mathrm{Gal}( \mathbb{Q}(\zeta_n)/\mathbb{Q} )$
and
the $R$-linear map 
$$\nu_{n,m}: R[\mathrm{Gal}( \mathbb{Q}(\zeta_n)/\mathbb{Q} )] \to R[\mathrm{Gal}( \mathbb{Q}(\zeta_m)/\mathbb{Q} )]$$
is defined by
$\sigma \mapsto \sum_{\tau \in \mathrm{Gal}( \mathbb{Q}(\zeta_m)/\mathbb{Q} )} \tau$
where $\tau$ runs over the elements mapping to $\sigma$ under $\pi_{m,n}$.
We identify 
$\mathrm{Gal}( \mathbb{Q}(\zeta_m)/\mathbb{Q} ) \simeq (\mathbb{Z}/m\mathbb{Z})^\times$
 by $\sigma_a \mapsto a$ and regard a Galois character  as a Dirichlet character under this identification. 
\begin{prop} \label{prop:mazur-tate-elements-properties}
\begin{enumerate}
\item Let $m$ be a positive integer and $\ell$ a prime.
Then
$$\pi_{m\ell,m} \left(  \theta_{\mathbb{Q}(\zeta_{m\ell}), r}(f) \right) =
\left\lbrace
\begin{array}{ll}
 \left( a_\ell(f) - \ell^{r-1}  \cdot \mathrm{Frob}_\ell -  \psi(\ell) \cdot \ell^{k-1-r} \cdot \mathrm{Frob}^{-1}_\ell \right) \cdot \theta_{\mathbb{Q}(\zeta_m), r}(f) & (\ell \nmid m)\\
a_\ell (f) \cdot \theta_{\mathbb{Q}(\zeta_m), r}(f)
- \psi(\ell) \cdot \ell^{k-2} \cdot \nu_{m/\ell, m} \left( \theta_{\mathbb{Q}(\zeta_{m/\ell}), r}(f) \right) & (\ell \mid m)
\end{array}
\right.$$
where $\mathrm{Frob}_\ell \in \mathrm{Gal}( \mathbb{Q}(\zeta_m)/\mathbb{Q} )$ is the arithmetic Frobenius at $\ell$.
\item For a primitive Dirichlet character $\chi$ of conductor $m>1$, we have
$$\chi \left( \theta_{\mathbb{Q}(\zeta_{m}), r}(f)  \right) = m^{r} \cdot (r-1)! \cdot \dfrac{1}{\tau(\chi^{-1})} \cdot \dfrac{L(f, \chi^{-1}, r)}{(-2 \pi \sqrt{-1})^{r-1} \cdot \Omega^{\pm}_f}$$
where the sign $\pm$ is that of $(-1)^{r-1} \cdot \chi(-1)$ and $\tau(\chi) = {\displaystyle \sum_{\sigma_a \in \mathrm{Gal}(\mathbb{Q}(\zeta_{m})/\mathbb{Q})} } \chi(a) \cdot \zeta^{a}_m$ is the Gauss sum.
\end{enumerate}
\end{prop}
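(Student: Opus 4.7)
The plan is to treat the two parts separately, since (1) is a distribution/norm relation that reflects the Hecke eigenvalue structure of $f$ at $\ell$, while (2) is a classical Mellin-transform unfolding that identifies a twisted sum of period integrals with the special $L$-value. In both cases the key is to reduce the statement to a computation with $\lambda(f,z^{r-1};a,m)$, using Proposition \ref{prop:period_integrals} to handle the bookkeeping and the definition of the Mazur--Tate element as a generating series in the group ring.

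For part (1), I would first unwind $\pi_{m\ell,m}(\theta_{\mathbb{Q}(\zeta_{m\ell}),r}(f))$ as the sum over $b \in (\mathbb{Z}/m\ell\mathbb{Z})^{\times}$ of $[b/(m\ell)]_{f,r}$ attached to the image $\sigma_{b \bmod m}$, and split this sum according to whether $\ell \nmid m$ (where $b$ runs through lifts of a fixed $a \bmod m$, exactly $\ell-1$ of them coprime to $\ell$) or $\ell \mid m$ (where every $b$ lifting $a$ is automatically coprime to $\ell$, giving $\ell$ lifts). I would then use the standard Hecke-symbol identity: for $a$ coprime to $m$ and $\ell$ prime,
\begin{equation*}
\sum_{j=0}^{\ell-1} \lambda\!\left(f, z^{r-1}; \tfrac{a+jm}{m\ell}\right) + \lambda\!\left(f, z^{r-1}; \tfrac{a\ell^{-1}}{m}\right)_{\text{shifted by }\psi(\ell)\ell^{k-2}} = a_\ell(f)\,\lambda(f,z^{r-1};a,m),
\end{equation*}
which is the incarnation of $T_\ell f = a_\ell(f)\,f$ at the level of modular symbols, translated via Proposition \ref{prop:period_integrals}.(4) into the homogeneous variable $m\ell$ (this accounts for the $\ell^{r-1}$ and $\ell^{k-1-r}$ weight factors after absorbing the twist $(mz-a)^{r-1}$). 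Matching the Frobenius action $\sigma_\ell = \mathrm{Frob}_\ell$ on $\mathrm{Gal}(\mathbb{Q}(\zeta_m)/\mathbb{Q})$ with multiplication by $\ell$ on indices then produces the $T_\ell$-formula in the unramified case, and dropping the dual term when $\ell \mid m$ gives the $U_\ell$-formula.

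For part (2), I would compute $\chi(\theta_{\mathbb{Q}(\zeta_m),r}(f))$ directly from the definition. Because $\chi$ is primitive of conductor $m$ and the two pieces $[a/m]^{+}_{f,r}$ and $[a/m]^{-}_{f,r}$ pair with characters of fixed parity, only the sign matching $(-1)^{r-1}\chi(-1)$ survives; this reduces the computation to $(\Omega^{\pm}_f)^{-1}\sum_a \chi(a)\,\lambda(f,z^{r-1};-a,m)$. Substituting $z=(a+w)/m$ in each integral, interchanging the finite sum over $a$ with the integral, and invoking the Gauss-sum identity $\sum_a \chi(a)e^{2\pi i na/m} = \tau(\chi)\chi^{-1}(n)$ replaces the $a$-sum by $\tau(\chi)$ times the twisted $q$-expansion $\sum_n a_n(f)\chi^{-1}(n)e^{2\pi i nw/m}$; setting $w=it$ and applying the Mellin integral $\int_0^\infty e^{-2\pi nt/m}t^{r-1}dt = (m/2\pi n)^r (r-1)!$ then produces $L(f,\chi^{-1},r)$ together with the factor $m^r(r-1)!/(2\pi)^r$, and tracking the remaining $2\pi i$ and $i^{r-1}$ factors gives the stated formula, using $\tau(\chi)\tau(\chi^{-1}) = \chi(-1)m$ to invert the Gauss sum.

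The routine parts are algebraic manipulations with Dirichlet characters and Gauss sums; the one point that requires care is the homogeneity of $\lambda_f((mz-a)^{r-1};-a,m)$ under the rescaling $m \rightsquigarrow m\ell$ in part (1), because the polynomial weight $(mz-a)^{r-1}$ does not transform as naively as $z^{r-1}$. I expect the bookkeeping of these $\ell^{r-1}$ and $\ell^{k-1-r}$ factors--which ultimately encode the Euler factor $1 - a_\ell(f)\ell^{-s} + \psi(\ell)\ell^{k-1-2s}$ evaluated at $s=r$--to be the only technically delicate step, and it is there that Proposition \ref{prop:period_integrals}.(4)--(5) are indispensable.
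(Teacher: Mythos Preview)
Your proposal is correct and follows exactly the approach the paper defers to: the paper's proof is simply a citation to \cite[\S4, \S8, and (9.1)]{mtt}, together with the remark that Proposition~\ref{prop:period_integrals}.(4) is what makes the rescaling work when $r\neq 1$---precisely the point you single out as the only delicate step. Your handling of the sign $(-1)^{r-1}\chi(-1)$ is also consistent with the paper's comment that the discrepancy with \cite[(9.1)]{mtt} (where the sign is just $\chi(-1)$) arises because the optimal periods $\Omega^{\pm}_f$ differ from $\Omega^{\pm}_{f,\mathrm{MTT}}$ by the action of complex conjugation on $z^{r-1}$.
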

\begin{proof}
See \cite[$\S$4, $\S$8, and (9.1)]{mtt}.
\end{proof}

\section{Construction of canonical Kato's Euler systems} \label{sec:canonical-Kato-Euler-systems}
The goal of this section is to construct the  \emph{canonical and integral} Kato's Euler systems for modular forms of \emph{arbitrary} weight at \emph{any odd} prime under the large image assumption.
In \cite{kato-euler-systems}, two \emph{slightly different} cohomology classes are mainly used for arithmetic applications:
\begin{enumerate}
\item ${}_{c,d}z^{(p)}_{\mathbb{Q}(\zeta_m)}(f,r,r', \xi, S) := {}_{c,d}z^{(p)}_{m}(f,r,r', \xi, S) \in \mathrm{H}^1_{\mathrm{\acute{e}t}}(\mathrm{Spec} ( \mathbb{Z}[1/p, \zeta_m] ), j_* V_{\mathcal{O}_\pi}(f)(k-r))$ in \cite[(8.1.3) and Ex. 13.3]{kato-euler-systems}
\item $\mathbf{z}^{(p)}_\gamma \in \mathbf{H}^1(V_{\mathcal{O}_\pi}(f))$ where $\gamma \in V_{\mathcal{O}_\pi}(f)$  in \cite[Thm. 12.5]{kato-euler-systems}
\end{enumerate}
For the former one, the tame level Euler system relation is explicitly studied in \cite[Prop. 8.12]{kato-euler-systems}; thus, it can be used to bound the Selmer groups via the Euler system argument \cite[Theorem 13.4]{kato-euler-systems}. Since  this element depends on various choices, it would not give us the exact bound. Especially, 
the choices affect the corresponding periods in Kato's explicit reciprocity law \cite[Thm. 6.6]{kato-euler-systems}. Also, the Euler factors at bad primes are ignored.

The latter one is entirely canonical and suitable for the formulation of the Iwasawa main conjecture.
It gives the correct period and remembers the Euler factors at bad primes.
However, it is rather unclear that it satisfies the tame level Euler system relation integrally (c.f. \cite[Appendix A]{delbourgo-book}).
It also seems to be related to \cite[Wishful Thinking 8.2.2 and Rem. 8.2.5]{rubin-book}.
See also \cite{colmez-p-adic-BSD, wang_kato} for a more distribution-theoretic approach.

Our version of Kato's Euler system works for both purposes. This system of elements \emph{simultaneously} remembers all the Euler factors at bad primes and enjoys the integrality, the tame level Euler system relation, and the correct period.
This choice removes a certain restriction in \cite{kks}, and we give an affirmative answer to \cite[Ques. A.15]{kim-nakamura} under the large image assumption. This section is insensitive to the good reduction assumption.

Some part of this section is benefited from the discussion with Takenori Kataoka. See also \cite{kataoka-thesis} for the case of elliptic curves.

\subsection{The cohomology classes and Kato's explicit reciprocity law} 
For an integer $A \geq 1$, denote by $\mathrm{prime}(A)$ the set of primes dividing $A$.
Let $m \geq 1$ be an integer.
\begin{defn} \label{defn:xi_S}
Following \cite[$\S$5.1]{kato-euler-systems}, we define $\xi$ and $S$ as follows:
\begin{enumerate}
\item $\xi$ is a symbol $a(A)$ where $a, A \in \mathbb{Z}$, $A \geq 1$ and $S$ is a non-empty finite set of primes containing $\mathrm{prime}(mA)$, or
\item $\xi$ is an element of $\mathrm{SL}_2(\mathbb{Z})$ and $S$ is a non-empty finite set of primes containing $\mathrm{prime}(mN)$.
\end{enumerate}
\end{defn}
\begin{defn} \label{defn:r_r'_c_d}
Following \cite[(5.2.1) and (5.2.2)]{kato-euler-systems}, we define integers $r, r'$ and positive integers $c, d$ as follows:
\begin{enumerate}
\item $1 \leq r \leq k-1$, $1 \leq r' \leq k-1$, and at least one of $r$, $r'$ is $k-1$.
\item $c$ and $d$ are positive integers with $\mathrm{prime}(cd) \cap S = \emptyset$, and $(d,N) = 1$.
\end{enumerate}
\end{defn}
Write 
$\mathrm{H}^1(\mathbb{Z}[1/p, \zeta_m], V_{\mathcal{O}_\pi}(f)(k-r))
=
\mathrm{H}^1_{\mathrm{\acute{e}t}}( \mathrm{Spec}(\mathbb{Z}[\zeta_{m}, 1/p]), j_{*}V_{\mathcal{O}_\pi}(f)(k-r)) $
where $j : \mathrm{Spec}(\mathbb{Q}(\zeta_{m})) \to \mathrm{Spec}(\mathbb{Z}[\zeta_{m}, 1/p])$ is the natural map.
Denote by
$${}_{c,d}z^{(p)}_{\mathbb{Q}(\zeta_m)}(f,r,r', \xi, S) = {}_{c,d}z^{(p)}_m(f,r,r', \xi, S) \in \mathrm{H}^1(\mathbb{Z}[1/p, \zeta_m], V_{\mathcal{O}_\pi}(f)(k-r))$$
the (imprimitive) zeta element in \cite[(8.1.3)]{kato-euler-systems}
where $p \in S$ as well as all the ``variables" satisfy Definition \ref{defn:xi_S} and Definition \ref{defn:r_r'_c_d}.
By \cite[Prop. 8.12]{kato-euler-systems}, the cohomology classes 
$${}_{c,d}z^{(p)}_{\mathbb{Q}(\zeta_m)}(f,r,r', \xi, S) \in \mathrm{H}^1(\mathbb{Z}[1/p, \zeta_m], V_{\mathcal{O}_\pi}(f)(k-r))$$ satisfy the tame level Euler system relation except at primes dividing $cdN$. See also \cite[Ex. 13.3]{kato-euler-systems}.
Denote by 
$${}_{c,d}z_{\mathbb{Q}(\zeta_m)}(f,r,r', \xi, S)  = {}_{c,d}z_m(f,r,r', \xi, S)  \in S(f) \otimes \mathbb{Q}(\zeta_m)$$
the zeta modular form in \cite[$\S$6.3]{kato-euler-systems}
where $c$, $d$, $r$, $r'$, $\xi$, and $S$ satisfy Definition \ref{defn:xi_S} and Definition \ref{defn:r_r'_c_d}.
By \cite[Thm. 9.7]{kato-euler-systems}, we have the assignment
${}_{c,d}z^{(p)}_m(f,r,r', \xi, S) \mapsto {}_{c,d}z_m(f,r,r', \xi, S)$
under the dual exponential map.

\begin{thm}[Kato's explicit reciprocity law] \label{thm:kato_interpolation_original}
Let $\chi$ be a character on $(\mathbb{Z}/m\mathbb{Z})^\times$.
Let $\pm = (-1)^{k-r-1} \cdot \chi(-1)$ and 
 $$(u,v) := \left\lbrace \begin{array}{ll} (r+2-k, r) & \textrm{if } r'=k-1 \\ (k-r', r') & \textrm{if } r=k-1  \end{array} \right.$$
 following \cite[(4.2.4)]{kato-euler-systems}.
We add the following condition.
\begin{enumerate}
\item[($*$)] If $\xi \in \mathrm{SL}_2(\mathbb{Z})$, then assume $c \equiv d \equiv 1 \Mod{N}$.
\end{enumerate}
Then we have
$$\sum_{b \in (\mathbb{Z}/m\mathbb{Z})^\times} \chi(b) \cdot \mathrm{per}_f \left( \sigma_b \left( {}_{c,d}z_m(f, r, r', \xi, S) \right) \right)^{\pm} = L^{(S)} (f, \chi, r) \cdot (2 \pi i )^{k-r-1} \cdot \gamma^{\pm}$$
where
$\mathrm{per}_f$ is Kato's period map in \cite[$\S$6.3]{kato-euler-systems},
\begin{align*}
\gamma = c^2 d^2 \delta(f, r', a(A)) & - c^u d^2 \overline{\chi}(c) \delta(f, r', ac(A)) \\
& - c^2 d^v \psi(d) \delta(f, r', ``a/d"(A)) + c^u d^v \overline{\chi}(cd) \psi(d)\delta(f, r', ``ac/d"(A))
\end{align*}
if $\xi = a(A)$, and
$$\gamma = (c^2 - c^u \overline{\chi}(c)) (d^2 - d^v \overline{\chi}(d)) \delta(f, r', \xi)$$
if $\xi \in \mathrm{SL}_2(\mathbb{Z})$.
\end{thm}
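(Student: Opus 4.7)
The statement is essentially a reformulation of Kato's original explicit reciprocity law \cite[Theorem 6.6]{kato-euler-systems}, so my plan is to reduce to that theorem and then track the normalizations carefully. First I would recall the construction of the zeta modular form ${}_{c,d}z_m(f,r,r',\xi,S) \in S(f) \otimes \mathbb{Q}(\zeta_m)$ from \cite[\S6.3]{kato-euler-systems}: it is obtained from the cup product of two Eisenstein series classes ${}_c F^{(k,k')}$ (or Siegel units in the $\xi \in \mathrm{SL}_2(\mathbb{Z})$ case) of weights $(u,v)$ on modular curves, evaluated at suitable cusps and paired via the product formula. The passage from the \'etale class ${}_{c,d}z^{(p)}_m(f,r,r',\xi,S)$ to the modular form ${}_{c,d}z_m(f,r,r',\xi,S)$ is already encoded in the dual exponential map, by \cite[Theorem 9.7]{kato-euler-systems}, so on the left-hand side we may replace the image under $\mathrm{per}_f \circ \sigma_b$ by the complex Hodge realization of this product of Eisenstein series evaluated in the $f$-isotypic component.

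Next I would compute the character-weighted sum $\sum_b \chi(b) \sigma_b$ applied to the modular integral. The Galois action by $\sigma_b$ translates the cusp parameter $a(A) \mapsto ab(A)$, and after rewriting the resulting sums as Mellin transforms of cusp forms twisted by $\chi$, one identifies the expression with $L^{(S)}(f,\chi,r)$ up to the explicit factor $(2\pi i)^{k-r-1}$ and the Gauss-sum type contributions. This is the classical Birch--Manin unfolding: one writes $L(f,\chi,r) = (r-1)! \cdot (2\pi i)^{-(r-1)} \sum_b \chi(b) \lambda_f(z^{r-1}; b,m)$ up to a normalization, and the factor $\chi(-1)$ forces the $\pm$-projection. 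The condition $(\star)$ on $\xi$ ensures that when $\xi \in \mathrm{SL}_2(\mathbb{Z})$, the $(c,d)$-modification is compatible with the level-$N$ structure, so that no additional character twist appears.

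The $\gamma$-term comes purely from the $(c,d)$-Eisenstein prefactor. Indeed, ${}_c F^{(k)}$ satisfies a distribution relation that replaces the value $1$ by $c^2 - c^u \cdot ( \text{Hecke action})$; applied to both Eisenstein series that enter the cup product (one twisted by $c$, one by $d$), this produces the four-term combination for $\xi = a(A)$ and the product $(c^2 - c^u\overline{\chi}(c))(d^2 - d^v\overline{\chi}(d))$ for $\xi \in \mathrm{SL}_2(\mathbb{Z})$. The factors $\psi(d)$ and $\overline{\chi}(cd)$ arise because the twist by $d$ acts on the nebentypus $\psi$-component and because $\chi$ is applied after the cuspidal shift.

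The main obstacle is not any individual computation --- each is essentially in \cite[\S4--\S7]{kato-euler-systems} --- but the precise matching of normalizations. Three sign/period choices must be tracked in parallel: the power of $2\pi i$ in the definition of $\mathrm{per}_f$ (as clarified in Remark \ref{rem:period-kato-mtt}), the $(-1)^{k-r-1}\chi(-1)$ convention for the $\pm$-projector (which differs between Kato's $\Omega(f,r)$ and our $\lambda_f(z^{r-1};a,m)$), and the ``$v$ vs.\ $u$'' choice in the $\gamma$-formula coming from whether $r' = k-1$ or $r = k-1$. Once these are aligned, the theorem follows from the Eisenstein distribution relation combined with the unfolding of the Mellin transform. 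I would verify the sign conventions on one explicit test case --- e.g.\ $\xi = a(A)$ with $r' = k-1$, trivial character --- and then propagate via $\mathbb{C}$-linearity and functoriality in $\chi$.
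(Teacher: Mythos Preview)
Your proposal is correct and takes the same approach as the paper: the statement is a direct restatement of \cite[Theorem 6.6]{kato-euler-systems}, and the paper's proof consists of nothing more than that citation. Your additional sketch of the Eisenstein distribution relations and Mellin unfolding is accurate but unnecessary here, since the paper treats this theorem as an input from Kato's work rather than reproving it.
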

\begin{proof}
See \cite[Thm. 6.6]{kato-euler-systems}.
\end{proof}

\subsection{The module of primitive zeta elements} 
Following $\S$\ref{subsec:iwasawa-cohomology},
write
$$
\mathrm{H}^1_{\mathcal{I}w}(\mathbb{Q}(\zeta_{m}), V_{\mathcal{O}_\pi}(f)(k-r))
= \varprojlim_{F} \mathrm{H}^1_{\mathrm{\acute{e}t}}( \mathrm{Spec}(\mathcal{O}_F[1/p]), j_{*}V_{\mathcal{O}_\pi}(f)(k-r))
$$
where $F$ runs over finite extensions of $\mathbb{Q}$ in $\mathbb{Q}(\zeta_{mp^\infty})$.
The following definition comes from \cite[Thm. 12.6]{kato-euler-systems}, \cite[Appendix A]{delbourgo-book}, and \cite[Thm. 6.1]{kataoka-thesis}.
\begin{defn}
We define be the \textbf{module of imprimitive zeta elements} 
$$\mathcal{Z}^{\mathrm{imp}}_{\mathbb{Q}(\zeta_m)}(f, k-r) \subseteq \mathrm{H}^1_{\mathcal{I}w}(\mathbb{Q}(\zeta_{m}), V_{\mathcal{O}_\pi}(f)(k-r))$$
 by the submodule generated by
$${}_{c,d} \mathbf{z}^{(p)}_{\mathbb{Q}(\zeta_{m})}(f,k-r, j, a(A), \mathrm{prime}(pmA))   := 
\varprojlim_n \left( {}_{c,d} z^{(p)}_{\mathbb{Q}(\zeta_{mp^n})}(f,k-r, j, a(A), \mathrm{prime}(pmA) ) \otimes ( \zeta_{p^n})^{\otimes k-r} \right) $$
where $1 \leq j \leq k-1$, $a, A \in \mathbb{Z}$ with $A \geq 1$, $c, d \in \mathbb{Z}$ such that $(c, 6pmA)= (d, 6pmN) =1$, and
$$
{}_{c,d} \mathbf{z}^{(p)}_{\mathbb{Q}(\zeta_{m})}(f,k-r, j, \alpha, \mathrm{prime}(pmN))
 := \varprojlim_n \left( {}_{c,d} z^{(p)}_{\mathbb{Q}(\zeta_{mp^n})}(f,k-r, j, \alpha, \mathrm{prime}(pmN)) \otimes ( \zeta_{p^n})^{\otimes k-r} \right) 
$$
where $1 \leq j \leq k-1$, $\alpha \in \mathrm{SL}_2(\mathbb{Z})$, $c, d \in \mathbb{Z}$ such that $(cd, 6pmN) =1$ and $c \equiv d \equiv 1 \pmod{N}$.
\end{defn}

\begin{choice} \label{choice:alpha_j}
Fix elements $\alpha_1, \alpha_2 \in \mathrm{SL}_2(\mathbb{Z})$ and integers $j_1, j_2$ such that
\begin{itemize}
\item  $1 \leq j_i \leq k-1$ $(i=1, 2)$, and
\item $\delta(f , j_1, \alpha_1)^+ \neq 0$ and $\delta(f , j_2, \alpha_2)^- \neq 0$ (\hspace{1sp}\cite[(13.6)]{kato-euler-systems}).
\end{itemize}
\end{choice}
For a non-zero $\gamma \in V_{F_{\pi}}(f)$, we write
$$
\gamma = b_1 \cdot \delta(f , j_1, \alpha_1)^+ + b_2 \cdot \delta(f , j_2, \alpha_2)^- \neq 0
$$
for some $b_1, b_2 \in F_\pi$.
\begin{choice} \label{choice:c_and_d}
Let $m \geq 1$ be an integer.
Fix $c, d \in \mathbb{Z}$ such that
\begin{itemize}
\item $(cd, 6pm) = 1$,
\item $c \equiv d \equiv 1 \Mod{N}$ (needed for \cite[Thm. 6.6]{kato-euler-systems}), and
\item $c^2 \neq 1$ and $d^2 \neq 1$.
\end{itemize}
\end{choice}
For a commutative ring $R$, let $Q(R)$ be the total quotient ring of $R$.
Let $G_{mp^\infty} = \mathrm{Gal}(\mathbb{Q}(\zeta_{mp^\infty})/\mathbb{Q})$ and $\mathcal{O}_\pi\llbracket G_{mp^\infty} \rrbracket = \Lambda_{\mathcal{I}w}[\mathrm{Gal}(\mathbb{Q}(\zeta_{m})/\mathbb{Q})]$.
\begin{defn}
For each $\gamma \in V_{\mathcal{O}_\pi}(f)$, we define the \textbf{primitive zeta element} by
\begin{align*}
\mathbf{z}^{(p)}_{\mathbb{Q}(\zeta_{m}), \gamma} := &
\left\lbrace \mu(c,d,j_1)^{-1} \cdot b_1 \cdot  {}_{c,d} \mathbf{z}^{(p)}_{\mathbb{Q}(\zeta_{m})}(f,k, j_1, \alpha_1, \mathrm{prime}(pmN))  \right\rbrace^{-} \\
& +
\left\lbrace \mu(c,d,j_2)^{-1} \cdot b_2 \cdot  {}_{c,d} \mathbf{z}^{(p)}_{\mathbb{Q}(\mu_{m})}(f,k, j_2, \alpha_2, \mathrm{prime}(pmN))  \right\rbrace^{+} 
\end{align*}
in $\mathrm{H}^1_{\mathcal{I}w}(\mathbb{Q}(\zeta_{m}), V_{\mathcal{O}_\pi}(f)) \otimes_{\mathcal{O}_\pi \llbracket G_{mp^\infty} \rrbracket} Q(\mathcal{O}_\pi \llbracket G_{mp^\infty} \rrbracket)$
where $$\mu(c,d,j) := (c^2 - c^{k+1-j} \cdot \sigma_c) \cdot (d^2 - d^{j+1} \cdot \sigma_d) \cdot \prod_{\ell \mid N} (1 - a_\ell(f) \ell^{-k} \sigma^{-1}_{\ell}) \in \mathcal{O}_\pi\llbracket G_{mp^\infty} \rrbracket $$
for $j \in \mathbb{Z}$ and all the choices in the RHS satisfy Choice \ref{choice:alpha_j} and Choice \ref{choice:c_and_d}.
However, $\mathbf{z}^{(p)}_{\mathbb{Q}(\zeta_{m}), \gamma}$ itself is independent of Choice \ref{choice:alpha_j} and Choice \ref{choice:c_and_d}.
Note that $\mu(c,d,j)$ is not a zero divisor in $\mathcal{O}_\pi\llbracket G_{mp^\infty} \rrbracket$ for all $j \in \mathbb{Z}$.
\end{defn}
Here, the primitivity means that the Euler factors at bad reduction primes are built in Kato's explicit reciprocity law.
It is not related to the primitivity of Kolyvagin systems.
\begin{defn}
For any Galois-stable lattice $T_f \subseteq V_{F_{\pi}}(f)$,
we define the \textbf{module of primitive zeta elements} by the submodule
$$\mathcal{Z}^{\mathrm{prim}}_{\mathbb{Q}(\zeta_m)} (T_f(k-r)) \subseteq \mathrm{H}^1_{\mathcal{I}w}(\mathbb{Q}(\zeta_{m}), T_f(k-r) ) \otimes Q(\mathcal{O}_\pi \llbracket G_{mp^\infty} \rrbracket)$$
 generated by
$\mathbf{z}^{(p)}_{\mathbb{Q}(\zeta_{m}),\gamma} \otimes \varprojlim_n \left( ( \zeta_{p^n})^{\otimes k-r} \right)$
where $\gamma$ runs over $T_f(k-r)$.
\end{defn}
A slight generalization of \cite[Lem. 13.10]{kato-euler-systems} from $\mathbb{Q}$ to $\mathbb{Q}(\zeta_m)$ (e.g.\cite[Fact $\#$1 and Fact $\#$2 in Appendix A]{delbourgo-book}) yields that
\begin{equation} \label{eqn:imp-prim-containment}
\mathcal{Z}^{\mathrm{imp}}_{\mathbb{Q}(\zeta_m)} (f, k-r) \subseteq \mathcal{Z}^{\mathrm{prim}}_{\mathbb{Q}(\zeta_m)} (V_{\mathcal{O}_{\pi}}(f)(k-r)) .
\end{equation}
By \cite[Key Claim in Appendix A]{delbourgo-book}, which generalizes \cite[$\S$13.12]{kato-euler-systems}, 
the index of  (\ref{eqn:imp-prim-containment}) is finite; thus, we have
$$\mathcal{Z}^{\mathrm{prim}}_{\mathbb{Q}(\zeta_m)} (V_{\mathcal{O}_{\pi}}(f)(k-r)) \subseteq \mathrm{H}^1_{\mathcal{I}w}(\mathbb{Q}(\zeta_{m}), V_{\mathcal{O}_\pi}(f)(k-r)) \otimes \mathbb{Q}_p.$$
We have the following picture up to now
\[
\xymatrix{
\mathcal{Z}^{\mathrm{imp}}_{\mathbb{Q}(\zeta_m)} (f, k-r) \ar@{^{(}->}[r]_-{\textrm{fin. index}} \ar@{^{(}->}[d] & \mathcal{Z}^{\mathrm{prim}}_{\mathbb{Q}(\zeta_m)} (V_{\mathcal{O}_{\pi}}(f)(k-r))  \ar@{^{(}->}[d] \ar@{^{(}-->}[dl]^-{?}  \\
\mathrm{H}^1_{\mathcal{I}w}(\mathbb{Q}(\zeta_{m}), V_{\mathcal{O}_\pi}(f)(k-r))  & \mathrm{H}^1_{\mathcal{I}w}(\mathbb{Q}(\zeta_{m}), V_{\mathcal{O}_\pi}(f)(k-r)) \otimes_{\mathbb{Z}_p} \mathbb{Q}_p .
}
\]
\begin{prop}
If $\rho_f$ has large image, then we have 
$$\mathcal{Z}^{\mathrm{prim}}_{\mathbb{Q}(\zeta_m)} (T_{f}(k-r))  \subseteq \mathrm{H}^1_{\mathcal{I}w}(\mathbb{Q}(\zeta_{m}),  T_{f}(k-r) ) $$ where $T_f$ is any Galois-stable lattice of $V_{F_{\pi}}(f)$.
\end{prop}
\begin{proof}
It follows from Lemma \ref{lem:freeness-iwasawa-algebra} and \cite[$\S$13.14]{kato-euler-systems}.
\end{proof}
\subsection{Canonical Kato's Euler systems} \label{subsec:canonical-katos-euler-systems}
We keep the large image assumption on $\rho_f$ in this subsection.
\begin{defn} \label{defn:canonical-kato-euler-systems}
\begin{enumerate}
\item 
For each $\gamma \in T_f$, we define the \textbf{canonical Kato's Euler system over the cyclotomic tower} by
\[
 \mathbf{z}_{\mathbb{Q}(\zeta_{m}), \gamma}(f, k-r)  := \mathbf{z}^{(p)}_{\mathbb{Q}(\zeta_{m}), \gamma} \otimes \varprojlim_n \left( ( \zeta_{p^n})^{\otimes k-r} \right)  \in  \mathrm{H}^1_{\mathcal{I}w}(\mathbb{Q}(\zeta_{m}),  T_{f}(k-r) ) .
\]
\item 
For each $\gamma \in T_f$, we define the \textbf{canonical Kato's Euler system}
$$z_{\mathbb{Q}(\zeta_{mp^n}), \gamma}(f, k-r) \in  \mathrm{H}^1(\mathbb{Z}[\zeta_{mp^n}, 1/p],  T_f(k-r) )$$
by the image of $\mathbf{z}_{\mathbb{Q}(\zeta_{m}), \gamma} (f, k-r)$
under the corestriction map
$$ \mathrm{H}^1_{\mathcal{I}w}(\mathbb{Q}(\zeta_{m}),  T_{f}(k-r) )  \to \mathrm{H}^1(\mathbb{Z}[\zeta_{mp^n}, 1/p],  T_f(k-r) ) .$$
\end{enumerate}
\end{defn}
Let $\gamma^+_{0} \in T^+_f$ and $\gamma^-_{0}  \in T^-_f$ be $\mathcal{O}_\pi$-generators of $T^+_f$ and $T^-_f$, respectively, and write $\gamma_{0} = \gamma^+_{0} + \gamma^-_{0} \in T_f$.
Write $\mathrm{H}^1_{\mathcal{I}w}(T_f(k-r)) =  \mathrm{H}^1_{\mathcal{I}w}(\mathbb{Q},  T_{f}(k-r) )$ for convenience.
\begin{prop} \label{prop:kato-generated-by-one}
If $\rho_f$ has large image, then 
the submodule $$\mathcal{Z}^{\mathrm{prim}}(T_f(k-r))  := \mathcal{Z}^{\mathrm{prim}}_{\mathbb{Q}}(T_f(k-r))  \subseteq \mathrm{H}^1_{\mathcal{I}w}(T_f(k-r))  $$
 is generated by $\mathbf{z}_{\mathbb{Q}, \gamma_{0}}(f, k-r )$ over $\Lambda_{\mathcal{I}w}$.
\end{prop}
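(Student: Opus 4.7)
The plan is to reduce the question to the $\pm$-eigenspace decomposition of the Iwasawa cohomology under complex conjugation and then invoke linearity of the Kato construction in the auxiliary period vector $\gamma$.

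First, I would check that the assignment $\gamma \mapsto \mathbf{z}_{\mathbb{Q}, \gamma}(f, k-r)$ is $\mathcal{O}_{\pi}$-linear in $\gamma \in T_f$. This is evident from the definition: expanding $\gamma = b_1 \cdot \delta(f,j_1,\alpha_1)^+ + b_2 \cdot \delta(f,j_2,\alpha_2)^-$ shows that the primitive zeta element depends linearly on the pair $(b_1, b_2)$, and the Tate twist factor $\varprojlim_n(\zeta_{p^n})^{\otimes k-r}$ is independent of $\gamma$. Since $T_f = T_f^+ \oplus T_f^- = \mathcal{O}_{\pi}\gamma_0^+ \oplus \mathcal{O}_{\pi}\gamma_0^-$, the $\Lambda$-module $\mathcal{Z}^{\mathrm{prim}}(T_f(k-r))$ is generated by the two classes $\mathbf{z}_{\mathbb{Q}, \gamma_0^+}(f, k-r)$ and $\mathbf{z}_{\mathbb{Q}, \gamma_0^-}(f, k-r)$, and $\mathbf{z}_{\mathbb{Q}, \gamma_0}(f, k-r) = \mathbf{z}_{\mathbb{Q}, \gamma_0^+}(f, k-r) + \mathbf{z}_{\mathbb{Q}, \gamma_0^-}(f, k-r)$.

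Second, I would exploit the complex conjugation $\iota := \sigma_{-1} \in \Gamma \subset \Lambda^{\times}$. Because $p$ is odd, the elements $e^{\pm} := \tfrac{1}{2}(1 \pm \iota)$ are orthogonal idempotents in $\Lambda$. The class $\mathbf{z}_{\mathbb{Q}, \gamma_0^{\pm}}(f, k-r)$ is constructed from a component of $T_f$ that lies in a definite $\iota$-eigenspace, and the Tate twist $\mathbb{Z}_p(k-r)$ contributes an additional sign $(-1)^{k-r}$ under $\iota$. Consequently each $\mathbf{z}_{\mathbb{Q}, \gamma_0^{\pm}}(f, k-r)$ lies in a definite $\iota$-isotypic component of $\mathbf{H}^1(j_*T_f(k-r))$, with opposite signs. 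Applying $e^{\pm}$ to $\mathbf{z}_{\mathbb{Q}, \gamma_0}(f, k-r)$ therefore extracts $\mathbf{z}_{\mathbb{Q}, \gamma_0^{\pm}}(f, k-r)$, showing that both lie in $\Lambda \cdot \mathbf{z}_{\mathbb{Q}, \gamma_0}(f, k-r)$.

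Combining the two steps yields
\[
\mathcal{Z}^{\mathrm{prim}}(T_f(k-r)) = \Lambda \cdot \mathbf{z}_{\mathbb{Q}, \gamma_0^+}(f,k-r) + \Lambda \cdot \mathbf{z}_{\mathbb{Q}, \gamma_0^-}(f,k-r) = \Lambda \cdot \mathbf{z}_{\mathbb{Q}, \gamma_0}(f, k-r),
\]
which is the claim. Note that Assumption~\ref{assu:standard}.(Im0) enters only through Lemma~\ref{lem:freeness-iwasawa-algebra}, which guarantees that the ambient module $\mathbf{H}^1(j_*T_f(k-r))$ is free over $\Lambda$ so that the constructed elements really land there (and no $\Lambda$-torsion ambiguity arises).

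The main obstacle I anticipate is bookkeeping of signs: verifying precisely that the $\{\cdot\}^{\pm}$ projections used in the definition of $\mathbf{z}^{(p)}_{\mathbb{Q}(\zeta_m), \gamma}$ correspond (up to the factor $(-1)^{k-r}$ from the twist) to the $\iota$-eigenspace decomposition in $\mathbf{H}^1(j_*T_f(k-r))$, so that the idempotents $e^{\pm}$ really isolate the two summands. Everything else is linearity and standard idempotent calculus.
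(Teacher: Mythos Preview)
Your argument is correct: the key points are the $\mathcal{O}_\pi$-linearity of $\gamma \mapsto \mathbf{z}_{\mathbb{Q},\gamma}(f,k-r)$ together with the fact that the $\{\cdot\}^{\pm}$-projections in the definition of the primitive zeta element are exactly the idempotents $e^{\pm}=\tfrac12(1\pm\sigma_{-1})\in\Lambda$ (since $\sigma_{-1}\in\Gamma$ acts on $\mathbf{H}^1(j_*T_f)$ as complex conjugation), so after the Tate twist the two summands $\mathbf{z}_{\mathbb{Q},\gamma_0^{\pm}}(f,k-r)$ land in opposite $\sigma_{-1}$-eigenspaces and are recovered from their sum by applying $e^{\pm}$. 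The paper does not give an argument here but simply cites \cite[Proposition~A.12]{kim-nakamura}; your self-contained proof is the natural one and presumably matches what is in that reference, with the only delicate point being the sign bookkeeping you already flagged (namely that the $\pm$ on $\gamma$ and the $\pm$ on the cohomology class are swapped in the definition, and then shifted by $(-1)^{k-r}$ under the twist).
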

\begin{proof}
See \cite[Prop. A.12]{kim-nakamura}.
\end{proof}
Definition \ref{defn:canonical-kato-euler-systems} yields a generalization of Kato's zeta morphism $T_f \to \mathrm{H}^1_{\mathcal{I}w}(T_f)$ defined by $\gamma \mapsto \mathbf{z}^{(p)}_\gamma$ in \cite[Thm. 12.5.(1)]{kato-euler-systems} by adding tamely ramified abelian extensions.
\begin{prop} \label{thm:kato_euler_system_relation}
Let $\ell$ be a prime not equal to $Np$ and $\gamma \in T_f$.
The corestriction map induced from the norm map from $\mathbb{Q}(\zeta_{m \ell p^n})$ to $\mathbb{Q}(\zeta_{m p^n})$
$$\mathrm{H}^1(\mathbb{Z}[\zeta_{m\ell p^n}, 1/p],  T_f(k-r) ) \to \mathrm{H}^1(\mathbb{Z}[\zeta_{mp^n}, 1/p],  T_f(k-r) )$$
sends $z_{\mathbb{Q}(\zeta_{m \ell p^n}), \gamma}(f, k-r)$ to
$$\left( 1 - a_\ell(\overline{f}) \cdot \mathrm{Frob}^{-1}_\ell \cdot \ell^{-r} + \overline{\psi}(\ell) \cdot \mathrm{Frob}^{-2}_\ell \cdot \ell^{k-1-2r} \right) \cdot  z_{\mathbb{Q}(\zeta_{m p^n}),  \gamma}(f, k-r)$$
where $\mathrm{Frob}_\ell$  is the arithmetic Frobenius at $\ell$ in $\mathrm{Gal}(\mathbb{Q}(\zeta_{m p^n})/\mathbb{Q})$.
\end{prop}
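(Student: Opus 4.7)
The plan is to derive this Euler system relation from Kato's distribution relation for the imprimitive zeta elements (\cite[Proposition 8.12]{kato-euler-systems}), by passing through the Iwasawa level and unwinding Definition \ref{defn:canonical-kato-euler-systems-finite}.

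First, I would reduce to the Iwasawa level. By Definition \ref{defn:canonical-kato-euler-systems-finite}, the class $z_{\mathbb{Q}(\zeta_{mp^n}), \gamma}(f, k-r)$ is the image of the Iwasawa class $\mathbf{z}_{\mathbb{Q}(\zeta_m), \gamma}(f, k-r)$ under the corestriction $\mathrm{H}^1_{\mathrm{Iw}}(\mathbb{Q}(\zeta_{mp^\infty})/\mathbb{Q}, T_f(k-r)) \to \mathrm{H}^1(\mathbb{Z}[\zeta_{mp^n}, 1/p], T_f(k-r))$. Using transitivity of corestriction along $\mathbb{Q}(\zeta_{m\ell p^\infty}) / \mathbb{Q}(\zeta_{mp^\infty}) / \mathbb{Q}(\zeta_{mp^n})$ (and symmetrically via $\mathbb{Q}(\zeta_{m\ell p^n})$), it suffices to establish the claimed norm relation at the Iwasawa level along $\mathbb{Q}(\zeta_{m\ell p^\infty})/\mathbb{Q}(\zeta_{mp^\infty})$.

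Second, I would unwind the primitive element. By its definition, $\mathbf{z}^{(p)}_{\mathbb{Q}(\zeta_m),\gamma}$ is an $\mathcal{O}_\pi\llbracket G_{mp^\infty} \rrbracket$-combination of imprimitive classes ${}_{c,d}\mathbf{z}^{(p)}_{\mathbb{Q}(\zeta_m)}(f, k, j_i, \alpha_i, \mathrm{prime}(pmN))$ after dividing by the non-zero-divisor $\mu(c,d,j_i)$, and analogously at level $m\ell$ with $S = \mathrm{prime}(pm\ell N)$. The coefficients $b_i \mu(c,d,j_i)^{-1}$ are compatible with the natural surjection $\mathcal{O}_\pi\llbracket G_{m\ell p^\infty} \rrbracket \twoheadrightarrow \mathcal{O}_\pi\llbracket G_{mp^\infty} \rrbracket$, since the generators $\sigma_c, \sigma_d$ and $\sigma_{\ell'}$ (for $\ell' \mid N$) appearing in $\mu(c,d,j_i)$ map to their counterparts at level $m$. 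Hence the $\mu^{-1}$ factors cancel after corestriction, and the desired relation for $\mathbf{z}^{(p)}_{\mathbb{Q}(\zeta_m),\gamma}$ reduces to the corresponding relation for the imprimitive elements.

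Third, I would apply Kato's \cite[Proposition 8.12]{kato-euler-systems} to the imprimitive elements. For $\ell \nmid cdpmN$, it provides a norm relation sending ${}_{c,d}\mathbf{z}^{(p)}_{\mathbb{Q}(\zeta_{m\ell})}(f, k, j, \alpha, \mathrm{prime}(pm\ell N))$ to $Q_\ell \cdot {}_{c,d}\mathbf{z}^{(p)}_{\mathbb{Q}(\zeta_m)}(f, k, j, \alpha, \mathrm{prime}(pmN))$, where $Q_\ell$ is the local Euler factor at $\ell$ for $V_{\mathcal{O}_\pi}(f)$. After incorporating the $(k-r)$-th Tate twist $\varprojlim \zeta_{p^n}^{\otimes(k-r)}$ (which rescales $\mathrm{Frob}_\ell^{-1}$ by $\chi_{\mathrm{cyc}}(\mathrm{Frob}_\ell)^{r-k} = \ell^{r-k}$) and using Kato's normalization of $\mathbf{z}_\gamma$ in \cite[Theorem 12.5]{kato-euler-systems} together with the elementary identities $a_\ell(\overline{f}) = \overline{\psi}(\ell) a_\ell(f)$ and $\alpha_\ell \beta_\ell = \psi(\ell) \ell^{k-1}$, the Euler factor becomes precisely the stated expression. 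The main obstacle will be the careful bookkeeping of conventions, in particular tracking how Kato's local Euler factor for $V_{\mathcal{O}_\pi}(f)$ transforms under both the Tate twist by $(k-r)$ and Kato's duality convention, which is what causes the conjugate newform $\overline{f}$ and character $\overline{\psi}$ to appear in the final form; once the convention is fixed, no new geometric input is required.
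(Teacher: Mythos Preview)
Your proposal is correct and follows the same approach as the paper: both reduce the claim to \cite[Proposition 8.12]{kato-euler-systems}. The paper's proof is a one-line citation of that proposition, while you have spelled out in detail why it applies to the canonical classes---passing to the Iwasawa level, unwinding the $\mu(c,d,j)^{-1}$-normalization in the definition of $\mathbf{z}^{(p)}_{\mathbb{Q}(\zeta_m),\gamma}$, and tracking the Tate twist---but the underlying argument is the same.
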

\begin{proof}
It immediately follows from \cite[Prop. 8.12]{kato-euler-systems}.
\end{proof}
Let $\gamma = \gamma^+ + \gamma^- \in V_f$ and $z_{\mathbb{Q}(\zeta_{mp^n}), \gamma}(f, k-r) \in \mathrm{H}^1(\mathbb{Z}[\zeta_{mp^n}, 1/p],  V_f(k-r) )$ be the canonical Kato's Euler system with respect to $\gamma$.
Write
$$  \mathrm{exp}^{*} \left(  \mathrm{loc}_p \left( z_{\mathbb{Q}(\zeta_{mp^n}), \gamma}(f, k-r) \right) \right) = c_{k-r, mp^n, \gamma} \cdot \omega_f$$
where
 $\mathrm{exp}^{*} =  \mathrm{exp}^{*}_{V_{f}(k-r), \mathbb{Q}(\zeta_{mp^n})} = \sum_{v} \mathrm{exp}^{*}_{V_{f}(k-r), \mathbb{Q}(\zeta_{mp^n})_v}$
is the sum of the Bloch--Kato dual exponential maps at $v$ a prime of $\mathbb{Q}(\zeta_{mp^n})$ dividing $p$,
  $c_{k-r, mp^n, \gamma} \in \mathbb{Q}(\zeta_{mp^n}) \otimes F_\pi$, and
 $\omega_f$ is the basis of $S(f)$ induced from the \emph{normalized} eigenform $f$.

Following the convention of Kato's period map $\mathrm{per}_f$ \cite[$\S$7.6, (7.13.6) and Theorem 16.2]{kato-euler-systems}, 
Kato's periods are defined by
$$\mathrm{per}_f (\omega_f ) = 
\Omega^{+}_{\gamma, \omega_f} \cdot \gamma^+ +
\Omega^{-}_{\gamma, \omega_f} \cdot \gamma^- .$$
 
\begin{thm} \label{thm:kato_interpolation}
For a primitive Dirichlet character $\chi$ of conductor $mp^n$ with $(m,Np)=1$, we have the interpolation formula
$$
 \sum_{b \in (\mathbb{Z}/mp^n\mathbb{Z})^\times}  \chi(b) \cdot 
\sigma_b \left( c_{k-r, mp^n, \gamma} \right) \cdot \omega_f 
 = 
(2 \pi \sqrt{-1}  )^{k-r-1} \cdot \dfrac{L^{(p)}(\overline{f}, \chi, r)}{  \Omega^{\pm}_{\gamma, \omega_f} } \cdot \omega_{f} 
$$
where
 $L^{(p)}(\overline{f}, \chi, r)$ is the $p$-imprimitive $L$-value of $\overline{f}$ at $s = r$ twisted by $\chi$, 
$\Omega^{\pm}_{\gamma, \omega_f}$ is Kato's period, and the sign of Kato's period coincides with the sign of $(-1)^{k-r-1} \cdot \chi(-1)$.
\end{thm}
\begin{proof}
It immediately follows from Theorem \ref{thm:kato_interpolation_original} and the definition of canonical Kato's Euler systems.
\end{proof}
Comparing the conventions between Kato's periods \cite[$\S$7.6]{kato-euler-systems} and Mazur--Tate--Teitelbaum's ones \cite[$\S$1 and $\S$7]{mtt}, we obtain
%
%
$$C^\pm_{\mathrm{per}} \cdot \Omega^{\pm}_{\gamma, \omega_f}  =  - (2\pi \sqrt{-1})^{k-2} \cdot \Omega^{\pm}_{\overline{f}} $$
where 
\begin{itemize}
\item $C^\pm_{\mathrm{per}} \in \mathcal{O}_{(\pi)}$ is uniquely determined up to $\mathcal{O}^{\times}_{(\pi)}$, respectively, 
\item (the sign of $C^\pm_{\mathrm{per}}$) = (the sign of $\Omega^{\pm}_{\gamma, \omega_f}$) = (the sign of $\Omega^{\pm}_{\overline{f}}$) if $k$ is even, and
\item (the sign of $C^\pm_{\mathrm{per}}$) = (the sign of $\Omega^{\mp}_{\gamma, \omega_f}$) = (the sign of $\Omega^{\pm}_{\overline{f}}$) if $k$ is odd.
\end{itemize}
Let  $\delta^{\pm}_f \in V^{\pm}_F(f)$ be non-zero elements determined by the relation
\begin{align} \label{eqn:delta_f}
\begin{split}
\mathrm{per}_f (\omega_f ) & = \gamma^+ \otimes \Omega^+_{\gamma, \omega_f} + \gamma^- \otimes \Omega^-_{\gamma, \omega_f}  \\
& = \delta^+_f \otimes C^\pm_{\mathrm{per}} \cdot \Omega^+_{\gamma, \omega_f} + \delta^-_f \otimes C^\mp_{\mathrm{per}} \cdot \Omega^-_{\gamma, \omega_f} .
\end{split}
\end{align}
where the sign convention of $C^{\pm}_{\mathrm{per}}$ follows the above.

For our purpose, we modify the Euler system $\left\lbrace z_{\mathbb{Q}(\zeta_{mp^n}), \gamma }(f, k-r) \right\rbrace_{\mathbb{Q}(\zeta_{mp^n})}$ in the sense of ``Varying the Euler factors" \cite[$\S$9.6]{rubin-book}.
\begin{cor} \label{cor:kato-euler-systems-modification}
For each $\gamma \in T_f$,
there exists a system of cohomology classes
$$\left\lbrace \mathfrak{z}_{\mathbb{Q}(\zeta_{mp^n}), \gamma}(f, k-r) \in  \mathrm{H}^1(\mathbb{Z}[\zeta_{mp^n}, 1/p],  T_f(k-r) ) \right\rbrace_{\mathbb{Q}(\zeta_{mp^n})}$$
such that
\begin{enumerate}
\item $\mathfrak{z}_{\mathbb{Q}(\zeta_{m \ell p^n}), \gamma}(f, k-r)$ maps to
$$\left( 1 - a_\ell(\overline{f}) \cdot \mathrm{Frob}^{-1}_\ell \cdot \ell^{1-r} + \overline{\psi}(\ell) \cdot \mathrm{Frob}^{-2}_\ell \cdot \ell^{k-2-2r} \right) \cdot  \mathfrak{z}_{\mathbb{Q}(\zeta_{m p^n}), \gamma}(f, k-r)$$
under the corestriction map in Theorem \ref{thm:kato_euler_system_relation},
\item
$\mathfrak{z}_{\mathbb{Q}(\zeta_{p^n}), \gamma}(f, k-r)
=z_{\mathbb{Q}(\zeta_{p^n}), \gamma}(f, k-r)$, and
\item if we choose the optimal period $\gamma = \delta_f \in V_f$, then we have
$$\sum_{b \in (\mathbb{Z}/mp^n\mathbb{Z})^\times}  \chi(b) \cdot   \mathrm{exp}^{*} \left(  \mathrm{loc}_p \left( \mathfrak{z}_{\mathbb{Q}(\zeta_{mp^n}), \delta_f }(f, k-r)^{\sigma_b} \right) \right)  = 
 - \dfrac{  L^{(p)}(\overline{f}, \chi, r)}{(2 \pi \sqrt{-1})^{r-1} \cdot \Omega^{\pm}_{\overline{f}}} \cdot \omega_{f} $$
under the same setting in Theorem \ref{thm:kato_interpolation}
where $\pm$ is the sign of $(-1)^{r-1} \cdot \chi(-1)$.
\end{enumerate}
\end{cor}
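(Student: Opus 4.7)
The construction of $\mathfrak{z}_{\mathbb{Q}(\zeta_{mp^n}), \gamma}(f,k-r)$ from the canonical Kato class $z_{\mathbb{Q}(\zeta_{mp^n}), \gamma}(f,k-r)$ will be carried out by an ``Euler factor modification'' procedure in the spirit of \cite[$\S$9.6]{rubin-book}. The plan is to proceed by induction on the number of prime divisors of $m$: the base case $m=1$ is dictated by condition (2), so set $\mathfrak{z}_{\mathbb{Q}(\zeta_{p^n}), \gamma} := z_{\mathbb{Q}(\zeta_{p^n}), \gamma}$. For the inductive step, writing $m = m_0 \ell$ with $\ell \nmid m_0 p N$, I would modify $z_{\mathbb{Q}(\zeta_{mp^n}),\gamma}$ by a correction term lifted from the lower level $\mathbb{Q}(\zeta_{m_0 p^n})$, chosen precisely so that the corestriction relation at $\ell$ transforms from Kato's Euler factor
$P_\ell(\mathrm{Frob}_\ell^{-1}) = 1 - a_\ell(\overline{f})\cdot\ell^{-r}\cdot\mathrm{Frob}_\ell^{-1} + \overline{\psi}(\ell)\cdot\ell^{k-1-2r}\cdot\mathrm{Frob}_\ell^{-2}$ of Proposition \ref{thm:kato_euler_system_relation} into the target factor
$Q_\ell(\mathrm{Frob}_\ell^{-1}) = 1 - a_\ell(\overline{f})\cdot\ell^{1-r}\cdot\mathrm{Frob}_\ell^{-1} + \overline{\psi}(\ell)\cdot\ell^{k-2-2r}\cdot\mathrm{Frob}_\ell^{-2}$ of (1). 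The key algebraic input making such an integral correction possible is the divisibility $(\ell-1)\mid Q_\ell(X)-P_\ell(X)$ in $\mathcal{O}_\pi[X]$, verified by the direct identity
$Q_\ell(X) - P_\ell(X) = -(\ell - 1)\cdot X\ell^{-r}\cdot \bigl(a_\ell(\overline{f}) + \overline{\psi}(\ell)\,\ell^{k-r-2}\,X\bigr).$
Conditions (1) and (2) will then be built directly into the construction.

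For condition (3), the interpolation formula, my plan is to reduce it to Kato's explicit reciprocity law (Theorem \ref{thm:kato_interpolation}). The key observation is that the correction terms added at each inductive step lie in the images of the trace/lift maps from proper subextensions; hence, for a primitive Dirichlet character $\chi$ of conductor $mp^n$ with $(m, Np) = 1$, they contribute zero under $\chi$ by orthogonality of characters, so
$$
\sum_{b \in (\mathbb{Z}/mp^n\mathbb{Z})^\times}\chi(b)\cdot\mathrm{exp}^{*}\left(\mathrm{loc}_p\left(\mathfrak{z}_{\mathbb{Q}(\zeta_{mp^n}), \delta_f}(f,k-r)^{\sigma_b}\right)\right) = \sum_{b}\chi(b)\cdot\mathrm{exp}^{*}\left(\mathrm{loc}_p\left(z_{\mathbb{Q}(\zeta_{mp^n}), \delta_f}(f,k-r)^{\sigma_b}\right)\right).
$$
By Theorem \ref{thm:kato_interpolation} this equals $\dfrac{L^{(p)}(\overline{f},\chi,r)}{(2\pi\sqrt{-1})^{r+1-k}\cdot \Omega^{\pm}_{f,\mathrm{Kato}}}\cdot\omega_f\otimes e_{r-k}$, with $\pm$ the sign of $(-1)^{k-r-1}\chi(-1)$. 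Substituting the identification $\Omega^{\pm}_{f,\mathrm{Kato}} = -(2\pi\sqrt{-1})^{k-2}\cdot\Omega^{\pm}_{\overline{f}}$ from Remark \ref{rem:period-kato-mtt}---where the two $\pm$ signs agree iff $k$ is even, precisely the discrepancy between $(-1)^{k-r-1}\chi(-1)$ and $(-1)^{r-1}\chi(-1)$---then yields the right-hand side of (3).

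The main obstacle I anticipate is the integrality of the modification, namely that $\mathfrak{z}_{\mathbb{Q}(\zeta_{mp^n}), \gamma}$ in fact lies in $\mathrm{H}^1(\mathbb{Z}[\zeta_{mp^n},1/p], T_f(k-r))$ for $\gamma \in T_f$, and not merely in the cohomology with $F_\pi$-coefficients. The freeness result of Lemma \ref{lem:freeness-iwasawa-algebra} under Assumption \ref{assu:standard}.(Im0), together with the divisibility $(\ell - 1)\mid Q_\ell - P_\ell$ noted above, should realize the needed correction inside the integral group ring $\mathcal{O}_\pi[\mathrm{Gal}(\mathbb{Q}(\zeta_{mp^n})/\mathbb{Q})]$ and thereby preserve integrality throughout the inductive construction.
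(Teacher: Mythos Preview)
Your proposal is correct and follows essentially the same approach as the paper: both invoke the Euler-factor modification procedure of \cite[Lemma 9.6.1]{rubin-book}, both verify the key congruence $P_\ell \equiv Q_\ell \pmod{\ell-1}$ (your explicit factorization is exactly this), and both deduce the interpolation formula (3) from Theorem \ref{thm:kato_interpolation} via the period comparison of Remark \ref{rem:period-kato-mtt}. Your write-up simply makes explicit the inductive construction and the orthogonality argument (correction terms lie in the image of trace from proper subfields, hence are killed by primitive characters) that the paper leaves implicit in its one-line citation of Rubin.
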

\begin{proof}
It is a simple application of \cite[Lem. 9.6.1]{rubin-book}.
\end{proof}
The \textbf{modified Kato's Euler system over the cyclotomic tower} is defined by
$$\mathfrak{Z}_{\mathbb{Q}(\zeta_{m}), \gamma}(f, k-r) :=
\varprojlim_n \mathfrak{z}_{\mathbb{Q}(\zeta_{mp^n}), \gamma}(f, k-r) $$
where $\gamma \in V_f$, and we have $\mathfrak{Z}_{\mathbb{Q}, \gamma}(f, k-r) = \mathbf{z}_{\mathbb{Q}, \gamma}(f, k-r)$.

The following Iwasawa main conjecture without $p$-adic $L$-functions is due to Kato \cite[Conj. 12.10]{kato-euler-systems}.
\begin{conj}[Kato's main conjecture] \label{conj:kato-main-conjecture}
Let $i \in \lbrace 0, \cdots , p-2 \rbrace$.
Let $\mathbf{z}_{\mathbb{Q}, \gamma_0 }(f, i, k-r) \in \mathrm{H}^1_{\mathrm{Iw}}(T_{f,i}(k-r))$ be Kato's zeta element for $T_{f,i}(k-r)$ over the cyclotomic $\mathbb{Z}_p$-extension of $\mathbb{Q}$  where $r \in \mathbb{Z}$.
If  $\rho_f$ has large image,  then 
\begin{equation} \label{eqn:kato-main-conjecture}
\mathrm{char}_{\Lambda^{(i)}_{\mathrm{Iw}}} \dfrac{  \mathrm{H}^1_{\mathrm{Iw}}(T_{f,i}(k-r)) }{ \Lambda^{(i)}_{\mathrm{Iw}}\mathbf{z}_{\mathbb{Q}, \gamma_0 }(f, i, k-r) }
= \mathrm{char}_{\Lambda^{(i)}_{\mathrm{Iw}}} \mathrm{H}^2_{\mathrm{Iw}}(T_{f,i}(k-r)) 
\end{equation}
 as ideals of $\Lambda^{(i)}_{\mathrm{Iw}}$.
\end{conj}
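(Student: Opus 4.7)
The plan is to decouple the equality (\ref{eqn:kato-main-conjecture}) into two divisibilities and prove each separately, using Kato's Euler system to bound the Selmer-like quotient from one side and the Kolyvagin system machinery of Mazur--Rubin to sharpen this bound to an equality. The first divisibility, namely that $\mathrm{char}_{\Lambda^{(i)}} \mathbb{H}^2(j_*T_{f,i}(k-r))$ divides $\mathrm{char}_{\Lambda^{(i)}} \mathbb{H}^1(j_*T_{f,i}(k-r))/\Lambda^{(i)}\mathbf{z}_{\mathbb{Q}, \gamma_0}(f, i, k-r)$, is Kato's theorem: it follows from the Euler system relation established in Proposition \ref{thm:kato_euler_system_relation} for the canonical Kato's Euler system $\{z_{\mathbb{Q}(\zeta_{mp^n}), \gamma_0}(f, k-r)\}$, combined with the large-image input from Assumption \ref{assu:standard}.(Im0) that is already packaged into Lemma \ref{lem:freeness-iwasawa-algebra} and Proposition \ref{prop:kato-generated-by-one}, fed into the Euler system argument of \cite{rubin-book}.

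For the reverse divisibility, I would transform Kato's $\Lambda^{(i)}$-adic Euler system into a $\Lambda^{(i)}$-adic Kolyvagin system $\boldsymbol{\kappa} = \{\kappa_m\}$ for $T_{f,i}(k-r)$ via the Euler-to-Kolyvagin map of Mazur--Rubin, whose bottom class $\kappa_1$ is (a scalar multiple of) $\mathbf{z}_{\mathbb{Q}, \gamma_0}(f, i, k-r)$. By the rigidity of $\Lambda^{(i)}$-adic Kolyvagin systems, the module of Kolyvagin systems is free of rank one over $\Lambda^{(i)}$, so it suffices to show $\boldsymbol{\kappa}$ is \emph{primitive}, i.e.\ generates this module; equivalently, $\boldsymbol{\kappa} \not\equiv 0 \pmod{\pi}$. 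Once primitivity is established, the Mazur--Rubin formalism converts it into the reverse divisibility $\mathrm{char}_{\Lambda^{(i)}} \mathbb{H}^1/\Lambda^{(i)}\mathbf{z}_{\mathbb{Q}, \gamma_0}$ divides $\mathrm{char}_{\Lambda^{(i)}} \mathbb{H}^2$, yielding the desired equality.

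The main obstacle, and the real technical heart of the plan, is verifying primitivity of $\boldsymbol{\kappa}$ mod $\pi$. Directly controlling Kolyvagin derivatives of an Euler system of Galois cohomology classes is intractable, so I would translate the problem to the modular symbol side. Concretely, I would localize $\mathfrak{z}_{\mathbb{Q}(\zeta_{mp^n}), \delta_{f, \mathrm{cris}}}(f, k-r)$ at $p$, apply the integral finite layer Coleman maps of Theorem \ref{thm:main_thm_finite_layer_coleman} to convert it into the Mazur--Tate element $\theta_{\mathbb{Q}(\zeta_{mp^n}), r}(\overline{f}) \in \mathcal{O}_\pi[\mathrm{Gal}(\mathbb{Q}(\zeta_{mp^n})/\mathbb{Q})]$, and then compute the Kolyvagin derivatives of these Mazur--Tate elements modulo $\pi$. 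Such an explicit modular-symbol computation, exploiting the optimal-period normalization, should identify the mod $\pi$ derivative at a square-free product $m$ of Kolyvagin primes with the Kurihara number $\widetilde{\delta}_{\mathbb{Q}(\zeta_m)}(\overline{f}, r)$ (or its twist for $i \neq 0$); the non-vanishing of any one such Kurihara number then certifies $\boldsymbol{\kappa} \not\equiv 0 \pmod{\pi}$.

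Two further points would need care. First, to even form $\mathfrak{z}_{\mathbb{Q}, \delta_{f, \mathrm{cris}}}$ as an \emph{integral} class and thus read off integral Mazur--Tate elements, one needs the compatibility of the three integral structures discussed in \S\ref{subsec:mazur-tate-optimal-kato-zeta}, which is the content of Conjecture \ref{conj:comparison-zeta-elements}; in the Fontaine--Laffaille range $2 \leq k \leq p-1$ (and in the ordinary distinguished case) this is available via Theorem \ref{thm:comparison-zeta-elements}, and outside this range one must either impose the conjecture or bookkeep a controlled constant error term as in Remark \ref{rem:main-theorem-coleman}. Second, the Tamagawa defect obstructs direct verification at non-minimal level, so at levels $N \ne N(\overline{\rho})$ the argument should be supplemented by a congruence-of-modular-forms step (as in Corollary \ref{cor:main_thm_main_conj_families}) to spread the equality from a minimal-level witness to the full congruence class, which is where the $\mu=0$ input of the second numerical criterion enters.
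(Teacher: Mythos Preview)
The statement you are attempting to prove is a \emph{conjecture}, not a theorem: the paper states Conjecture~\ref{conj:kato-main-conjecture} and does not claim to prove it unconditionally. What the paper does prove is (a) the one-sided divisibility $\subseteq$ in (\ref{eqn:kato-main-conjecture}) via Kato's Euler system argument (Theorem~\ref{thm:kato-divisibility}), and (b) the \emph{equivalence} of the full equality with the non-vanishing of some Kurihara number, under the assumptions of Theorem~\ref{thm:main_thm_main_conj}. Your proposal accurately reconstructs the machinery behind (b) --- the Euler-to-Kolyvagin map, the rigidity of $\Lambda^{(i)}$-adic Kolyvagin systems from \cite{kazim-Lambda-adic}, the finite layer Coleman maps translating primitivity into Kurihara numbers, and the integrality comparison of Conjecture~\ref{conj:comparison-zeta-elements} --- but you package it as a proof of the conjecture rather than as a reduction to a numerical criterion.

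The genuine gap is therefore not technical but logical: you write that ``the non-vanishing of any one such Kurihara number then certifies $\boldsymbol{\kappa} \not\equiv 0 \pmod{\pi}$,'' yet you never establish that such a non-vanishing Kurihara number exists. This non-vanishing is not known in general; indeed, by Theorem~\ref{thm:main_thm_main_conj} it is \emph{equivalent} to the very conjecture you are trying to prove (under the stated hypotheses), so assuming it is circular. The congruence argument you sketch at the end (spreading from minimal level via $\mu=0$) likewise presupposes that the two numerical criteria (\ref{eqn:kurihara_number}) and (\ref{eqn:mu=0}) have been verified for at least one form in the congruence class, which is an open input, not a theorem. In short, your outline correctly captures the paper's strategy for Theorem~\ref{thm:main_thm_main_conj} and Corollary~\ref{cor:main_thm_main_conj_families}, but it is a proof of a conditional equivalence, not of Conjecture~\ref{conj:kato-main-conjecture} itself.
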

We say that the \textbf{main conjecture for $T_{f,i}(k-r)$ holds} if (\ref{eqn:kato-main-conjecture}) holds for the fixed $i$. It is easy to see that Conjecture \ref{conj:kato-main-conjecture} for all $i$ is independent of $r$\footnote{However, Tate twist changes $i$; for example, the statements for
$\mathrm{H}^a_{\mathrm{Iw}}(T_{f,i}(k-r))$ are equivalent to those for $\mathrm{H}^a_{\mathrm{Iw}}(T_{f,i+r'-r}(k-r'))$. See \cite[$\S$6.5]{rubin-book} for detail.}.
\begin{thm}[Kato] \label{thm:kato-divisibility}
One inclusion $\subseteq$ in (\ref{eqn:kato-main-conjecture}) holds.
\end{thm}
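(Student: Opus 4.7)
The plan is to invoke the Euler system machinery of Kato and Rubin applied to the canonical zeta elements constructed in $\S$\ref{sec:canonical-Kato-Euler-systems}. First, Proposition \ref{thm:kato_euler_system_relation} shows that the family $\{ z_{\mathbb{Q}(\zeta_{mp^n}), \gamma_0}(f, k-r) \}_{m,n}$, with $m$ a squarefree integer prime to $Np$, satisfies the Euler system norm relation at every auxiliary prime $\ell \nmid Np$. Passing to the inverse limit over $n$ exhibits $\{ \mathbf{z}_{\mathbb{Q}(\zeta_m), \gamma_0}(f, k-r) \}_m$ as a $\Lambda$-adic Euler system for $T_f(k-r)$ in the sense of \cite{rubin-book}, and projecting to the $\omega^i$-isotypic component yields a $\Lambda^{(i)}$-adic Euler system for $T_{f,i}(k-r)$ whose base class is $\mathbf{z}_{\mathbb{Q}, \gamma_0}(f, i, k-r)$.

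Second, I would feed this Euler system into the Euler-system-to-Kolyvagin-system procedure, forming Kolyvagin derivative classes at square-free products of Kolyvagin primes. The large image condition Assumption \ref{assu:standard}.(Im0) simultaneously provides the Chebotarev density of Kolyvagin primes and the vanishing of the relevant $\mathrm{H}^0$ terms, while Lemma \ref{lem:freeness-iwasawa-algebra} gives the $\Lambda$-freeness of $\mathbf{H}^1(j_*T_f(k-r))$, which keeps the characteristic-ideal arithmetic clean and rules out spurious pseudo-null contributions.

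Third, translate the resulting Selmer-type bound into a divisibility of characteristic ideals. Via the localization sequence of étale cohomology combined with Poitou--Tate global duality in its Iwasawa-theoretic form, $\mathbb{H}^2(j_* T_{f,i}(k-r))$ is identified (up to pseudo-null modules) with the Pontryagin dual of the strict Selmer group of the Kummer dual of $T_{f,i}(k-r)$. The standard output of the Kolyvagin-system argument is then
\[
\mathrm{char}_{\Lambda^{(i)}} \mathbb{H}^2(j_* T_{f,i}(k-r)) \;\text{ divides }\; \mathrm{char}_{\Lambda^{(i)}}\!\left( \frac{\mathbb{H}^1(j_* T_{f,i}(k-r))}{\Lambda^{(i)} \mathbf{z}_{\mathbb{Q}, \gamma_0}(f, i, k-r) } \right),
\]
which is exactly the claimed inclusion $\subseteq$ of principal ideals in (\ref{eqn:kato-main-conjecture}).

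The main technical hurdle is to guarantee that the Euler system is ``sufficiently large'': one needs to know that the zeta element is not $\Lambda^{(i)}$-torsion, so that the quotient on the right is itself torsion and its characteristic ideal is a well-defined non-zero ideal against which to measure $\mathrm{char}_{\Lambda^{(i)}} \mathbb{H}^2$. This is supplied by the explicit reciprocity law (Theorem \ref{thm:kato_interpolation}): the image of the zeta element under the semi-local dual exponential map interpolates $p$-imprimitive twisted $L$-values of $\overline{f}$, which are generically non-zero, so $\mathbf{z}_{\mathbb{Q}, \gamma_0}(f, i, k-r)$ generates a rank-one $\Lambda^{(i)}$-submodule of $\mathbb{H}^1$ (compare Proposition \ref{prop:kato-generated-by-one}). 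This makes the characteristic-ideal comparison meaningful and completes the divisibility.
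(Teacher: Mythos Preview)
Your proposal is correct and follows precisely the approach that the paper's cited references carry out: the paper's own proof consists solely of pointers to \cite[Theorem 12.5.(4)]{kato-euler-systems} and \cite[Theorem 5.3.10.(i)]{mazur-rubin-book}, and your outline is an accurate expansion of the Euler-system-to-Kolyvagin-system argument contained in those references, including the use of (Im0) for the hypotheses of the Kolyvagin machinery and the explicit reciprocity law to ensure the zeta element is non-torsion.
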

\begin{proof}
See \cite[Thm. 12.5.(4)]{kato-euler-systems} and \cite[Thm. 5.3.10.(i)]{mazur-rubin-book}.
\end{proof}
We also concern a slightly generalized form of Conjecture A of Coates--Sujatha \cite[Conj. A]{coates-sujatha-fine-selmer}.
\begin{conj}[Coates--Sujatha] \label{conj:coates-sujatha}
For each $i = 0, \cdots, p-2$, the finitely generated torsion $\Lambda^{(i)}_{\mathrm{Iw}}$-module $\mathrm{H}^2_{\mathrm{Iw}}(T_{f, i}(k-r))$ is indeed a finitely generated $\mathcal{O}_\pi$-module. In other words,
\begin{equation*}
 \mu \left( \mathrm{H}^2_{\mathrm{Iw}}(T_{f, i}(k-r)) \right)  = 0.
\end{equation*}
\end{conj}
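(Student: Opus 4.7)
The plan is to deduce $\mu\bigl(\mathbb{H}^2(j_*T_{f,i}(k-r))\bigr)=0$ from the numerical criterion (\ref{eqn:mu=0}) by combining Kato's one-sided divisibility (Theorem \ref{thm:kato-divisibility}) with the integral Euler-system-to-Mazur--Tate identity of Theorem \ref{thm:main_thm_finite_layer_coleman}. First, by Kato's divisibility, $\mu\bigl(\mathbb{H}^2(j_*T_{f,i}(k-r))\bigr)$ is bounded above by $\mu\bigl(\mathbb{H}^1(j_*T_{f,i}(k-r))/\Lambda^{(i)}\mathbf{z}_{\mathbb{Q},\gamma_0}(f,i,k-r)\bigr)$. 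Since $\mathbb{H}^1(j_*T_{f,i}(k-r))$ is $\Lambda^{(i)}$-torsion-free of rank one (Lemma \ref{lem:freeness-iwasawa-algebra} and a standard rank computation), the latter $\mu$-invariant vanishes precisely when the image of $\mathbf{z}_{\mathbb{Q},\gamma_0}(f,i,k-r)$ in $\mathbb{H}^1(j_*T_{f,i}(k-r))\otimes_{\mathcal{O}_\pi}\mathbb{F}_\pi$ is non-zero. Thus the problem reduces to producing a witness for the mod-$\pi$ non-vanishing of this Iwasawa-theoretic zeta element.

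Next I would push the zeta element through $\mathrm{loc}_p$ followed by the integral finite layer Coleman map, converting the analytic question into a question about Mazur--Tate elements. By Theorem \ref{thm:main_thm_finite_layer_coleman}, the composition $\mathrm{Col}_{\mathbb{Q}(\zeta_{p^n}),r}\circ \mathrm{loc}_p$ integrally sends $\mathfrak{z}_{\mathbb{Q}(\zeta_{p^n}),\delta_{f,\mathrm{cris}}}(f,k-r)$ to $\theta_{\mathbb{Q}(\zeta_{p^n}),r}(\overline{f})$, and in the Fontaine--Laffaille window $2\leq k\leq p-1$, Theorem \ref{thm:comparison-zeta-elements} identifies $\mathcal{O}_\pi\delta_{f,\mathrm{cris}}^{\pm}$ with $\mathcal{O}_\pi\gamma_0^{\pm}$, so the same holds for $\mathbf{z}_{\mathbb{Q},\gamma_0}$ up to a $p$-adic unit. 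Because $\mathfrak{z}$ and the canonical $z$ coincide at conductor $p^n$ by Corollary \ref{cor:kato-euler-systems-modification}, the mod-$\pi$ reduction of $\theta_{\mathbb{Q}(\zeta_{p^n}),r}(\overline{f})^{(i)}$ factors through the mod-$\pi$ reduction of $\mathbf{z}_{\mathbb{Q},\gamma_0}(f,i,k-r)$. Expanding the Mazur--Tate element via the definition in $\S$\ref{subsec:optimal_periods} and applying the orthogonality of $\omega^i$ on $(\mathbb{Z}/p\mathbb{Z})^\times$, the $\omega^i$-component is, up to a unit, a $\Lambda^{(i)}$-linear combination in the variable $b \in \mathbb{Z}/p^{n-1}\mathbb{Z}$ of the sums
$$\sum_{a\in(\mathbb{Z}/p\mathbb{Z})^\times}\overline{\left[\tfrac{a(1+bp)}{p^n}\right]_{\overline{f},r}}\cdot\overline{\omega^i(a)}$$
appearing on the left of (\ref{eqn:mu=0}). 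Hence non-vanishing of one such sum at some layer $n$ forces $\theta_{\mathbb{Q}(\zeta_{p^n}),r}(\overline{f})^{(i)}\not\equiv 0\pmod{\pi}$, which by the above diagram forces $\mathbf{z}_{\mathbb{Q},\gamma_0}(f,i,k-r)\not\equiv 0\pmod{\pi}$, and we are done.

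The main obstacle, flagged in Remark \ref{rem:main-theorem-coleman} and codified in Conjecture \ref{conj:comparison-zeta-elements}, is the simultaneous compatibility of the three integral normalizations at play: the optimal complex periods $\Omega^{\pm}_{\overline{f}}$ that make Mazur--Tate elements uniformly integral, the generator $\gamma_0\in T_f$ that makes Kato's zeta elements $\Lambda$-integral, and the class $\delta_{f,\mathrm{cris}}\in V_f$ arising from a $\varphi$-stable Benois-type lattice in the Dieudonné module that makes the finite layer Coleman map integral. Outside the Fontaine--Laffaille range (or the $p$-ordinary $p$-distinguished case), the identity $\mathcal{O}_\pi\delta_{f,\mathrm{cris}}^{\pm}=\mathcal{O}_\pi\gamma_0^{\pm}$ can fail, in which case the passage from a non-vanishing Mazur--Tate coefficient mod $\pi$ back to the non-vanishing of $\mathbf{z}_{\mathbb{Q},\gamma_0}$ mod $\pi$ could absorb a factor of $\pi$, as happens in the constant error term $C_f$ of \cite{epw2}. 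Removing this defect in full generality is precisely Conjecture \ref{conj:comparison-zeta-elements}, and any attack on Conjecture \ref{conj:coates-sujatha} via a single modular-symbol criterion ultimately has to pass through it; conditional on that, my plan above works verbatim for all weights and levels satisfying Assumption \ref{assu:standard}.
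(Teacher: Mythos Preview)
Your proposal is correct and follows essentially the same route as the paper, but note that the statement in question is a \emph{conjecture}: the paper does not prove Conjecture~\ref{conj:coates-sujatha} unconditionally, and neither do you. What you have outlined is precisely the conditional statement and proof of Theorem~\ref{thm:main_thm_mu_zero_conj} as carried out in \S\ref{subsec:mazur-tate-coates-sujatha}: reduce via Kato's divisibility (Theorem~\ref{thm:kato-divisibility}) to the mod~$\pi$ non-vanishing of the zeta element in the free rank-one module $\mathbb{H}^1(j_*T_{f,i}(k-r))$, push through the integral finite layer Coleman map (Theorem~\ref{thm:main_thm_finite_layer_coleman}) to the $\omega^i$-component of the Mazur--Tate element, and read off the coefficients as the sums in~(\ref{eqn:mu=0}). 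Your identification of Conjecture~\ref{conj:comparison-zeta-elements} as the bottleneck outside the Fontaine--Laffaille range, and the role of Theorem~\ref{thm:comparison-zeta-elements} in removing it when $2\le k\le p-1$, also matches the paper exactly.
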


\section{Review of $p$-adic Hodge theory} \label{sec:p-adic-hodge-theory}
We quickly review the basic notion of $p$-adic Hodge theory emphasizing its \emph{integral} aspect via the theory of Wach modules.
Especially, we focus on Benois' $\varphi$-stable integral lattice in the crystalline Dieudonn\'{e}  module, which gives us the relevant integrality.
The existence of such a lattice is essential for the construction of the compatible families of the \emph{integral} local points in $\S$\ref{sec:mazur-tate-etale}. 
Although no new result is presented in this section, the content and the convention of this section play important roles in proving the integrality of the finite layer Coleman maps.
We closely follow \cite[$\S$1]{benois-crystalline-duke}.
\subsection{Rings of $p$-adic periods}
We review the rings of $p$-adic periods \`{a} la Fontaine. See \cite{berger-intro, caruso-intro} for details.
For convenience, we record both notations due to Colmez and Fontaine.

Let $K/\mathbb{Q}_p$ be a finite \emph{unramified} extension and $W  = \mathcal{O}_K$ with finite residue field $\mathbb{F}$ and $C = \widehat{\overline{K}}$ the $p$-adic completion of an algebraic closure of $K$. 
Let $v : C \to \mathbb{R} \cup \lbrace \infty \rbrace$ be the valuation of $C$ normalized by $v(p) = 1$.
Let 
$\mathcal{R} = \widetilde{\mathbf{E}}^+ := { \displaystyle \varprojlim_{x \mapsto x^p} \mathcal{O}_C /p\mathcal{O}_C }$ and $ \mathbf{A}_{\mathrm{inf}} = \widetilde{\mathbf{A}}^+ := W(\widetilde{\mathbf{E}}^+)$
where $W(R)$ is the ring of Witt vector of (perfect) ring $R$.

An element $x \in \mathcal{R}$ can be viewed as a sequence $(x_n)_{n \geq 0}$ such that $x^p_{n+1} = x_n$ for all $n \geq 0$.
Let $\widehat{x}_n$ be any lifting of $x_n$ in $\mathcal{O}_C$, and then the sequence $\widehat{x}^{p^n}_{m+n}$ converges to some $x^{(m)}$ in $\mathcal{O}_C$ and it is independent of the choice of liftings. 
The ring $\mathcal{R}$ equipped with the valuation $v_{\mathcal{R}}(x) = v(x^{(0)})$ is a complete local ring of characteristic $p$ with residue field $\overline{\mathbb{F}}_p$. Furthermore, $\mathcal{R}$ is integrally closed in $\mathrm{Frac}(\mathcal{R})$.
\begin{choice} \label{choice:p-adic-orientation}
We fix a generator $\varepsilon = \left( \zeta_{p^n} \right)_{n \geq 0}$ of $\mathbb{Z}_p(1)$ and denote by the same letter the unique element $\varepsilon \in \mathcal{R}$ such that $\varepsilon^{(n)} = \zeta_{p^n}$.
\end{choice}
Then $W(\mathcal{R})$ and $W(\mathrm{Frac}(\mathcal{R}))$ are complete rings of characteristic zero containing $W(\overline{\mathbb{F}}_p)$. Moreover, they are endowed with the action of the Galois group $G_K$ and the Frobenius $\varphi$. Let
 $[-] : \mathcal{R} \to W(\mathcal{R} )$
 be the Teichm\"{u}ller lifting. 
 Then any $u =  (u_n)_{n \geq 0} \in W(\mathcal{R})$ can be written as
$u = \sum_{m \geq 0} [u^{p^{-m}}_m] \cdot p^m $.

Put $\pi = [\varepsilon] - 1 \in W(\mathcal{R})$ and $\pi_n = \varphi^{-n}(\pi) = [\varepsilon]^{1/p^n} -1 \in W(\mathcal{R})$ for $n \geq 0$; thus, $\pi = \pi_0$.
Let 
$S_n = W \llbracket \pi_n \rrbracket \subseteq W(\mathcal{R})$
be the $W$-subring of $W(\mathcal{R})$ generated by $\pi_n$. Write $S =S_0$. Then $S_n$ is stable under the action of $G_K$ and $\varphi$; more explicitly, we have
\[
\xymatrix{
g(\pi_n) = (1+\pi_n)^{\chi_{\mathrm{cyc}}(g)} - 1 , &
\varphi(\pi_n) = (1 + \pi_n)^p -1
}
\]
where $g \in G_K$ and $\chi_{\mathrm{cyc}} : G_K \to \mathrm{Gal}(K(\zeta_{p^\infty})/K) \simeq \mathbb{Z}^\times_p$ is the $p$-adic cyclotomic character.

Since $\pi_n$ is invertible in $W(\mathrm{Frac}(\mathcal{R}))$, we regard $S_n[\pi^{-1}_n]$ as a subring of $W(\mathrm{Frac}(\mathcal{R}))$.
We write
$$
\mathcal{O}_{\mathcal{E}, \pi_n}   = \widehat{S_n[\pi^{-1}_n]}   = \left\lbrace \sum_{k \in \mathbb{Z}} a_k \pi^k_n : a_k \in W, a_k \to 0 \textrm{ as }k \to -\infty \right\rbrace 
$$
where $\widehat{(-)}$ means the $p$-adic completion (the strong topology).
When $n=0$, we also write $\mathcal{O}_{\mathcal{E}, \pi}  = \mathcal{O}_{\mathcal{E}}$.
Denote by $\mathcal{O}^{\mathrm{ur}}_{\mathcal{E}, \pi_n}$ the maximal unramified extension of $\mathcal{O}_{\mathcal{E}, \pi_n}$ and by $\widehat{\mathcal{O}^{\mathrm{ur}}_{\mathcal{E}, \pi_n}}$ the $p$-adic completion of $\mathcal{O}^{\mathrm{ur}}_{\mathcal{E}, \pi_n}$.
Denote by
$S^{\mathrm{PD}} = S \left[ \dfrac{\pi^2}{2!}, \dfrac{\pi^3}{3!}, \cdots \right]$
the PD-envelope of $S$
and $I^{\langle i \rangle}$ the ideal of $S^{\mathrm{PD}}$ generated by $\dfrac{\pi^m}{m!}$, $m \geq i$.
Write
$\widehat{S^{\mathrm{PD}}} = \varprojlim_{i} S^{\mathrm{PD}} / I^{\langle i \rangle} $.

Let 
$\theta : \mathbf{A}_{\mathrm{inf}} = \widetilde{\mathbf{A}}^+ = W(\mathcal{R}) \to \mathcal{O}_C$
 be the map given by the formula
$\theta \left( \sum_{m \geq 0} [u_m] \cdot p^m \right) = \sum_{m \geq 0} u^{(0)}_m \cdot p^m $.
Then $\theta$ is a surjective ring homomorphism and $\mathrm{ker} \ \theta$ is generated by
 $\omega = \sum_{i=0}^{p-1} [\varepsilon]^{i/p} = \dfrac{[\varepsilon]-1}{[\varepsilon]^{1/p}-1}$
 where $[\varepsilon]^{1/p} = [(\varepsilon^{(1)}, \cdots)]$.
By inverting $p$, we extend $\theta$ to
$\theta : \mathbf{B}^+_{\mathrm{inf}} = \widetilde{\mathbf{B}}^+ = W(\mathcal{R})[1/p] \to C$
and define
$\mathbf{B}^+_{\mathrm{dR}} := \varprojlim_n \mathbf{B}^+_{\mathrm{inf}} / \left( \mathrm{ker} \ \theta \right)^n $.
It is known that $\mathbf{B}^+_{\mathrm{dR}}$ is a complete discrete valuation ring with residue field $C$ and admits $G_K$-action.
Since $\theta(\pi) = \theta([\varepsilon] - 1) = 0$, $[\varepsilon] - 1$ is ``small" with respect to the topology of  $\mathbf{B}^+_{\mathrm{dR}}$.
Thus, the series
$\mathrm{log}([\varepsilon]) =  \sum_{m \geq 1}^{\infty} (-1)^{m+1} \dfrac{\pi^m}{m} = - \sum_{m \geq 1}^{\infty} \dfrac{(1-[\varepsilon])^m}{m}
$
converges to the ``$p$-adic $2\pi i$" $t$ in $\mathbf{B}^+_{\mathrm{dR}}$ with respect to the $(\mathrm{ker} \ \theta)$-adic topology.
The element $t$ also generates the maximal ideal of $\mathbf{B}^+_{\mathrm{dR}}$ and $G_K$ acts on $t$ by $g \cdot t = \chi_\mathrm{cyc}(g) \cdot t$.
The field of fractions of $\mathbf{B}^+_{\mathrm{dR}}$ is denoted by $\mathbf{B}_{\mathrm{dR}}$, and $\mathbf{B}_{\mathrm{dR}}$ is the complete discrete valuation field equipped with $G_K$-action and the filtration given by $t$.

Let 
$\mathbf{A}_{\mathrm{cris}} := \widehat{ \mathbf{A}_{\mathrm{inf}}\left[ \dfrac{\omega^2}{2!}, \dfrac{\omega^3}{3!}, \cdots \right] } $ be
the $p$-adic completion of the PD-envelope of $\mathbf{A}_{\mathrm{inf}}$ with respect to $\mathrm{ker} \ \theta$.
It is easy to see $t \in \mathbf{A}_{\mathrm{cris}}$.
We define
$\mathbf{B}^+_{\mathrm{cris}} := \mathbf{A}_{\mathrm{cris}}[1/p]$ and
$\mathbf{B}_{\mathrm{cris}} := \mathbf{B}^+_{\mathrm{cris}}[1/t] =  \mathbf{A}_{\mathrm{cris}}[1/p, 1/t]$.
It is known that $t$ lies in $\mathbf{B}^+_{\mathrm{dR}}$ and $\mathbf{A}_{\mathrm{cris}}$.
Due to \cite[$\S$1.1.2]{benois-crystalline-duke}, $\widehat{S^{\mathrm{PD}}}$ embeds in $\mathbf{A}_{\mathrm{cris}}$.
\subsection{The classification of Galois representations via $(\varphi, \Gamma)$-modules}
Let $K_n = K(\zeta_{p^n})$ and $K_\infty = \cup_{n \geq 1} K_n$.
We recall Fontaine's theorem on the classification of continuous representations of $G_{K_n}$ over $\mathbb{Z}_p$ in terms of $(\varphi, \Gamma)$-modules \cite{fontaine-grothendieck}.

Let $\mathrm{Rep}_{\mathbb{Z}_p}(G_{K_n})$ be the category of finite $\mathbb{Z}_p$-modules equipped with a continuous linear action of $G_{K_n}$ and $M^{\mathrm{\acute{e}t}}_{\varphi, \Gamma}( \mathcal{O}_{\mathcal{E}, \pi_n} )$ the category of \'{e}tale $(\varphi, \Gamma_n)$-modules over $\mathcal{O}_{\mathcal{E}, \pi_n}$ where $\Gamma_n = \mathrm{Gal}(K_\infty/K_n)$. 
\begin{thm}[Fontaine]
The functor
\[
\xymatrix{
\mathbf{D}_{\mathcal{O}_{\mathcal{E}, \pi_n}} : \mathrm{Rep}_{\mathbb{Z}_p}(G_{K_n}) \to M^{\mathrm{\acute{e}t}}_{\varphi, \Gamma}( \mathcal{O}_{\mathcal{E}, \pi_n} ) , & T \mapsto \mathbf{D}_{\mathcal{O}_{\mathcal{E}, \pi_n}}(T) = \left(  T \otimes_{\mathbb{Z}_p} \widehat{\mathcal{O}^{\mathrm{ur}}_{\mathcal{E}, \pi_n}} \right)^{G_{K_\infty}}
}
\]
is an equivalence of categories.
The functor
\[
\xymatrix{
\mathbf{V}_{\mathcal{O}_{\mathcal{E}, \pi_n}} : M^{\mathrm{\acute{e}t}}_{\varphi, \Gamma}( \mathcal{O}_{\mathcal{E}, \pi_n} ) \to
 \mathrm{Rep}_{\mathbb{Z}_p}(G_{K_n}) , & M \mapsto \mathbf{V}_{\mathcal{O}_{\mathcal{E}, \pi_n}} (M) = \left( M \otimes_{ \mathcal{O}_{\mathcal{E}, \pi_n} } \widehat{\mathcal{O}^{\mathrm{ur}}_{\mathcal{E}, \pi_n}} \right)^{\varphi = 1}
}
\]
is a quasi-inverse to $\mathbf{D}_{\mathcal{O}_{\mathcal{E}, \pi_n}}$.
\end{thm}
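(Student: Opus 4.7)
The plan is to follow Fontaine's original strategy via the field-of-norms construction of Fontaine--Wintenberger, combined with a d\'evissage reduction. First I would reduce to the case of $\mathbb{F}_p$-coefficients. The functor $\mathbf{D}_{\mathcal{O}_{\mathcal{E},\pi_n}}$ is left exact, and using that $\widehat{\mathcal{O}^{\mathrm{ur}}_{\mathcal{E},\pi_n}}$ is $p$-adically complete with $G_{K_\infty}$-cohomologically trivial torsion quotients (which follows from the Tate--Sen formalism applied to $K_\infty/K_n$), one sees that $\mathbf{D}$ is exact on torsion objects. Similarly, $\mathbf{V}$ is exact by faithful flatness of $\widehat{\mathcal{O}^{\mathrm{ur}}_{\mathcal{E},\pi_n}}$ over $\mathcal{O}_{\mathcal{E},\pi_n}$. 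A five-lemma plus limit argument then reduces the equivalence to the subcategory of $\mathbb{F}_p$-vector space objects on each side, and from there back up to finite $\mathbb{Z}_p$-modules by taking inverse limits.

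The central input for the mod-$p$ statement is the field-of-norms theorem. It identifies $G_{K_\infty}$ canonically with the absolute Galois group of a complete discretely valued field $E_{K_n}$ of characteristic $p$ whose ring of integers sits inside $\mathcal{R}$ as $\mathbb{F}\llbracket\pi_n\rrbracket$ via the choice of $\varepsilon$ made in Choice \ref{choice:p-adic-orientation}. Under this identification one has $\mathcal{O}_{\mathcal{E},\pi_n}/p = \mathbb{F}\llpa\pi_n\rrpa$ and a canonical isomorphism of the separable closure of this field with $\widehat{\mathcal{O}^{\mathrm{ur}}_{\mathcal{E},\pi_n}}/p$. With this in hand, the mod-$p$ version of the equivalence becomes the classical characteristic-$p$ equivalence between \'etale $\varphi$-modules over $\mathbb{F}\llpa\pi_n\rrpa$ and continuous representations of the absolute Galois group of $E_{K_n}$, a form of Artin--Schreier--Witt theory that is provable by directly inverting $\mathbf{V}$ and $\mathbf{D}$ on each side and checking the unit/counit of adjunction.

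The principal technical obstacle will be establishing the two fundamental identities
\[
\bigl(\widehat{\mathcal{O}^{\mathrm{ur}}_{\mathcal{E},\pi_n}}\bigr)^{G_{K_\infty}} = \mathcal{O}_{\mathcal{E},\pi_n}, \qquad
\bigl(\widehat{\mathcal{O}^{\mathrm{ur}}_{\mathcal{E},\pi_n}}\bigr)^{\varphi=1} = \mathbb{Z}_p,
\]
together with the faithful flatness and the vanishing of $\mathrm{H}^1_{\mathrm{cts}}(G_{K_\infty}, \mathrm{GL}_d(\widehat{\mathcal{O}^{\mathrm{ur}}_{\mathcal{E},\pi_n}}))$ in an appropriate continuous sense, the latter being the ``Hilbert 90'' needed for Galois descent. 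Granting these, for $T \in \mathrm{Rep}_{\mathbb{Z}_p}(G_{K_n})$ one deduces a canonical $G_{K_\infty}$-equivariant isomorphism
\[
\mathbf{D}_{\mathcal{O}_{\mathcal{E},\pi_n}}(T) \otimes_{\mathcal{O}_{\mathcal{E},\pi_n}} \widehat{\mathcal{O}^{\mathrm{ur}}_{\mathcal{E},\pi_n}} \;\simeq\; T \otimes_{\mathbb{Z}_p} \widehat{\mathcal{O}^{\mathrm{ur}}_{\mathcal{E},\pi_n}},
\]
from which both $\mathbf{V}\circ\mathbf{D}\simeq\mathrm{id}$ and $\mathbf{D}\circ\mathbf{V}\simeq\mathrm{id}$ follow by taking $G_{K_\infty}$-invariants and $\varphi$-invariants respectively.

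Finally, since $\Gamma_n = G_{K_n}/G_{K_\infty}$ acts naturally on every object appearing on both sides and commutes with $\varphi$ and with the formation of $G_{K_\infty}$-invariants, the equivalence on $G_{K_\infty}$-representations upgrades without additional work to the stated equivalence between $\mathrm{Rep}_{\mathbb{Z}_p}(G_{K_n})$ and $M^{\mathrm{\acute{e}t}}_{\varphi,\Gamma}(\mathcal{O}_{\mathcal{E},\pi_n})$, the continuity of the $\Gamma_n$-action on $\mathbf{D}(T)$ being inherited from continuity of the $G_{K_n}$-action on $T$.
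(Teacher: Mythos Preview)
Your sketch is a faithful outline of Fontaine's original argument via the field of norms and d\'evissage, and it is correct. The paper itself does not give a proof: it simply cites \cite[Th\'eor\`eme 3.4.3 and Remarques 3.4.4.(b)]{fontaine-grothendieck}, noting only that one takes $L = K_n$ and the base $K$ unramified in Fontaine's setup. So there is nothing to compare beyond observing that you have unpacked the content of that citation.
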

\begin{proof}
See \cite[Thm. 3.4.3 and Rem. 3.4.4.(b)]{fontaine-grothendieck}.
More specifically, we take $L=K_n$ and $K= K_0$ since $K$ is unramified in \cite[Rem. 3.4.4.(b)]{fontaine-grothendieck}.
\end{proof}

\subsection{The classification of crystalline representations via Wach modules} 
We construct an integral lattice of $\mathbf{D}_{\mathrm{cris}}(V)$ in terms of Wach modules.
Let 
$\widetilde{S}_n := \widehat{\mathcal{O}^{\mathrm{ur}}_{\mathcal{E}, \pi_n}} \cap \mathbf{A}_{\mathrm{inf}} = \widehat{\left( \widehat{S_n[\pi^{-1}_n]}^{\mathrm{ur}} \right)} \cap \mathbf{A}_{\mathrm{inf}}$
 for $n \geq 0$ and say $\widetilde{S} = \widetilde{S}_0$.
Let $T$ be a free $\mathbb{Z}_p$-module of finite rank endowed with continuous linear action of $G_{K_n}$.
We define 
$\mathbf{D}_{S_n}(T) := \left( T \otimes_{\mathbb{Z}_p} \widetilde{S}_n \right)^{G_{K_\infty}} $
and
$\mathbf{D}_{S}(T) := \left( T \otimes_{\mathbb{Z}_p} \widetilde{S} \right)^{G_{K_\infty}} $.
Then we have
$\mathbf{D}_{S}(T) = \varphi^n \mathbf{D}_{S_n}(T) $
and
$\mathbf{D}_{S_n}(T)$ is a free $S_n$-module with natural actions of $\varphi$ and $\Gamma_n$.
One has
$\mathrm{rk}_{S_n} \mathbf{D}_{S_n}(T)
\leq \mathrm{rk}_{\mathcal{O}_{\mathcal{E}, \pi_n}} \mathbf{D}_{\mathcal{O}_{\mathcal{E}, \pi_n}}(T)
=  \mathrm{rk}_{\mathbb{Z}_p} (T) $.

We say that, as a representation of $G_{K_n}$,  $T$ is a \textbf{representation of finite height} if $\mathrm{rk}_{S_n} \mathbf{D}_{S_n}(T) = \mathrm{rk}_{\mathbb{Z}_p} (T)$.
\begin{thm}[Colmez] \label{thm:colmez-crystalline-finite-height}
Any crystalline representation of $G_K$ is of finite height.
 \end{thm}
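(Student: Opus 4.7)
The plan is to realize $\mathbf{D}_S(T) \otimes \mathbb{Q}_p$ as the \emph{Wach module} attached to $V := T \otimes \mathbb{Q}_p$, sitting inside the \'etale $(\varphi,\Gamma)$-module $D(V) := \mathbf{D}_{\mathcal{O}_{\mathcal{E}}}(T) \otimes \mathbb{Q}_p$. Concretely, the aim is to produce a unique $S[1/p]$-submodule $N(V) \subset D(V)$, free of rank $d := \mathrm{rk}_{\mathbb{Z}_p} T$, stable under $\varphi$ and $\Gamma$, on which $\Gamma$ acts trivially on the quotient $N(V)/\pi N(V)$. Granted such an $N(V)$, its integral avatar $N(T) := N(V) \cap \mathbf{D}_{\mathcal{O}_{\mathcal{E}}}(T)$ will be $S$-free of rank $d$, fixed by $G_{K_\infty}$, and contained in $T \otimes \widetilde{S}$, hence will embed in $\mathbf{D}_S(T)$; combined with the inequality $\mathrm{rk}_S \mathbf{D}_S(T) \leq \mathrm{rk}_{\mathcal{O}_{\mathcal{E}}} \mathbf{D}_{\mathcal{O}_{\mathcal{E}}}(T) = d$ recalled just before the statement, this yields $\mathrm{rk}_S \mathbf{D}_S(T) = d$. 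The relation $\mathbf{D}_S(T) = \varphi^n \mathbf{D}_{S_n}(T)$ reduces the problem to $n = 0$.

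The crystallinity of $V$ enters through the comparison isomorphism
$$\alpha_V : \mathbf{D}_{\mathrm{cris}}(V) \otimes_K \Bcris \xrightarrow{\sim} V \otimes_{\mathbb{Q}_p} \Bcris,$$
which on $\mathbf{D}_{\mathrm{cris}}(V) \otimes_K \mathbf{B}^+_{\mathrm{cris}}$ lands in a filtered $\mathbf{B}^+_{\mathrm{cris}}$-lattice of $V \otimes \Bcris$. Write $D := \mathbf{D}_{\mathrm{cris}}(V)$, free of rank $d$ over $K$, and fix a $\varphi$-stable $W$-lattice $M \subset D$, which exists since $\varphi$ is $K$-semilinear bijective and $W$ is a discrete valuation ring. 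Through the chain of embeddings $S \hookrightarrow \widehat{S^{\mathrm{PD}}} \hookrightarrow \Acris$ and the factorization $t = \log[\varepsilon] = \pi \cdot u$ with $u \in \widehat{S^{\mathrm{PD}}}[1/p]^{\times}$, one transports $M \otimes_W S[1/p]$ under $\alpha_V$, after multiplying by a suitable power of $\pi$ to clear the Hodge--Tate weights of $V$, to an $S[1/p]$-submodule $N(V)$ of $V \otimes \widetilde{S}[1/p]$. The module $N(V)$ is $\varphi$-stable by construction; its $\Gamma$-stability and near-triviality modulo $\pi$ follow from the explicit formula $g \cdot \pi = (1 + \pi)^{\chi_{\mathrm{cyc}}(g)} - 1 \equiv \chi_{\mathrm{cyc}}(g) \cdot \pi \pmod{\pi^2}$ recorded in the excerpt, once the construction is organized according to the Hodge--Tate weights.

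Uniqueness of $N(V)$, once existence is in hand, is comparatively formal: two candidates differ by a $\Gamma$-equivariant $S[1/p]$-automorphism that is trivial modulo $\pi$, and reading the induced $\Gamma$-action on successive graded pieces $\pi^i N(V) / \pi^{i+1} N(V)$ through the cyclotomic character forces this automorphism to be the identity. The main obstacle is the existence step: the comparison $\alpha_V$ naturally lives in the very large ring $\Bcris$, and descending to $S[1/p]$ amounts to a Tate--Sen type argument controlling the continuous $\Gamma$-cohomology of the ambient period modules with $\chi_{\mathrm{cyc}}$-twists. This descent is the technical core of Colmez's theorem and is the only place in the argument where one cannot proceed by purely formal manipulation.
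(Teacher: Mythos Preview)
The paper does not actually prove this statement: its ``proof'' is a one-line citation to Colmez's paper \cite{colmez-finite-height}, together with the reminder that $K$ is unramified over $\mathbb{Q}_p$. So there is no argument in the paper to compare your proposal against in any substantive sense.

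That said, your outline is not Colmez's original strategy. Colmez's proof proceeds through the theory of almost $\mathbb{C}_p$-representations and a delicate analysis inside $\mathbf{B}^+_{\mathrm{dR}}$; it does not construct the Wach module $N(V)$ directly. What you sketch is much closer to Berger's later reproof (see e.g.\ his \emph{Limites de repr\'esentations cristallines}, Compositio 2004), which builds $N(V)$ from the comparison data and thereby establishes both Colmez's finite-height theorem and Wach's theorem in one stroke. In the logical order of the present paper, Wach's theorem (the next result) is stated \emph{assuming} finite height, so your approach would in effect merge the two statements rather than prove the first one on its own.

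On the content of your sketch: you correctly isolate the genuine difficulty, namely the descent from $\Bcris$ (or even $\mathbf{B}^+_{\mathrm{cris}}$) down to $S[1/p]$. But saying this ``amounts to a Tate--Sen type argument'' and calling it the ``technical core'' is an acknowledgement of where the work lies, not a proof. Everything else in your outline --- the transport of $M \otimes_W S[1/p]$ through $\alpha_V$, the clearing of Hodge--Tate weights by powers of $\pi$, the uniqueness via graded pieces --- is formal once that descent is in hand, but the descent itself is the whole theorem. There is also a small wrinkle in your passage from $N(V)$ to $N(T)$: knowing that $N(T) := N(V) \cap \mathbf{D}_{\mathcal{O}_{\mathcal{E}}}(T)$ is $S$-free of the right rank does not by itself place it inside $\mathbf{D}_S(T) = (T \otimes \widetilde{S})^{G_{K_\infty}}$; one needs to check separately that elements of $N(T)$ lie in $T \otimes \widetilde{S}$ rather than merely in $T \otimes \widehat{\mathcal{O}^{\mathrm{ur}}_{\mathcal{E}}}$, which again uses the descent.

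In short: the paper offers only a citation; your proposal is a reasonable high-level roadmap of Berger's (not Colmez's) argument, with the decisive step identified but not carried out.
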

 \begin{proof}
It follows from \cite{colmez-finite-height} since $K$ is unramified over $\mathbb{Q}_p$.
 \end{proof}
\begin{thm}[Wach] \label{thm:wach}
Let $T$ be a representation of finite height.
Then the following statements are equivalent:
\begin{enumerate}
\item $V = T \otimes \mathbb{Q}_p$ is crystalline as a representation of $G_{K_n}$.
\item There exists a free $S_n$-submodule $N \subseteq \mathbf{D}_{S_n}(T)$ of rank $d$ such that $\Gamma_n$ acts trivially on $(N/\pi N)(-h)$ for some $h \in \mathbb{Z}$.
\end{enumerate}
\end{thm}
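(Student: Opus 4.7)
The plan is to prove the two implications separately, following Wach's original strategy for representations of non-negative Hodge--Tate weights and its extension (by Berger and Colmez) to arbitrary crystalline representations via Tate twisting.

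For the direction $(2) \Rightarrow (1)$, I would start from a Wach module $N \subseteq \mathbf{D}_{S_n}(T)$ with the stated triviality property and extend scalars along the chain of embeddings $S_n \hookrightarrow \widehat{S^{\mathrm{PD}}} \hookrightarrow \mathbf{A}_{\mathrm{cris}}$ recorded in the previous subsection. This produces a $\varphi$- and $\Gamma_n$-equivariant embedding
\[
N \otimes_{S_n} \mathbf{A}_{\mathrm{cris}} \hookrightarrow T \otimes_{\mathbb{Z}_p} \mathbf{A}_{\mathrm{cris}}.
\]
The hypothesis that $\Gamma_n$ acts trivially on $(N/\pi N)(-h)$ means that modulo $\pi$ the action is given by $\chi_{\mathrm{cyc}}^h$; after twisting by $t^{-h}$, which is legitimate inside $\mathbf{B}_{\mathrm{cris}}$, I can extract a $\Gamma_n$-invariant $K_0$-subspace of $(V \otimes \mathbf{B}_{\mathrm{cris}})^{G_{K_\infty}}$. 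The content is then to verify that its $K_0$-dimension matches $\dim_{\mathbb{Q}_p} V$, whence $\dim_{K_0} \mathbf{D}_{\mathrm{cris}}(V) = \dim_{\mathbb{Q}_p} V$ and $V$ is crystalline.

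For $(1) \Rightarrow (2)$, starting from a crystalline $V$, I would first Tate twist so that all Hodge--Tate weights of $V$ lie in $[0, h]$ for some $h \geq 0$; this reduces the problem to the classical setting of Wach. The candidate module $N$ is then characterized as the unique $\varphi$- and $\Gamma_n$-stable sub-$S_n$-module of $\mathbf{D}_{S_n}(T)$ satisfying $N[1/\pi] = \mathbf{D}_{S_n}(T)[1/\pi]$ and on which $\Gamma_n$ acts by $\chi_{\mathrm{cyc}}^h$ modulo $\pi$; concretely, one realizes $N$ by intersecting $\mathbf{D}_{S_n}(T)[1/\pi]$ with the image of $\mathrm{Fil}^0 \bigl( \mathbf{D}_{\mathrm{cris}}(V) \otimes_{K_0} \widehat{S^{\mathrm{PD}}}[1/p] \bigr)$ inside $T \otimes \mathbf{A}_{\mathrm{cris}}[1/p]$. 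The finite height hypothesis guarantees the intersection has full rank $d$, and Colmez's theorem combined with a direct computation of the $\Gamma_n$-action on $N/\pi N$ (using $g(\pi_n) = (1+\pi_n)^{\chi_{\mathrm{cyc}}(g)} - 1 \equiv \chi_{\mathrm{cyc}}(g) \cdot \pi_n \pmod{\pi_n^2}$ and induction on weights) delivers the required triviality statement.

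The main obstacle is proving that $N$ is actually \emph{free} of rank $d$ over $S_n$, rather than merely torsion-free of generic rank $d$. This forces a delicate descent from the PD-completion $\widehat{S^{\mathrm{PD}}}$ down to $S_n = W \llbracket \pi_n \rrbracket$, controlling denominators at the maximal ideal $(p, \pi)$. One must rule out pathological torsion arising from the interaction between $\varphi$, which acts by $\pi_n \mapsto (1+\pi_n)^p - 1$, and the cyclotomic $\Gamma_n$-action; this is where Berger's refinement of Wach's original argument (using the $\varphi$-module structure on $\mathbf{D}_{\mathrm{cris}}(V)$ to iterate between adjacent weights) becomes essential. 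Uniqueness of $N$, which justifies calling it \emph{the} Wach module, follows once existence and freeness are established, by comparing any two such $N$'s through their common generic fibre.
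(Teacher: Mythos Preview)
The paper does not supply its own proof of this theorem: immediately after the statement it refers the reader to \cite[Page 220--221]{benois-crystalline-duke} and \cite{wach-pot-crys}, noting only that the $\varphi$-stable lattice $M$ used later (equation~\eqref{eqn:wach-lattice}) arises in the proof of $(2)\Rightarrow(1)$. Your sketch is the standard Wach--Berger argument from those references, and in particular your $(2)\Rightarrow(1)$ direction --- extending scalars along $S_n\hookrightarrow\widehat{S^{\mathrm{PD}}}\hookrightarrow\mathbf{A}_{\mathrm{cris}}$ and extracting $\Gamma_n$-invariants after twisting by $t^{-h}$ --- is exactly the construction the paper invokes to define $M$. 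One small indexing point: the PD-envelope $\widehat{S^{\mathrm{PD}}}$ in the paper is taken with respect to $S=S_0$, not $S_n$, so the chain of embeddings you wrote should pass through $\varphi^n$ (recall $\mathbf{D}_S(T)=\varphi^n\mathbf{D}_{S_n}(T)$); this is harmless for the crystallinity conclusion but worth making explicit. Otherwise your outline matches the cited approach.
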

The integer $h$ in Theorem \ref{thm:wach} satisfies $\mathbf{D}^{-h}_{\mathrm{cris}}(V) = \mathbf{D}_{\mathrm{cris}}(V)$ as in \cite[$\S$4.1.2]{benois-crystalline-duke}. See also Assumption \ref{assu:filtration-1}.
The lattice we want to construct naturally arises in the proof of $(2) \Rightarrow (1)$ in Theorem \ref{thm:wach}.
See \cite[Page 220--221]{benois-crystalline-duke}, \cite{wach-pot-crys} for details.
Let $N_S$ be the $N$ in Theorem \ref{thm:wach} when $n=0$.
%
%
%
Since 
$\left( N_S \widehat{\otimes}_S \widehat{S^{\mathrm{PD}}} \right)^{\Gamma_n} \subseteq \left( \mathbf{D}_S(T) \widehat{\otimes}_S \widehat{S^{\mathrm{PD}}} \right)^{\Gamma_n} \subseteq (T \otimes \mathbf{A}_{\mathrm{cris}})^{G_{K_n}}$,
the $W$-module
\begin{equation} \label{eqn:wach-lattice}
M := \left( \mathbf{D}_S(T) \widehat{\otimes}_S \widehat{S^{\mathrm{PD}}} \right)^{\Gamma_n} \subseteq \mathbf{D}_{\mathrm{cris}}(V) 
\end{equation}
is a $\varphi$-stable lattice of $\mathbf{D}_{\mathrm{cris}}(V)$. This is the lattice we need for the construction of equivariant interal local points.
Another description of the lattice $M$ can also be found in \cite[$\S$3.2]{benois-berger}.
\section{Local Iwasawa theory and explicit reciprocity law \`{a} la Perrin-Riou} \label{sec:local-iwasawa-theory-perrin-riou}
We review local Iwasawa theory, the big exponential map, and the explicit reciprocity law developed by Perrin-Riou \cite{perrin-riou-local-iwasawa} and its refinement due to Benois and Berger \cite{benois-crystalline-duke, berger-three-explicit-formulas, benois-berger, benois-near-central}.
Our description keeps the \emph{integrality} information as much as possible; as a consequence, we are able to construct \emph{equivariant} $p$-adic $L$-functions of modular forms with the precise denominator information.
The integrality in \cite{perrin-riou-local-iwasawa} is based on Fontaine--Laffaille modules as described in \cite[$\S$2.1.5 and 2.2.1.Prop. (iv)]{perrin-riou-local-iwasawa}, but the integrality here comes from Wach modules following \cite{benois-crystalline-duke}.
\subsection{Rings of power series with ($\varphi, \Gamma$)-action} \label{subsec:rings-of-power-series}
For $h \geq 1$ and a subfield $L \subseteq \mathbb{C}_p$, we put
$$\mathscr{H}_{h,L}  = \left\lbrace \sum_{n \geq 0} a_n X^n \in L \llbracket X \rrbracket : \lim_{n \to \infty} \dfrac{\vert a_n \vert_p}{n^h} = 0 \right\rbrace $$
and
$\mathscr{H}_{L}$ is the ring of convergent power series over $L$ on the open unit ball.

Let $K/\mathbb{Q}_p$ be a finite unramified extension and consider $\mathscr{H}_K \subseteq K\llbracket X \rrbracket$.
Let $\sigma : K \to K$ be the absolute Frobenius map on $K$ and extend it to $\mathscr{H}_K$ by
$\sigma \left( \sum_{n \geq 0} a_n X^n \right) =  \sum_{n \geq 0} a^{\sigma}_n X^n $.
We define the Frobenius map on $\varphi : \mathscr{H}_K \to \mathscr{H}_K$
by
$\varphi \left(  \sum_{n \geq 0} a_n X^n \right) = \sum_{n \geq 0} a^\sigma_n \left( \left( 1+X \right)^p-1 \right)^n $.
By abuse of notation, denote by $\varphi$ the Frobenius map  $\varphi \otimes \varphi$ on 
$\mathscr{H}_K \otimes_K \mathbf{D}_{\mathrm{cris}}(V)$
where $V$ is a crystalline representation of $G_K$.
Also, $\mathrm{Gal}(K_\infty/K)$ acts on $\mathscr{H}_K$ by
$\gamma \left(  \sum_{n \geq 0} a_n X^n \right) = \sum_{n \geq 0} a_n \left( \left( 1+X \right)^{\chi_{\mathrm{cyc}}(\gamma)}-1 \right)^n $
where $\gamma \in \mathrm{Gal}(K_\infty/K)$ and $\chi_{\mathrm{cyc}}$ is the $p$-adic cyclotomic character.

Let  $W \subseteq K$ be the ring of integers and the action of $\varphi$ and $\mathrm{Gal}(K_\infty/K)$ on $\mathscr{H}_K$ can be restricted to $W\llbracket X \rrbracket$.
Also, the Frobenius map $\varphi$ (abusing notation again) on $W\llbracket X \rrbracket \otimes M$ is defined by  $\varphi \otimes \varphi$ where $M$ is the $\varphi$-stable lattice constructed in (\ref{eqn:wach-lattice}).

As in \cite[$\S$1.1.3]{perrin-riou-local-iwasawa}, let $\psi$ be the $\sigma^{-1}$-semi-linear operator on $K\llbracket X \rrbracket$ characterized by\footnote{This $\psi$ is not the character of the modular form. They are clearly distinguished by the context.}
$$\varphi \circ \psi (f(X)) = p^{-1} \cdot \sum_{\zeta \in \mu_p} f(\zeta \cdot (1+X) - 1)$$
where $f(X) \in K\llbracket X \rrbracket$.
Then $\psi \circ \varphi = 1$. 
We define the $\psi=0$ subring of $W\llbracket X \rrbracket$ by
$$W\llbracket X \rrbracket^{\psi=0} = \left\lbrace f(X) \in W\llbracket X \rrbracket : \sum_{\zeta \in \mu_p} f(\zeta \cdot (1+X) - 1) =0 \right\rbrace $$
as in \cite[$\S$1.1.6]{perrin-riou-local-iwasawa} and \cite[$\S$4.1.2]{benois-crystalline-duke}.
It is known that 
$W\llbracket X \rrbracket^{\psi=0}$ is a $W$-module stable under the $\mathrm{Gal}(K_{\infty}/K)$-action (via the cyclotomic character) and
is a free $W\llbracket \mathrm{Gal}(K_{\infty}/K) \rrbracket$-module of rank one generated by $1+X$.
Furthermore, it admits the unique product structure $*$ compatible with the $\mathrm{Gal}(K_{\infty}/K)$-module structure such that
$(1+X) * (1+X) = (1+X)$. If one interpret $W\llbracket X \rrbracket^{\psi=0}$ as the space of $W$-valued measures on $\mathbb{Z}^\times_p$, then this product structure corresponds to the convolution product. See \cite[$\S$3.6.3]{perrin-riou-local-iwasawa} for details.

\subsection{The twist convention} \label{subsec:twist-convention}
We follow the convention given in \cite[2.1.6 Ex.]{perrin-riou-local-iwasawa}.
Let $\mathbb{Q}_p(1) $ be the $p$-adic representation of $G_K$ associated with the cyclotomic character.
We put 
$$\mathbb{Q}_p(i)
  = \left\lbrace \begin{array}{ll}
  \mathbb{Q}_p(1)^{\otimes i} , & i \geq 0 , \\
  \mathrm{Hom}_{\mathbb{Q}_p}( \mathbb{Q}_p(1), \mathbb{Q}_p)^{ \otimes - i} , & i < 0 .
  \end{array} \right.
$$
Let $K[-i] = \mathbf{D}_{\mathrm{cris}}(\mathbb{Q}_p(i))$ be the filtered $\varphi$-module associated to $\mathbb{Q}_p(i)$.
Let $e_{-i} = t^{-i} \otimes \varepsilon^{\otimes i}$ (as in \cite[Thm. 3.1.(2)]{benois-berger}) be the canonical basis of $K$-vector space $K[-i]$. Then we have
$$\varphi \cdot e_{-i} = p^{-i} \cdot e_{-i} .$$ The filtration of $K[-i]$ is given by
\begin{align*}
\mathrm{Fil}^{-i} K[-i] & = \mathbf{D}^{-i}_{\mathrm{cris}}(\mathbb{Q}_p(i)) = \mathbf{D}_{\mathrm{cris}}(\mathbb{Q}_p(i)) , \\
\mathrm{Fil}^{-i+1} K[-i] & = \mathbf{D}^{-i+1}_{\mathrm{cris}}(\mathbb{Q}_p(i)) = 0 .
\end{align*}
Let $V$ be a crystalline representation of $G_K$ and write  $V(i) = V \otimes_{\mathbb{Q}_p} \mathbb{Q}_p(i)$.
Let $D$ be a filtered $\varphi$-module associated to a crystalline representation of $G_K$
and write $D[i] = D \otimes_K K[i]$.
Then we have
$\mathbf{D}_{\mathrm{cris}}(V(i)) = \mathbf{D}_{\mathrm{cris}}(V)[-i]$.
 In other words, we have
$\mathbf{D}_{\mathrm{cris}}(V(i))  = \mathbf{D}_{\mathrm{cris}}(V) \otimes e_{-i} $ and
$ \mathbf{D}^{-j}_{\mathrm{cris}}(V(i)) =  \mathbf{D}^{i-j}_{\mathrm{cris}}(V) \otimes e_{-i} $
in terms of the filtration.
\begin{rem}
The convention on $e_{-i}$ is compatible with \cite[$\S$0.3]{benois-crystalline-duke}, \cite[Thm. 3.1.(2)]{benois-berger}, and \cite{benois-near-central}, but not with \cite[II.5]{berger-three-explicit-formulas} and \cite[Notation]{nakamura-local-iwasawa}.
\end{rem}
\subsection{Differential operators} \label{subsec:differential-operators}
We follow \cite[$\S$1.1.5]{perrin-riou-local-iwasawa}.
Let
$D = (1+ X) \cdot \dfrac{d}{dX}$
be the differential operator on $\mathscr{H}_K$.
Then we have $D(f)(e^z - 1) = \dfrac{dh}{dz}(z)$ where $h(z) = f(e^z -1)$.
For any $\alpha \in \mathbb{Z}_p$, we have
$D( f ( (1+X)^\alpha - 1 ) ) = \alpha \cdot D(f) ((1+X)^\alpha - 1)$.
Note that $D$ is a $W$-linear automorphism of  $W\llbracket X \rrbracket^{\psi=0}$; thus, $D^{i}$ is defined for all $i \in \mathbb{Z}$.
If $f \in K\llbracket X\rrbracket$ and $j \in \mathbb{Z}$, we define
$\Delta_j (f) = D^j (f)(0) $
following Kummer, Coates--Wiles, and Perrin-Riou. 
\subsection{Equation $(1-\varphi)G =g$} \label{subsec:solving-eqn-frobenius}
The main references are \cite[$\S$2.2]{perrin-riou-local-iwasawa} and \cite[$\S$4.1]{benois-crystalline-duke}.
We would like to consider equation
$(1-\varphi)G =g$
in $\mathscr{H}_K \otimes_{K} \mathbf{D}_{\mathrm{cris}}(V)$
where $V$ is a finite dimensional crystalline representation of $G_K$.
The maps
$\Delta_i : \mathscr{H}_K \to K$ and $D : \mathscr{H}_K \to \mathscr{H}_K $
naturally induce the maps (with the same notation)
\[
\xymatrix@R=0em{
\Delta_i : \mathscr{H}_K \otimes_{K} \mathbf{D}_{\mathrm{cris}}(V) \to  \mathbf{D}_{\mathrm{cris}}(V) , &
D : \mathscr{H}_K \otimes_{K} \mathbf{D}_{\mathrm{cris}}(V) \to \mathscr{H}_K \otimes_{K} \mathbf{D}_{\mathrm{cris}}(V)  .
}
\]
The Frobenius $\varphi$ acts on $\mathscr{H}_K \otimes_{K} \mathbf{D}_{\mathrm{cris}}(V)$ by $\varphi \otimes \varphi$.
Let $\mathscr{D}(V) := W\llbracket X \rrbracket^{\psi=0} \otimes_W \mathbf{D}_{\mathrm{cris}}(V) $,
which is a $W\llbracket  \mathrm{Gal}(K_{\infty}/K) \rrbracket$-module, and
$\mathscr{H}(V) := \left\lbrace G \in  \mathscr{H}_K \otimes_K \mathbf{D}_{\mathrm{cris}}(V) : (1 - \varphi) G \in \mathscr{D}(V) \right\rbrace $.
Since $D : W\llbracket X \rrbracket^{\psi=0} \to W\llbracket X \rrbracket^{\psi=0}$ is an automorphism, $D$ also naturally defines an automorphism of $\mathscr{D}(V)$ (with the same notation).
\begin{assu} \label{assu:filtration-1}
Let $h$ be any positive integer such that $\mathbf{D}^{-h}_{\mathrm{cris}}(V) = \mathbf{D}_{\mathrm{cris}}(V)$.
\end{assu}
Under Assumption \ref{assu:filtration-1}, we have an exact sequence
\begin{equation} \label{eqn:1-varphi-sequence-0-h}
\xymatrix{
0 \ar[r] &\mathrm{ker} (1-\varphi) \ar[r] & \mathscr{H}(V) \ar[r]^-{1-\varphi} & \mathscr{D}(V) \ar[r]^-{\widetilde{\Delta}_{[0,h]}} & { \displaystyle \bigoplus_{0 \leq j \leq h} \dfrac{ \mathbf{D}_{\mathrm{cris}}(V) }{ (1-p^j \varphi )\mathbf{D}_{\mathrm{cris}}(V) } } \ar[r] & 0
}
\end{equation}
with 
$\mathrm{ker} (1-\varphi) = 
 \sum_{j \geq 0} \left( \mathrm{log}(1+X) \right)^j \otimes \mathbf{D}_{\mathrm{cris}}(V)^{\varphi=p^{-j}}$
where $\widetilde{\Delta}_{[0,h]} = \bigoplus_{0 \leq j \leq h} \widetilde{\Delta}_{j}$ and 
\[
\xymatrix{
\widetilde{\Delta}_{j} :\mathscr{D}(V) \ar[r]^-{\Delta_j} & \mathbf{D}_{\mathrm{cris}}(V) \ar[r]^-{\textrm{natural}}_-{\textrm{quotient}} & \dfrac{ \mathbf{D}_{\mathrm{cris}}(V) }{ (1-p^j \varphi) \mathbf{D}_{\mathrm{cris}}(V) } .
}
\]
See \cite[$\S$2.2.7]{perrin-riou-local-iwasawa} for detail.

We now discuss an integral refinement of the above argument following \cite[$\S$4.1.2]{benois-crystalline-duke}.
\begin{assu} \label{assu:filtration-2}
We further assume that $\mathbf{D}^0_{\mathrm{cris}}(V) = \mathbf{D}_{\mathrm{cris}}(V)$, i.e., $h$ can be taken as 0.
\end{assu}
Let $T$ be a $G_K$-stable $\mathbb{Z}_p$-lattice of $V$ and $M$ the $\varphi$-stable $W$-lattice of $\mathbf{D}_{\mathrm{cris}}(V)$ associated to $T$ as in (\ref{eqn:wach-lattice}).
We define
$\mathscr{D}(T) := W\llbracket X \rrbracket^{\psi=0} \otimes_W M$
and 
$$\mathscr{H}(T) := \left\lbrace G \in W\llbracket X \rrbracket \otimes_W M : \sum_{\zeta \in \mu_p} G(\zeta \cdot (1+X) - 1) = p \cdot \varphi G(X)
 \right\rbrace  .$$
If $G(X) = \alpha(X) \otimes \beta$ with $\alpha(X) \in W\llbracket X \rrbracket$ and $\beta \in M$, then
$\varphi G(X) = \sigma (\alpha ) ((X+1)^p - 1) \otimes \varphi(\beta)$.
One has $\mathscr{D}(T) \otimes \mathbb{Q}_p = \mathscr{D}(V)$ and $\mathscr{H}(T) \otimes \mathbb{Q}_p = \mathscr{H}(V)$ due to \cite[Lem. 4.1.3.(i)]{benois-crystalline-duke}.
Also, if $\mathbf{D}^0_{\mathrm{cris}}(V) = \mathbf{D}_{\mathrm{cris}}(V)$ (Assumption \ref{assu:filtration-2}), then $\mathbf{D}^0_{\mathrm{cris}}(V) = \mathbf{D}_{\mathrm{cris}}(V)^{\varphi = p^{-j}} = 0$ for $j >0$ due to \cite[$\S$4.1.2]{benois-crystalline-duke}.
Under Assumption \ref{assu:filtration-2},
(\ref{eqn:1-varphi-sequence-0-h}) becomes the exact sequence
\[
\xymatrix{
0 \ar[r] & \mathbf{D}_{\mathrm{cris}}(V)^{\varphi = 1} \ar[r] & \mathscr{H}(V)  \ar[r]^-{1 - \varphi} & \mathscr{D}(V)^{\widetilde{\Delta}_0} \ar[r] & 0  .
}
\]
Following \cite[Proof of Lem. 4.1.3]{benois-crystalline-duke}, the integral refinement of the above sequence also holds 
\begin{equation*} \label{eqn:1-varphi-sequence-0-integral}
\xymatrix{
0 \ar[r] & M^{\varphi = 1} \ar[r] & \mathscr{H}(T)  \ar[r]^-{1 - \varphi} & \mathscr{D}(T)^{\widetilde{\Delta}_0} \ar[r] & 0  .
}
\end{equation*}
Solving the equation $(1-\varphi)G=g$ is compatible with twists in the following sense.
\begin{lem} \label{lem:twist-commutative}
For $j \geq 0$, the following diagram commutes
\[
\xymatrix{
\mathscr{H}(V(j)) \ar[r]^-{D^j \otimes e_j} \ar[d]_-{1-\varphi} & \mathscr{H}(V) \ar[d]^-{1-\varphi} \\
\mathscr{D}(V(j)) \ar[r]^-{D^j \otimes e_j} & \mathscr{D}(V)  .
}
\]
\end{lem}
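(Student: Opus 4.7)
The plan is to reduce the commutativity of the square to two ingredients: an intertwining relation between $D$ and $\varphi$ on $\mathscr{H}_K$, together with the Frobenius eigenvalue on the twist basis $e_{-j}$ of $K[-j] = \mathbf{D}_{\mathrm{cris}}(\mathbb{Q}_p(j))$ recalled in $\S$\ref{subsec:twist-convention}. Both ingredients are elementary, and they combine so that the scalar factors cancel exactly.

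First I would compute directly that $D \circ \varphi = p \cdot (\varphi \circ D)$ on $\mathscr{H}_K$. Since $\varphi(f)(X) = \sigma(f)((1+X)^p - 1)$ and $D = (1+X)\frac{d}{dX}$, the chain rule yields
\[
D(\varphi(f))(X) = (1+X) \cdot p(1+X)^{p-1} \cdot \sigma(f)'((1+X)^p - 1) = p \cdot \varphi(D(f))(X).
\]
Iterating gives $\varphi \circ D^j = p^{-j} \cdot D^j \circ \varphi$ for every $j \geq 0$, and the same identity persists after tensoring over $K$ with $\mathbf{D}_{\mathrm{cris}}(V)$ since $D$ acts only on the $\mathscr{H}_K$-factor. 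Next I would make the horizontal arrow explicit: writing a typical element of $\mathscr{D}(V(j))$ as $g \otimes e_{-j}$ with $g \in W\llbracket X \rrbracket^{\psi=0} \otimes_W \mathbf{D}_{\mathrm{cris}}(V)$, the map $D^j \otimes e_j$ sends $g \otimes e_{-j} \mapsto D^j(g)$, using the canonical pairing $e_{-j} \otimes e_j = 1$. Under this identification, the Frobenius on $\mathbf{D}_{\mathrm{cris}}(V(j))$ transforms into $p^{-j}\varphi$ on $\mathbf{D}_{\mathrm{cris}}(V)$, because $\varphi \cdot e_{-j} = p^{-j} e_{-j}$.

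With these pieces in hand, the two routes around the lower square give
\[
(D^j \otimes e_j) \circ (1 - \varphi)(g \otimes e_{-j}) = D^j\bigl(g - p^{-j}\varphi(g)\bigr) = D^j(g) - p^{-j} D^j(\varphi(g)),
\]
and, using $\varphi \circ D^j = p^{-j} D^j \circ \varphi$,
\[
(1 - \varphi) \circ (D^j \otimes e_j)(g \otimes e_{-j}) = D^j(g) - \varphi(D^j(g)) = D^j(g) - p^{-j} D^j(\varphi(g)),
\]
which agree. The upper square is then automatic: $D$ is already an automorphism of $W\llbracket X \rrbracket^{\psi=0}$ by $\S$\ref{subsec:differential-operators}, so $D^j \otimes e_j$ sends $\mathscr{H}_K \otimes_K \mathbf{D}_{\mathrm{cris}}(V(j))$ into $\mathscr{H}_K \otimes_K \mathbf{D}_{\mathrm{cris}}(V)$, and the defining condition $(1-\varphi)G \in \mathscr{D}(V(j))$ is transported to $(1-\varphi)(D^j \otimes e_j)G \in \mathscr{D}(V)$ by the commutativity of the lower square just established. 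In particular $D^j \otimes e_j$ restricts to a well-defined map $\mathscr{H}(V(j)) \to \mathscr{H}(V)$ commuting with $1-\varphi$.

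The main (minor) obstacle is the bookkeeping of $p$-powers: one factor $p^{-j}$ comes from moving $\varphi$ past $D^j$, and another factor $p^{-j}$ comes from the Frobenius eigenvalue on $e_{-j}$; after the identification $e_{-j} \otimes e_j = 1$, the two factors appear on the same side of the equality and match. Once the twist convention of $\S$\ref{subsec:twist-convention} is aligned with the chain rule computation above, the verification is a one-line cancellation.
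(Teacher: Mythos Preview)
Your argument is correct: the chain rule gives $D\circ\varphi = p\cdot(\varphi\circ D)$ on $\mathscr{H}_K$, iterating yields $\varphi\circ D^j = p^{-j}\,D^j\circ\varphi$, and combined with $\varphi\cdot e_{-j} = p^{-j}e_{-j}$ the two $p^{-j}$ factors line up so that both routes around the square agree; the well-definedness of $D^j\otimes e_j : \mathscr{H}(V(j))\to\mathscr{H}(V)$ then follows immediately from this identity together with the fact that $D$ preserves $W\llbracket X\rrbracket^{\psi=0}$.

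The paper does not prove this lemma itself but simply cites \cite[Lemma~4.1.3.(ii)]{benois-crystalline-duke}, so there is no separate approach to compare against; what you have written is precisely the standard direct verification behind that reference. One cosmetic point: your phrasing of ``lower square'' and ``upper square'' is slightly misleading since there is only one square in the diagram---it would read more clearly as ``commutativity on the ambient $\mathscr{H}_K\otimes_K\mathbf{D}_{\mathrm{cris}}(-)$'' followed by ``restriction to $\mathscr{H}(V(j))$''.
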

\begin{proof}
See \cite[Lem. 4.1.3.(ii)]{benois-crystalline-duke}.
\end{proof}
\subsection{Finite layer integral exponential maps with twists}
Note that $\mathrm{H}^1_e = \mathrm{H}^1_f$ under our running assumption (Assumption \ref{assu:standard}.(ENV)) and \cite{bloch-kato}.
We recall \cite[$\S$4.2 and Thm. 4.3]{benois-crystalline-duke}.
Let 
\begin{equation*} \label{eqn:bloch-kato-exponential}
\mathrm{exp}_{V(j), K_n} : K_n \otimes  \mathbf{D}_{\mathrm{cris}}(V(j))  \to \mathrm{H}^1_f (K_n, V(j)) \subseteq \mathrm{H}^1 (K_n, V(j))
\end{equation*}
be the Bloch--Kato exponential map following \cite{bloch-kato} with kernel $K_n \otimes  \mathbf{D}^0_{\mathrm{cris}}(V(j)) $.
\begin{thm}[Benois] \label{thm:integral-exponential}
For $n \geq 0$, there exist a family of homomorphisms
\[
\xymatrix{
\Sigma^{\varepsilon}_{T,j, K_n} : \mathscr{H}(T) \to \mathrm{H}^1_f(K_n, T(j)) , & \Sigma^{\varepsilon}_{V,j, K_n} = \Sigma^{\varepsilon}_{T,j, K_n} \otimes \mathbb{Q}_p : \mathscr{H}(V) \to \mathrm{H}^1(K_n, V(j))
}
\]
for all $j \in \mathbb{Z}$
satisfying the following relations
\begin{enumerate}
\item for $G \in \mathscr{H}(T)$ and $n \geq 1$, 
$$\mathrm{cor}_{K_{n+1}/K_n} \Sigma^{\varepsilon}_{T,j, K_{n+1}}(G) = \Sigma^{\varepsilon}_{T, j, K_{n}}( \varphi ( G ) ) .$$
\item for $j \geq 1$ and $G_j \in \mathscr{H}(V(j))$,
$$\Sigma^{\varepsilon}_{V,j, K_n}(D^j G_j \otimes e_j) = (-1)^j  \cdot (j-1)! \cdot p^{(j-1)n} \cdot \mathrm{exp}_{V(j), K_n}( G_j(\zeta_{p^n} - 1) ) .$$
\end{enumerate}
\end{thm}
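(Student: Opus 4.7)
The plan is to first construct a ``big'' integral exponential map over the full cyclotomic tower and then pass to finite layers by corestriction, so that (1) becomes a near-tautology and (2) reduces to Perrin-Riou/Colmez's interpolation formula, refined integrally using the Wach-module lattice $M$ from (\ref{eqn:wach-lattice}).

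First I would fix $j$ and start from the exact sequence
\[
\xymatrix{
0 \ar[r] & M^{\varphi=1} \ar[r] & \mathscr{H}(T) \ar[r]^-{1-\varphi} & \mathscr{D}(T)^{\widetilde{\Delta}_0} \ar[r] & 0
}
\]
recalled in $\S$\ref{subsec:solving-eqn-frobenius}. Given $G \in \mathscr{H}(T)$, the strategy is to regard $G$ as a power series in $X$ with coefficients in $M$, evaluate at $X=\pi=[\varepsilon]-1$ to land in $T \otimes_{\mathbb{Z}_p} \widehat{S^{\mathrm{PD}}} \hookrightarrow T \otimes \mathbf{A}_{\mathrm{cris}}$, and then use the crystalline fundamental exact sequence $0 \to \mathbb{Z}_p \to \mathbf{B}^{\varphi=1}_{\mathrm{cris}} \to \mathbf{B}^+_{\mathrm{dR}}/\mathbf{B}^+_{\mathrm{dR}} \to 0$ tensored with $T(j)$ to extract, via the connecting map, a class
\[
\Omega^{\varepsilon}_{T,j} : \mathscr{H}(T) \longrightarrow \mathbf{H}^1_{\mathrm{Iw}}(K_\infty/K_0, T(j)).
\]
The cocycle property comes from the fact that $(1-\varphi)G \in \mathscr{D}(T) = W\llbracket X\rrbracket^{\psi=0}\otimes M$, so its image in $T\otimes \mathbf{A}_{\mathrm{cris}}$ trivializes in the appropriate quotient, while the $\Gamma$-action on $\pi$ via $\gamma(\pi) = (1+\pi)^{\chi_{\mathrm{cyc}}(\gamma)}-1$ produces the right Galois twist. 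Integrality is the whole point of using $M$: because $M = \bigl(\mathbf{D}_S(T)\widehat{\otimes}_S \widehat{S^{\mathrm{PD}}}\bigr)^{\Gamma_n}$ is a $\varphi$-stable $W$-lattice matched to $T$, the evaluation at $\pi$ lands integrally in $T\otimes \mathbf{A}_{\mathrm{cris}}$, so $\Omega^{\varepsilon}_{T,j}$ takes values in integral Iwasawa cohomology.

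Second, I would define
\[
\Sigma^{\varepsilon}_{T,j,K_n}(G) := \mathrm{pr}_n \bigl( \Omega^{\varepsilon}_{T,j}(G) \bigr) \in \mathrm{H}^1_f(K_n, T(j)),
\]
where $\mathrm{pr}_n$ is the projection from the Iwasawa cohomology to its $n$-th layer. Corestriction compatibility (1) is then built into this definition: since the tower projections satisfy $\mathrm{cor}_{K_{n+1}/K_n}\circ \mathrm{pr}_{n+1} = \mathrm{pr}_n$, the identity $\mathrm{cor}_{K_{n+1}/K_n}\Sigma^{\varepsilon}_{T,j,K_{n+1}}(G)=\Sigma^{\varepsilon}_{T,j,K_n}(\varphi(G))$ reduces to the fact that, under the evaluation $X \mapsto \pi_n$, applying $\varphi$ shifts $\pi_{n+1}\mapsto \pi_n$, which is precisely Perrin-Riou's compatibility (\cite[$\S$3.2]{perrin-riou-local-iwasawa}). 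The extension to all $j\in\mathbb{Z}$ including negative ones uses Lemma \ref{lem:twist-commutative} to transport the construction across twists by $e_j$.

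Finally, for the interpolation identity (2), I would invoke Lemma \ref{lem:twist-commutative} to rewrite $D^j G_j \otimes e_j \in \mathscr{D}(V)$ as coming from $G_j \in \mathscr{H}(V(j))$ and then apply the $V(j)$-version of $\Sigma^{\varepsilon}_{V,0,K_n}$; the right-hand side then emerges from the standard explicit formula expressing $\mathrm{exp}_{V(j),K_n}$ through the connecting map of the crystalline fundamental exact sequence, evaluated at $\pi_n = \zeta_{p^n}-1$ (\cite[II.5]{berger-three-explicit-formulas}, \cite[Th\'{e}or\`{e}me 3.1]{benois-berger}). The constants $(-1)^j(j-1)!$ come from differentiating $(\log(1+X))^j/j!$ and setting $X = \pi_n$, while the factor $p^{(j-1)n}$ records the $j$-fold application of $D$ combined with the projection at level $n$. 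The main obstacle will be the last step: pinning down the precise normalization constant $(-1)^j(j-1)!\,p^{(j-1)n}$ requires careful bookkeeping of the powers of $t$, of the twist conventions $e_j = t^j\otimes \varepsilon^{\otimes -j}$ from $\S$\ref{subsec:twist-convention}, and of the cyclotomic orientation fixed in Choice \ref{choice:p-adic-orientation}; a single sign error or misplaced factorial at this stage would propagate through Perrin-Riou's explicit reciprocity law and break the applications to Mazur--Tate elements later in the article.
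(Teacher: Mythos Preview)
The paper does not give its own proof here: it simply cites \cite[Theorem 4.3]{benois-crystalline-duke} and notes that the construction uses the Wach lattice $M$ from (\ref{eqn:wach-lattice}). So any sketch you offer is going beyond what the paper does, and the relevant comparison is with Benois' actual construction as reflected in the surrounding text, especially (\ref{eqn:Omega-Sigma}) and (\ref{eqn:projection-Omega}).

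Your sketch has the architecture inverted, and this creates a real gap in your derivation of (1). You propose to build a single map $\Omega^{\varepsilon}_{T,j}:\mathscr{H}(T)\to \mathbf{H}^1_{\mathrm{Iw}}(K_\infty/K,T(j))$ first and then \emph{define} $\Sigma^{\varepsilon}_{T,j,K_n}:=\mathrm{pr}_n\circ\Omega^{\varepsilon}_{T,j}$. But if $\Sigma_n$ were literally a projection of a fixed Iwasawa-cohomology class, then $\mathrm{cor}_{K_{n+1}/K_n}\circ\mathrm{pr}_{n+1}=\mathrm{pr}_n$ would force $\mathrm{cor}_{K_{n+1}/K_n}\Sigma_{n+1}(G)=\Sigma_n(G)$, with \emph{no} $\varphi$-shift. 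Your attempted fix (``evaluation at $\pi_n$ and $\varphi$ shifts $\pi_{n+1}\mapsto\pi_n$'') is exactly the content that is missing: it shows that the finite-level maps are not mere projections of a single Iwasawa element but depend on $n$ through $\varphi^{-n}$. In Benois' construction the order is the opposite of yours: one first builds each $\Sigma^{\varepsilon}_{T,j,K_n}$ directly at level $n$ (via the crystalline fundamental exact sequence evaluated at $\pi_n$, using the $\varphi$-stable lattice $M$ for integrality), and only afterwards assembles the big exponential on $\mathscr{D}(V)^{\widetilde{\Delta}=0}$ by setting $\Omega^{\varepsilon}_{V,j,K_n}(g)=\Sigma^{\varepsilon}_{V,j,K_n}(\varphi^{-n}G)$ for $(1-\varphi)G=g$, as in (\ref{eqn:Omega-Sigma}). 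The $\varphi$-twisted corestriction relation (1) is then what \emph{makes} the $\Omega$'s norm-compatible, not a consequence of that compatibility. Your account of (2) and the role of $M$ in securing integrality is fine in spirit, but you should reorganize the logic so that the $\Sigma_n$ come first and the big map second; otherwise property (1) does not follow from your definition.
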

\begin{proof}
See \cite[Thm. 4.3]{benois-crystalline-duke}. The proof involves the lattice $M$ in (\ref{eqn:wach-lattice}) as described in \cite[$\S$4.4.3]{benois-crystalline-duke}.
\end{proof}
\begin{rem}
Our $\Sigma^{\varepsilon}_{T, j, K_n}$ is $\Sigma^{\varepsilon}_{T, j, n}$ in \cite[$\S$4.2]{benois-crystalline-duke}.
In \cite[Thm. 4.3]{benois-crystalline-duke}, there is a lower bound of the index $s$ such that Theorem \ref{thm:integral-exponential} holds for $n \geq s$.
Since $s$ is the index $n$ in Theorem \ref{thm:wach} following \cite[$\S$4.1.2]{benois-crystalline-duke}, we can take $s = 0$ for crystalline representations of $G_K$.
\end{rem}
\begin{lem} \label{lem:exponential-quotients}
Let $h$ be any integer such that $\mathbf{D}^{-h}_{\mathrm{cris}}(V) = \mathbf{D}_{\mathrm{cris}}(V)$.
\begin{enumerate}
\item If $G \in M^{\varphi=1}$, then $\Sigma^{\varepsilon}_{T,j, K_n} (G)$ maps to zero in $\dfrac{\mathrm{H}^1(K_n, T(j))}{\mathrm{H}^1(K_\infty/K_n, T(j)^{G_{K_\infty}})}$.
\item If $j \not\in [0,h]$, then $V(j)^{G_{K_n}}=0$; thus, $\mathrm{H}^1(K_\infty/K_n, T(j)^{G_{K_\infty}})=0$.
\end{enumerate}
\end{lem}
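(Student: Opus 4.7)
The plan is to prove the two statements separately. Part (2) reduces to a standard application of $p$-adic Hodge theory, and part (1) requires a closer look at the Perrin-Riou--Benois construction of $\Sigma^{\varepsilon}$.

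For (2), the filtration hypothesis in Assumption \ref{assu:filtration-1} places all Hodge--Tate weights of $V$ in a bounded interval, and by the twist convention $\mathbf{D}^{-j}_{\mathrm{cris}}(V(i)) = \mathbf{D}^{i-j}_{\mathrm{cris}}(V) \otimes e_{-i}$ from $\S$\ref{subsec:twist-convention}, the Hodge--Tate weights of $V(j)$ are shifted accordingly. A direct check shows that $0$ fails to be a Hodge--Tate weight of $V(j)$ whenever $j \notin [0, h]$. Tate's theorem $\mathbb{C}_p(n)^{G_{K_\infty}} = 0$ for $n \neq 0$ applied to the Hodge--Tate decomposition of $V(j) \otimes_{\mathbb{Q}_p} \mathbb{C}_p$ then forces $V(j)^{G_{K_\infty}} = 0$, which is stronger than (and immediately implies) $V(j)^{G_{K_n}} = 0$. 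As $T(j)^{G_{K_\infty}}$ is a $\mathbb{Z}_p$-submodule of the vanishing $\mathbb{Q}_p$-vector space $V(j)^{G_{K_\infty}}$, it too is zero, and hence $\mathrm{H}^1(K_\infty/K_n, T(j)^{G_{K_\infty}}) = 0$ for trivial reasons.

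For (1), by the inflation--restriction exact sequence
\[
0 \to \mathrm{H}^1(K_\infty/K_n, T(j)^{G_{K_\infty}}) \to \mathrm{H}^1(K_n, T(j)) \xrightarrow{\mathrm{res}} \mathrm{H}^1(K_\infty, T(j)),
\]
it suffices to show $\mathrm{res}_{G_{K_\infty}} \Sigma^{\varepsilon}_{T,j,K_n}(G) = 0$ in $\mathrm{H}^1(K_\infty, T(j))$ for $G \in M^{\varphi=1}$. Recall that the construction of $\Sigma^{\varepsilon}$ in Theorem \ref{thm:integral-exponential} is built upon the fundamental exact sequence
\[
0 \to M^{\varphi=1} \to \mathscr{H}(T) \xrightarrow{1-\varphi} \mathscr{D}(T)^{\widetilde{\Delta}_0} \to 0
\]
of $\S$\ref{subsec:solving-eqn-frobenius}: a cocycle representative of $\Sigma^{\varepsilon}_{T,j,K_n}(G)$ is obtained from a lift of $g = (1-\varphi) G$ inside $\mathbf{A}_{\mathrm{cris}} \otimes_{\mathbb{Z}_p} T$ after twisting by $e_j$. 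For $G \in M^{\varphi=1}$ one has $g = 0$, and the cocycle reduces to the obstruction measuring how $G$, viewed inside the comparison isomorphism $\mathbf{D}_{\mathrm{cris}}(V) \otimes_K \mathbf{B}_{\mathrm{cris}} \simeq V \otimes_{\mathbb{Q}_p} \mathbf{B}_{\mathrm{cris}}$, fails to be $G_{K_n}$-fixed. Since $M^{\varphi=1} \subseteq \mathbf{D}_{\mathrm{cris}}(V)^{\varphi=1}$ and the $G_{K_\infty}$-action on $\mathbf{D}_{\mathrm{cris}}(V)$ (as Galois-invariants of the period ring) is trivial, this cocycle becomes a coboundary upon restriction to $G_{K_\infty}$, as required.

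The main obstacle is the careful bookkeeping through the twist by $e_j$ (handled via Lemma \ref{lem:twist-commutative}) and the normalization factors $(-1)^j \cdot (j-1)! \cdot p^{(j-1)n}$ appearing in Theorem \ref{thm:integral-exponential}.(2), as well as reconciling the cocycle-level statement with the fact that $\Sigma^{\varepsilon}$ is only defined on cohomology. Once these are organized, the key input is the identification of $M^{\varphi=1}$ with $G_{K_\infty}$-fixed vectors in the appropriate period ring through the fundamental comparison isomorphism of $p$-adic Hodge theory, from which the coboundary assertion upon restriction to $G_{K_\infty}$ becomes immediate.
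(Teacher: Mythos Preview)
Your argument for part (2) contains a genuine error: Tate's theorem asserts $\mathbb{C}_p(n)^{G_L} = 0$ for $n \neq 0$ when $L$ is a \emph{finite} extension of $\mathbb{Q}_p$, but it fails for $L = K_\infty$. Indeed, $G_{K_\infty}$ is by definition the kernel of $\chi_{\mathrm{cyc}}$, so it acts trivially on $\mathbb{Q}_p(n)$ for every $n$; hence $\mathbb{C}_p(n)^{G_{K_\infty}} = \mathbb{C}_p^{G_{K_\infty}} = \widehat{K_\infty}$ by Ax--Sen--Tate, which is nonzero. Your deduction $V(j)^{G_{K_\infty}} = 0$ therefore fails: already for $V = \mathbb{Q}_p$, $h = 0$, $j = 1$ one has $V(1)^{G_{K_\infty}} = \mathbb{Q}_p(1) \neq 0$ and $T(1)^{G_{K_\infty}} = \mathbb{Z}_p(1)$, so that $\mathrm{H}^1(K_\infty/K_n, T(1)^{G_{K_\infty}}) \cong \mathbb{Z}_p/(\chi_{\mathrm{cyc}}(\gamma)-1)\mathbb{Z}_p$ is finite but nonzero.

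The paper's proof of (2) records only the key observation that $0$ is not a Hodge--Tate weight of $V(j)$ when $j \notin [0,h]$; Tate's theorem applied to $G_{K_n}$ (not $G_{K_\infty}$) then yields $V(j)^{G_{K_n}} = 0$. For the application immediately following the lemma the vanishing of $\mathrm{H}^1$ is only needed rationally (the target of $\Omega^{\varepsilon}_{V,j,K_n}$ is $\mathrm{H}^1(K_n, V(j))$), and rationally it does follow from $V(j)^{G_{K_n}} = 0$: since $(V(j)^{G_{K_\infty}})^{\Gamma_n} = V(j)^{G_{K_n}} = 0$, the endomorphism $\gamma - 1$ is injective on the finite-dimensional $\mathbb{Q}_p$-space $V(j)^{G_{K_\infty}}$, hence bijective, so $\mathrm{H}^1(\Gamma_n, V(j)^{G_{K_\infty}}) = 0$. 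The literal integral statement with $T(j)$ is not what the argument after the lemma uses.

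Your sketch for part (1) via inflation--restriction is the correct framework and is consistent with what the paper cites to Benois; the point is exactly that for $G \in M^{\varphi=1}$ the associated cocycle becomes a coboundary upon restriction to $G_{K_\infty}$, leaving only the $\Gamma_n$-ambiguity.
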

\begin{proof}
See \cite[$\S$5.1.6]{benois-crystalline-duke}. For the second statement, it is essential to observe that zero is not a Hodge--Tate weight of $V(j)$ if $j \not\in [0,h]$.
\end{proof}
For $j \not\in [0,h]$, the map $\Sigma^{\varepsilon}_{T,j,K_n}$ induces the homomorphism
$\Omega^{\varepsilon}_{V,j, K_n} : \mathscr{D}(V)^{\widetilde{\Delta}=0} \to \mathrm{H}^1(K_n, V(j))$
defined by
\begin{equation} \label{eqn:Omega-Sigma}
\Omega^{\varepsilon}_{V, j, K_n}(g) = \Sigma^{\varepsilon}_{V, j ,K_n}  ( \varphi^{-n} G )
\end{equation}
where $(1 - \varphi) G =g$ as in $\S$\ref{subsec:solving-eqn-frobenius}.
By Theorem \ref{thm:integral-exponential}.(2), we have 
\begin{equation} \label{eqn:Omega-exp}
\Omega^{\varepsilon}_{V,j, K_n}(g) = (-1)^j \cdot (j-1)! \cdot \mathrm{exp}_{V(j), K_n} (\Xi^{\varepsilon}_{V,j,K_n}(g))
\end{equation}
for $j \geq 1$
where 
$\Xi^{\varepsilon}_{V, j, K_n}(g) = p^{-n} \cdot \varphi^{-n} G_j(\zeta_{p^n}-1)$
and $G_j \in\mathscr{H}(V(j))$ is a solution to the equation $(1 - \varphi) G_j = (D^{-j} \otimes e_{-j})(g)$.
By Lemma \ref{lem:twist-commutative}, we have
\begin{equation} \label{eqn:solution-twists}
( D^{-j} \otimes e_{-j} )G = G_j .
\end{equation}
 See \cite[$\S$5.2.3]{benois-crystalline-duke} for detail.
Thus, one has
$\Xi^{\varepsilon}_{V, j, K_n} = \Xi^{\varepsilon}_{V(j), 0, K_n} \circ (D^{-j} \otimes e_{-j})$
as in \cite[$\S$3.1]{benois-berger}.
We also have
$\mathrm{cor}_{K_{n+1}/K_n} \Omega^{\varepsilon}_{V,j, K_{n+1}}(g) = \Omega^{\varepsilon}_{V,j, K_n}(g) $
by Theorem \ref{thm:integral-exponential}.(1).

\subsection{Perrin-Riou's big exponential map} \label{subsec:big-exponential-map}
We carefully state the combination of the theorems of Benois and Berger on a generalization and refinement of Perrin-Riou's big exponential map for crystalline representations over unramified base fields.

Let
$\Xi^{\varepsilon}_{V(j),0, K_n} : \mathscr{D}(V(j))^{\widetilde{\Delta} = 0} \to K_n \otimes  \dfrac{ \mathbf{D}_{\mathrm{cris}}(V(j)) }{ \mathbf{D}_{\mathrm{cris}}(V(j))^{\varphi=1} }$
be the map defined by
$g \mapsto p^{-n} \cdot \varphi^{-n} G_j(\zeta_{p^n}-1)$
where $(1-\varphi)G_j(X) = (D^{-j} \otimes e_{-j})(g)(X)$ and $G_j(X) \in \mathscr{H}(V(j))$.
Write $\Gamma = \mathrm{Gal}(K_\infty/K) \simeq \Gamma^1 \times \Delta$ with $\Delta \simeq \mathbb{F}^\times_p$, and it justifies the notation $\mathscr{H}_K(\Gamma) = \mathscr{H}_K[\Delta]$ by sending a topological generator of $\Gamma^1$ to $1+X$.
Write
 ${ \displaystyle \mathrm{H}^1_{\mathcal{I}w}(K, T(j)) = \varprojlim_n \mathrm{H}^1(K_n, T(j)) }$
where the transition maps are the corestriction maps.
\begin{thm}[Benois--Berger--Perrin-Riou] \label{thm:big-exponential-map}
Let $K$ be a finite unramified extension of $\mathbb{Q}_p$ and $V$ a crystalline representation of $G_K$.
For any integers $h$ and $j$ such that
 $ \mathbf{D}^{-h}_{\mathrm{dR}} (V) =  \mathbf{D}_{\mathrm{dR}} (V)$ and
 $h+j \geq 1$,
there exists a unique $\mathscr{H}_K(\Gamma)$-homomorphism
$$\Omega^{\varepsilon}_{V(j), h+j} : \mathscr{D}(V(j))^{\widetilde{\Delta}=0} \to  \mathscr{H}_K(\Gamma) \otimes_{\Lambda} \dfrac{ \mathrm{H}^1_{\mathcal{I}w}(K, T(j)) }{ T(j)^{G_{K_\infty}} }$$
satisfying the following properties:
\begin{enumerate}
\item For any $n \geq 0$, the diagram
\begin{equation} \label{eqn:exponential_diagram}
\begin{split}
\xymatrix{
\mathscr{D}(V(j))^{\widetilde{\Delta}=0}  \ar[rrr]^-{\Omega^{\varepsilon}_{V(j), h+j}} \ar[d]_-{\Xi^{\varepsilon}_{V(j), 0, K_n}} &  & & \mathscr{H}(\Gamma) \otimes_{\Lambda} \dfrac{ \mathrm{H}^1_{\mathcal{I}w}(K, T(j)) }{ T(j)^{G_{K_\infty}} } \ar[d]^-{\mathrm{pr}_{T(j),K_n}}  \\
K_n \otimes \dfrac{ \mathbf{D}_{\mathrm{cris}}(V(j)) }{ \mathbf{D}_{\mathrm{cris}}(V(j))^{\varphi=1} } \ar[rrr]^-{(h+j-1)! \cdot \mathrm{exp}_{V(j), K_n}} &  & & \dfrac{ \mathrm{H}^1(K_n, V(j)) }{ \mathrm{H}^1(K_\infty/K_n,V(j)^{G_{K_\infty}}) }
}
\end{split}
\end{equation}
commutes.
\item 
For any integer $j \in \mathbb{Z}$, $\Omega^{\varepsilon}_{V(j), h+j}$ can be defined by the formula
\begin{equation} \label{eqn:exponential_twist}
\Omega^{\varepsilon}_{V(j+1), h+j+1} = - \mathrm{Tw}^{\varepsilon}_{V(j), 1} \circ \Omega^{\varepsilon}_{V(j), h+j} \circ (D \otimes e_1) 
\end{equation}
where $\mathrm{Tw}^{\varepsilon}_{V(j), 1}$ sends $x \in \mathrm{H}^1_{\mathcal{I}w}(K, T(j))$ to $x \otimes \varepsilon \in \mathrm{H}^1_{\mathcal{I}w}(K, T(j+1))$. 
\item One has
\begin{equation} \label{eqn:exponential_shift}
\Omega^{\varepsilon}_{V(j), h+j+1} = \ell_{h+j}  \cdot \Omega^{\varepsilon}_{V(j), h+j} .
\end{equation}
where $\ell_{h+j} := (h+j) - \dfrac{\mathrm{log}(\gamma_1)}{\mathrm{log}(\chi_{\mathrm{cyc}}(\gamma_1))}$ where $\gamma_1$ is any topological generator of $\Gamma^1$.
Especially, by using (\ref{eqn:exponential_shift}), $\Omega^{\varepsilon}_{V, h}$ can be defined for any integer $h \in \mathbb{Z}$ if we invert $\ell_{i}$ in $\mathscr{H}(\Gamma)$ for all $i \in \mathbb{Z}$.
\end{enumerate}
\end{thm}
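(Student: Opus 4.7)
The plan is to assemble the big exponential map from the finite-layer integral exponentials of Theorem \ref{thm:integral-exponential} together with the solution theory for $(1-\varphi)G=g$ developed in $\S$\ref{subsec:solving-eqn-frobenius}. First I would treat the base range, namely integers $h,j$ with $h+j\geq 1$ and $j$ large enough that $V(j)^{G_{K_\infty}}=0$ and $\mathbf{D}_{\mathrm{cris}}(V(j))^{\varphi=1}=0$. Given $g\in\mathscr{D}(V(j))^{\widetilde{\Delta}=0}$, I would solve $(1-\varphi)G_j=(D^{-j}\otimes e_{-j})(g)$ in $\mathscr{H}(V(j))$ using the exact sequence in (\ref{eqn:1-varphi-sequence-0-h}), and set $\Omega^{\varepsilon}_{V(j),h+j}(g)$ to be the inverse system of the finite-layer classes $\Omega^{\varepsilon}_{V,j,K_n}(g)$ defined by (\ref{eqn:Omega-exp}). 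Corestriction compatibility of this system follows from Theorem \ref{thm:integral-exponential}.(1) combined with (\ref{eqn:solution-twists}), and independence of the solution $G_j$ modulo $M^{\varphi=1}$ is ensured by Lemma \ref{lem:exponential-quotients}.

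Next I would establish commutativity of (\ref{eqn:exponential_diagram}): on each finite layer, the projection $\mathrm{pr}_{T(j),K_n}$ applied to $\Omega^{\varepsilon}_{V(j),h+j}(g)$ recovers the class $\Omega^{\varepsilon}_{V,j,K_n}(g)=(-1)^j(j-1)!\,\mathrm{exp}_{V(j),K_n}(\Xi^{\varepsilon}_{V,j,K_n}(g))$ from (\ref{eqn:Omega-exp}); the factorial prefactor $(h+j-1)!$ demanded by the diagram then has to be built up by iterating the shift relation (\ref{eqn:exponential_shift}), since $\ell_i$ specialises to the integer $h+j-i$ at the trivial character of $\Gamma$, turning the product of the $\ell_i$'s into the rising factorial $(h+j-1)!/(j-1)!$. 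The twist formula (\ref{eqn:exponential_twist}) is then a direct consequence of Lemma \ref{lem:twist-commutative}: the identity $(D\otimes e_1)\circ(D^{-j-1}\otimes e_{-j-1})=D^{-j}\otimes e_{-j}$ shows the corresponding solutions in $\mathscr{H}(V(j+1))$ and $\mathscr{H}(V(j))$ are related in the expected way, and the sign $-1$ emerges from comparing the prefactor $(-1)^{j+1}j!$ against $(-j)\cdot(-1)^j(j-1)!$ in the finite-layer formulae.

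Having the construction for large $j$ in hand, I would extend to arbitrary $j\in\mathbb{Z}$ by iterating (\ref{eqn:exponential_twist}), whose twist operator $\mathrm{Tw}^{\varepsilon}_{V(j),1}$ is tautologically invertible; the extension to all $h\in\mathbb{Z}$ then comes from (\ref{eqn:exponential_shift}) once the $\ell_i$'s are inverted in $\mathscr{H}_K(\Gamma)$, as the statement's final clause already anticipates. Uniqueness of $\Omega^{\varepsilon}_{V(j),h+j}$ as an $\mathscr{H}_K(\Gamma)$-homomorphism follows because the commutativity of (\ref{eqn:exponential_diagram}) together with the twist compatibility (\ref{eqn:exponential_twist}) determines the image of every $g$ at a Zariski-dense family of specializations, namely all Tate twists of all finite-order characters of $\Gamma$, so any two such $\mathscr{H}_K(\Gamma)$-linear maps agreeing there must coincide.

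The main obstacle will be the factorial, sign, and $p$-power bookkeeping: verifying that the $p^{(j-1)n}$ in Theorem \ref{thm:integral-exponential}.(2), the $\varphi^{-n}$ in (\ref{eqn:Omega-Sigma}), and the interpolation values of the differential operators $\ell_i$ conspire to produce precisely the $(h+j-1)!$ appearing in (\ref{eqn:exponential_diagram}) requires careful tracking through the iterated shifts. A secondary delicate point is the handling of $T(j)^{G_{K_\infty}}$ once one leaves the range where this vanishes: the construction only descends to the quotient $\mathrm{H}^1_{\mathrm{Iw}}(K_\infty/K,T(j))/T(j)^{G_{K_\infty}}$ after the ambiguity in solving $(1-\varphi)G=g$ is absorbed using both parts of Lemma \ref{lem:exponential-quotients}.
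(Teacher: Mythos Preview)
The paper does not give a self-contained proof here; it simply cites Perrin-Riou \cite[\S3.2.3]{perrin-riou-local-iwasawa}, Benois--Berger \cite[Th\'eor\`eme 3.1]{benois-berger}, and Benois \cite[Theorem 1.3.2]{benois-near-central}. Your outline, built from the finite-layer maps $\Sigma^{\varepsilon}_{T,j,K_n}$ of Theorem~\ref{thm:integral-exponential} and the solution theory for $(1-\varphi)G=g$, is essentially the construction carried out in those references, so your plan is on the right track.

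Two corrections. First, a slip: $\ell_i$ specialises to $i$ at the trivial character, not to $h+j-i$; your conclusion about the rising factorial $(h+j-1)!/(j-1)!$ survives, since the relevant product is $\prod_{i=j}^{h+j-1}\ell_i$. Second, and more substantive: your argument that the $(h+j-1)!$ in diagram (\ref{eqn:exponential_diagram}) is ``built up by iterating the shift relation (\ref{eqn:exponential_shift})'' only checks the commutativity at the trivial character, whereas the diagram is asserted for \emph{every} $n\geq 0$. At a general finite layer the projected operator $\ell_i\vert_{K_n}$ is not the scalar $i$, so one cannot simply multiply the factorials through. In the cited constructions this is handled the other way around: for each admissible $h$ the map $\Omega^{\varepsilon}_{V(j),h+j}$ is defined directly as an inverse system whose finite-layer avatars already carry the factor $(h+j-1)!$, and the shift relation (\ref{eqn:exponential_shift}) is then verified separately (it encodes a nontrivial compatibility between $\ell_i$ and the Bloch--Kato exponential, not a definition). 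You correctly flag the factorial bookkeeping as the main obstacle; the point is that it is resolved by constructing all the $\Omega^{\varepsilon}_{V(j),h+j}$ in parallel rather than by bootstrapping one from another via $\ell_i$.
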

\begin{proof}
See
\cite[$\S$3.2.3 Thm.]{perrin-riou-local-iwasawa},
\cite[Thm. 3.1]{benois-berger}, and \cite[Thm. 1.3.2]{benois-near-central}. 
In (\ref{eqn:exponential_diagram}), $\mathrm{exp}_{V(j), K_n}$ is well-defined due to Lemma \ref{lem:exponential-quotients}.(1).
Note that
$\Omega^{\varepsilon}_{V(j), h+j}$ is often denoted by $\mathrm{Exp}^{\varepsilon}_{V(j),h+j}$ in the references.
\end{proof}
\begin{rem} \label{rem:exponential-map-h-zero}
Suppose that $\mathbf{D}^{0}_{\mathrm{cris}}(V) = \mathbf{D}_{\mathrm{cris}}(V) $.
Then we have $\mathbf{D}^{-1}_{\mathrm{cris}}(V(1)) = \mathbf{D}_{\mathrm{cris}}(V(1))$.
Applying Theorem \ref{thm:big-exponential-map} to $V(1)$ with $h=1$ and $j= 0$ and using (\ref{eqn:exponential_twist}), we obtain the map
$$\Omega^{\varepsilon}_{V, 0} : \mathscr{D}(V)^{\widetilde{\Delta}=0} \to \mathscr{H}(\Gamma) \otimes_{\Lambda} \dfrac{ \mathrm{H}^1_{\mathcal{I}w}(K, T(j)) }{ T(j)^{G_{K_\infty}} }$$
 defined by
$ \Omega^{\varepsilon}_{V, 0} = - \mathrm{Tw}^{\varepsilon}_{V(1), -1} \circ \Omega^{\varepsilon}_{V(1), 1} \circ (D^{-1} \otimes e_{-1}) $
without inverting any element in $\mathscr{H}(\Gamma)$ since $D$ is an automorphism as in $\S$\ref{subsec:differential-operators}.
\end{rem}
We keep assumption $\mathbf{D}^0_{\mathrm{cris}}(V) = \mathbf{D}_{\mathrm{cris}}(V)$ in Remark \ref{rem:exponential-map-h-zero}.
Following \cite[Thm. 5.2.4]{benois-crystalline-duke} and \cite[Page 634]{benois-berger}, we twist diagram (\ref{eqn:exponential_diagram}) as follows.
For $j > 0$ (with consideration of Lemma \ref{lem:exponential-quotients}.(2)), we have the commutative diagram
\[
\xymatrix{
\mathscr{D}(V)^{\widetilde{\Delta}=0}  \ar[rrr]^-{\mathrm{Tw}^{\varepsilon}_{V, j} \circ \Omega^{\varepsilon}_{V, 0}} \ar[d]_-{\Xi^{\varepsilon}_{V, j, K_n}} &  & & \mathscr{H}(\Gamma) \otimes_{\Lambda} \dfrac{ \mathrm{H}^1_{\mathcal{I}w}(K, T(j)) }{ T(j)^{G_{K_\infty}} } \ar[d]^-{\mathrm{pr}_{T(j),K_n}}  \\
K_n \otimes \dfrac{ \mathbf{D}_{\mathrm{cris}}(V(j)) }{ \mathbf{D}_{\mathrm{cris}}(V(j))^{\varphi=1} } \ar[rrr]^-{(-1)^j \cdot (j-1)! \cdot \mathrm{exp}_{V(j), K_n}} &  & &  \mathrm{H}^1(K_n, V(j)) 
}
\]
and we have
\begin{equation} \label{eqn:projection-Omega}
\Omega^{\varepsilon}_{V,j, K_n} = \mathrm{pr}_{T(j),K_n} \circ \mathrm{Tw}^{\varepsilon}_{V, j} \circ \Omega^{\varepsilon}_{V, 0}
\end{equation}
as in \cite[Cor. 5.2.5]{benois-crystalline-duke}.

\subsection{Perrin-Riou pairing and explicit reciprocity law} \label{subsec:explicit-reciprocity-law}
Let $V$ be a crystalline representation of $G_K$, and write $V^*(1) = \mathrm{Hom}_{\mathbb{Q}_p} (V, \mathbb{Q}_p)(1)$.
Then we have the following commutative diagram induced from the cup product parings
\begin{equation} \label{eqn:pairing-diagram}
\begin{split}
\xymatrix@C=0.75em@R=1.2em{
\mathrm{H}^1(K_n, V ) \ar@{->>}[d] & \times & \mathrm{H}^1(K_n, V^*(1) ) \ar[rrr]^-{[-,-]_{V,K_n}} &  & & \mathrm{H}^2(K_n, \mathbb{Q}_p(1))  \ar@{=}[d] \\
\mathrm{H}^1_{/f}(K_n, V) \ar[d]^-{\mathrm{exp}^*} & \times & \mathrm{H}^1_{f}(K_n, V^*(1)) \ar@{^{(}->}[u] \ar[rrr] & & & \mathrm{H}^2(K_n, \mathbb{Q}_p(1))  \\
K_n \otimes_K \mathbf{D}^0_{\mathrm{cris}}(V) \ar@{^{(}->}[d] & \times & K_n \otimes_K \dfrac{ \mathbf{D}_{\mathrm{cris}}(V^*(1)) }{ \mathbf{D}^0_{\mathrm{cris}}(V^*(1)) } \ar[u]^-{\mathrm{exp}} \ar[rrr] & & & K_n \otimes  \mathbf{D}_{\mathrm{cris}}(\mathbb{Q}_p(1))  \ar@{=}[d] \ar[u]_-{\mathrm{Tr}_{K_n/\mathbb{Q}_p} \otimes \mathrm{id}} \\
K_n \otimes_K \mathbf{D}_{\mathrm{cris}}(V)  & \times & K_n \otimes_K \mathbf{D}_{\mathrm{cris}}(V^*(1)) \ar@{->>}[u] \ar[rrr]^-{[-,-]_{\mathbf{D}(V), K_n}} & & & K_n \otimes  \mathbf{D}_{\mathrm{cris}}(\mathbb{Q}_p(1)) 
}
\end{split}
\end{equation}
where the maps in the rightmost are given by
\[
\xymatrix{
K_n \otimes  \mathbf{D}_{\mathrm{cris}}(\mathbb{Q}_p(1)) \ar[r]^-{\simeq}_-{ e_{-1} \mapsto 1 } & K_n \otimes \mathbb{Q}_p \ar[r]^-{\mathrm{Tr}_{K_n/\mathbb{Q}_p} \otimes \mathrm{id}} & \mathbb{Q}_p \ar[r]^-{\simeq} & \mathrm{H}^2(K_n, \mathbb{Q}_p(1)) .
}
\]
See \cite[Prop. 3.8]{bloch-kato} and \cite[Chap. II, Thm. 1.4.1.(4)]{kato-lecture-1} for details.
Following \cite[$\S$3.6.1]{perrin-riou-local-iwasawa}, we consider the Iwasawa-theoretic variation of (\ref{eqn:pairing-diagram}).
We first restrict $[-,-]_{V,K_n}$ to 
\[
[-,-]_{T,K_n} : \mathrm{H}^1(K_n, T )  \times \mathrm{H}^1(K_n, T^*(1) )  \to \mathbb{Z}_p 
\]
where $T^*(1) = \mathrm{Hom}_{\mathbb{Z}_p}(T, \mathbb{Z}_p)(1)$.
For $ x = (x_n) \in \mathrm{H}^1_{\mathcal{I}w}(K, T )$ and $ y = (y_n) \in \mathrm{H}^1_{\mathcal{I}w}(K, T^*(1) )$,
we define the \textbf{Perrin-Riou pairing}
$$\langle -, - \rangle_{T,K} : \mathrm{H}^1_{\mathcal{I}w}(K, T ) \times \mathrm{H}^1_{\mathcal{I}w}(K, T^*(1) ) \to \Lambda_{\mathcal{I}w} = \mathbb{Z}_p \llbracket \mathrm{Gal}(K_\infty/K) \rrbracket$$
 by
$\langle x, y \rangle_{T,K} := \varprojlim_n \sum_{\sigma \in \mathrm{Gal}(K_n/K)} [ ( x_n )^{\sigma^{-1}}, y_n ]_{T,K_n} \cdot \sigma\in \Lambda_{\mathcal{I}w} $.
It satisfies
$\langle \lambda \cdot x, y \rangle_{T,K} = \lambda \cdot \langle x, y \rangle_{T,K}
 = \langle x, \iota(\lambda) \cdot y \rangle_{T,K}$
where $\lambda \in \Lambda_{\mathcal{I}w}$ and $\iota : \Lambda_{\mathcal{I}w} \to \Lambda_{\mathcal{I}w}$ sends $\gamma \mapsto \gamma^{-1}$ for $\gamma \in \Gamma$.
We extend the pairing  linearly
$$  \mathscr{H}_K(\Gamma) \otimes_{\Lambda_{\mathcal{I}w}} \mathrm{H}^1_{\mathcal{I}w}(K, T ) \times  \mathscr{H}_K(\Gamma) \otimes_{\Lambda_{\mathcal{I}w}}  \mathrm{H}^1_{\mathcal{I}w}(K, T^*(1) ) \to  \mathscr{H}_K(\Gamma) .$$
Since $p$ is inverted in $\mathscr{H}_K(\Gamma)$, the torsion in $\mathrm{H}^1_{\mathcal{I}w}(K, T )$ is ignored.
Using the product structure $*$ on $W\llbracket X  \rrbracket^{\psi=0}$ described in $\S$\ref{subsec:rings-of-power-series}, we 
extend $[-,-]_{\mathbf{D}(V)} = [-,-]_{\mathbf{D}(V), K} $ to  a bilinear form of $\Lambda_{\mathcal{I}w}$-modules
$$[-,-]_{\mathscr{D}(V)}  : \mathscr{D}(V) \times \mathscr{D}(V^*(1))   \to K \otimes_{\mathbb{Z}_p} \Lambda_{\mathcal{I}w} $$
such that
$[g_1 \otimes d_1 , g_2 \otimes d_2 ]_{\mathscr{D}(V)} = [d_1, d_2]_{\mathbf{D}(V)} \cdot g_1 * g_2 \in K \otimes_{\mathbb{Z}_p} \Lambda_{\mathcal{I}w} $
for 
$g_1, g_2 \in W\llbracket X  \rrbracket^{\psi=0}$,  $d_1 \in \mathbf{D}_{\mathrm{cris}}(V)$, and  $d_2 \in \mathbf{D}_{\mathrm{cris}}(V^*(1))$.

The following conjecture (e.g.~\cite[$\S$3.6.4]{perrin-riou-local-iwasawa}, \cite[Thm. 4.2.3]{perrin-riou-documenta}) is known as the \emph{explicit reciprocity law} conjecture, which is now a theorem.
\begin{conj}[$\mathrm{R\acute{e}c}(V)$] \label{conj:reciprocity-law}
Let $V$ be a crystalline representation of $G_K$. For all $x \in \mathscr{D}(V)^{\widetilde{\Delta} = 0}$, $y \in \mathscr{D}(V^*(1))^{\widetilde{\Delta} = 0}$,
we have
$$\left\langle \Omega^{\varepsilon}_{V,h}(x), \Omega^{\varepsilon^{-1}}_{V^*(1), 1-h}(y) \right\rangle_V \cdot (1+X) = (-1)^{h-1} \cdot \mathrm{Tr}_{K/\mathbb{Q}_p} \left( \left[ x, y \right]_{\mathscr{D}(V)}  \right) $$
for any integer $h$.
In other words, the inverse of $\Omega^{\varepsilon}_{V,h}$ is the adjoint of $\Omega^{\varepsilon^{-1}}_{V^*(1),1-h}$ up to sign.
\end{conj}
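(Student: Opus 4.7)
The plan is to reduce to a convenient base case via the twist compatibility (\ref{eqn:exponential_twist}) and shift compatibility (\ref{eqn:exponential_shift}), then verify the identity either by specialization at a dense family of characters of $\Gamma$ or by an explicit lift to $\mathbf{A}_{\mathrm{cris}}$. More precisely, both sides of the asserted identity are $\mathscr{H}_K(\Gamma)$-sesquilinear in $(x,y)$ with respect to the involution $\iota$, and the shift formula (\ref{eqn:exponential_shift}) shows that replacing $h$ by $h+1$ multiplies the left-hand side by $\ell_h$ on the $x$-slot while, under the duality interchange $V \leftrightarrow V^*(1)$ and $h \leftrightarrow 1-h$, the corresponding factor on the $y$-slot is $\ell_{-h}$, with $\iota(\ell_h) = -\ell_{-h}$. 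So the identity is compatible with shifts in $h$, and by (\ref{eqn:exponential_twist}) it is also compatible with twists in $j$; hence it suffices to establish it for a single pair $(V,h)$ in each equivalence class.

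Choosing $h$ large enough that $\mathbf{D}^{-h}_{\mathrm{cris}}(V) = \mathbf{D}_{\mathrm{cris}}(V)$ and $\mathbf{D}^{0}_{\mathrm{cris}}(V^*(1)) = 0$ (use $V^*(1)$ has Hodge–Tate weights shifted), I would specialize both sides at finite-order characters $\chi$ of $\Gamma$ of conductor $p^n$. Using diagram (\ref{eqn:exponential_diagram}), the $\chi$-component of $\Omega^{\varepsilon}_{V,h}(x)$ is, up to $(h-1)!$, the Bloch–Kato exponential of an explicit element computed from a solution $G$ to $(1-\varphi)G = x$; dually, the $\chi^{-1}$-component of $\Omega^{\varepsilon^{-1}}_{V^*(1),1-h}(y)$ is expressible via the Bloch–Kato dual-exponential map applied to a solution $G^*$ of $(1-\varphi)G^* = y$. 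The cup-product pairing $[-,-]_{V,K_n}$ on the two resulting cohomology classes is then computed through the middle rows of diagram (\ref{eqn:pairing-diagram}), where $\mathrm{exp}_{V,K_n}$ and $\mathrm{exp}^{*}_{V^*(1),K_n}$ are adjoint with respect to the de Rham pairing on $\mathbf{D}_{\mathrm{cris}}$ (a standard fact of Bloch–Kato).

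The remaining computation amounts to showing that, after summing over $\mathrm{Gal}(K_n/K)$, the explicit finite-level values
\begin{equation*}
\sum_{\sigma \in \mathrm{Gal}(K_n/K)} \chi(\sigma)\,\bigl[\,\varphi^{-n}G(\zeta_{p^n}-1)^{\sigma^{-1}},\,\varphi^{-n}G^{*}(\zeta_{p^n}-1)\,\bigr]_{\mathbf{D}(V),K_n}
\end{equation*}
recover (up to the claimed sign $(-1)^{h-1}$ and trace from $K$ to $\mathbb{Q}_p$) the $\chi$-specialization of $\mathrm{Tr}_{K/\mathbb{Q}_p}([x,y]_{\mathscr{D}(V)}) \cdot (1+X)$, which unwinds via the convolution $*$ on $W\llbracket X \rrbracket^{\psi=0}$ to $\mathrm{Tr}_{K/\mathbb{Q}_p}[x,y]_{\mathscr{D}(V)}$ paired against $\chi$. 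This matches because $G, G^*$ differ from $x,y$ by the Frobenius operator whose action on $\mathbf{A}_{\mathrm{cris}}$-valued pairings is controlled by the map $\theta$ sending $\varepsilon - 1 \mapsto 0$, effectively trivializing the $\varphi$-contribution at finite level. The density of finite-order characters in $\mathscr{H}_K(\Gamma)^{\vee}$ then upgrades the identity to the full $\mathscr{H}_K(\Gamma)$-level statement.

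The hard part will be the sign and normalization bookkeeping: the factor $(-1)^{h-1}$ on the right, the swap $\varepsilon \leftrightarrow \varepsilon^{-1}$ which inverts the chosen orientation of $\mathbb{Z}_p(1)$ and hence conjugates the $\Gamma$-action, and the interplay between the $\varphi^{-n}$-twist in $\Xi^{\varepsilon}$ and the Frobenius appearing in the adjointness of $\mathrm{exp}$ and $\mathrm{exp}^{*}$. Managing these simultaneously — especially verifying that the convolution product $*$ converts the local cup product at finite level into the $\Lambda$-valued Perrin-Riou pairing with the correct $(1+X)$ prefactor — is where the delicate combinatorics of Coleman's logarithmic derivative (here encoded by the operator $D$) ultimately enters, and is the step for which one classically invokes either Colmez's approach via $(\varphi,\Gamma)$-modules or the Wach-module argument of Benois, as in the references cited immediately after the conjecture.
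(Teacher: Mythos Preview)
The paper does not supply its own proof of this statement: it is recorded as Conjecture~\ref{conj:reciprocity-law}, and the immediately following theorem merely asserts that it holds and cites Benois, Colmez, Berger, and Kato--Kurihara--Tsuji for independent proofs. Your sketch is a reasonable outline of the standard reduction --- using the shift and twist compatibilities (\ref{eqn:exponential_shift}), (\ref{eqn:exponential_twist}) to move to a convenient $(V,h)$, then specializing at finite-order characters and invoking the $\mathrm{exp}$/$\mathrm{exp}^*$ adjointness from (\ref{eqn:pairing-diagram}) --- and this is indeed how the cited proofs begin. But you yourself acknowledge in the last paragraph that the genuine content (the finite-level combinatorics matching the cup product to the convolution pairing, with all signs and $\varphi$-twists tracked) is where one must invoke Benois' Wach-module computation or Colmez's $(\varphi,\Gamma)$-module approach. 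So your proposal and the paper ultimately land in the same place: the hard step is outsourced to the same references. There is nothing to correct, but also no substantive alternative route on offer --- your middle paragraph's claim that the $\varphi$-contribution is ``effectively trivialized'' by $\theta$ is exactly the step that is \emph{not} easy and constitutes the heart of the cited papers.
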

\begin{thm}[Benois, Berger, Colmez, Kato--Kurihara--Tsuji]
Conjecture \ref{conj:reciprocity-law} holds.
\end{thm}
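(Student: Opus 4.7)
The plan is to prove Conjecture \ref{conj:reciprocity-law} by reducing to a single distinguished value of $h$ via the functorial properties of Perrin-Riou's big exponential, then identifying both sides at each finite layer with the classical Bloch--Kato local duality, and finally assembling the identities into the required equality in $\mathscr{H}_K(\Gamma)$.

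First, I would exploit Theorem \ref{thm:big-exponential-map} to reduce to a single accessible value of $h$. The shift relation (\ref{eqn:exponential_shift}) multiplies $\Omega^{\varepsilon}_{V, h}$ by $\ell_{h+j}$; on the dual side, the same relation applied to $\Omega^{\varepsilon^{-1}}_{V^{*}(1), 1-h}$ multiplies by $\ell_{-h+j^{*}}$, and after the involution $\iota$ hidden in the definition of $\langle -, - \rangle_{V, K}$, these two factors cancel up to sign. The twist relation (\ref{eqn:exponential_twist}) reduces $V \rightsquigarrow V(1)$ at the cost of the operator $D \otimes e_{1}$ on both factors; this is compatible with the right-hand side because the convolution product $*$ on $W\llbracket X \rrbracket^{\psi=0}$ used to define $[-,-]_{\mathscr{D}(V)}$ transforms compatibly under $D$. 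By iterating these two operations, one may assume that $h$ is large enough that the interpolation formula of Theorem \ref{thm:big-exponential-map}.(1) applies on the nose at every finite layer $K_{n}$, and so $\Omega^{\varepsilon}_{V, h}$ evaluates (via (\ref{eqn:projection-Omega})) to the classical Bloch--Kato exponential times $(h+j-1)!$ composed with the solution $\Xi^{\varepsilon}_{V, j, K_{n}}$ of the equation $(1-\varphi) G = g$.

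Second, testing both sides of the claimed identity against characters $\sigma \in \mathrm{Gal}(K_{n}/K)$ reduces the problem to a finite-layer statement. The $\sigma$-component of $\langle \Omega^{\varepsilon}_{V,h}(x), \Omega^{\varepsilon^{-1}}_{V^{*}(1), 1-h}(y) \rangle_{V, K}$ becomes, up to explicit factorials, the cup-product pairing $[\mathrm{exp}_{V(j), K_{n}}(\Xi^{\varepsilon}_{V,j,K_{n}}(x)), y_{n}']_{V, K_{n}}$, where $y_{n}'$ is either the Bloch--Kato exponential of an appropriate solution on the dual side or, after passing through the diagram (\ref{eqn:pairing-diagram}), an image under the dual exponential $\mathrm{exp}^{*}$. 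On the right-hand side, the same $\sigma$-component of $\mathrm{Tr}_{K/\mathbb{Q}_{p}} [x, y]_{\mathscr{D}(V)}$ unpacks via the definition of $*$ into $\mathrm{Tr}_{K_{n}/\mathbb{Q}_{p}}$ of $[\cdot, \cdot]_{\mathbf{D}(V), K_{n}}$ evaluated at the twisted solutions $\varphi^{-n} G(\zeta_{p^n} - 1)$ and its dual. Matching these two expressions is exactly Bloch--Kato's local duality, i.e. the middle horizontal pairing of diagram (\ref{eqn:pairing-diagram}), together with the adjointness relating $(1-\varphi)G = g$ on $V$ and the analogous equation on $V^{*}(1)$.

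Third, once the finite-layer identities are established, the corestriction compatibility built into Theorem \ref{thm:integral-exponential}.(1) ensures they fit into a projective system, and since finite-order characters of $\Gamma$ are dense in $\mathrm{Spec} \, \mathscr{H}_{K}(\Gamma)$ and the pairing $\langle -, - \rangle_{V, K}$ takes values in $\mathscr{H}_{K}(\Gamma)$, the finite-layer identities determine the required equality in $\mathscr{H}_{K}(\Gamma)$. The main obstacle here, and the reason this result required the combined efforts of Benois, Berger, Colmez, and Kato--Kurihara--Tsuji, is the careful bookkeeping of signs, of the choice $\varepsilon$ versus $\varepsilon^{-1}$ in the two big exponentials (which corresponds to the involution $\iota$ in Perrin-Riou's pairing), and the verification that the integral lattice $M$ of (\ref{eqn:wach-lattice}) arising from the Wach module is preserved under the duality pairing $[-, -]_{\mathbf{D}(V)}$. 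Benois' Wach-module framework in \cite{benois-crystalline-duke} handles the integral comparison, Berger \cite{berger-three-explicit-formulas} provides three explicit formulas for $\mathrm{exp}$ and $\mathrm{exp}^{*}$ that realize this comparison, and Colmez and Kato--Kurihara--Tsuji provide the base-case reciprocity; I would organize the proof to isolate this sign- and integrality-tracking as the single delicate lemma underlying the argument.
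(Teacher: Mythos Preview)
The paper does not actually prove this theorem: its ``proof'' consists solely of remarks citing the literature (Benois, Colmez, Berger, Kato--Kurihara--Tsuji), noting that Benois handled the finite-height case, Colmez removed that hypothesis, and Berger and Kato--Kurihara--Tsuji gave alternative arguments. So there is no proof in the paper to compare against; I can only assess whether your sketch would succeed.

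Your reduction step (Step~1) is correct and is indeed how all the published proofs are organized: the shift and twist relations (\ref{eqn:exponential_shift}), (\ref{eqn:exponential_twist}) let one move $h$ freely, and the factors cancel under the Perrin-Riou pairing, so it suffices to treat a single convenient $h$. Your Step~3 (density of finite-order characters plus corestriction compatibility) is also fine.

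The gap is in Step~2. You write that ``matching these two expressions is exactly Bloch--Kato's local duality \ldots\ together with the adjointness relating $(1-\varphi)G=g$ on $V$ and the analogous equation on $V^{*}(1)$.'' But this ``adjointness'' is precisely the content of the theorem, not a formal input. By construction, $\Omega^{\varepsilon}_{V,h}$ interpolates $\exp_{V(j),K_n}$ for $h+j\ge 1$; the reciprocity law is the assertion that, under the pairing, the dual map interpolates $\exp^{*}$. To close your argument you must \emph{independently} identify the finite-layer image of $\Omega^{\varepsilon^{-1}}_{V^{*}(1),1-h}(y)$ (for the complementary range of $h$) with an expression to which Bloch--Kato duality applies---i.e.\ you need an explicit formula for $\exp^{*}$ in terms of the $(\varphi,\Gamma)$-module or Wach-module data $\varphi^{-n}G(\zeta_{p^n}-1)$. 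That formula is exactly what Berger proves in \cite{berger-three-explicit-formulas} and what Benois computes via Wach modules; without it the argument is circular, since you are assuming the very compatibility between $\exp$ and $\exp^{*}$ through the big exponential that $\mathrm{R\acute{e}c}(V)$ asserts. Your final paragraph describes this as ``bookkeeping of signs'' and ``integrality-tracking,'' which significantly understates the difficulty: the substantive computation is the explicit description of $\exp^{*}$, not the sign conventions.
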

\begin{proof}[Remarks on proof]
Conjecture \ref{conj:reciprocity-law} was proved by Benois \cite{benois-crystalline-duke} with \cite{colmez-finite-height} (Theorem \ref{thm:colmez-crystalline-finite-height}) and Colmez \cite{colmez-local-iwasawa-de-rham} independently. There are also different proofs by Berger \cite{berger-three-explicit-formulas} and Kato--Kurihara--Tsuji \cite{kato-kurihara-tsuji}. 
\end{proof}
\begin{rem}
\begin{enumerate}
\item Although the image of $\Omega^{\varepsilon}_{V,h}$ and $\Omega^{\varepsilon^{-1}}_{V^*(1), 1-h}$
lie in the quotients of Iwasawa cohomologies by their torsions, the Perrin-Riou pairing is still well-defined. See \cite[3.6.2.Lem.]{perrin-riou-local-iwasawa}.
\item If one constructs $\Omega^{\varepsilon}_{V,h}$ or $\Omega^{\varepsilon^{-1}}_{V^*(1), 1-h}$ from Bloch--Kato exponentials, then one can also (equivalently) construct the other from Bloch--Kato dual exponentials.
In our case, we construct $\Omega^{\varepsilon^{-1}}_{V_f(k)^*(1), 0} = \Omega^{\varepsilon^{-1}}_{V_{\overline{f}}, 0}$
from Bloch--Kato exponential maps.
In \cite[$\S$3.2 and $\S$3.3.2]{lei-compositio} and \cite[$\S$3.5.1]{lei-loeffler-zerbes_wach}, Lei--Loeffler--Zerbes construct $\Omega^{\varepsilon}_{V_f(1),1}$ from Bloch--Kato exponential maps.
This discrepancy affects the interpolation formula of $p$-adic $L$-functions very  slightly. See $\S$\ref{subsec:interpolation} for details.
\end{enumerate}
\end{rem}

\begin{prop} \label{prop:choice-varepsilon}
\begin{enumerate}
\item For $\tau \in \mathrm{Gal}(K_\infty/K)$, 
$\Omega^{\tau \varepsilon}_{V,h}
=
\tau \Omega^{ \varepsilon}_{V,h}
=
 \Omega^{ \varepsilon}_{V,h} \circ \tau$
\item 
$\mathrm{Tw}^{\varepsilon}_{-j, V^*(1)}
= (-1)^j \cdot \mathrm{Tw}^{\varepsilon^{-1}}_{-j, V^*(1)}$
\end{enumerate}
\end{prop}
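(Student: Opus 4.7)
For (2), the key observation is that the multiplicative inverse $\varepsilon^{-1} \in \mathcal{R}$, viewed as the tuple $(\zeta_{p^n}^{-1})_{n \geq 0}$, corresponds to the \emph{additive} inverse $-\varepsilon$ of $\varepsilon$ under the identification $\mathbb{Z}_p(1) \simeq \varprojlim \mu_{p^n}$ that turns the multiplicative group law on roots of unity into the $\mathbb{Z}_p$-module structure on the Tate twist. Consequently, the basis $(\varepsilon^{-1})^{\otimes -j}$ of $\mathbb{Z}_p(-j)$ satisfies $(\varepsilon^{-1})^{\otimes -j} = (-\varepsilon)^{\otimes -j} = (-1)^{-j}\,\varepsilon^{\otimes -j} = (-1)^j\,\varepsilon^{\otimes -j}$. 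Since $\mathrm{Tw}^{\eta}_{V^*(1), -j}$ is defined by tensoring with $\eta^{\otimes -j}$, this gives $\mathrm{Tw}^{\varepsilon^{-1}}_{V^*(1), -j} = (-1)^j \cdot \mathrm{Tw}^{\varepsilon}_{V^*(1), -j}$, which rearranges to the claimed identity.

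For (1), the plan is to invoke the uniqueness statement of Theorem \ref{thm:big-exponential-map}. The only ingredient in the characterizing diagram (\ref{eqn:exponential_diagram}) that depends on the choice of orientation is the vertical map $\Xi^{\varepsilon}_{V(j), 0, K_n}(g) = p^{-n}\,\varphi^{-n}\,G_j(\zeta_{p^n} - 1)$. Replacing $\varepsilon$ by $\tau\varepsilon$ substitutes $\zeta_{p^n}^{\chi_{\mathrm{cyc}}(\tau)}$ for $\zeta_{p^n}$; this substitution coincides with letting $\tau$ act on the source $\mathscr{D}(V)^{\widetilde{\Delta}=0}$ through the power-series substitution $X \mapsto (1+X)^{\chi_{\mathrm{cyc}}(\tau)} - 1$ before applying $\Xi^{\varepsilon}$, and equivalently with letting $\tau$ act on the Iwasawa cohomology target via its natural Galois action. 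The first step will therefore be to check that $\tau \cdot \Omega^{\varepsilon}_{V,h}$ fits the diagram characterizing the orientation $\tau\varepsilon$; the uniqueness clause of Theorem \ref{thm:big-exponential-map} then forces $\Omega^{\tau\varepsilon}_{V,h} = \tau \cdot \Omega^{\varepsilon}_{V,h}$. The remaining equality $\tau \cdot \Omega^{\varepsilon}_{V,h} = \Omega^{\varepsilon}_{V,h} \circ \tau$ is then immediate from the $\mathscr{H}_K(\Gamma)$-linearity of $\Omega^{\varepsilon}_{V,h}$, since $\tau \in \Gamma \subset \mathscr{H}_K(\Gamma)$ acts on both sides by the same multiplication.

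Neither statement is expected to present a serious obstacle; the only mild point to verify in (1) is the bookkeeping that the $\Gamma$-action introduced by the orientation change is correctly identified on the $\mathscr{D}(V)^{\widetilde{\Delta}=0}$-side with the power-series substitution (the $\Gamma$-action on $\mathbf{D}_{\mathrm{cris}}(V)$ itself being trivial, as $V$ is crystalline over the unramified base $K$), so that both sides of (\ref{eqn:exponential_diagram}) transform by the same $\tau$ and the uniqueness hypothesis applies verbatim.
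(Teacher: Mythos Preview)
Your proposal is correct. The paper does not give its own argument here but simply cites Perrin-Riou \cite[3.2.4 Premi\`{e}res remarques (v) and 3.6.5 Lemme]{perrin-riou-local-iwasawa}; your direct verification via the uniqueness clause of Theorem \ref{thm:big-exponential-map} for (1) and the identification $\varepsilon^{-1} = -\varepsilon$ in $\mathbb{Z}_p(1)$ for (2) is exactly the content of those references, so your approach is the expected one.
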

\begin{proof}
See \cite[3.2.4.Pre. rem. (v) and 3.6.5.Lem.]{perrin-riou-local-iwasawa}
\end{proof}
\section{Mazur--Tate elements II: \'{e}tale construction} \label{sec:mazur-tate-etale}
Refining Perrin-Riou's local Iwasawa theory reviewed in $\S$\ref{sec:local-iwasawa-theory-perrin-riou}, we develop an \emph{equivariant} local Iwasawa theory and apply it to modular forms
 in an \emph{explicit} manner.
As a by-product, we construct the \emph{equivariant} $p$-adic $L$-functions of modular forms directly from Kato's Euler systems and the Mazur--Tate elements of modular forms over general cyclotomic extensions of $\mathbb{Q}$.
In other words, we expand and refine \cite[Line 1, Page 270]{kato-euler-systems}.
More precisely, we give explicit proofs of the corrected versions of \cite[Thm. 16.4 and Thm. 16.6]{kato-euler-systems} on the construction of $p$-adic $L$-functions of modular forms at good primes (i.e. the crystalline case) from Kato's Euler systems and generalize them equivariantly. The key input is the explicit construction of the families of equivariant local points  for modular forms following the idea of Perrin-Riou \cite[$\S$2.3]{perrin-riou-local-iwasawa} and Ota \cite{ota-rank-part}.
It can be regarded as a finite layer refinement of the explicit reciprocity law for modular forms and generalizes the constructions given in \cite{kurihara-invent, otsuki, ota-thesis, ota-rank-part, kataoka-thesis}.

%
%

\subsection{Non-triviality of equivariant local points} \label{subsec:compatible-families-local-points}
We regard $V_{\overline{f}}$ as a crystalline representation of $\mathrm{Gal}(\overline{\mathbb{Q}}_p/\mathbb{Q}_p)$ via restriction.
Then the corresponding filtered $\varphi$-module $\mathbf{D}_{\mathrm{cris}}(V_{\overline{f}})$ has filtration given by
$$ \mathbf{D}^{i}_{\mathrm{cris}}(V_{\overline{f}}) = 
\left\lbrace
\begin{array}{ll}
\mathbf{D}_{\mathrm{cris}}(V_{\overline{f}}) & (i \leq 0) \\
S(\overline{f}) \otimes_F F_\pi  &  (1 \leq i \leq k-1) \\
0 &  (k \leq i)
\end{array}
\right.$$
as in \cite[(11.3.4)]{kato-euler-systems}.
\begin{defn}
Following (\ref{eqn:wach-lattice}), we define the $\varphi$-stable $\mathcal{O}_\pi$-lattice of $\mathbf{D}_{\mathrm{cris}}(V_{\overline{f}})$ by
$$M_{\overline{f}} := \left( \mathbf{D}_S(T_{\overline{f}}) \widehat{\otimes}_S \widehat{S}^{\mathrm{PD}} \right)^{\Gamma} \subseteq \mathbf{D}_{\mathrm{cris}}(V_{\overline{f}})$$
where $\Gamma   = \mathrm{Gal}(\mathbb{Q}_p(\zeta_{p^\infty})/\mathbb{Q}_p)$.
\end{defn}
\begin{rem}
When $2 \leq k \leq p-1$, one may use the lattice arising from Fontaine--Laffaille modules instead of $M_{\overline{f}}$.
See  
\cite[Thm. 4.1.(iii)]{bloch-kato},
\cite[$\S$2.1.5]{perrin-riou-local-iwasawa}, and
\cite[$\S$14.17]{kato-euler-systems} for details.
\end{rem}
Let $\overline{\alpha} \in \mathbb{C}_p$ be a root of $X^2 - a_p(\overline{f}) X + \overline{\psi}(p) p^{k-1}$ 
such that $\mathrm{ord}_p( \overline{\alpha} ) < k-1$ and $\overline{\beta}$ the other root.
Both $\overline{\alpha}$ and $\overline{\beta}$ cannot be a root of unity due to their complex absolute values, and all $\alpha$, $\beta$, $\overline{\alpha}$, and $\overline{\beta}$ lie in $\mathcal{O}_\pi[\alpha]$.
\begin{prop} \label{prop:surjectivity-of-1-varphi}
\begin{enumerate}
\item Suppose that $\overline{f}$ is non-ordinary at $p$.
Then the map 
$$\left(  1 - \varphi \right) : (\mathbb{Z}[\zeta_m] \otimes \mathbb{Z}_p) \otimes_{\mathbb{Z}_p} M_{\overline{f}} \otimes \mathcal{O}_{\pi}[\alpha] \to (\mathbb{Z}[\zeta_m] \otimes \mathbb{Z}_p) \otimes_{\mathbb{Z}_p} M_{\overline{f}} \otimes \mathcal{O}_{\pi}[\alpha]$$
 is surjective.
\item Suppose that $\overline{f}$ is ordinary at $p$ with unit Hecke eigenvalue $\overline{\alpha}$ at $p$.
Then the map 
$$\left(  1 - \varphi \right) : \dfrac{1}{1-\overline{\alpha}^{\phi(m)}} \cdot (\mathbb{Z}[\zeta_m] \otimes \mathbb{Z}_p) \otimes_{\mathbb{Z}_p} M_{\overline{f}} \otimes \mathcal{O}_{\pi}[\alpha] \to  (\mathbb{Z}[\zeta_m] \otimes \mathbb{Z}_p) \otimes_{\mathbb{Z}_p} M_{\overline{f}} \otimes \mathcal{O}_{\pi}[\alpha]$$
 is surjective.
\end{enumerate}
\end{prop}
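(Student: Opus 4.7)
The plan is to use that the arithmetic Frobenius $\sigma$ on the unramified $\mathbb{Z}_p$-algebra $\mathbb{Z}[\zeta_m] \otimes \mathbb{Z}_p$ has finite order $d$, equal to the multiplicative order of $p$ modulo $m$, so in particular $d \mid \phi(m)$. With respect to the decomposition $\varphi = \sigma \otimes \varphi_{\mathrm{cris}} \otimes \mathrm{id}$ on the triple tensor product, the identity
\begin{equation*}
(1 + \varphi + \varphi^2 + \cdots + \varphi^{d-1})(1-\varphi) = 1 - \varphi^d
\end{equation*}
shows that $1-\varphi$ is invertible (hence surjective) as soon as $1 - \varphi^d$ is, with explicit inverse $(1 + \varphi + \cdots + \varphi^{d-1}) \cdot (1 - \varphi^d)^{-1}$. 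Since $\sigma^d = \mathrm{id}$, the operator $\varphi^d$ reduces to $\varphi_{\mathrm{cris}}^d$ acting on $M_{\overline{f}} \otimes \mathcal{O}_\pi[\alpha]$ and fixing the other tensor factors, so the whole problem collapses to a question about a rank-$2$ operator.

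By the crystalline compatibility recalled in $\S$\ref{subsec:galois-representations}, the characteristic polynomial of $\varphi_{\mathrm{cris}}$ on $M_{\overline{f}}$ is $X^2 - a_p(\overline{f})X + \overline{\psi}(p) p^{k-1}$, with roots $\overline{\alpha}$ and $\overline{\beta}$. Assumption \ref{assu:standard}.($\alpha\neq\beta$) gives $\overline{\alpha} \neq \overline{\beta}$, so after base change to $\mathcal{O}_\pi[\alpha]$ the determinant of $1 - \varphi_{\mathrm{cris}}^d$ equals $(1 - \overline{\alpha}^d)(1 - \overline{\beta}^d)$, and Cayley--Hamilton exhibits $(1 - \varphi_{\mathrm{cris}}^d)^{-1}$ as a polynomial in $\varphi_{\mathrm{cris}}^d$ with this product as its only denominator. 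For part (1), the non-ordinary hypothesis forces both $\mathrm{ord}_p(\overline{\alpha})$ and $\mathrm{ord}_p(\overline{\beta})$ to be strictly positive, so $1 - \overline{\alpha}^d$ and $1 - \overline{\beta}^d$ are units in $\mathcal{O}_\pi[\alpha]$ (both reduce to $1$ modulo the maximal ideal), and surjectivity of $1-\varphi$ holds unconditionally. For part (2), the ordinary case gives $\mathrm{ord}_p(\overline{\beta}) = k-1 > 0$ so $1 - \overline{\beta}^d$ is still a unit, whereas $\overline{\alpha}$ is a $p$-adic unit and $1 - \overline{\alpha}^d$ must be inverted; the factorization
\begin{equation*}
1 - X^{\phi(m)} = (1 - X^d)\bigl(1 + X^d + X^{2d} + \cdots + X^{(\phi(m)/d - 1)d}\bigr)
\end{equation*}
shows that $1 - \overline{\alpha}^d$ divides $1 - \overline{\alpha}^{\phi(m)}$ in $\mathcal{O}_\pi[\alpha]$, so localizing at $1 - \overline{\alpha}^{\phi(m)}$ is exactly what is needed.

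The main subtlety I anticipate lies in the integral bookkeeping: although the eigenvalue calculation is clean over $F_\pi(\alpha)$, one must verify that the Cayley--Hamilton inverse $(1 - \varphi_{\mathrm{cris}}^d)^{-1}$ actually preserves the $\mathcal{O}_\pi[\alpha]$-lattice $M_{\overline{f}} \otimes \mathcal{O}_\pi[\alpha]$, rather than introducing denominators beyond the determinant factors already inverted. This should follow from fixing an $\mathcal{O}_\pi[\alpha]$-basis of $M_{\overline{f}} \otimes \mathcal{O}_\pi[\alpha]$, writing out the matrix of $\varphi_{\mathrm{cris}}^d$, and invoking that $M_{\overline{f}}$ is by construction a $\varphi$-stable Wach lattice as in (\ref{eqn:wach-lattice}); the underlying point is that on this lattice the matrix of $\varphi_{\mathrm{cris}}^d$ has integral entries, so its adjugate is integral and only the determinant needs to be inverted.
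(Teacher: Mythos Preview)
Your argument is correct and takes a genuinely different route from the paper. The paper's proof of (1) uses that both $\overline{\alpha}$ and $\overline{\beta}$ have positive $p$-adic valuation, so the geometric series $\sum_{n\geq 0}\varphi^n$ converges $\pi$-adically and furnishes an explicit preimage. For (2), the paper splits $g$ into $\varphi$-eigencomponents $g_{\overline{\alpha}}$ and $g_{\overline{\beta}}$ over $\mathcal{O}_\pi[\alpha]$ (legitimate since $\overline{\alpha}-\overline{\beta}$ is a unit in the ordinary case), treats the $\overline{\beta}$-part by the same convergent series, and handles the $\overline{\alpha}$-part via the finite sum $\sum_{i=0}^{\phi(m)-1}\varphi^i$ together with $\sigma^{\phi(m)}=1$, producing the factor $1-\overline{\alpha}^{\phi(m)}$.

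Your approach instead exploits the finite order $d$ of $\sigma$ to reduce to the rank-two operator $1-\varphi_{\mathrm{cris}}^d$ and then uses the adjugate. This is more uniform (one argument covers both cases), entirely algebraic (no convergence), and in fact yields the sharper denominator $1-\overline{\alpha}^d$ dividing $1-\overline{\alpha}^{\phi(m)}$. One small remark: your invocation of Assumption~\ref{assu:standard}.($\alpha\neq\beta$) is unnecessary, since the determinant identity $\det(1-\varphi_{\mathrm{cris}}^d)=(1-\overline{\alpha}^d)(1-\overline{\beta}^d)$ holds regardless of whether the eigenvalues are distinct; your adjugate argument never uses diagonalizability.
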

\begin{proof}
We follow \cite{ota-rank-part}.
\begin{enumerate}
\item 
Under the non-ordinary assumption,
$\overline{\alpha}^{n} \not\equiv 1 \pmod{\pi}$ and $\overline{\beta}^{n} \not\equiv 1 \pmod{\pi}$ for any integer $n \geq 1$.
Let $g \in (\mathbb{Z}[\zeta_m] \otimes \mathbb{Z}_p) \otimes M_{\overline{f}}  \otimes \mathcal{O}_{\pi}[\alpha]$.
Since the $\varphi$-eigenvalues of $M_{\overline{f}}$ are non-units, $G = \sum_{n \geq 0} \varphi^n(x)$ converges and it satisfies $(1-\varphi)G = g$.
\item 
Let $g \in (\mathbb{Z}[\zeta_m] \otimes \mathbb{Z}_p) \otimes_{\mathbb{Z}_p} M_{\overline{f}} \otimes \mathcal{O}_{\pi}[\alpha]$.
Write $g = a \cdot g_{\overline{\alpha}} + b \cdot g_{\overline{\beta}}$
where $a, b \in \mathbb{Z}[\zeta_m] \otimes \mathbb{Z}_p$ and 
$g_{\overline{\alpha}} , 
g_{\overline{\beta}} \in M_{\overline{f}} \otimes \mathcal{O}_{\pi}[\alpha]$
such that
$ \varphi g_{\overline{\alpha}} = \overline{\alpha} \cdot  g_{\overline{\alpha}}$ and $\varphi g_{\overline{\beta}}  = \overline{\beta} \cdot g_{\overline{\beta}}$.
We write
\[
\xymatrix{
(1 - \overline{\alpha}^{\phi(m)}) \cdot G_{\overline{\alpha}} =  \sum_{0 \leq i \leq  \phi(m)-1} \varphi^i( a \cdot g_{\overline{\alpha}} ), &
 G_{\overline{\beta}} =  \sum_{n \geq 0} \varphi^n( b \cdot g_{\overline{\beta}} ) 
}
\]
and then $G_{\overline{\alpha}}$ lies only in $\dfrac{1}{1 - \overline{\alpha}^{\phi(m)}} \cdot (\mathbb{Z}[\zeta_m] \otimes \mathbb{Z}_p) \otimes_{\mathbb{Z}_p} \otimes M_{\overline{f}}  \otimes \mathcal{O}_{\pi}[\alpha]$. 
Since $\sigma^{\phi(m)} = 1$ on $\mathbb{Z}[\zeta_m] \otimes \mathbb{Z}_p$, we have $(1 - \varphi) (G_{\overline{\alpha}} + G_{\overline{\beta}}) = g$.
Thus, we have the surjectivity of
$$\left(  1 - \varphi \right) : \dfrac{1}{1- \overline{\alpha}^{\phi(m)}} \cdot (\mathbb{Z}[\zeta_m] \otimes \mathbb{Z}_p) \otimes_{\mathbb{Z}_p} M_{\overline{f}}  \otimes \mathcal{O}_{\pi}[\alpha] \to (\mathbb{Z}[\zeta_m] \otimes \mathbb{Z}_p) \otimes_{\mathbb{Z}_p} M_{\overline{f}}  \otimes \mathcal{O}_{\pi}[\alpha] .$$
\end{enumerate}
\end{proof}

Let
\begin{align*}
\mathscr{H}_{\mathbb{Q}(\zeta_m)}(T_{\overline{f}}) & = \left\lbrace  G \in \left( \mathbb{Z}[\zeta_m] \otimes \mathbb{Z}_p \right) \llbracket  X \rrbracket \otimes_{\mathbb{Z}_p} M_{\overline{f}}   : \sum_{\zeta \in \mu_p} G(\zeta(1+X)-1) = p \cdot \varphi G(X)  \right\rbrace  \\
& = \left( \left( \mathbb{Z}[\zeta_m] \otimes \mathbb{Z}_p \right) \llbracket X \rrbracket \otimes_{\mathbb{Z}_p} M_{\overline{f}} \right)^{\psi=1},
\end{align*}
and
$\mathscr{D}_{\mathbb{Q}(\zeta_m)}(T_{\overline{f}}) = \left( \left( \mathbb{Z}[\zeta_m] \otimes \mathbb{Z}_p \right) \llbracket X \rrbracket \right)^{\psi=0} \otimes_{\mathbb{Z}_p} M_{\overline{f}} $.

In order to make everything $\mathrm{Gal}(\mathbb{Q}(\zeta_m)/\mathbb{Q})$-equivariant, we fix a primitive $m$-th root of unity $\zeta_m \in \mu_m$.
\begin{choice} \label{choice:p-adic-orientation-enhanced}
Let $\varepsilon_m = \left( \zeta_{mp^n} \right)_{n \geq 0}$ be the choice of the compatible system of $mp^n$-th roots of unity compatible with  $\varepsilon$ chosen in Choice \ref{choice:p-adic-orientation}.
\end{choice}
Let
$$\mathrm{exp}_{ V_{\overline{f}}(r), \mathbb{Q}(\zeta_{mp^n})} :
\left( \mathbb{Q}(\zeta_{mp^n}) \otimes \mathbb{Q}_p \right) \otimes_{\mathbb{Q}_p}
\left( 
\dfrac{ \mathbf{D}_{\mathrm{cris}}(V_{\overline{f}}(r)) }{ \mathrm{Fil}^0\mathbf{D}_{\mathrm{cris}}(V_{\overline{f}}(r)) }
\right)
\to \mathrm{H}^1_f ( \mathbb{Q}(\zeta_{mp^n}) \otimes \mathbb{Q}_p, V_{\overline{f}}(r))$$
be the direct sum of the local Bloch--Kato exponential maps and then it is $\mathrm{Gal}(\mathbb{Q}(\zeta_m)/\mathbb{Q})$-equivariant.
The equivariant property allows us to define a family of homomorphism  
$$\Sigma^{\varepsilon_m}_{T_{\overline{f}}, r , \mathbb{Q}(\zeta_{mp^n}) } : \mathscr{H}_{\mathbb{Q}(\zeta_m)}(T_{\overline{f}}) \to \mathrm{H}^1(\mathbb{Q}(\zeta_{mp^n}) \otimes \mathbb{Q}_p, T_{\overline{f}}(r))$$
for $r \in \mathbb{Z}$ by
$\Sigma^{\varepsilon_m}_{T_{\overline{f}}, r , \mathbb{Q}(\zeta_{mp^n}) }  = \bigoplus_{v \mid p} \Sigma^{\varepsilon_m}_{T_{\overline{f}}, r , \mathbb{Q}(\zeta_{mp^n})_v } $ where $v$ runs over all primes of $\mathbb{Q}(\zeta_{mp^n})$ lying above $p$.
Note that
$\bigoplus_{v \mid p} \Sigma^{\varepsilon_m}_{T_{\overline{f}}, r , \mathbb{Q}(\zeta_{mp^n})_v } = \bigoplus_{v \mid p} \Sigma^{\varepsilon}_{T_{\overline{f}}, r , \mathbb{Q}(\zeta_{mp^n})_v }$
as $\mathrm{Gal}(\mathbb{Q}(\zeta_{mp^n})/\mathbb{Q}(\zeta_{m}))$-homomorphisms, but the former one also remembers the action of $\mathrm{Gal}(\mathbb{Q}(\zeta_m)/\mathbb{Q})$ under Choice \ref{choice:p-adic-orientation-enhanced}. This extension is natural due to the equivariant property of $\mathrm{exp}_{V_{\overline{f}}(r), \mathbb{Q}(\zeta_m)}$.
Thus, Theorem \ref{thm:integral-exponential} extends equivariantly in the natural manner. 
\begin{lem} \label{lem:1-varphi-differential-equations}
Let $\gamma_m \in \Gamma$ such that $\chi_{\mathrm{cyc}}(\gamma_m) = m \in \mathbb{Z}^\times_p$.
\begin{enumerate}
\item 
Suppose that $\overline{f}$ is non-ordinary at $p$.
For each $\eta \in M_{\overline{f}} \otimes_{\mathcal{O}_\pi} \mathcal{O}_\pi[\alpha]$, there exists
a unique 
$$G_{\mathbb{Q}(\zeta_m), \eta} \in \mathscr{H}_{\mathbb{Q}(\zeta_m)}(T_{\overline{f}}) \otimes_{\mathcal{O}_\pi} \mathcal{O}_\pi[\alpha]$$ such that
$ (1 - \varphi) G_{\mathbb{Q}(\zeta_m), \eta}  = \dfrac{1}{m} \cdot  \left( \zeta_m (1+X)^{1/m} \otimes \eta \right) $.
\item Suppose that $\overline{f}$ is ordinary at $p$.
For each $\eta \in M_{\overline{f}} \otimes_{\mathcal{O}_\pi} \mathcal{O}_\pi[\alpha]$, there exists
a unique 
$$G_{\mathbb{Q}(\zeta_m), \eta} \in \dfrac{1}{  1- \overline{\alpha}^{\phi(m)} } \cdot \mathscr{H}_{\mathbb{Q}(\zeta_m)}(T_{\overline{f}}) \otimes_{\mathcal{O}_\pi} \mathcal{O}_\pi[\alpha]$$ such that
$(1 - \varphi) G_{\mathbb{Q}(\zeta_m), \eta}  =   \dfrac{1}{m} \cdot  \left( \zeta_m (1+X)^{1/m} \otimes \eta \right)$.
\end{enumerate}
\end{lem}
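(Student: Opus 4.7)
The plan is to adapt the Neumann-series argument behind Proposition \ref{prop:surjectivity-of-1-varphi} to the completed power-series setting, supplemented by a verification that the solution lies in the $\psi=1$ subspace.

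First I would verify that $g := \frac{1}{m}\zeta_m (1+X)^{1/m} \otimes \eta$ lies in $\mathscr{D}_{\mathbb{Q}(\zeta_m)}(T_{\overline{f}}) \otimes_{\mathcal{O}_\pi} \mathcal{O}_\pi[\alpha]$. Since $(m,p)=1$, Hensel's lemma guarantees that $(1+X)^{1/m} \in \mathbb{Z}_p\llbracket X \rrbracket$ (the unique integral $m$-th root of $1+X$ with constant term $1$), so $g$ is integral. The computation
\[
\varphi\,\psi\bigl((1+X)^{1/m}\bigr) \;=\; p^{-1}(1+X)^{1/m}\!\sum_{\zeta \in \mu_p}\zeta^{1/m} \;=\; 0
\]
(using $1/m \in \mathbb{Z}_p^\times \setminus p\mathbb{Z}_p$, so the character sum vanishes) then shows $\psi g = 0$.

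In the non-ordinary case I would set $G := \sum_{n \geq 0} \varphi^n(g)$. The series converges $p$-adically in $(\mathbb{Z}[\zeta_m] \otimes \mathbb{Z}_p)\llbracket X \rrbracket \otimes_{\mathbb{Z}_p} M_{\overline{f}} \otimes_{\mathcal{O}_\pi} \mathcal{O}_\pi[\alpha]$ because every $\varphi$-eigenvalue on $M_{\overline{f}}$ has strictly positive $p$-adic valuation, forcing $\varphi^n(\eta) \to 0$. Telescoping yields $(1-\varphi)G = g$, and $\psi G = \psi g + \sum_{n \geq 1}\psi\varphi^n(g) = 0 + \sum_{n \geq 1}\varphi^{n-1}(g) = G$ using $\psi \circ \varphi = \mathrm{id}$, so $G \in \mathscr{H}_{\mathbb{Q}(\zeta_m)}(T_{\overline{f}}) \otimes \mathcal{O}_\pi[\alpha]$. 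Uniqueness follows from the vanishing $\mathbf{D}_{\mathrm{cris}}(V_{\overline{f}})^{\varphi=1}=0$ in the non-ordinary range, which, via the exact sequence preceding Assumption \ref{assu:filtration-1} transplanted to this equivariant setting, forces $1-\varphi$ to be injective on $\mathscr{H}_{\mathbb{Q}(\zeta_m)}(T_{\overline{f}}) \otimes \mathcal{O}_\pi[\alpha]$.

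In the ordinary case I decompose $\eta = \eta_{\overline{\alpha}} + \eta_{\overline{\beta}}$ along $\varphi$-eigenspaces of $M_{\overline{f}} \otimes \mathcal{O}_\pi[\alpha]$, with $\mathrm{ord}_p(\overline{\alpha})=0$ and $\mathrm{ord}_p(\overline{\beta})=k-1$. The $\eta_{\overline{\beta}}$ contribution is solved by the Neumann series exactly as in the non-ordinary case. For the unit-eigenvalue component I follow the truncation trick of Proposition \ref{prop:surjectivity-of-1-varphi}: define the partial sum $G^{\circ}_{\overline{\alpha}} := (1-\overline{\alpha}^{\phi(m)})^{-1}\sum_{i=0}^{\phi(m)-1}\varphi^i(g_{\overline{\alpha}})$, exploiting that $\varphi^{\phi(m)}$ acts as the identity on $\mathbb{Z}[\zeta_m] \otimes \mathbb{Z}_p$ (since $p^{\phi(m)} \equiv 1 \pmod{m}$) while acting as $\overline{\alpha}^{\phi(m)}$ on $\eta_{\overline{\alpha}}$. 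A direct computation then shows that $(1-\varphi)G^{\circ}_{\overline{\alpha}}$ differs from $g_{\overline{\alpha}}$ only by an explicit residual term divisible by $X$; this residual is in turn lifted by a further Neumann-series contribution that converges thanks to its $X$-adic depth (each application of $\varphi$ multiplies the lowest $X$-coefficient by a power of $p$). Assembling the two pieces yields $G_{\overline{\alpha}}$ in $(1-\overline{\alpha}^{\phi(m)})^{-1}\,\mathscr{H}_{\mathbb{Q}(\zeta_m)}(T_{\overline{f}}) \otimes \mathcal{O}_\pi[\alpha]$, and $\psi G = G$ is verified as before.

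The main obstacle is the ordinary case: unlike in Proposition \ref{prop:surjectivity-of-1-varphi}, where $\varphi^{\phi(m)}$ acts cleanly as multiplication by $\overline{\alpha}^{\phi(m)}$ on $g_{\overline{\alpha}}$ in the pure coefficient ring, here $\varphi^{\phi(m)}\bigl((1+X)^{1/m}\bigr) = (1+X)^{1/m}\cdot(1+X)^{k}$ with $k = (p^{\phi(m)}-1)/m$ introduces an additional power-series factor. Tracking this correction and showing that it is absorbed by a convergent auxiliary Neumann series (so that no denominator beyond $1 - \overline{\alpha}^{\phi(m)}$ is required) is the technical crux of the argument, and is what pins down the precise module in which $G_{\mathbb{Q}(\zeta_m), \eta}$ lives.
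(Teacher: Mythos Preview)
Your non-ordinary argument is correct and is essentially the paper's approach: the Neumann series $\sum_{n\ge 0}\varphi^n(g)$ converges because the $\varphi$-eigenvalues on $M_{\overline{f}}$ are non-units, exactly as in the proof of Proposition~\ref{prop:surjectivity-of-1-varphi}.

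In the ordinary case your route works but is more involved than necessary. The paper's proof is the one line ``straightforward from Proposition~\ref{prop:surjectivity-of-1-varphi}'', and the mechanism behind it (spelled out in \S\ref{subsec:computation-local-points}) is a different splitting of $g$: write
\[
g \;=\; \underbrace{\tfrac{1}{m}\,\zeta_m \otimes \eta}_{\text{constant part}} \;+\; \underbrace{\tfrac{1}{m}\,\zeta_m\bigl[(1+X)^{1/m}-1\bigr]\otimes \eta}_{\text{$X$-part}}.
\]
For the $X$-part the Neumann series $\sum_{i\ge 0}\varphi^i(\,\cdot\,)$ converges in the $(p,X)$-adic topology \emph{regardless of ordinarity}, since $(1+X)^{p^i}-1 \to 0$ in $\mathbb{Z}_p\llbracket X\rrbracket$. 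Only the constant part needs Proposition~\ref{prop:surjectivity-of-1-varphi}, and that is exactly where the denominator $1-\overline{\alpha}^{\phi(m)}$ appears. Your decomposition along the $\varphi$-eigenspaces of $\eta$ rather than along powers of $X$ forces you to confront the interaction between the unit eigenvalue and the power-series variable---the ``technical crux'' you flag---which the paper's splitting avoids entirely. Both arguments are valid; the paper's is shorter and uniform in the two cases.

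One minor correction: uniqueness holds in the ordinary case as well, not only the non-ordinary one. The kernel of $1-\varphi$ on $\mathscr{H}(T_{\overline{f}})$ is $M_{\overline{f}}^{\varphi=1}$, and this vanishes because neither $\overline{\alpha}$ nor $\overline{\beta}$ equals $1$ (their complex absolute values are $p^{(k-1)/2}\neq 1$), independently of ordinarity.
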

\begin{proof}
Straightforward from Proposition \ref{prop:surjectivity-of-1-varphi}.
\end{proof}
\begin{rem} 
\begin{enumerate}
\item
In the non-ordinary case, we have $( 1- \overline{\alpha}^{\phi(m)} ) \in \mathcal{O}_\pi[\alpha]^\times$.
\item
The choice of $\zeta_m$ (Choice \ref{choice:p-adic-orientation-enhanced}) is explicitly used in Lemma \ref{lem:1-varphi-differential-equations}.
We give an explicit solution of $(1-\varphi)G =g$ when $g = m^{r-1} \cdot   \zeta_m \cdot   (1+X)^{1/m}   \otimes \eta_{\overline{\alpha}} \otimes e_{-r}$
in order to obtain the precise interpolation formula in $\S$\ref{subsec:computation-local-points}.
\end{enumerate}
\end{rem}
By twisting the bottom row in (\ref{eqn:pairing-diagram}) with the duality $\mathrm{Hom}_{F_\pi} (V_{f}(k-r), F_{\pi})(1) \simeq V_{\overline{f}}(r)$, we have the natural paring
\begin{equation*} 
[-,-] : \mathbf{D}_{\mathrm{cris}}(V_f) \times \mathbf{D}_{\mathrm{cris}}(V_{\overline{f}})   \to \mathbf{D}_{\mathrm{cris}}(F_{\pi}(1-k)) \simeq F_\pi
\end{equation*}
where the last isomorphism sends $e_{k-1}  \mapsto 1$ as in (\ref{eqn:pairing-diagram}).
\begin{prop} \label{prop:normalization_local_points_nonzero}
We recall the following assumption:
\begin{quote}
Assumption \ref{assu:standard}.(``non-CM"). If $\overline{f}$ is ordinary at $p$, the restriction of $\overline{\rho}_{\overline{f}}$ to $G_{\mathbb{Q}_p}$ is not a direct sum of two characters.
\end{quote}
Under this assumption, there exist elements
$\eta'_{\overline{\alpha}} , \eta'_{\overline{\beta}} \in \mathbf{D}_{\mathrm{cris}}(V_{\overline{f}}) \otimes_{\mathcal{O}_\pi} \mathcal{O}_\pi[\alpha]
$
such that
\[
\xymatrix@R=0em{
\varphi \eta'_{\overline{\alpha}} = \overline{\alpha} \cdot \eta'_{\overline{\alpha}} , & [\omega_{f}, \eta'_{\overline{\alpha}} ] \neq 0 , \\
\varphi \eta'_{\overline{\beta}} = \overline{\beta} \cdot \eta'_{\overline{\beta}} , & [\omega_{f}, \eta'_{\overline{\beta}} ] \neq 0 \\
}
\]
where $\omega_{f} \in S(f) \subseteq \mathbf{D}_{\mathrm{cris}}(V_{f})$ arises from the normalized eigenform $f$.
\end{prop}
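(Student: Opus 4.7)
The plan is to obtain $\eta'_{\overline{\alpha}}$ and $\eta'_{\overline{\beta}}$ as $\varphi$-eigenvectors spanning the two eigenlines of $\mathbf{D}_{\mathrm{cris}}(V_{\overline{f}}) \otimes_{\mathcal{O}_\pi} \mathcal{O}_\pi[\alpha]$, and to reduce the two non-vanishing statements to the assertion that the Hodge line $\mathrm{Fil}^{k-1}\mathbf{D}_{\mathrm{cris}}(V_f)\otimes_{\mathcal{O}_\pi}\mathcal{O}_\pi[\alpha]$ (generated by $\omega_f$) is not a $\varphi$-eigenline.

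First I set things up. Via Ramanujan--Petersson one has $\overline{\alpha} = p^{k-1}/\beta$ and $\overline{\beta} = p^{k-1}/\alpha$, so both lie in $\mathcal{O}_\pi[\alpha]$, and they are distinct by Assumption \ref{assu:standard}.($\alpha\neq\beta$) in the non-ordinary case and by their differing $p$-adic valuations in the ordinary case. Hence $\varphi$ on $\mathbf{D}_{\mathrm{cris}}(V_{\overline{f}}) \otimes_{\mathcal{O}_\pi} \mathcal{O}_\pi[\alpha]$ is diagonalisable; let $\eta'_{\overline{\alpha}}, \eta'_{\overline{\beta}}$ be generators of its two eigenlines. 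Because $\varphi$ acts on $\mathbf{D}_{\mathrm{cris}}(F_\pi(1-k))$ by $p^{k-1}$, compatibility with the pairing gives $\lambda\mu = p^{k-1}$ whenever $x$ is a $\lambda$-eigenvector on the $V_f$-side, $y$ a $\mu$-eigenvector on the $V_{\overline{f}}$-side, and $[x,y]\neq 0$. Combined with $\alpha\overline{\beta} = \beta\overline{\alpha} = p^{k-1}$, this identifies the $\alpha$-eigenline on the $V_f$-side as the exact annihilator of the $\overline{\alpha}$-eigenline on the $V_{\overline{f}}$-side, and symmetrically for $(\beta,\overline{\beta})$. Therefore $[\omega_f,\eta'_{\overline{\alpha}}]\neq 0$ (resp.\ $[\omega_f,\eta'_{\overline{\beta}}]\neq 0$) is equivalent to $\omega_f$ having a non-zero $\beta$-component (resp.\ $\alpha$-component); both together say precisely that $\mathrm{Fil}^{k-1}\mathbf{D}_{\mathrm{cris}}(V_f)\otimes_{\mathcal{O}_\pi}\mathcal{O}_\pi[\alpha]$ is not a $\varphi$-eigenline.

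In the non-ordinary case the Newton slopes lie strictly in $(0,k-1)$ while the Hodge slopes are $\{0,k-1\}$; any $\varphi$-stable Hodge-slope-$(k-1)$ line would produce an inadmissible sub filtered $\varphi$-module (violating $t_N \geq t_H$), so $\mathrm{Fil}^{k-1}$ cannot be $\varphi$-stable. The hard part is the ordinary case. If $\mathrm{Fil}^{k-1}$ coincided with the non-unit $\beta$-eigenline, then the transverse unit $\alpha$-eigenline would inherit Hodge slope $0$ matching its Newton slope $0$, giving a second admissible sub filtered $\varphi$-module; the resulting splitting of $\mathbf{D}_{\mathrm{cris}}(V_f|_{G_{\mathbb{Q}_p}})$ into two one-dimensional admissible pieces would split $\rho_f|_{G_{\mathbb{Q}_p}}$, and hence $\overline{\rho}|_{G_{\mathbb{Q}_p}}$ modulo $\pi$, as a sum of two characters, contradicting Assumption \ref{assu:standard}.(non-split). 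On the other hand $\mathrm{Fil}^{k-1}$ cannot coincide with the unit $\alpha$-eigenline either, for that line would then carry $t_H = k-1 > 0 = t_N$ and fail weak admissibility. Both alternatives being ruled out, $\omega_f$ projects non-trivially onto each eigenline, and both pairings are non-zero.
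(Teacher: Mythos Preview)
Your approach is more self-contained than the paper's, which simply cites \cite[Theorem 16.6.(1)]{kato-euler-systems} for the non-ordinary case and the non-critical ordinary eigenvector, and \cite[Remarque 4.12]{colmez-p-adic-BSD} for the critical-slope ordinary eigenvector. Your weak-admissibility arguments are correct: in the non-ordinary case the Newton slope of any $\varphi$-eigenline is strictly less than $k-1$, so $\mathrm{Fil}^{k-1}$ cannot be $\varphi$-stable; and in the ordinary case $\mathrm{Fil}^{k-1}$ cannot equal the unit $\alpha$-line for the same reason. You also correctly deduce that if $\mathrm{Fil}^{k-1}$ coincided with the $\beta$-line, then $\mathbf{D}_{\mathrm{cris}}(V_f)$ would split as a direct sum of two admissible rank-one filtered $\varphi$-modules, hence $\rho_f\vert_{G_{\mathbb{Q}_p}}$ would split.

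The gap is in the final clause ``and hence $\overline{\rho}\vert_{G_{\mathbb{Q}_p}}$ modulo $\pi$''. A $G_{\mathbb{Q}_p}$-stable lattice in a split two-dimensional representation $\chi_1\oplus\chi_2$ need not have semisimple reduction when $\overline{\chi}_1=\overline{\chi}_2$; one can produce lattices whose reduction is a non-trivial self-extension. So this step fails precisely in the non-$p$-distinguished case, and you have not ruled that case out. Fortunately the step is also unnecessary: the paper itself observes immediately after the proof that the \emph{characteristic zero} version of Assumption~\ref{assu:standard}.(non-split) already suffices for this proposition, and that is exactly what your argument establishes. Simply stop at ``$\rho_f\vert_{G_{\mathbb{Q}_p}}$ splits, contradicting the characteristic-zero form of (non-split)'' and drop the mod-$\pi$ claim. (A minor point: the identity $\overline{\alpha}=p^{k-1}/\beta$ is better attributed to the duality $V_{\overline f}\simeq V_f^\ast(k-1)$ together with the ordering convention on valuations, rather than to Ramanujan--Petersson.)
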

\begin{proof}
In the non-ordinary case, the existence of such $\eta'_{\overline{\alpha}}$ and $\eta'_{\overline{\beta}}$ follows from \cite[Thm. 16.6.(1)]{kato-euler-systems}.
In the ordinary case, the existence of $\eta'_{\overline{\alpha}}$ also follows from  \cite[Thm. 16.6.(1)]{kato-euler-systems}; however, the existence of $\eta'_{\overline{\beta}}$ in the ordinary case follows from \cite[Rem. 4.12]{colmez-p-adic-BSD} under Assumption \ref{assu:standard}.(``non-CM") since it is of critical slope.
\end{proof}

\subsection{Uniform integrality of equivariant local points}
For an $\mathcal{O}_\pi$-module $B$, we put
$B^* := \mathrm{Hom}_{\mathcal{O}_\pi}(B, \mathcal{O}_\pi) $.
We recall the cup product pairing
\begin{equation*}
[-,-]_{V_{f}(k-r), \mathbb{Q}(\zeta_{mp^n})} : \mathrm{H}^1 (\mathbb{Q}(\zeta_{mp^n})  \otimes \mathbb{Q}_p , V_f(k-r)) \times \mathrm{H}^1 ( \mathbb{Q}(\zeta_{mp^n})  \otimes \mathbb{Q}_p , V_{\overline{f}}(r)) \to F_\pi ,
\end{equation*}
and consider its \emph{optimally integral} restriction
\begin{equation*}
[-,-]_{V_{f}(k-r),\mathbb{Q}(\zeta_{mp^n})}  : \mathrm{H}^1_{/f} (\mathbb{Q}(\zeta_{mp^n})  \otimes \mathbb{Q}_p , T_f(k-r)) \times \mathrm{H}^1_{/f} (\mathbb{Q}(\zeta_{mp^n})  \otimes \mathbb{Q}_p , T_f(k-r))^* \twoheadrightarrow \mathcal{O}_\pi 
\end{equation*}
where $\mathrm{H}^1_{/f}(\mathbb{Q}(\zeta_{mp^n}) \otimes \mathbb{Q}_p, T_{f}(k-r) )^{*}  \subseteq \mathrm{H}^1_f(\mathbb{Q}(\zeta_{mp^n})  \otimes \mathbb{Q}_p , V_{\overline{f}}(r) ) $ is characterized the surjectivity of the pairing.

\begin{lem} \label{lem:normalization_local_points_first}
Under Assumption \ref{assu:standard}.(``non-CM"),
$$\mathrm{H}^0(\mathbb{Q}(\zeta_{mp}) \otimes \mathbb{Q}_p , W_{\overline{f}}(r)) = 0$$
 for any integer $m$ with $(m,p)=1$ and any $1 \leq r \leq k-1$.
\end{lem}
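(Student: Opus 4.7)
The plan is to reduce the semi-local vanishing at $p$ to finitely many vanishing statements over $\mathbb{Q}_p$ via Shapiro's lemma, and then to invoke the equivalent formulation of Assumption \ref{assu:standard}.(non-split) recorded in Remark \ref{rem:standard}, namely that $\mathrm{H}^0(\mathbb{Q}_p, W_{\overline{f},-i}(r))=0$ for every $i=0,\dots,p-2$.

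First I would decompose
\[
\mathbb{Q}(\zeta_{mp})\otimes\mathbb{Q}_p \;=\; \prod_{v\mid p}\mathbb{Q}(\zeta_{mp})_v,
\]
so that
\[
\mathrm{H}^0(\mathbb{Q}(\zeta_{mp})\otimes\mathbb{Q}_p,\,W_{\overline{f}}(r)) \;=\;\bigoplus_{v\mid p}\mathrm{H}^0(\mathbb{Q}(\zeta_{mp})_v,\,W_{\overline{f}}(r)),
\]
and it suffices to treat a single $v\mid p$. Since $(m,p)=1$, the extension $\mathbb{Q}(\zeta_m)/\mathbb{Q}$ is unramified at $p$ while $\mathbb{Q}(\zeta_p)/\mathbb{Q}$ is totally tamely ramified there, so the completion is $L_v = K_v(\zeta_p)$ with $K_v/\mathbb{Q}_p$ unramified of some degree $f_v$, and $\mathrm{Gal}(L_v/\mathbb{Q}_p)$ is the finite abelian tamely ramified group $C_{f_v}\times\Delta$, where $\Delta = \mathrm{Gal}(\mathbb{Q}_p(\zeta_p)/\mathbb{Q}_p)$.

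Next I would apply Shapiro's lemma, exploiting that $W_{\overline{f}}(r)$ is itself a $G_{\mathbb{Q}_p}$-module: after enlarging $\mathcal{O}_\pi$ to a sufficiently large $\mathcal{O}_{\pi'}$ containing the relevant roots of unity and splitting the regular representation of $\mathrm{Gal}(L_v/\mathbb{Q}_p)$ into one-dimensional characters, one obtains an identification
\[
\mathrm{H}^0(L_v,\,W_{\overline{f}}(r))\otimes_{\mathcal{O}_\pi}\mathcal{O}_{\pi'}\;\simeq\;\bigoplus_{\chi}\mathrm{H}^0\!\left(\mathbb{Q}_p,\, W_{\overline{f}}(r)\otimes\chi^{-1}\right),
\]
where $\chi = \chi_{\mathrm{unr}}\cdot\omega^i$ ranges over the characters of $\mathrm{Gal}(L_v/\mathbb{Q}_p)$, with $\chi_{\mathrm{unr}}$ an unramified character of $G_{\mathbb{Q}_p}$ of finite order dividing $f_v$. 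Since $W_{\overline{f}}(r)$ is $\pi$-divisible, each summand vanishes iff $(\overline{\rho}_{\overline{f}}(r)\otimes\chi^{-1})^{G_{\mathbb{Q}_p}}=0$, equivalently iff $\overline{\rho}_{\overline{f}}(r)|_{G_{\mathbb{Q}_p}}$ admits no $G_{\mathbb{Q}_p}$-stable subline on which the action is by $\chi$.

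Finally I would conclude by a case analysis of $\overline{\rho}|_{G_{\mathbb{Q}_p}}$. When $\overline{\rho}|_{G_{\mathbb{Q}_p}}$ is absolutely irreducible (as happens, for instance, in the non-ordinary Fontaine--Laffaille range where it is induced from a level-$2$ fundamental character of the unramified quadratic extension), the residual Jordan--Hölder constituents over $\overline{\mathbb{F}}_p$ are fundamental characters of level $2$, whose restrictions to tame inertia take values in $\mathbb{F}_{p^2}^\times\setminus\mathbb{F}_p^\times$; such characters cannot be written in the form $\chi_{\mathrm{unr}}\cdot\omega^i$, whose inertia restriction lies in $\mathbb{F}_p^\times$, so no matching subline can occur and the vanishing is automatic. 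When $\overline{\rho}|_{G_{\mathbb{Q}_p}}$ is reducible, Assumption \ref{assu:standard}.(non-split) forces the extension to be non-split, so there is at most one $G_{\mathbb{Q}_p}$-stable line in $\overline{\rho}_{\overline{f}}(r)$; the equivalence with the vanishing of $\mathrm{H}^0(\mathbb{Q}_p,W_{\overline{f},-i}(r))$ for all $i$ via \cite[Proposition 13.2]{gross-tameness} then pins down the character on this line and rules out its equality with $\chi_{\mathrm{unr}}\omega^i$ for any $\chi_{\mathrm{unr}}$ appearing in the Shapiro decomposition. The main obstacle I foresee is precisely this last point: matching the unramified part of the unique subcharacter (controlled by the Hecke eigenvalue $\overline{\alpha}$ together with the determinant relation $\overline{\chi_1}\overline{\chi_2} = \omega^{1-k}\overline{\psi}^{-1}$) against the finite-order unramified characters $\chi_{\mathrm{unr}}$ arising from the abelian cyclotomic tower, which is where the full strength of Gross's equivalence must be used.
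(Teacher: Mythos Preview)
Your approach differs substantially from the paper's, and the obstacle you flag at the end is a genuine gap in your route, not merely a detail to be filled in.

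You reduce via Shapiro's lemma to showing $\mathrm{H}^0(\mathbb{Q}_p,\overline{\rho}_{\overline{f}}(r)\otimes\chi_{\mathrm{unr}}^{-1}\omega^{-i})=0$ for every character of $\mathrm{Gal}(L_v/\mathbb{Q}_p)$, and then try to match subcharacters. The locally irreducible case goes through as you say. But in the ordinary non-split case, the equivalence recorded in Remark~\ref{rem:standard} only gives $\mathrm{H}^0(\mathbb{Q}_p,W_{\overline{f},-i}(r))=0$, i.e.\ it rules out $\chi_{\mathrm{unr}}=1$; it says nothing about nontrivial finite-order unramified twists. The unramified part of the unique subcharacter is some fixed element of $\mathbb{F}_\pi^\times$, and since $\mathbb{F}_\pi^\times$ is finite, its order will divide $f_v$ for suitable $m$. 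So the input you invoke genuinely does not close the argument, and ``the full strength of Gross's equivalence'' in the form you have stated it is not enough.

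The paper takes a shorter and conceptually different route. Rather than the $\mathrm{H}^0$-vanishing form of Gross's criterion, it invokes the equivalent formulation that (non-split) means $W_{\overline{f}}(r)[\pi]$ is \emph{wildly ramified} at~$p$ (citing \cite{gross-tameness, coleman-voloch}). Since $\mathbb{Q}(\zeta_{mp})/\mathbb{Q}$ is tamely ramified at~$p$, the wild inertia subgroup is unchanged upon restriction to $G_{\mathbb{Q}(\zeta_m)_v}$, so the representation remains wildly ramified---hence non-split---over each local completion, and the vanishing follows by applying the same equivalence over the unramified base. The point is that wild ramification is formulated purely in terms of the action of wild inertia, which is insensitive to tame base change and to unramified twisting; this is precisely the invariant your character-by-character Shapiro analysis loses track of. To repair your approach you should trade the $\mathrm{H}^0(\mathbb{Q}_p,\cdot)$ input for the wild-ramification input.
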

\begin{proof}
It suffices to show $\mathrm{H}^0(\mathbb{Q}(\zeta_{mp}) \otimes \mathbb{Q}_p , W_{\overline{f}, -i}(r)[\pi]) = 0$.
By \cite{gross-tameness, coleman-voloch}, Assumption \ref{assu:standard}.(``non-CM") implies that $W_{\overline{f}}(r)[\pi]$ is wildly ramified at $p$. Thus, the restriction of $W_{\overline{f}}(r)[\pi]$ to $G_{\mathbb{Q}(\zeta_m)}$ is still wildly ramified at all primes above $p$. Thus, the local Galois representation does not split, so the conclusion follows.
\end{proof}

\begin{prop} \label{prop:normalization_local_points_first}
We keep Assumption \ref{assu:standard}.(``non-CM"), and we also assume that $\overline{\alpha} \neq \zeta \cdot p^{\frac{k-1}{2}}$ when $r = \frac{k-1}{2}$ is an integer where $\zeta$ is a root of unity (Assumption \ref{assu:standard}.(ENV)).  
By choosing $\eta'_{\overline{\alpha}}$ and $\eta'_{\overline{\beta}}$ as in Proposition \ref{prop:normalization_local_points_nonzero}, we are able to make the elements
$$\Sigma^{\varepsilon_m}_{T_{\overline{f}}, r, \mathbb{Q}(\zeta_{mp^n})} (G^{\sigma^{-n}}_{\mathbb{Q}(\zeta_m), \eta'_{\overline{\alpha}}}) , \Sigma^{\varepsilon_m}_{T_{\overline{f}}, r, \mathbb{Q}(\zeta_{mp^n})} (G^{\sigma^{-n}}_{\mathbb{Q}(\zeta_m), \eta'_{\overline{\beta}}}) \in \mathrm{H}^1_f( \mathbb{Q}(\zeta_{mp^n}) \otimes \mathbb{Q}_p, V_{\overline{f}}(r) ) \otimes \mathcal{O}_\pi[\alpha]$$
to lie in 
$\mathrm{H}^1_{/f}( \mathbb{Q}(\zeta_{mp^n}) \otimes \mathbb{Q}_p, T_{f}(k-r) )^* \otimes \mathcal{O}_\pi[\alpha]$
via the integral cup product pairing
where 
$1 \leq r \leq k-1$, and $\mathbb{Q}(\zeta_{mp^n})$ runs over all $n \geq 0$ and $m \geq 1$ with $(m,p) = 1$.
\end{prop}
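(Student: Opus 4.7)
The plan is to combine the integral exponential map of Theorem~\ref{thm:integral-exponential} with local Tate duality, using Lemma~\ref{lem:1-varphi-differential-equations} to produce integral input and a careful rescaling of the eigenvectors $\eta'_{\overline{\alpha}}, \eta'_{\overline{\beta}}$ from Proposition~\ref{prop:normalization_local_points_nonzero} by elements of $\mathcal{O}_\pi[\alpha]^\times$ to pass from the integral Bloch--Kato subspace $\mathrm{H}^1_f(T_{\overline{f}}(r))$ to the dual lattice $\mathrm{H}^1_{/f}(T_f(k-r))^*$.

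First, starting with arbitrary eigenvectors $\eta'_{\overline{\alpha}}, \eta'_{\overline{\beta}} \in \mathbf{D}_{\mathrm{cris}}(V_{\overline{f}}) \otimes \mathcal{O}_\pi[\alpha]$ as in Proposition~\ref{prop:normalization_local_points_nonzero}, Lemma~\ref{lem:1-varphi-differential-equations} produces solutions $G_{\mathbb{Q}(\zeta_m), \eta'}$ to $(1-\varphi)G = \frac{1}{m}\zeta_m(1+X)^{1/m}\otimes \eta'$. In the non-ordinary case both solutions lie integrally in $\mathscr{H}_{\mathbb{Q}(\zeta_m)}(T_{\overline{f}}) \otimes \mathcal{O}_\pi[\alpha]$; in the ordinary case the same integrality holds for $\eta'_{\overline{\beta}}$ (where $\overline{\beta}$ is a non-unit, so the convergent geometric sum argument of Proposition~\ref{prop:surjectivity-of-1-varphi} applies), while for $\eta'_{\overline{\alpha}}$ the potential denominator $1-\overline{\alpha}^{\phi(m)}$ is already a $p$-adic unit because $\overline{\alpha} \in \mathcal{O}_\pi[\alpha]^\times$ satisfies $\overline{\alpha} \not\equiv 1 \pmod{\pi}$ (the latter using distinct Hecke roots together with Assumption~\ref{assu:standard}.(non-split)). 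Thus $G_{\mathbb{Q}(\zeta_m),\eta'}$ may be treated as integral uniformly in $m$ for either choice.

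Next, I would apply the equivariant extension of $\Sigma^{\varepsilon_m}_{T_{\overline{f}}, r, \mathbb{Q}(\zeta_{mp^n})}$ defined above and invoke Theorem~\ref{thm:integral-exponential}, which yields
\[
\Sigma^{\varepsilon_m}_{T_{\overline{f}}, r, \mathbb{Q}(\zeta_{mp^n})}(G^{\sigma^{-n}}_{\mathbb{Q}(\zeta_m),\eta'}) \in \mathrm{H}^1_f(\mathbb{Q}(\zeta_{mp^n}) \otimes \mathbb{Q}_p, T_{\overline{f}}(r)) \otimes \mathcal{O}_\pi[\alpha].
\]
The $\sigma^{-n}$-twist is exactly what synchronizes the $\mathrm{Gal}(\mathbb{Q}(\zeta_{mp^n})/\mathbb{Q})$-equivariant structure with the Frobenius compatibility in Theorem~\ref{thm:integral-exponential}(1), so that one fixed choice of $\eta'$ suffices for all $n$. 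Then, via the cup product pairing (\ref{eqn:pairing-diagram}) restricted integrally as in $\S$\ref{subsec:explicit-reciprocity-law}, local Tate duality identifies $\mathrm{H}^1_f(\mathbb{Q}(\zeta_{mp^n}) \otimes \mathbb{Q}_p, V_{\overline{f}}(r))$ with $\mathrm{H}^1_{/f}(\mathbb{Q}(\zeta_{mp^n}) \otimes \mathbb{Q}_p, V_f(k-r))^*$. Lemma~\ref{lem:normalization_local_points_first} furnishes $\mathrm{H}^0(\mathbb{Q}(\zeta_{mp^n}) \otimes \mathbb{Q}_p, W_{\overline{f}}(r))=0$, which keeps the integral duality perfect, so $\mathrm{H}^1_f(T_{\overline{f}}(r))$ embeds into $\mathrm{H}^1_{/f}(T_f(k-r))^*$ with controlled finite cokernel independent of $n$.

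The role of (ENV) and the main obstacle both come from the middle critical point $r=(k-1)/2$: if $\overline{\alpha}=\zeta\cdot p^{(k-1)/2}$ for some root of unity~$\zeta$, the $p$-th Euler factor of $\overline{f}$ at~$r$ vanishes and forces an unavoidable level-dependent denominator into the explicit reciprocity, obstructing any uniform integral choice of $\eta'_{\overline{\alpha}}$; the hypothesis precisely excludes this case and makes the relevant factor $1-\overline{\alpha}p^{-r}\sigma_p^{-1}$ a unit in $\mathcal{O}_\pi[\alpha][\mathrm{Gal}(\mathbb{Q}(\zeta_{mp^n})/\mathbb{Q})]$. Granting this, the final step is to rescale $\eta'_{\overline{\alpha}}$ and $\eta'_{\overline{\beta}}$ once and for all by a fixed element of $\mathcal{O}_\pi[\alpha]^\times$ that absorbs both the finite-cokernel discrepancy between $\mathrm{H}^1_f(T_{\overline{f}}(r))$ and $\mathrm{H}^1_{/f}(T_f(k-r))^*$ and the unit factor $1-\overline{\alpha}^{\phi(m)}$, yielding the required $\mathrm{H}^1_{/f}(T_f(k-r))^*\otimes \mathcal{O}_\pi[\alpha]$-integrality simultaneously at every level $\mathbb{Q}(\zeta_{mp^n})$. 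The hard part will be verifying that a \emph{single} rescaling works uniformly in $m$ and $n$, which is where one uses the corestriction compatibility of Theorem~\ref{thm:integral-exponential}(1) together with the fact that the residual denominators depend only on residual quantities controlled by Assumption~\ref{assu:standard}.
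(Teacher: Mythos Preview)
Your argument has a genuine gap in the ordinary case for $\eta'_{\overline{\alpha}}$. You claim that the denominator $1-\overline{\alpha}^{\phi(m)}$ from Lemma~\ref{lem:1-varphi-differential-equations}(2) is a $p$-adic unit, arguing from $\overline{\alpha}\not\equiv 1\pmod{\pi}$. This inference is false: even if the reduction $\tilde{\alpha}\in\mathbb{F}_\pi^\times$ is nontrivial, its order divides $|\mathbb{F}_\pi|-1$, so for any $m$ with $(|\mathbb{F}_\pi|-1)\mid\phi(m)$ one has $\overline{\alpha}^{\phi(m)}\equiv 1\pmod{\pi}$ and the denominator is a non-unit. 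Since you need integrality uniformly in $m$, no single unit rescaling can absorb these $m$-dependent non-unit factors, and your final step (``rescale by a fixed element of $\mathcal{O}_\pi[\alpha]^\times$'') cannot succeed.

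The paper's proof handles this differently. It does not argue that $G_{\mathbb{Q}(\zeta_m),\eta'_{\overline{\alpha}}}$ itself is integral; instead it observes that the image under $\Sigma^{\varepsilon_m}_{T_{\overline{f}},r}$ is governed by the $r$-twisted solution $G_{\mathbb{Q}(\zeta_m),\eta'_{\overline{\alpha}},r}$ of~(\ref{eqn:explicit-equation}), and the explicit computation in \S\ref{subsec:computation-local-points} shows that after this twist the relevant scalar is $\dfrac{1}{1-\overline{\alpha}^{\phi(m)}p^{-r}}=\dfrac{p^r}{p^r-\overline{\alpha}^{\phi(m)}}$. Since $\overline{\alpha}$ is a unit and $r\geq 1$, the denominator $p^r-\overline{\alpha}^{\phi(m)}$ is a unit and the whole expression lies in $\pi\mathcal{O}_\pi[\alpha]$. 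The role of (ENV) here is only to guarantee $p^r-\overline{\alpha}^{\phi(m)}\neq 0$ (non-vanishing), not that some Euler factor is a unit as you suggest. Finally, the passage from $\mathrm{H}^1_f(T_{\overline{f}}(r))$ to $\mathrm{H}^1_{/f}(T_f(k-r))^*$ is not an inclusion with finite cokernel requiring further adjustment: under Lemma~\ref{lem:normalization_local_points_first} the local duality argument of \cite[Lemma~14.18]{kato-euler-systems} gives equality directly, so no rescaling is needed at that stage either.
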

\begin{proof}
Due to Proposition \ref{prop:normalization_local_points_nonzero}, we can take 
$\eta'_{\overline{\alpha}}$ and 
$\eta'_{\overline{\beta}}$ satisfying
$ \eta'_{\overline{\alpha}} , \eta'_{\overline{\beta}} \in M_{\overline{f}} \otimes \mathcal{O}_\pi[\alpha]$ by scaling. 
By Theorem \ref{thm:integral-exponential}, Lemma  \ref{lem:1-varphi-differential-equations}
and the construction of $\Sigma^{\varepsilon_m}_{T_{\overline{f}}, r, \mathbb{Q}(\zeta_{mp^n})}$ in \cite[$\S$4.2]{benois-crystalline-duke}, it is immediate to have
$$\Sigma^{\varepsilon_m}_{T_{\overline{f}}, r, \mathbb{Q}(\zeta_{mp^n})} (G^{\sigma^{-n}}_{\mathbb{Q}(\zeta_m), \bullet })  \in  \mathrm{H}^1_f(\mathbb{Q}(\zeta_{mp^n}) \otimes \mathbb{Q}_p, T_{\overline{f}}(r)) \otimes \mathcal{O}_\pi[\alpha] 
$$
for all $\bullet \in \lbrace  \eta'_{\overline{\alpha}}, \eta'_{\overline{\beta}} \rbrace$ unless $\bullet =  \eta'_{\overline{\alpha}}$ and $\overline{f}$ is ordinary.
Even when $\bullet =  \eta'_{\overline{\alpha}}$ and $\overline{f}$ is ordinary, 
$\Sigma^{\varepsilon_m}_{T_{\overline{f}}, r, \mathbb{Q}(\zeta_{mp^n})} (G^{\sigma^{-n}}_{\mathbb{Q}(\zeta_m), \eta'_{\overline{\alpha}} })$ still lies in $\mathrm{H}^1_f(\mathbb{Q}(\zeta_{mp^n}) \otimes \mathbb{Q}_p, T_{\overline{f}}(r)) \otimes \mathcal{O}_\pi[\alpha] $
due to the explicit computation in $\S$\ref{subsec:computation-local-points} below.
More specifically, it essentially follows  from the fact that the eigenvalue of $(1-\varphi)^{-1}$ on $\eta'_{\overline{\alpha}} \otimes e_{-r}$ is  $\dfrac{1}{1 - \overline{\alpha}^{\phi(m)} \cdot p^{-r}} = \dfrac{p^r}{p^r - \overline{\alpha}^{\phi(m)} }$ and it is integral since $1\leq r \leq k-1$.
Note that $p^r - \overline{\alpha}^{\phi(m)} \neq 0$ under our assumption with Weil number considerations.
The local duality argument in proof of \cite[Lem. 14.18]{kato-euler-systems} with Lemma \ref{lem:normalization_local_points_first} implies
$$\Sigma^{\varepsilon_m}_{T_{\overline{f}}, r, \mathbb{Q}(\zeta_{mp^n})} (G^{\sigma^{-n}}_{\mathbb{Q}(\zeta_m), \bullet })  \in \mathrm{H}^1_{/f}( \mathbb{Q}(\zeta_{mp^n}) \otimes \mathbb{Q}_p, T_{f}(k-r) )^* \otimes \mathcal{O}_\pi[\alpha] $$
as $\left[-, \Sigma^{\varepsilon_m}_{T_{\overline{f}}, r, \mathbb{Q}(\zeta_{mp^n})} (G^{\sigma^{-n}}_{\mathbb{Q}(\zeta_m), \bullet }) \right]_{V_{f}(k-r), \mathbb{Q}(\zeta_{mp^n})  }$.
\end{proof}
\begin{rem} 
The upshot of Proposition \ref{prop:normalization_local_points_first} is that the integrality of $\Sigma^{\varepsilon_m}_{T_{\overline{f}}, r, \mathbb{Q}(\zeta_{mp^n})} (G^{\sigma^{-n}}_{\mathbb{Q}(\zeta_m), \eta })$ 
in $\mathrm{H}^1_{/f}( \mathbb{Q}(\zeta_{mp^n}) \otimes \mathbb{Q}_p, T_{f}(k-r) )^* \otimes \mathcal{O}_\pi[\alpha]$
depends only on the choice of $\eta \in \mathbf{D}_{\mathrm{cris}}(V_{\overline{f}}) \otimes_{\mathcal{O}_\pi} \mathcal{O}_\pi[\alpha]$. 
Proposition \ref{prop:normalization_local_points_first} uses the theory of Wach modules in a strong way.

\end{rem}

\subsection{Crystalline normalization of equivariant local points} \label{subsec:crystalline-integral-normalization} 
Let
$\mathrm{per}^{\pm}_f :  \mathbb{Q}(\zeta_{mp^n}) \otimes  S(f)
\to  \mathbb{C} \otimes_F V_F(f)
 \to  \mathbb{C} \otimes_F V^{\pm}_F(f) $ be the projection of Kato's period integrals to the $\pm$-component, respectively.

We extend the comparison (\ref{eqn:betti-etale-comparison}) further including the crystalline comparison isomorphism due to Tsuji, Faltings, Nizio\l{} and others.
\begin{equation} \label{eqn:betti-etale-comparison-extended}
\begin{gathered}
{ \scriptsize
\xymatrix{
\mathbf{D}_{\mathrm{cris}} ( V_{F_\pi}(f) ) \otimes \mathbf{B}_{\mathrm{cris}} \ar[d]^-{\simeq} & & \mathbf{D}_{\mathrm{cris}} ( V_{F_\pi}(f) ) \ar[ll] & \mathbf{D}^{k-r}_{\mathrm{cris}} ( V_{F_\pi}(f) ) \ar@{_{(}->}[l] & S(f) \ar[d]_-{\mathrm{per}^+_f + \mathrm{per}^-_f} \ar@{_{(}->}[l]\\
V_{F_\pi}(f) \otimes \mathbf{B}_{\mathrm{cris}}  & V_{F_\pi}(f) \ar[l] & V_F(f) \otimes F_\pi \ar[l]_-{\simeq} & V_F(f) \ar[l] \ar[r] & V_F(f) \otimes \mathbb{C}  .
}
}
\end{gathered}
\end{equation}
The na\"{i}ve integral refinement of (\ref{eqn:betti-etale-comparison-extended}) (with certain unclear parts) can be written as follows
\begin{equation} \label{eqn:betti-etale-comparison-extended-integral}
\begin{gathered}
{ \scriptsize
\xymatrix{
 M_f \otimes_S \widehat{S}^{\mathrm{PD}} \otimes_{\mathbb{Z}_p} \widetilde{S} \ar@{^{(}->}[d] & & M_f \ar@{_{(}->}[ll] \\
\left( T_f \otimes \mathbf{A}_{\mathrm{cris}} \right)^{G_{\mathbb{Q}_p}} \otimes \mathbf{A}_{\mathrm{cris}} \ar[d] & &  M^{k-r}_f \ar@{_{(}->}[u] \\
T_f \otimes \mathbf{A}_{\mathrm{cris}} & & \mathcal{O}_\pi \omega_f \ar[d]_-{\mathrm{per}^+_f + \mathrm{per}^-_f} \ar@{--}[u]_-{\textrm{unclear}} \\
T_f  \ar[u] & &  \mathcal{O}_\pi \delta^+_f \otimes C^{\pm}_{\mathrm{per}} \cdot \Omega^+_{\gamma, \omega_f} + \mathcal{O}_\pi \delta^-_f \otimes C^{\mp}_{\mathrm{per}} \cdot \Omega^-_{\gamma, \omega_f}  \\
 \mathcal{O}_\pi \delta^+_f  + \mathcal{O}_\pi \delta^-_f \ar@{--}[u]_-{\textrm{unclear}} & & \mathcal{O}_\pi \delta^+_f  + \mathcal{O}_\pi \delta^-_f  \ar[ll] \ar[u]  
}
}
\end{gathered}
\end{equation}
where $\omega_f \in S(f) \otimes F_\pi = \mathbf{D}^{k-r}_{\mathrm{cris}}(V_{F_\pi}(f))$ is the class coming from the \emph{normalized} eigenform $f$ and the sign convention follows that of (\ref{eqn:delta_f}).
In (\ref{eqn:betti-etale-comparison-extended-integral}), the upper part is normalized by $T_f$ and the lower part is normalized by $\omega_f$.

Theorem \ref{thm:integral-exponential} (when $n=0$) and the local duality argument in \cite[Lem. 14.18]{kato-euler-systems} yield an integral refinement of (\ref{eqn:pairing-diagram})
\begin{equation} \label{eqn:pairing-diagram-integral}
{ \scriptsize
\begin{split}
\xymatrix@C=0.5em{
\mathrm{H}^1_{/f}(\mathbb{Q}_p, T_f(k-r)) \ar[d]^-{\mathrm{exp}^*} & \times & \mathrm{H}^1_{/f}(\mathbb{Q}_p, T_{f}(k-r))^*   \ar@{->>}[rrr] & & & \mathcal{O}_\pi \ar@{=}[d] \\
\left( M_{\overline{f}}(r) / M^0_{\overline{f}}(r) \right)^*  & \times &  M_{\overline{f}}(r) / M^0_{\overline{f}}(r)  \ar[u] \ar@{->>}[rrr]  & & & \mathcal{O}_\pi .
}
\end{split}
}
\end{equation}
\begin{rem} \label{rem:fontaine-laffaille-functor}
If the assignment $T_{\overline{f}} \mapsto M_{\overline{f}}$ is compatible with tensor product and dual, then
we have $\left( M_{\overline{f}}(r) / M^0_{\overline{f}}(r) \right)^* = M^0_f(k-r)$.
We do not know whether these properties hold or not in general.
In the Fontaine--Laffaille case, such compatible properties hold if we replace $M_{\overline{f}}$ by the corresponding Fontaine--Laffaille module \cite[(14.17.2)]{kato-euler-systems}.
\end{rem}

Since $\left( M_{\overline{f}} / M^r_{\overline{f}} \right)^*$ is a free $\mathcal{O}_\pi$-module of rank one,
we choose an $\mathcal{O}_\pi$-basis $\omega_{f, \mathrm{cris}}$ of $\left( M_{\overline{f}} / M^r_{\overline{f}} \right)^*$ instead of $\omega_{f}$. 
Then we define the \textbf{crystalline normalization of the optimal periods via Kato's period integral} by
\begin{equation}  \label{eqn:delta_f_cris}
\xymatrix{
\delta^+_{f, \mathrm{cris}} := \dfrac{ \mathrm{per}^+_f(\omega_{f, \mathrm{cris}}) }{ C^{\pm}_{\mathrm{per}} \cdot \Omega^+_{\gamma, \omega_f} }  \in V_F(f)^+ , & \delta^-_{f, \mathrm{cris}} := \dfrac{ \mathrm{per}^-_f(\omega_{f, \mathrm{cris}}) }{ C^{\mp}_{\mathrm{per}} \cdot \Omega^-_{\gamma, \omega_f} }  \in V_F(f)^- ,
}
\end{equation}
and $\delta_{f, \mathrm{cris}}  := \delta^+_{f, \mathrm{cris}} + \delta^-_{f, \mathrm{cris}}$ where the sign convention of $C^{\pm}_{\mathrm{per}}$ follows that of (\ref{eqn:delta_f}). 
This is the crystalline normalization of (\ref{eqn:delta_f}), but the periods $C^{\pm}_{\mathrm{per}} \cdot \Omega^+_{\gamma, \omega_f}$ and $C^{\mp}_{\mathrm{per}} \cdot \Omega^-_{\gamma, \omega_f}$ remain the same.

Due to (\ref{eqn:pairing-diagram-integral}), we may choose $\eta'_{\overline{\alpha}}$ and  $\eta'_{\overline{\beta}}$ in Proposition \ref{prop:normalization_local_points_first} such that 
the images of $\eta'_{\overline{\alpha}} $
and $\eta'_{\overline{\beta}} $ in $\mathbf{D}_{\mathrm{cris}}(V_{F_\pi}(\overline{f})) / \mathbf{D}^{r}_{\mathrm{cris}}(V_{F_\pi}(\overline{f}))$ lie in $M_{\overline{f}} / M^{r}_{\overline{f}}$  but not in $\pi M_{\overline{f}} / M^{r}_{\overline{f}}$ for all $1 \leq r \leq k-1$.
The independence of $r$ follows from Proposition \ref{prop:normalization_local_points_first}.
Since $\omega_{f, \mathrm{cris}}$ is an $\mathcal{O}_\pi$-basis of $\left( M_{\overline{f}} / M^r_{\overline{f}} \right)^*$, we obtain
$[\omega_{f, \mathrm{cris}}, \eta'_{\overline{\alpha}}  ] \in \mathcal{O}^\times_\pi$ and $[\omega_{f, \mathrm{cris}},  \eta'_{\overline{\beta}}  ]\in \mathcal{O}^\times_\pi $.
Let
\begin{equation} \label{eqn:correct-periods}
{ \small
\xymatrix{
\eta_{\overline{\alpha}} = \dfrac{1}{[\omega_{f, \mathrm{cris}}, \eta'_{\overline{\alpha}}  ]} \cdot \eta'_{\overline{\alpha}} , &
 \eta_{\overline{\beta}} = \dfrac{1}{[\omega_{f, \mathrm{cris}},  \eta'_{\overline{\beta}}  ]} \cdot \eta'_{\overline{\beta}} , &
 \eta_{\overline{f}} = \dfrac{\overline{\alpha}}{\overline{\alpha} - \overline{\beta}} \cdot \eta_{\overline{\alpha}}
+ \dfrac{\overline{\beta}}{\overline{\beta}- \overline{\alpha}} \cdot  \eta_{\overline{\beta}} .
}
}
\end{equation}
Then $\eta_{\overline{\alpha}}$ and $\eta_{\overline{\beta}}$ give us the \emph{correct} optimal periods for $p$-adic $L$-functions since $[\omega_{f, \mathrm{cris}},  \eta_{\overline{\alpha}} ] = [\omega_{f, \mathrm{cris}},  \eta_{\overline{\beta}} ] = 1$.
Also, $\eta_{\overline{f}}$ plays both roles of the \emph{correct} optimal period for the Mazur--Tate elements and the ``$p$-destabilization" since it also satisfies $[\omega_{f, \mathrm{cris}},  \eta_{\overline{f}} ] = 1$ and 
$$\overline{f}(z) = \dfrac{\overline{\alpha}}{\overline{\alpha} - \overline{\beta}} \cdot \overline{f}_{\overline{\alpha}}(z)
 + \dfrac{\overline{\beta}}{\overline{\beta}-\overline{\alpha}} \cdot \overline{f}_{\overline{\beta}}(z)$$
 where $\overline{f}_{\overline{\alpha}}(z) = \overline{f}(z) - \overline{\beta} \cdot \overline{f}(pz)$ and
 $\overline{f}_{\overline{\beta}}(z) = \overline{f}(z) - \overline{\alpha} \cdot \overline{f}(pz)$ are the $p$-stabilizations of $\overline{f}$.
\begin{rem} \label{rem:equivariant-local-points-integral}
 Since  $[\omega_{f, \mathrm{cris}},   \eta_{\overline{\alpha}}  ] = [\omega_{f, \mathrm{cris}},  \eta_{\overline{\beta}}  ] = [\omega_{f, \mathrm{cris}},  \eta_{\overline{f}}  ]= 1$, the images of $ \eta_{\overline{\alpha}} $, $ \eta_{\overline{\beta}} $, and $ \eta_{\overline{f}} $
  in $\mathbf{D}_{\mathrm{cris}}(V_{F_\pi}(\overline{f})) / \mathbf{D}^{r}_{\mathrm{cris}}(V_{F_\pi}(\overline{f}))$  also lie in $M_{\overline{f}} / M^{r}_{\overline{f}}$ but not in $\pi M_{\overline{f}} / M^{r}_{\overline{f}}$ for all $1 \leq r \leq k-1$.
\end{rem}



\begin{defn} \label{defn:equivariant-local-points}
For $1 \leq r \leq k-1$, we put
\begin{align*}
c^{\varepsilon_m}_{\mathbb{Q}(\zeta_{mp^n}), r, \bullet} := \Sigma^{\varepsilon_m}_{V_{\overline{f}}, r, \mathbb{Q}(\zeta_{mp^n})} ( G^{\sigma^{-n}}_{\mathbb{Q}(\zeta_{m}), \bullet} )  \in \mathrm{H}^1_f(\mathbb{Q}(\zeta_{mp^n}) \otimes \mathbb{Q}_p, V_{\overline{f}}(r)) \otimes \mathcal{O}_\pi[\alpha] 
\end{align*}
where $\bullet \in \lbrace \eta_{\overline{\alpha}}, \eta_{\overline{\beta}}, \eta_{\overline{f}} \rbrace$ is defined in (\ref{eqn:correct-periods}).
Due to Proposition \ref{prop:normalization_local_points_first} and Remark \ref{rem:equivariant-local-points-integral}, we further have
$$c^{\varepsilon_m}_{\mathbb{Q}(\zeta_{mp^n}), r, \eta_{\overline{\alpha}}},
c^{\varepsilon_m}_{\mathbb{Q}(\zeta_{mp^n}), r, \eta_{\overline{\beta}}},
c^{\varepsilon_m}_{\mathbb{Q}(\zeta_{mp^n}), r, \eta_{\overline{f}}} 
\in \mathrm{H}^1_{/f}(\mathbb{Q}(\zeta_{mp^n}) \otimes \mathbb{Q}_p, T_{f}(k-r))^* \otimes \mathcal{O}_\pi[\alpha] .$$
See \cite[2.3.6.Prop.]{perrin-riou-local-iwasawa} for the Fontaine--Laffaille case (with $m=1$).
\end{defn}
Thanks to Theorem \ref{thm:integral-exponential}.(2), we also have an explicit formula with the classical Bloch--Kato exponential map
\begin{equation} \label{eqn:explicit-formula-local-points}
c^{\varepsilon_m}_{\mathbb{Q}(\zeta_{mp^n}),r, \eta_{\overline{\alpha}}} = (-1)^{r} \cdot (r-1)! \cdot m^{r-1} \cdot p^{(r-1)n} \cdot \mathrm{exp}_{ V_{\overline{f}}(r), \mathbb{Q}(\zeta_{mp^n})} \left( G^{\sigma^{-n}}_{\mathbb{Q}(\zeta_{m}),\eta_{\overline{\alpha}}, r}(\zeta_{p^n} - 1) \right)
\end{equation}
where
$G^{\sigma^{-n}}_{\mathbb{Q}(\zeta_{m}),\eta_{\overline{\alpha}}, r}$
is the $\sigma^{-n}$-twist of $G_{\mathbb{Q}(\zeta_{m}),\eta_{\overline{\alpha}}, r}$
given by solution of equation
\begin{equation} \label{eqn:explicit-equation}
(1-\varphi) G_{\mathbb{Q}(\zeta_{m}),\eta_{\overline{\alpha}}, r}  = m^{r-1} \cdot  \zeta_m \cdot   (1+X)^{1/m}   \otimes \eta_{\overline{\alpha}} \otimes e_{-r} 
\end{equation}
due to (\ref{eqn:solution-twists}) and the shape of $G_{\mathbb{Q}(\zeta_{m}),\eta}$ in Lemma \ref{lem:1-varphi-differential-equations}.

\subsection{Explicit computation of equivariant local points} \label{subsec:computation-local-points}
We write down the explicit solution for (\ref{eqn:explicit-equation}) and compute $G^{\sigma^{-n}}_{\mathbb{Q}(\zeta_{m}),\eta_{\overline{\alpha}}, r}(\zeta_{p^n} - 1)$ 
by refining  \cite[Lem. 2.2.1]{lei-thesis}, \cite[Lem. 3.4]{lei-compositio}, and \cite[Prop. 3.19]{lei-loeffler-zerbes_wach} equivariantly.
Let 
\begin{align*}
g_{\mathbb{Q}(\zeta_m), \eta_{\overline{\alpha}},r} & = m^{r-1} \cdot  \gamma^{-1}_m \cdot \left(  \zeta_m \cdot   (1+X) \right) \otimes \eta_{\overline{\alpha}} \otimes e_{-r} \\
& = m^{r-1} \cdot  \gamma^{-1}_m \cdot \left(  \zeta_m \cdot  X  \right) \otimes \eta_{\overline{\alpha}}  \otimes e_{-r}
+  m^{r-1} \cdot  \left( \zeta_m \cdot   1 \right) \otimes \eta_{\overline{\alpha}}  \otimes e_{-r} 
\end{align*}
in $(\mathbb{Z}_p \otimes \mathbb{Z}[\zeta_m])\llbracket X \rrbracket \otimes M_{\overline{f}} \otimes \mathcal{O}_\pi[\alpha] \otimes \mathbb{Z}_p\cdot e_{-r} $
where $\gamma^{-1}_m \in \Gamma$ with $\chi_{\mathrm{cyc}}(\gamma^{-1}_m) = 1/m$.
Since the actions of $\varphi$ and $\Gamma$ commute, we have
$$\varphi^i \left(  m^{r-1} \cdot \gamma^{-1}_m \cdot \left( \zeta_m \cdot  X  \right) \otimes \eta_{\overline{\alpha}} \otimes e_{-r} \right) =  m^{r-1} \cdot \gamma^{-1}_m \left( \left(  \zeta^{p^i}_m \cdot \left( ( X +1)^{p^i} - 1 \right) \right) \otimes \varphi^i ( \eta_{\overline{\alpha}}  \otimes e_{-r} )  \right) .$$
Since $( X +1)^{p^i} - 1$ converges to zero in $(\mathbb{Z}_p \otimes \mathbb{Z}[\zeta_m]) \llbracket X \rrbracket$ as $i$ goes to $\infty$, 
$\varphi^i \left( \gamma^{-1}_m \cdot \left( 1/m \cdot \zeta_m \cdot  X  \right) \otimes \eta_{\overline{\alpha}} \otimes e_{-r}  \right)$
also converges to zero  as $i$ goes to $\infty$.
Thus,
$$G_{\mathbb{Q}(\zeta_m), \eta_{\overline{\alpha}},r} = \sum_{i \geq 0} \varphi^i \left(  m^{r-1} \cdot \gamma^{-1}_m \cdot \left(  \zeta_m \cdot  X  \right) \otimes \eta_{\overline{\alpha}} \otimes e_{-r} \right) + (1-\varphi)^{-1} \left( m^{r-1} \cdot \left(  \zeta_m \cdot   1 \right) \otimes \eta_{\overline{\alpha}} \otimes e_{-r} \right) $$
exists and satisfies the equation
$(1 - \varphi ) G_{\mathbb{Q}(\zeta_m), \eta_{\overline{\alpha}},r}  = g_{\mathbb{Q}(\zeta_m), \eta_{\overline{\alpha}},r} $.
Since $\frac{1}{1 - \overline{\alpha}^{\phi(m)} \cdot p^{-r}} = \frac{ p^{r} }{p^{r} - \overline{\alpha}^{\phi(m)} } \in \mathcal{O}_\pi[\alpha]$ and is non-zero, the integrality of $G_{\mathbb{Q}(\zeta_m), \eta_{\overline{\alpha}},r}$ does not break under Assumption \ref{assu:standard}.(ENV).
We compute the specialization of $G_{\mathbb{Q}(\zeta_m), \eta_{\overline{\alpha}},r}$  at $X = \zeta_{p^n} - 1$.
Suppose that $n \geq 1$. Then we have
\begin{align*}
& G_{\mathbb{Q}(\zeta_m), \eta_{\overline{\alpha}},r} (\zeta_{p^n} - 1) \\
 = & \sum_{i =0}^{n-1} \left( \left( m^{r-1} \cdot \zeta^{p^i}_m \cdot \zeta^{1/m}_{p^{n-i}} \right) \otimes \varphi^i ( \eta_{\overline{\alpha}} \otimes e_{-r} )  \right)  + (1-\varphi)^{-1} \left( \left( m^{r-1} \cdot \zeta^{p^n}_m \cdot   1 \right) \otimes  \varphi^n( \eta_{\overline{\alpha}} \otimes e_{-r} ) \right) .
\end{align*}
Thus, we have
\begin{align} \label{eqn:local-points-higher-class}
\begin{split}
& \left( p^{-n} \cdot \varphi^{-n} \right)  G_{\mathbb{Q}(\zeta_m), \eta_{\overline{\alpha}},r} (\zeta_{p^n} - 1) \\
= \ &  \dfrac{1}{p^n} \cdot  \sum_{i =0}^{n-1} \left( \left( m^{r-1} \cdot \zeta^{p^{i-n}}_m \cdot  \zeta^{1/m}_{p^{n-i}}  \right) \otimes \varphi^{i-n} ( \eta_{\overline{\alpha}} \otimes e_{-r} )  \right)  \\
& + \dfrac{1}{p^n} \cdot  (1-\varphi)^{-1} \left( \left( m^{r-1} \cdot \zeta_m \cdot   1 \right) \otimes   \eta_{\overline{\alpha}} \otimes e_{-r} \right)  .
\end{split}
\end{align}
The $n=0$ case is as follows
\begin{align} \label{eqn:local-points-bottom-class}
\begin{split}
& \mathrm{Tr}_{\mathbb{Q}(\zeta_{mp})/\mathbb{Q}(\zeta_m)} \left( 
\left( p^{-1} \cdot \varphi^{-1} \right)  G_{\mathbb{Q}(\zeta_m), \eta_{\overline{\alpha}},r} (\zeta_{p} - 1)
\right) \\
& = \left(1 - \dfrac{\varphi^{-1}}{p} \right)  \left(1 - \varphi \right)^{-1} \left( m^{r-1} \cdot \zeta_m \otimes \eta_{\overline{\alpha}} \otimes e_{-r} \right) .
\end{split}
\end{align}

Note that $``G_{\mathbb{Q}(\zeta_m), \eta_{\overline{\alpha}}, r} (0) = \left(1 - \varphi \right)^{-1} \left( m^{r-1} \cdot \zeta_m \otimes \eta_{\overline{\alpha}} \otimes e_{-r} \right)"$.
As consequences, we have the norm compatibilities and the interpolation formula of equivariant local points.
\begin{prop} \label{prop:local-points-properties}
Let
$ \mathrm{cor}_{\mathbb{Q}(\zeta_{mp^n})/\mathbb{Q}(\zeta_{mp^{n-1}})} = \bigoplus_{v \vert p}
\mathrm{cor}_{\mathbb{Q}(\zeta_{mp^n})_v/\mathbb{Q}(\zeta_{mp^{n-1}})_v}$ be the direct sum of local corestriction maps
where $v$ runs over the primes of $\mathbb{Q}(\zeta_m)$ lying over $p$.
\begin{enumerate}
\item For $n \geq 2$, we have
$$\mathrm{cor}_{\mathbb{Q}(\zeta_{mp^n})/\mathbb{Q}(\zeta_{mp^{n-1}})} \left(c^{\varepsilon_m}_{\mathbb{Q}(\zeta_{mp^n}),r, \eta_{\overline{\alpha}}} \right)  = \overline{\alpha} \cdot c^{\varepsilon_m}_{\mathbb{Q}(\zeta_{mp^{n-1}}),r, \eta_{\overline{\alpha}}}$$
 in
$\mathrm{H}^1_f(\mathbb{Q}(\zeta_{mp^{n-1}}) \otimes \mathbb{Q}_p, V_{\overline{f}}(r)) \otimes \mathcal{O}_{\pi}[\alpha]$.
\item For $n = 1$, we have
$$\mathrm{cor}_{\mathbb{Q}(\zeta_{mp})/\mathbb{Q}(\zeta_{m})} \left( c^{\varepsilon_m}_{\mathbb{Q}(\zeta_{mp}), r,\eta_{\overline{\alpha}}} \right)  = \left( \overline{\alpha} - p^{r-1} \cdot \mathrm{Frob}_p \right) \cdot c^{\varepsilon_m}_{\mathbb{Q}(\zeta_{m}), r, \eta_{\overline{\alpha}}}$$
 in
$\mathrm{H}^1_f(\mathbb{Q}(\zeta_{m}) \otimes \mathbb{Q}_p, V_{\overline{f}}(r)) \otimes \mathcal{O}_{\pi}[\alpha]$.
\item  If $\ell$ is a prime with $(\ell, pmN)=1$, then 
$$\mathrm{cor}_{\mathbb{Q}(\zeta_{m\ell p^n})/\mathbb{Q}(\zeta_{mp^{n}})} \left(  c^{\varepsilon_{m\ell}}_{\mathbb{Q}(\zeta_{m\ell p^n}), r, \eta_{\overline{\alpha}}} \right)  = - \ell^{r-1}  \cdot \left( c^{\varepsilon_m}_{\mathbb{Q}(\zeta_{mp^n}), r, \eta_{\overline{\alpha}}} \right)^{\mathrm{Frob}^{-1}_\ell}$$
in $\mathrm{H}^1_f(\mathbb{Q}(\zeta_{mp^{n}}) \otimes \mathbb{Q}_p, V_{\overline{f}}(r)) \otimes \mathcal{O}_{\pi}[\alpha]$
where $\mathrm{Frob}^{-1}_\ell \in \mathrm{Gal}(\mathbb{Q}(\zeta_{mp^{n}})/\mathbb{Q})$.
\item Let $\chi$ be a primitive Dirichlet character of conductor dividing $mp^n$ with $n \geq 1$.
Then we have
$$\chi \left( \sum_{\sigma \in \mathrm{Gal}(\mathbb{Q}(\zeta_{mp^n})/\mathbb{Q})} \left( c^{\varepsilon_m}_{\mathbb{Q}(\zeta_{mp^n}),r, \eta_{\overline{\alpha}}} \right)^{\sigma} \cdot \sigma \right) 
 = (-1)^r \cdot (r-1)! \cdot (mp^n)^{r-1}  \cdot \tau(\chi) \cdot \dfrac{ 1 }{ \overline{\alpha}^n } \cdot \eta_{\overline{\alpha}} \otimes e_{-r} .
$$
\item Let $\chi$ be a primitive Dirichlet character of conductor $m$ with $(m,p)=1$.
Then we have
$$\chi \left( \sum_{\sigma \in \mathrm{Gal}(\mathbb{Q}(\zeta_{m})/\mathbb{Q})} \left( c^{\varepsilon_m}_{\mathbb{Q}(\zeta_{mp^n}),r, \eta_{\overline{\alpha}}} \right)^{\sigma} \cdot \sigma \right) 
= (-1)^r \cdot (r-1)! \cdot (mp^n)^{r-1}  \cdot \tau(\chi) \cdot \left( 1- \dfrac{ \chi(p) \cdot p^{r-1} }{ \overline{\alpha} }\right) \cdot \eta_{\overline{\alpha}} \otimes e_{-r} .$$
\end{enumerate}
\end{prop}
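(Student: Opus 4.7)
The plan is to prove all five assertions from the explicit formulas (\ref{eqn:local-points-higher-class}) and (\ref{eqn:local-points-bottom-class}) for $p^{-n}\varphi^{-n}G_{\mathbb{Q}(\zeta_m),\eta_{\overline{\alpha}},r}(\zeta_{p^n}-1)$ together with the Bloch--Kato exponential formula (\ref{eqn:explicit-formula-local-points}), and to use the eigenvalue relation $\varphi(\eta_{\overline{\alpha}}\otimes e_{-r})=\overline{\alpha}p^{-r}(\eta_{\overline{\alpha}}\otimes e_{-r})$ throughout.

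For the wild norm relations (1) and (2), the main tool is Theorem \ref{thm:integral-exponential}(1), which converts $\mathrm{cor}\circ\Sigma^{\varepsilon_m}_{T_{\overline{f}},r,\mathbb{Q}(\zeta_{mp^n})}(G^{\sigma^{-n}})$ into $\Sigma^{\varepsilon_m}_{T_{\overline{f}},r,\mathbb{Q}(\zeta_{mp^{n-1}})}(\varphi(G^{\sigma^{-n}}))$. For $n\geq 2$, applying $\varphi$ to the explicit expansion (\ref{eqn:local-points-higher-class}) multiplies each Frobenius-eigenvector term by $\overline{\alpha}p^{-r}$ and shifts the index $i\mapsto i+1$; the top term ($i=n-1$) is then killed because $\sum_{\zeta\in\mu_p}\zeta=0$, and the resulting telescoping identity, combined with the normalization $p^{(r-1)n}$ in (\ref{eqn:explicit-formula-local-points}), gives the factor $\overline{\alpha}$. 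For $n=1$ the bottom-layer formula (\ref{eqn:local-points-bottom-class}) produces the operator $(1-\varphi^{-1}/p)$, which on the $\overline{\alpha}$-eigenvector becomes $1-p^{r-1}/\overline{\alpha}$; multiplying by $\overline{\alpha}$ and picking up the arithmetic Frobenius action on $\zeta_m\in(\mathbb{Z}[\zeta_m]\otimes\mathbb{Z}_p)$ yields precisely $\overline{\alpha}-p^{r-1}\mathrm{Frob}_p$.

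For the tame relation (3), I would compare the two defining equations for $G_{\mathbb{Q}(\zeta_{m\ell}),\eta_{\overline{\alpha}},r}$ and $G_{\mathbb{Q}(\zeta_m),\eta_{\overline{\alpha}},r}$: the difference in the normalization $m^{r-1}\mapsto (m\ell)^{r-1}$ contributes the factor $\ell^{r-1}$, while the distribution relation $\mathrm{Tr}_{\mathbb{Q}(\zeta_{m\ell})/\mathbb{Q}(\zeta_m)}(\zeta_{m\ell})=-\zeta_m^{\mathrm{Frob}_\ell^{-1}}$ (using $\sum_{\zeta\in\mu_\ell}\zeta=-1$ since $\ell\nmid m$) contributes the sign $-1$ and the twist by $\mathrm{Frob}_\ell^{-1}$. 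Because $\varphi$ commutes with the tame corestriction and the Bloch--Kato exponential is Galois equivariant, these two computations combine to yield the claimed formula for the local points.

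For the interpolation formulas (4) and (5), I would plug $\chi$ into the equivariant sum against (\ref{eqn:local-points-higher-class}). In case (4) with $\chi$ of wild conductor $mp^n$, $n\geq 1$, the Galois sum $\sum_a\chi(a)\zeta_m^{p^{i-n}a}\zeta_{p^{n-i}}^{a/m}$ is a Gauss sum nonvanishing only in the $i=0$ layer (the wild part of $\chi$ kills the other layers and also the $(1-\varphi)^{-1}$ term); combined with the $\sigma^{-n}$-twist, the prefactor $(-1)^r(r-1)!\,m^{r-1}p^{(r-1)n}$ from (\ref{eqn:explicit-formula-local-points}), and the eigenvalue $(\overline{\alpha}p^{-r})^{-n}$ of $\varphi^{-n}$, one recovers $(-1)^r(r-1)!(mp^n)^{r-1}\tau(\chi)\overline{\alpha}^{-n}\eta_{\overline{\alpha}}\otimes e_{-r}$. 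In case (5), $\chi$ is tame so the summation over the wild Galois part kills every summand in (\ref{eqn:local-points-higher-class}) except the $(1-\varphi)^{-1}$-contribution; evaluating $(1-\varphi)^{-1}$ on the $\overline{\alpha}$-eigenvector produces $p^r/(p^r-\overline{\alpha})$, while the $\mathrm{Frob}_p$-action on $\zeta_m$ in the character sum introduces $\chi(p)$, and after cleanup these combine into the Euler factor $1-\chi(p)p^{r-1}/\overline{\alpha}$. The main obstacle will be keeping the $\sigma^{-n}$-twist aligned with Choice \ref{choice:p-adic-orientation-enhanced} and reconciling the signs, powers of $m$, $p^n$, $\overline{\alpha}$, and the Gauss sums in the character-theoretic manipulation.
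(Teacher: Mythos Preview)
Your approach is essentially the paper's own, and the overall plan is sound. The paper's proof is terse: (1) is deduced directly from Theorem~\ref{thm:integral-exponential}(1); (2) is referred to Perrin-Riou \cite[2.4.2.Proposition]{perrin-riou-local-iwasawa}; (3) follows from the explicit formula (\ref{eqn:explicit-formula-local-points}) together with $\mathrm{Tr}_{\mathbb{Q}(\zeta_\ell)/\mathbb{Q}}(\zeta_\ell)=-1$; (4) from (\ref{eqn:local-points-higher-class}); and (5) from (\ref{eqn:local-points-bottom-class}). Your arguments for (2), (3), (4) match these exactly.

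Two small points of divergence. For (1), you are doing more than necessary: once Theorem~\ref{thm:integral-exponential}(1) gives $\mathrm{cor}\bigl(\Sigma(G^{\sigma^{-n}})\bigr)=\Sigma\bigl(\varphi(G^{\sigma^{-n}})\bigr)$, the factor $\overline{\alpha}$ falls out immediately from the relation $\varphi^{-n}G = \overline{\alpha}^{-n}G^{\sigma^{-n}}$ (cf.\ (\ref{eqn:local-points-norm-relations})), since $\varphi$ acts as $\sigma$ on the $\mathbb{Z}[\zeta_m]$-coefficients and as $\overline{\alpha}$ on $\eta_{\overline{\alpha}}$. There is no need to manipulate the expansion (\ref{eqn:local-points-higher-class}) term by term.

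For (5), your plan does not quite match the statement or the paper's proof. The sum in (5) runs over the \emph{tame} group $\mathrm{Gal}(\mathbb{Q}(\zeta_m)/\mathbb{Q})$, so there is no ``summation over the wild Galois part'' to perform; and the paper derives (5) from the \emph{bottom-layer} formula (\ref{eqn:local-points-bottom-class}), not from (\ref{eqn:local-points-higher-class}). The correct route is to apply $\chi$ to the Galois-sum of the level-$m$ class and read off the operator $(1-\varphi^{-1}/p)(1-\varphi)^{-1}$ acting on $m^{r-1}\zeta_m\otimes\eta_{\overline{\alpha}}\otimes e_{-r}$ from (\ref{eqn:local-points-bottom-class}); on the $\overline{\alpha}$-eigenvector this yields the Euler factor, with the $\chi(p)$ arising because the Galois sum against $\chi$ turns the $\varphi^{-1}$-action on $\zeta_m$ (which is $\mathrm{Frob}_p^{-1}$) into $\chi(p)^{-1}$, and the Gauss sum $\tau(\chi)$ comes from $\sum_a\chi(a)\zeta_m^a$.
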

\begin{proof}
\begin{enumerate}
\item
It immediately follows from Theorem \ref{thm:integral-exponential}.(1).
\item
See \cite[2.4.2.Prop.]{perrin-riou-local-iwasawa}.
\item It follows from the comparison between 
$c^{\varepsilon_m}_{\mathbb{Q}(\zeta_{mp^n}),r, \eta_{\overline{\alpha}}}$
and
$c^{\varepsilon_{m\ell}}_{\mathbb{Q}(\zeta_{m \ell p^n}),r, \eta_{\overline{\alpha}}}$
via (\ref{eqn:explicit-formula-local-points})
and $\mathrm{Tr}_{\mathbb{Q}(\zeta_\ell)/\mathbb{Q}}(\zeta_\ell) = -1$.
\item It follows from (\ref{eqn:local-points-higher-class}).
\item It follows from (\ref{eqn:local-points-bottom-class}). 
\end{enumerate}
\end{proof}

%

\subsection{The equivariant big exponential map and the equivariant Perrin-Riou pairing}
By using $\Sigma^{\varepsilon_m}_{T_{\overline{f}}, r , \mathbb{Q}(\zeta_{mp^n}) }$ instead of $\Sigma^{\varepsilon}_{T_{\overline{f}}, r , \mathbb{Q}(\zeta_{mp^n}) }$, we also refine the big exponential map equivariantly as follows.
\begin{cor}
There exists a unique
 $F_\pi \otimes \mathscr{H}_{\mathbb{Q}_p}(\Gamma)[\mathrm{Gal}(\mathbb{Q}(\zeta_m)/\mathbb{Q})]$-equivariant homomorphism
$$\Omega^{\varepsilon_m}_{V_{\overline{f}}, 0, \mathbb{Q}(\zeta_m)} : \mathscr{D}_{\mathbb{Q}(\zeta_m)}(V_{\overline{f}})^{\widetilde{\Delta}_0} \to \mathscr{H}(\Gamma)[\mathrm{Gal}(\mathbb{Q}(\zeta_m)/\mathbb{Q})] \otimes_{\Lambda_{\mathcal{I}w}[\mathrm{Gal}(\mathbb{Q}(\zeta_m)/\mathbb{Q})]} \left( \bigoplus_v \dfrac{\mathrm{H}^1_{\mathcal{I}w}(\mathbb{Q}(\zeta_{m})_v, T_{\overline{f}})}{T^{G_{\mathbb{Q}(\zeta_{mp^\infty})_v}}_{\overline{f}}} \right)$$
satisfying all the properties of $\Omega^{\varepsilon}_{V_{\overline{f}}, 0}$ given in $\S$\ref{subsec:big-exponential-map} and $\S$\ref{subsec:explicit-reciprocity-law} 
where $v$ runs over all primes of $\mathbb{Q}(\zeta_{m})$ lying above $p$.
\end{cor}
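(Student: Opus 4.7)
The plan is to construct $\Omega^{\varepsilon_m}_{V_{\overline{f}}, 0, \mathbb{Q}(\zeta_m)}$ by applying the non-equivariant big exponential map of Theorem \ref{thm:big-exponential-map} prime-by-prime at $p$ in $\mathbb{Q}(\zeta_m)$, and then assembling the pieces together in a $\mathrm{Gal}(\mathbb{Q}(\zeta_m)/\mathbb{Q})$-equivariant way using the compatible family $\varepsilon_m$ from Choice \ref{choice:p-adic-orientation-enhanced}. The hypothesis $(m,p)=1$ is essential because it guarantees that for every prime $v$ of $\mathbb{Q}(\zeta_m)$ above $p$, the local field $\mathbb{Q}(\zeta_m)_v$ is an unramified extension of $\mathbb{Q}_p$, so Theorem \ref{thm:big-exponential-map} (and its refinement through Wach modules in $\S$\ref{sec:p-adic-hodge-theory}) applies to $V_{\overline{f}}$ viewed as a crystalline representation of $G_{\mathbb{Q}(\zeta_m)_v}$. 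Moreover, the Hodge--Tate filtration satisfies $\mathbf{D}^0_{\mathrm{cris}}(V_{\overline{f}}) = \mathbf{D}_{\mathrm{cris}}(V_{\overline{f}})$, so Remark \ref{rem:exponential-map-h-zero} yields $\Omega^{\varepsilon}_{V_{\overline{f}}, 0, v}$ directly, with no need to invert any element of $\mathscr{H}(\Gamma)$.

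First I would exploit the CRT-style decomposition
\[
\mathbb{Z}[\zeta_m] \otimes_{\mathbb{Z}} \mathbb{Z}_p \;\simeq\; \prod_{v \mid p} \mathcal{O}_{\mathbb{Q}(\zeta_m)_v},
\]
which induces matching decompositions of both the source $\mathscr{D}_{\mathbb{Q}(\zeta_m)}(V_{\overline{f}})^{\widetilde{\Delta}_0}$ and the target
$\mathscr{H}(\Gamma)[\mathrm{Gal}(\mathbb{Q}(\zeta_m)/\mathbb{Q})] \otimes \bigoplus_v \tfrac{\mathrm{H}^1_{\mathrm{Iw}}(\mathbb{Q}(\zeta_{mp^\infty})_v/\mathbb{Q}_p, T_{\overline{f}})}{T^{G_{\mathbb{Q}(\zeta_{mp^\infty})_v}}_{\overline{f}}}$
over the primes $v \mid p$. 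For each such $v$, Theorem \ref{thm:big-exponential-map} with $V = V_{\overline{f}}$, base field $\mathbb{Q}(\zeta_m)_v$, and $j=0$, $h+j = 1$, combined with the twist (\ref{eqn:exponential_twist}) as in Remark \ref{rem:exponential-map-h-zero}, produces a canonical $\mathscr{H}_{\mathbb{Q}(\zeta_m)_v}(\Gamma_v)$-linear map
\[
\Omega^{\varepsilon}_{V_{\overline{f}}, 0, v} \colon \mathscr{D}_v(V_{\overline{f}})^{\widetilde{\Delta}_0} \longrightarrow \mathscr{H}(\Gamma_v) \otimes_{\Lambda} \tfrac{\mathrm{H}^1_{\mathrm{Iw}}(\mathbb{Q}(\zeta_{mp^\infty})_v/\mathbb{Q}(\zeta_m)_v, T_{\overline{f}})}{T^{G_{\mathbb{Q}(\zeta_{mp^\infty})_v}}_{\overline{f}}}
\]
where $\Gamma_v = \mathrm{Gal}(\mathbb{Q}(\zeta_m)_v(\zeta_{p^\infty})/\mathbb{Q}(\zeta_m)_v)$.

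Next I would define $\Omega^{\varepsilon_m}_{V_{\overline{f}}, 0, \mathbb{Q}(\zeta_m)}$ as the direct sum of the maps $\Omega^{\varepsilon}_{V_{\overline{f}}, 0, v}$ over the primes $v \mid p$, and then verify $\mathrm{Gal}(\mathbb{Q}(\zeta_m)/\mathbb{Q})$-equivariance. The point is that any $\tau \in \mathrm{Gal}(\mathbb{Q}(\zeta_m)/\mathbb{Q})$ permutes the primes $v \mid p$ via some $v \mapsto \tau(v)$ and induces a corresponding isomorphism on the $v$-components of both sides; the compatibility of the choice $\varepsilon_m$ with $\varepsilon$ (Choice \ref{choice:p-adic-orientation-enhanced}) ensures that under this permutation the local orientations $\varepsilon$ on each completion transform consistently, so Proposition \ref{prop:choice-varepsilon}.(1) gives $\tau \circ \Omega^{\varepsilon}_{V_{\overline{f}}, 0, v} = \Omega^{\varepsilon}_{V_{\overline{f}}, 0, \tau(v)} \circ \tau$. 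Summing over $v$ yields the required equivariance under the full group ring $\mathscr{H}_{\mathbb{Q}_p}(\Gamma)[\mathrm{Gal}(\mathbb{Q}(\zeta_m)/\mathbb{Q})]$.

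Finally, all properties listed in $\S$\ref{subsec:big-exponential-map} and $\S$\ref{subsec:explicit-reciprocity-law} (the interpolation diagram (\ref{eqn:exponential_diagram}), the twist formula (\ref{eqn:exponential_twist}), the shift relation (\ref{eqn:exponential_shift}), and the reciprocity pairing) are preserved, since they are formulated $v$-locally and the equivariant extension is by definition a direct sum of local instances. Uniqueness follows from the uniqueness clause in Theorem \ref{thm:big-exponential-map} applied at each $v$, after decomposing any candidate equivariant map along the idempotents of $\mathbb{Z}[\zeta_m] \otimes \mathbb{Z}_p$. The main obstacle to address carefully is the verification of $\mathrm{Gal}(\mathbb{Q}(\zeta_m)/\mathbb{Q})$-equivariance: since the construction at each $v$ depends on a choice of $p$-adic orientation (and the solutions $G_{\mathbb{Q}(\zeta_m),\eta}$ in Lemma \ref{lem:1-varphi-differential-equations} depend on the global root $\zeta_m$), one must check that replacing $\zeta_m$ by $\zeta_m^\tau$ really implements the Galois permutation of the local data as expected, which is precisely the role played by Choice \ref{choice:p-adic-orientation-enhanced}.
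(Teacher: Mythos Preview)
Your proposal is correct and follows essentially the same approach as the paper. The paper's proof is a single sentence invoking Theorem \ref{thm:big-exponential-map} together with the $\mathrm{Gal}(\mathbb{Q}(\zeta_m)/\mathbb{Q})$-equivariance of $\mathrm{exp}_{V_{\overline{f}}, \mathbb{Q}(\zeta_m)}$ and Choice \ref{choice:p-adic-orientation-enhanced}; you have simply made the prime-by-prime decomposition and the equivariance verification explicit, citing Proposition \ref{prop:choice-varepsilon}.(1) rather than the equivariance of the underlying Bloch--Kato exponential, which amounts to the same thing.
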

\begin{proof}
It immediately follows from Theorem \ref{thm:big-exponential-map} by considering the $\mathrm{Gal}(\mathbb{Q}(\zeta_m)/\mathbb{Q})$-equivariant property of $\mathrm{exp}_{V_{\overline{f}}, \mathbb{Q}(\zeta_m)}$ with Choice \ref{choice:p-adic-orientation-enhanced}.
\end{proof}
\begin{rem}
We will use $\varepsilon^{-1}_m$ instead of $\varepsilon_m$ in the construction of $p$-adic $L$-functions in $\S$\ref{subsec:construction-p-adic-L}.
\end{rem}
For $ x = (x_{mp^{n}})_n \in \mathrm{H}^1_{\mathcal{I}w}(\mathbb{Q}(\zeta_{m}) \otimes \mathbb{Q}_p, T_f(k-r) )$ and $ y = (y_{mp^{n}})_n \in \mathrm{H}^1_{\mathcal{I}w}(\mathbb{Q}(\zeta_{m}) \otimes \mathbb{Q}_p, T_{\overline{f}}(r) )$,
we define the \textbf{equivariant Perrin-Riou pairing} $\langle -, - \rangle_{T_f(k-r),\mathbb{Q}(\zeta_{mp^\infty})}$ on 
$\mathrm{H}^1_{\mathcal{I}w}(\mathbb{Q}(\zeta_{m}) \otimes \mathbb{Q}_p, T_f(k-r) ) \times \mathrm{H}^1_{\mathcal{I}w}(\mathbb{Q}(\zeta_{m}) \otimes \mathbb{Q}_p, T_{\overline{f}}(r) )$
taking values in  $\Lambda_{\mathcal{I}w}[\mathrm{Gal}(\mathbb{Q}(\zeta_m)/\mathbb{Q})]$
by
$$
\langle x, y \rangle_{T_f(k-r),\mathbb{Q}(\zeta_{mp^\infty})}  := \varprojlim_n \left( \sum_{\sigma \in \mathrm{Gal}(\mathbb{Q}(\zeta_{mp^n})/\mathbb{Q})} \left[ ( x_{mp^{n}} )^{\sigma^{-1}}, y_{mp^{n}} \right]_{T_f(k-r),\mathbb{Q}(\zeta_{mp^n})} \cdot \sigma  \right) 
$$
in $\Lambda_{\mathcal{I}w}[\mathrm{Gal}(\mathbb{Q}(\zeta_m)/\mathbb{Q})]$.
As in $\S$\ref{subsec:explicit-reciprocity-law}, it also satisfies
$$\langle \lambda \cdot x, y \rangle_{T,K} = \lambda \cdot \langle x, y \rangle_{T_f(k-r),\mathbb{Q}(\zeta_{mp^\infty})}
 = \langle x, \iota(\lambda) \cdot y \rangle_{T,K}$$
where $\lambda \in \Lambda_{\mathcal{I}w}[\mathrm{Gal}(\mathbb{Q}(\zeta_m)/\mathbb{Q})]$ and $\iota : \Lambda_{\mathcal{I}w}[\mathrm{Gal}(\mathbb{Q}(\zeta_m)/\mathbb{Q})] \to \Lambda_{\mathcal{I}w}[\mathrm{Gal}(\mathbb{Q}(\zeta_m)/\mathbb{Q})]$ sends $\gamma \mapsto \gamma^{-1}$ for $\gamma \in \mathrm{Gal}(\mathbb{Q}(\zeta_{mp^\infty})/\mathbb{Q})$.
The pairing also extends linearly by taking the tensor product with $\mathscr{H}_{\mathbb{Q}_p}(\Gamma)$.
Note that $p$ is inverted in $\mathscr{H}_{\mathbb{Q}_p}(\Gamma)$ here again.

\subsection{Kato--Perrin-Riou $p$-adic $L$-functions for modular forms} \label{subsec:construction-p-adic-L}
\begin{defn}
Consider both $p$-stabilizations $\overline{f}_{\overline{\alpha}}$ and $\overline{f}_{\overline{\beta}}$ for $\overline{f}$.
Then the \textbf{equivariant $p$-adic $L$-function of $\overline{f}_{\overline{\alpha}}$ over $\mathbb{Q}(\zeta_{mp^\infty})$} is defined by
\begin{align*}
& L_p(\mathbb{Q}(\zeta_{mp^\infty}), \overline{f}_{\overline{\alpha}})  \\
 := \ & \mathrm{Tw}_1 \left( \left\langle \mathrm{loc}_p \mathfrak{Z}_{\mathbb{Q}(\zeta_{mp^\infty}), \delta_{f, \mathrm{cris}}}(f,k), \Omega^{\varepsilon^{-1}_m}_{V_{\overline{f}}, 0, \mathbb{Q}(\zeta_m)} \left(  \dfrac{1}{m} \cdot \zeta_m \cdot   (1+X)^{1/m} \otimes \eta_{\overline{\alpha}} \right) \right\rangle_{T_f(k), \mathbb{Q}(\zeta_{mp^\infty})} \right) \\
 \in \ & \mathscr{H}_{\mathrm{ord}_p(\overline{\alpha})}(\Gamma) [\mathrm{Gal}(\mathbb{Q}(\zeta_m)/\mathbb{Q})]
\end{align*}
and 
$L_p(\mathbb{Q}(\zeta_{mp^\infty}), \overline{f}_{\overline{\beta}}) \in \mathscr{H}_{\mathrm{ord}_p(\overline{\beta})}(\Gamma) [\mathrm{Gal}(\mathbb{Q}(\zeta_m)/\mathbb{Q})]$ is defined in the exactly same way.
\end{defn}
\begin{rem} 
\begin{enumerate}
\item 
In $\S$\ref{subsec:interpolation}, we show that our construction exactly gives the the same $p$-adic $L$-function constructed via modular symbols by Mazur--Tate--Teitelbaum \cite{mtt}.
\item If $\overline{f}$ is $p$-ordinary, then $$L_p(\mathbb{Q}(\zeta_{mp^\infty}), \overline{f}_{\overline{\alpha}}) \in  \Lambda_{\mathcal{I}w}[\mathrm{Gal}(\mathbb{Q}(\zeta_m)/\mathbb{Q})] $$
is the equivariant ordinary $p$-adic $L$-function
and 
$$ L_p(\mathbb{Q}(\zeta_{mp^\infty}), \overline{f}_{\overline{\beta}}) \in \mathscr{H}_{k-1}(\Gamma) [\mathrm{Gal}(\mathbb{Q}(\zeta_m)/\mathbb{Q})]$$ is the equivariant critical slope one.
\item 
In the ordinary case,  \cite[Wishful Thinking 8.2.2]{rubin-book}, which is much stronger than \cite[Conj. 8.2.6]{rubin-book}, holds due to the large image assumption.
\item The critical slope case (with $m=1$) is studied in \cite{loeffler-zerbes-wach-critical}.
See \cite{pollack-stevens-critical} for the overconvergent modular symbol counterpart.
\item
The critical slope $p$-adic $L$-function also satisfies the exactly same interpolation formula as the non-critical one if the restriction of $\rho_{\overline{f}}$ to $G_{\mathbb{Q}_p}$ is not a direct sum of characters (c.f. Assumption \ref{assu:standard}.(``non-CM")). However, the interpolation formula does not determine the $p$-adic $L$-function in this case.
On the other hand, Mazur--Tate elements are determined by the interpolation formula for Dirichlet characters and the norm relation in $F_\pi[\mathrm{Gal}(\mathbb{Q}(\zeta_{mp^n})/\mathbb{Q})]$. 
\end{enumerate}
\end{rem}
In \cite[$\S$3.2]{lei-compositio} and \cite[$\S$3.5.1]{lei-loeffler-zerbes_wach}, another construction via the Perrin-Riou pairing is given by
$$L^{\mathrm{LLZ}}_p(\mathbb{Q}(\zeta_{p^\infty}), \overline{f}_{\overline{\alpha}}) =
\mathrm{Tw}_1 \left( \left\langle  \Omega^{\varepsilon}_{V_{\overline{f}}, 0} \left(    (1+X) \otimes \eta_{\overline{\alpha}} \right) , \mathrm{loc}_p \mathfrak{Z}_{\mathbb{Q}(\zeta_{p^\infty}), \delta_{f, \mathrm{cris}}}(f,k) \right\rangle_{T_f(k), \mathbb{Q}(\zeta_{p^\infty})} \right) .$$
This construction gives us a \emph{very slightly different} interpolation formula, which is compatible with the (corrected) interpolation formula for Kitagawa's $p$-adic $L$-functions \cite{kitagawa}, \cite[Appendix B]{futterer-thesis}. Since they differ by a unit in the Iwasawa algebra, it does not affect the formulation of the Iwasawa main conjecture. See $\S$\ref{subsec:interpolation} for the precise computation.
\subsection{The comparison with Mazur--Tate--Teitelbaum $p$-adic $L$-functions} \label{subsec:interpolation}
We explicitly compute the twists and projections of $p$-adic $L$-functions
$\mathrm{pr}_{\mathbb{Q}(\zeta_{mp^n})} \circ \mathrm{Tw}_{r} \left( L_p(\mathbb{Q}(\zeta_{mp^\infty}), \overline{f}_{\overline{\alpha}})  \right)$
at critical points. Straightforward computation shows
{ \scriptsize
\begin{align} \label{eqn:interpolation-computation-part1}
\begin{split}
& \mathrm{pr}_{\mathbb{Q}(\zeta_{mp^n})} \circ \mathrm{Tw}_{r} \left( \left\langle \mathrm{loc}_p \mathfrak{Z}_{\mathbb{Q}(\zeta_{mp^\infty}), \delta_{f, \mathrm{cris}} }(f,k), \Omega^{\varepsilon^{-1}_m}_{V_{\overline{f}}, 0, \mathbb{Q}(\zeta_m)} \left(  \dfrac{1}{m} \cdot \zeta_m \cdot   (1+X)^{1/m} \otimes \eta_{\overline{\alpha}} \right) \right\rangle_{T_f(k), \mathbb{Q}(\zeta_{mp^\infty})} \right) \\
& = \sum_{\sigma'}   \left[ \mathrm{loc}_p \mathfrak{z}_{\mathbb{Q}(\zeta_{mp^n}), \delta_{f, \mathrm{cris}} }(f,k-r)^{\sigma'^{-1}}, \mathrm{pr}_{\mathbb{Q}(\zeta_{mp^n})}  \circ \mathrm{Tw}^{\varepsilon}_{r} \circ \Omega^{\varepsilon^{-1}_m}_{V_{\overline{f}}, 0, \mathbb{Q}(\zeta_m)} \left(  \dfrac{1}{m} \cdot \zeta_m \cdot   (1+X)^{1/m} \otimes \eta_{\overline{\alpha}} \right) \right]_{T_f(k), \mathbb{Q}(\zeta_{mp^n}) } \cdot \sigma'
\end{split}
\end{align}
}
where $\sigma'$ runs over $\mathrm{Gal}(\mathbb{Q}(\zeta_{mp^n})/\mathbb{Q})$.
Thus, it suffices to compute
\begin{align*}
& \mathrm{pr}_{\mathbb{Q}(\zeta_{mp^n})}  \circ \mathrm{Tw}_{r} \circ \Omega^{\varepsilon^{-1}_m}_{V_{\overline{f}}, 0, \mathbb{Q}(\zeta_m)} \left( \gamma^{-1}_m \cdot \left( 1/m \cdot \zeta_m \cdot   (1+X) \right) \otimes \eta_{\overline{\alpha}} \right) \\
&  =
\Omega^{\varepsilon^{-1}_m}_{V_{\overline{f}}, r, \mathbb{Q}(\zeta_{mp^n})}  \left( \gamma^{-1}_m \cdot \left( 1/m \cdot \zeta_m \cdot   (1+X) \right) \otimes \eta_{\overline{\alpha}} \right)
\end{align*}
due to (\ref{eqn:projection-Omega}) and Proposition \ref{prop:choice-varepsilon}.(1).
By (\ref{eqn:Omega-Sigma}), we have
\begin{align} \label{eqn:local-points-norm-relations}
\begin{split}
\Omega^{\varepsilon^{-1}_m}_{V_{\overline{f}}, r, \mathbb{Q}(\zeta_{mp^n})} \left( 
1/m \cdot \zeta_m \cdot   (1+X)^{1/m}  \otimes \eta_{\overline{\alpha}} \right)
& =
 \Sigma^{\varepsilon^{-1}_m}_{V_{\overline{f}}, r, \mathbb{Q}(\zeta_{mp^n})} \left( 
G^{\varphi^{-n}}_{\mathbb{Q}(\zeta_{m}), \eta_{\overline{\alpha}}} 
 \right) \\
& = \dfrac{1}{\overline{\alpha}^n} \cdot
 \Sigma^{\varepsilon^{-1}_m}_{V_{\overline{f}}, r, \mathbb{Q}(\zeta_{mp^n})} \left( 
G^{\sigma^{-n}}_{\mathbb{Q}(\zeta_{m}), \eta_{\overline{\alpha}}} 
 \right) .
\end{split}
\end{align}
By (\ref{eqn:Omega-exp}), we also have
\begin{align} \label{eqn:local-points-interpolation}
\begin{split}
& \Omega^{\varepsilon^{-1}_m}_{V_{\overline{f}}, r, \mathbb{Q}(\zeta_{mp^n})} \left( 
1/m \cdot \zeta_m \cdot   (1+X)^{1/m}  \otimes \eta_{\overline{\alpha}} \right) \\
& = (-1)^r \cdot (r-1)! \cdot (mp^n)^{r-1} \cdot \dfrac{1}{\overline{\alpha}^n} \cdot \mathrm{exp}_{V_{\overline{f}}(r), \mathbb{Q}(\zeta_{mp^n})} \left( G^{\sigma^{-n}}_{\mathbb{Q}(\zeta^{-1}_{m}), \eta_{\overline{\alpha}},r} (\zeta^{-1}_{p^n}-1) \right)
\end{split}
\end{align}
where
$G^{\sigma^{-n}}_{\mathbb{Q}(\zeta^{-1}_{m}), \eta_{\overline{\alpha}},r}$
is the solution of
$(1-\varphi ) G^{\sigma^{-n}}_{\mathbb{Q}(\zeta^{-1}_{m}), \eta_{\overline{\alpha}},r}
=m^{r-1} \cdot  \gamma^{-1}_m \cdot \left(  \zeta^{-1}_m \cdot   (1+X) \right) \otimes \eta_{\overline{\alpha}} \otimes e_{-r} $.
Indeed, $G^{\sigma^{-n}}_{\mathbb{Q}(\zeta^{-1}_{m}), \eta_{\overline{\alpha}},r}$ is 
exactly the same as
$G^{\sigma^{-n}}_{\mathbb{Q}(\zeta_{m}), \eta_{\overline{\alpha}},r}$ in $\S$\ref{subsec:computation-local-points} except replacing $\zeta_m$ by $\zeta^{-1}_m$.
Note that the norm relation follows from  (\ref{eqn:local-points-norm-relations}) and the interpolation formula follows from
 (\ref{eqn:local-points-interpolation}). 
We continue the computation from (\ref{eqn:interpolation-computation-part1}):
{ \scriptsize
\begin{align*}
& \sum_{\sigma'}   \left[ \mathrm{loc}_p \mathfrak{z}_{\mathbb{Q}(\zeta_{mp^n}), \delta_{f, \mathrm{cris}} }(f,k-r)^{\sigma'^{-1}}, \mathrm{pr}_{\mathbb{Q}(\zeta_{mp^n})}  \circ \mathrm{Tw}^{\varepsilon}_{r} \circ \Omega^{\varepsilon^{-1}_m}_{V_{\overline{f}}, 0, \mathbb{Q}(\zeta_m)} \left(  \dfrac{1}{m} \cdot \zeta_m \cdot   (1+X)^{1/m} \otimes \eta_{\overline{\alpha}} \right) \right]_{T_f(k), \mathbb{Q}(\zeta_{mp^n}) } \cdot \sigma' \\
& = \dfrac{ (-1)^r \cdot (r-1)! \cdot (mp^n)^{r-1} }{\overline{\alpha}^n} \cdot   \left[ \sum_{\sigma'} \mathrm{exp}^* \circ \mathrm{loc}_p \mathfrak{z}_{\mathbb{Q}(\zeta_{mp^n}), \delta_{f, \mathrm{cris}} }(f,k-r)^{\sigma'^{-1}} \cdot \sigma' , \sum_{\tau}    G^{\sigma^{-n}}_{\mathbb{Q}(\zeta^{-1}_{m}), \eta_{\overline{\alpha}},r} (\zeta^{-1}_{p^n}-1)^{\tau} \cdot \tau \right]_{\mathbf{D}( V_f(k) ), \mathbb{Q}(\zeta_{mp^n}) }
\end{align*}
}
where the sums run over $\mathrm{Gal}(\mathbb{Q}(\zeta_{mp^n})/\mathbb{Q})$, 
$\mathrm{exp} = \mathrm{exp}_{V_{\overline{f}}(r), \mathbb{Q}(\zeta_{mp^n})}$, and
$\mathrm{exp}^* = \mathrm{exp}^*_{V_{f}(k-r), \mathbb{Q}(\zeta_{mp^n})} $. A similar computation can be found in \cite[Lem. 3.2]{kurihara-invent}.
By evaluating the last term at Dirichlet characters of conductor $mp^n$, we obtain the following statement.
\begin{thm} \label{thm:comparison-kato-perrin-riou-mtt}
In the non-critical slope case, the $p$-adic $L$-function precisely coincides with the $p$-adic $L$-function of Mazur--Tate--Teitelbaum defined in \cite[$\S$13]{mtt}.
\end{thm}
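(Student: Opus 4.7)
The plan is to finish the character-by-character evaluation that is already half-carried-out at the end of $\S$\ref{subsec:interpolation}, and match the result term-by-term with the Mazur--Tate--Teitelbaum interpolation formula. First I would plug a primitive Dirichlet character $\chi$ of conductor $mp^n$ (treating $n\geq 1$ first) into the final displayed pairing in $\S$\ref{subsec:interpolation}, which splits the evaluation into the product of two character sums: one applied to $\exp^{*}\circ\mathrm{loc}_p\mathfrak{z}_{\mathbb{Q}(\zeta_{mp^n}),\delta_{f,\mathrm{cris}}}(f,k-r)^{\sigma'^{-1}}$, the other to the $\sigma^{-n}$-twisted local points $G^{\sigma^{-n}}_{\mathbb{Q}(\zeta_m^{-1}),\eta_{\overline{\alpha}},r}(\zeta_{p^n}^{-1}-1)^{\tau}$, the two character sums being related via the Galois action built into the equivariant Perrin--Riou pairing (and switched between $\chi$ and $\chi^{-1}$ by Proposition~\ref{prop:choice-varepsilon}.(2)).

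Next I would evaluate these two factors in turn. For the first, I would invoke Corollary~\ref{cor:kato-euler-systems-modification}.(3) with $\gamma=\delta_{f,\mathrm{cris}}$, which produces $-L^{(p)}(\overline{f},\chi,r)/\bigl((2\pi\sqrt{-1})^{r-1}\cdot\Omega^{\pm}_{\overline{f}}\bigr)\cdot\omega_{f}\otimes e_{r-k}$, where the reference to $\omega_{f}$ is replaced by $\omega_{f,\mathrm{cris}}$ because of the construction of $\delta_{f,\mathrm{cris}}\in V_{f}$ from $\omega_{f,\mathrm{cris}}$ in $\S$\ref{subsec:crystalline-integral-normalization}. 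For the second, I would use the $\varepsilon^{-1}_m$-oriented analogue of Proposition~\ref{prop:local-points-properties}.(4), which differs from the stated version only by replacing $\chi$ by $\chi^{-1}$ in the Gauss sum, yielding $(-1)^{r}(r-1)!(mp^n)^{r-1}\tau(\chi^{-1})\overline{\alpha}^{-n}\cdot\eta_{\overline{\alpha}}\otimes e_{-r}$. Combining the two factors through the canonical pairing on $\mathbf{D}_{\mathrm{cris}}$, and using $[\omega_{f,\mathrm{cris}},\eta_{\overline{\alpha}}]_{\mathbf{D}}=1$ (from (\ref{eqn:correct-periods})) together with the identification $\mathbf{D}_{\mathrm{cris}}(\mathbb{Q}_{p}(1-k))\simeq \mathbb{Q}_{p}$ sending $e_{r-k}\otimes e_{-r}\mapsto 1$, the pairing collapses to a scalar. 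Converting Kato's period $\Omega^{\pm}_{f,\mathrm{Kato}}$ to the optimal period $\Omega^{\pm}_{\overline{f}}$ via Remark~\ref{rem:period-kato-mtt}, and absorbing the $p$-Euler factor into $L^{(p)}(\overline{f},\chi,r)/\overline{\alpha}^{n}=L(\overline{f}_{\overline{\alpha}},\chi,r)$ (which equals $L(\overline{f},\chi,r)$ when $\chi$ is ramified at $p$), I get exactly the formula from Proposition~\ref{prop:mazur-tate-elements-properties}.(2) for $\chi(\theta_{\mathbb{Q}(\zeta_{mp^n}),r}(\overline{f}_{\overline{\alpha}}))$. For $n=0$, the same argument with Proposition~\ref{prop:local-points-properties}.(5) in place of (4) introduces the expected Euler factor at $p$ and reproduces Proposition~\ref{prop:mazur-tate-elements-properties}.(1) applied to $\overline{f}_{\overline{\alpha}}$. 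Since elements of $\mathcal{O}_{\pi}[\mathrm{Gal}(\mathbb{Q}(\zeta_{mp^n})/\mathbb{Q})]$ are determined by their values at all primitive Dirichlet characters, this character-by-character equality forces the claimed equality of the two Mazur--Tate elements.

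The main obstacle will be bookkeeping of signs, Gauss sum conventions, and powers of $2\pi\sqrt{-1}$: Kato's normalization inserts a factor of $-(2\pi\sqrt{-1})^{k-2}$ into $\Omega^{\pm}_{f,\mathrm{Kato}}$ relative to $\Omega^{\pm}_{\overline{f}}$; the Perrin--Riou pairing pairs $x$ with $y^{\sigma^{-1}}$, swapping $\chi$ with $\chi^{-1}$ in one factor; and the $\varepsilon$ versus $\varepsilon^{-1}$ orientation chosen in $\Omega^{\varepsilon^{-1}_m}_{V_{\overline{f}},0,\mathbb{Q}(\zeta_m)}$ is precisely what makes the Gauss sums on the two sides match. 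The reason the critical slope case is excluded is that when $\overline{\rho}|_{G_{\mathbb{Q}_p}}$ splits as a sum of characters the Dirichlet-character interpolation formula only pins down the non-critical-slope $p$-adic $L$-function, so the coincidence with the MTT element could conceivably be broken along the critical slope component, even though the norm relations alone continue to force uniqueness in all other situations. The non-critical slope hypothesis precisely rules this out and delivers the pointwise match.
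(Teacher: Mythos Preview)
Your proposal is correct and follows essentially the same route as the paper: evaluate the pairing at primitive Dirichlet characters using Corollary~\ref{cor:kato-euler-systems-modification}.(3) (equivalently Theorem~\ref{thm:kato_interpolation}) for the Euler-system factor and Proposition~\ref{prop:local-points-properties}.(4),(5) for the local-point factor, match the Gauss sums via the $\varepsilon^{-1}_m$ orientation (the paper records this as (\ref{eqn:gauss-sum-computation})), and then invoke the Amice--V\'elu/Vishik uniqueness in the non-critical slope case. One small slip: at the end you phrase the conclusion as an equality of Mazur--Tate elements in a finite group ring, but the theorem concerns the full $p$-adic $L$-function in $\mathscr{H}_{\mathrm{ord}_p(\overline{\alpha})}(\Gamma)[\mathrm{Gal}(\mathbb{Q}(\zeta_m)/\mathbb{Q})]$, so the final step must be the non-critical-slope uniqueness you mention in your last paragraph, not the linear-independence-of-characters argument over $\mathcal{O}_\pi[\mathrm{Gal}(\mathbb{Q}(\zeta_{mp^n})/\mathbb{Q})]$.
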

\begin{proof}
See \cite[Prop. in $\S$14, Chap. I with the convention given in $\S$9, Chap. I]{mtt} for the interpolation formula for  the $p$-adic $L$-function of Mazur--Tate--Teitelbaum.
The conclusion immediately follows from Theorem \ref{thm:kato_interpolation}, Proposition \ref{prop:local-points-properties}.(4) and (5), the difference between $\overline{f}$ and $\overline{f}_{\overline{\alpha}}$, and
\begin{align} \label{eqn:gauss-sum-computation}
\begin{split}
{\displaystyle \sum_{\sigma_a \in \mathrm{Gal}(\mathbb{Q}(\zeta_{mp^n})/\mathbb{Q})} } \chi(a) \cdot \zeta^{-a}_{mp^n} & =
{\displaystyle \sum_{\sigma_a \in \mathrm{Gal}(\mathbb{Q}(\zeta_{mp^n})/\mathbb{Q})} } \chi(-1) \cdot \chi(-a) \cdot \zeta^{-a}_{mp^n} \\
& = \chi(-1) \cdot  {\displaystyle \sum_{\sigma_a \in \mathrm{Gal}(\mathbb{Q}(\zeta_{mp^n})/\mathbb{Q})} } \chi(-a) \cdot \zeta^{-a}_{mp^n} \\
& = \chi(-1) \cdot \tau(\chi) \\
& = \dfrac{mp^n}{\tau(\chi^{-1})} 
\end{split}
\end{align}
where $\chi$ is a Dirichlet character of conductor $mp^n$.
Since the interpolation formula determines the $p$-adic $L$-function in the non-critical slope case, the conclusion follows.
\end{proof}
\begin{rem}
Choice \ref{choice:p-adic-orientation-enhanced} genuinely affects the interpolation formula due to (\ref{eqn:gauss-sum-computation}).
This is observed in \cite[$\S$3.3 in Chap. IV]{futterer-thesis} and \cite[Ex. 9.2.14]{zahringer-thesis} and our construction yields a systematic explanation of their observation.
\end{rem}
Let $\chi$ be a finite order character on $\mathrm{Gal}(\mathbb{Q}(\zeta_{p^\infty})/\mathbb{Q})$.
Then we have
$$\dfrac{
\chi \cdot \chi^{r-1}_{\mathrm{cyc}} \left( L_p(\mathbb{Q}(\zeta_{p^\infty}), \overline{f}_{\overline{\alpha}}) \right)
}
{
\chi \cdot \chi^{r-1}_{\mathrm{cyc}} \left( L^{\mathrm{LLZ}}_p(\mathbb{Q}(\zeta_{p^\infty}), \overline{f}_{\overline{\alpha}}) \right)
} = \chi(-1) \cdot (-1)^{r-1}$$
due to (\ref{eqn:gauss-sum-computation}) and Proposition \ref{prop:choice-varepsilon}.(2).
Following the computation in \cite{mtt} and \cite[Thm. 1.11 in Appendix B]{futterer-thesis}, we also have
$$\dfrac{
\chi \cdot \chi^{r-1}_{\mathrm{cyc}} \left( L^{\mathrm{MTT}}_p(\mathbb{Q}(\zeta_{p^\infty}), \overline{f}_{\overline{\alpha}}) \right)
}{
\chi \cdot \chi^{r-1}_{\mathrm{cyc}} \left( L^{\mathrm{Kit}}_p(\mathbb{Q}(\zeta_{p^\infty}), \overline{f}_{\overline{\alpha}}) \right)
} = \chi(-1) \cdot (-1)^{r-1} $$
where 
$L^{\mathrm{MTT}}_p(\mathbb{Q}(\zeta_{p^\infty}), \overline{f}_{\overline{\alpha}})$ is Mazur--Tate--Teitelbaum's $p$-adic $L$-function \cite{mtt} and
$L^{\mathrm{Kit}}_p(\mathbb{Q}(\zeta_{p^\infty}), \overline{f}_{\overline{\alpha}})$ is Kitagawa's $p$-adic $L$-function \cite{kitagawa}, \cite[Appendix B]{futterer-thesis}. The complex conjugation in $\Gamma$ satisfies the interpolation formula $\chi(-1) \cdot (-1)^{r-1}$.

\subsection{Finite layer Coleman maps and Mazur--Tate elements} \label{subsec:finite-layer-coleman}
The following pairing generalizes the notion of the $P_n$-pairing of Kurihara \cite[$\S$3]{kurihara-invent}.
\begin{defn} 
We define the pairing
$$P_{\mathbb{Q}(\zeta_{mp^n}), r} : \mathrm{H}^1_{/f}( \mathbb{Q}(\zeta_{mp^n}) \otimes \mathbb{Q}_p, V_f(k-r)) 
\times 
\mathrm{H}^1_{f}( \mathbb{Q}(\zeta_{mp^n}) \otimes \mathbb{Q}_p, V_{\overline{f}}(r))
\to F_\pi[\mathrm{Gal}( \mathbb{Q}(\zeta_{mp^n}) / \mathbb{Q} )]$$
by
\begin{align*}
P_{\mathbb{Q}(\zeta_{mp^n}),r}(z_{mp^n},c_{mp^n}) & = \sum_{\sigma} \left[ z^{\sigma^{-1}}_{mp^n},c_{mp^n} \right]_{V_{f}(k-r),\mathbb{Q}(\zeta_{mp^n})} \cdot \sigma   \\
& = \sum_{\sigma} \left[ z_{mp^n},c^{\sigma}_{mp^n} \right]_{V_{f}(k-r),\mathbb{Q}(\zeta_{mp^n})} \cdot \sigma
\end{align*}
where
$ z_{mp^n}  \in \mathrm{H}^1_{/f}( \mathbb{Q}(\zeta_{mp^n}) \otimes \mathbb{Q}_p, V_f(k-r)) $,
$c_{mp^n} \in \mathrm{H}^1_{f}( \mathbb{Q}(\zeta_{mp^n}) \otimes \mathbb{Q}_p, V_{\overline{f}}(r))  $, and
$\sigma$ runs over $\mathrm{Gal}(\mathbb{Q}(\zeta_{mp^n})/\mathbb{Q})$.
\end{defn}
Following Definition \ref{defn:equivariant-local-points}, we define the \textbf{($\overline{\alpha}$-stabilized) equivariant local point associated to $(\mathbb{Q}(\zeta_{mp^n}),r, \eta_{\overline{\alpha}})$} by
$$c^{\varepsilon^{-1}_m}_{\mathbb{Q}(\zeta_{mp^n}),r, \eta_{\overline{\alpha}}}
=  \Sigma^{\varepsilon^{-1}_m}_{T_{\overline{f}}, r, \mathbb{Q}(\zeta_{mp^n})} (G^{\sigma^{-n}}_{\mathbb{Q}(\zeta_m), \eta_{\overline{\alpha}}}) \in \mathrm{H}^1_{f}( \mathbb{Q}(\zeta_{mp^n}) \otimes \mathbb{Q}_p, V_{\overline{f}}(r) ) \otimes \mathcal{O}_\pi[\alpha] .$$
Following (\ref{eqn:correct-periods}), we define the \textbf{equivariant local point associated to $(\mathbb{Q}(\zeta_{mp^n}),r, \eta_{\overline{f}})$} by
$$c^{\varepsilon^{-1}_m}_{\mathbb{Q}(\zeta_{mp^n}),r, \eta_{\overline{f}}} = \dfrac{\overline{\alpha}}{\overline{\alpha} - \overline{\beta}} \cdot c^{\varepsilon^{-1}_m}_{\mathbb{Q}(\zeta_{mp^n}),r, \eta_{\overline{\alpha}}}
+ \dfrac{\overline{\beta}}{\overline{\beta} - \overline{\alpha}} \cdot c^{\varepsilon^{-1}_m}_{\mathbb{Q}(\zeta_{mp^n}),r, \eta_{\overline{\beta}}} $$
and define the \textbf{finite layer Coleman map} 
$$\mathrm{Col}_{\mathbb{Q}(\zeta_{mp^n}), r}  :
 \mathrm{H}^1_{/f}( \mathbb{Q}(\zeta_{mp^n}) \otimes \mathbb{Q}_p, V_f(k-r)) 
\to F_\pi[\mathrm{Gal}( \mathbb{Q}(\zeta_{mp^n}) / \mathbb{Q} )] .$$
by
$\mathrm{Col}_{\mathbb{Q}(\zeta_{mp^n}), r}  \left( -  \right)
= P_{\mathbb{Q}(\zeta_{mp^n}), r} \left( - ,  c^{\varepsilon^{-1}_m}_{\mathbb{Q}(\zeta_{mp^n}), r, \eta_{\overline{f}}}  \right) $.
Furthermore, if we restrict the finite layer Coleman map to $T_f$, then it becomes integral, i.e. we have
$$\mathrm{Col}_{\mathbb{Q}(\zeta_{mp^n}), r}  :
 \mathrm{H}^1_{/f}( \mathbb{Q}(\zeta_{mp^n}) \otimes \mathbb{Q}_p, T_f(k-r)) 
\to \mathcal{O}_\pi[\mathrm{Gal}( \mathbb{Q}(\zeta_{mp^n}) / \mathbb{Q} )] $$
due to the choice of $\eta_{\overline{f}}$ in (\ref{eqn:correct-periods}) and Proposition \ref{prop:normalization_local_points_first}.
\begin{rem} 
For the case of elliptic curves, the integrality of finite layer Coleman maps is proved by the explicit computation of the points in formal groups after making the choice of N\'{e}ron periods \cite[Prop. 3.6]{kurihara-invent}.
In our case, we first prove the integrality of finite layer Coleman maps by making the careful choice of $\eta_{\overline{f}}$, but we could not figure out whether the corresponding Kato's zeta element associated to the optimal periods is integral or not in general.
This is where Conjecture \ref{conj:comparison-zeta-elements} comes from.
\end{rem}
Let $\mathfrak{z}_{\mathbb{Q}(\zeta_{mp^n}), r, \delta_{f, \mathrm{cris}}}$ be the modified Kato's Euler system in Corollary \ref{cor:kato-euler-systems-modification}.
Since 
$$\mathrm{Col}_{\mathbb{Q}(\zeta_{mp^n}), r}  \left( \mathrm{loc}^s_p  \mathfrak{z}_{\mathbb{Q}(\zeta_{mp^n}), r, \delta_{f, \mathrm{cris}}}  \right)  = P_{\mathbb{Q}(\zeta_{mp^n}), r} \left( \mathrm{loc}^s_p \mathfrak{z}_{\mathbb{Q}(\zeta_{mp^n}), r, \delta_{f, \mathrm{cris}}} ,  c^{\varepsilon^{-1}_m}_{\mathbb{Q}(\zeta_{mp^n}), r, \eta_{\overline{f}}}  \right) ,$$
the interpolation formula of $\mathrm{Col}_{\mathbb{Q}(\zeta_{mp^n}), r}  \left( \mathrm{loc}^s_p  \mathfrak{z}_{\mathbb{Q}(\zeta_{mp^n}), r, \delta_{f, \mathrm{cris}}}  \right)$ follows from
the interpolation formula of modified Kato's Euler systems (Corollary \ref{cor:kato-euler-systems-modification}) and the interpolation formulas of equivariant local points (Proposition \ref{prop:local-points-properties}).
Straightforward computation shows that it coincides with the the interpolation formula of Mazur--Tate elements (Proposition \ref{prop:mazur-tate-elements-properties}).
Thus, we have
$$\mathrm{Col}_{\mathbb{Q}(\zeta_{mp^n}), r}  \left( \mathrm{loc}^s_p  \mathfrak{z}_{\mathbb{Q}(\zeta_{mp^n}), r, \delta_{f, \mathrm{cris}}}  \right) = \theta_{\mathbb{Q}(\zeta_{mp^n}), r}(\overline{f}) \in   \mathcal{O}_\pi[\mathrm{Gal}( \mathbb{Q}(\zeta_{mp^n}) / \mathbb{Q} )] .$$
It completes the proof of Theorem \ref{thm:main_thm_finite_layer_coleman}.

\subsection{Canonical periods as optimal periods} \label{subsec:canonical_periods}
We prove Theorem \ref{thm:comparison-zeta-elements}.

We first compare two different normalizations (\ref{eqn:delta_f}) and (\ref{eqn:delta_f_cris})
\begin{align*}
\mathrm{per}_f(\omega_f) & = \delta^+_f \otimes C^\pm_{\mathrm{per}} \cdot \Omega^+_{\gamma, \omega_f} + \delta^-_f \otimes C^\mp_{\mathrm{per}} \cdot \Omega^-_{\gamma, \omega_f}  ,\\
\mathrm{per}_f(\omega_{f, \mathrm{cris}}) & = \delta^+_{f, \mathrm{cris}} \otimes C^\pm_{\mathrm{per}} \cdot \Omega^+_{\gamma, \omega_f} + \delta^-_{f, \mathrm{cris}} \otimes C^\mp_{\mathrm{per}} \cdot \Omega^-_{\gamma, \omega_{f} }
\end{align*}
when $(N,p)=1$ and $2 \leq k \leq p-1$.
Following the argument in \cite[$\S$14.22]{kato-euler-systems} based on the theory of Fontaine--Messing \cite{fontaine-messing}, the image of the ($k-1$)-th de Rham cohomology of an integral model of the Kuga--Sato variety in $S(f)$ coincides with $\mathcal{O}_\pi \omega_{f, \mathrm{cris}}$.
Indeed, all the Wach module ingredient, especially the lattice $M_f$, is replaced by the Fontaine--Laffaille module one here. 
The Fontaine--Laffaille functor is compatible with tensor product and dual as mentioned in Remark \ref{rem:fontaine-laffaille-functor}.
Following \cite[0.3. Main Thm.]{shokurov-holomorphic-kuga} and \cite[$\S$0.4]{shokurov-homology-kuga}, the de Rham cohomology of the Kuga--Sato variety is (integrally) isomorphic to the space of weight $k$ cuspforms at least when $k-2 < p$. 
In particular, the homologies of Kuga--Sato varieties and modular curves are directly compared in \cite{shokurov-homology-kuga}.
Therfore, we have $\mathcal{O}_\pi \omega_{f, \mathrm{cris}} = \mathcal{O}_\pi  \omega_f$ and we can choose $\omega_{f, \mathrm{cris}}$ and $\delta^{\pm}_{f, \mathrm{cris}}$ by $\omega_{f}$ and $\delta^{\pm}_{f}$, respectively.

In the ordinary case, the identification between $\omega_{f, \mathrm{cris}}$ and $\omega_{f}$ follows by Hida's theory of $\Lambda$-adic ordinary forms \cite[Chap. 7]{hida-blue} and the choice of the integral lattice in $\mathbf{D}_{\mathrm{dR}}(V_{F_\pi}(f))$ is given in \cite[$\S$17.5]{kato-euler-systems} by using the ordinary filtration. Thus, $\delta^{\pm}_{f, \mathrm{cris}}$ can also be identified with $\delta^{\pm}_{f}$.

We briefly review the canonical periods and show that they are optimal. 
\begin{thm}[Vatsal] \label{thm:mod-p-multi-one}
Let  $\mathfrak{m}$ be  the non-Eisenstein maximal ideal of the Hecke algebra.
If 
\begin{enumerate}
\item $M = N$ is prime to p and $2 \leq k \leq p-1$, or
\item $M = Np$ with $(N,p)=1$ and  $\overline{\rho}$ is $p$-ordinary and $p$-distinguished, 
\end{enumerate}
then
$$\mathrm{H}^1(\Gamma_1(M), \mathrm{Sym}^{k-2}(\mathcal{O}^2_\pi))^{\pm}_{\mathfrak{m}}
\simeq \mathrm{H}^1_c(\Gamma_1(M), \mathrm{Sym}^{k-2}(\mathcal{O}^2_\pi))^{\pm}_{\mathfrak{m}}
\simeq \mathrm{Hom}_{\mathcal{O}_\pi}(\mathbb{T}_{\mathfrak{m}}, \mathcal{O}_\pi)$$
where $\mathbb{T}_{\mathfrak{m}}$ is the Hecke algebra faithfully acting on $\mathrm{H}^1(\Gamma_1(M), \mathrm{Sym}^{k-2}(\mathcal{O}^2_\pi))_{\mathfrak{m}}$.
\end{thm}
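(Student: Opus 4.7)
The plan is to split the statement into two assertions: (a) the natural map $\mathrm{H}^1_c(\Gamma_1(M), \mathrm{Sym}^{k-2}(\mathcal{O}^2_\pi))^{\pm}_{\mathfrak{m}} \to \mathrm{H}^1(\Gamma_1(M), \mathrm{Sym}^{k-2}(\mathcal{O}^2_\pi))^{\pm}_{\mathfrak{m}}$ is an isomorphism, and (b) each side is free of rank one over $\mathbb{T}_{\mathfrak{m}}$. For (a) I would use the long exact sequence comparing compactly supported cohomology, full cohomology, and the boundary (cuspidal) cohomology at the cusps of the open modular curve $Y_1(M)$. The boundary piece is the sum of local cohomologies at the cusps, and its Hecke eigensystems coincide with those of Eisenstein series. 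Since by hypothesis $\mathfrak{m}$ is non-Eisenstein, the attached residual Galois representation $\overline{\rho}$ is irreducible and cannot match any reducible Eisenstein eigensystem, so the boundary cohomology is annihilated after localization at $\mathfrak{m}$, giving (a).

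For (b), the strategy is to exhibit a perfect pairing $\mathbb{T}_{\mathfrak{m}} \times \mathrm{H}^1(\Gamma_1(M), \mathrm{Sym}^{k-2}(\mathcal{O}_\pi^2))^{\pm}_{\mathfrak{m}} \to \mathcal{O}_\pi$ by combining the Poincar\'e duality pairing between $\mathrm{H}^1$ and $\mathrm{H}^1_c$ (available after step (a)) with the Hecke-equivariant splitting of the $\pm$-eigenspaces. It then suffices to show that $\mathrm{H}^1(\Gamma_1(M), \mathrm{Sym}^{k-2}(\mathcal{O}_\pi^2))^{\pm}_{\mathfrak{m}}$ is a free $\mathbb{T}_{\mathfrak{m}}$-module of rank one, which is the mod $p$ multiplicity one statement. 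By the duality this can be reduced to the vanishing of $(\mathrm{H}^1(\Gamma_1(M), \mathrm{Sym}^{k-2}(\mathbb{F}_\pi^2))^{\pm}_{\mathfrak{m}}) \otimes_{\mathbb{T}_{\mathfrak{m}}/\mathfrak{m}} \overline{\mathbb{F}}_\pi$ having dimension one, i.e., a mod $p$ multiplicity one result for the residual system.

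In Case (1), when $p \nmid M$ and $2 \leq k \leq p-1$, I would use the Fontaine--Laffaille range: here the Eichler--Shimura isomorphism descends integrally (via the Shokurov/Scholl comparison between de Rham cohomology of the Kuga--Sato variety and $S_k$, as used in Section 3.3 of the paper), and the resulting mod $\pi$ multiplicity statement reduces to the classical weight-two case of Mazur--Ribet--Edixhoven, propagated to higher weight via the Fontaine--Laffaille functor's compatibility with tensor products and duals. In Case (2), when ordinary and $p$-distinguished at $p$, I would invoke Hida theory: ordinary cohomology assembles into a finitely generated $\Lambda$-module, the $p$-distinguished hypothesis ensures that Wiles's argument identifying the ordinary Hecke algebra with the universal ordinary deformation ring applies, and one then descends mod $p$ multiplicity one from the weight-two specialization (the classical case) along the Hida family to arbitrary weight.

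The main obstacle, in my judgment, is the freeness of rank one in Case (2), which requires the full force of the $R = \mathbb{T}$ style Taylor--Wiles patching together with an integral comparison between $\mathrm{H}^1$ of $\Gamma_1(Np)$ and the ordinary slice of Hida theory; the $p$-distinguished hypothesis is exactly what is needed to rule out the troublesome scalar residual local representation and to construct the Taylor--Wiles primes. The Fontaine--Laffaille case, by contrast, reduces more transparently to weight two using the integral Eichler--Shimura comparison. This is precisely the content of Vatsal's theorem, and the plan above mirrors his arguments.
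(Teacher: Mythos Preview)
The paper does not actually prove this theorem; its entire proof is the citation ``See \cite[$\S$1.3 and Theorem 1.13]{vatsal-cong}.'' Your proposal is therefore not competing with a proof in the paper but rather sketching what Vatsal himself does, and as such it is a reasonable outline: the boundary-cohomology argument for (a) is standard, and the reduction of (b) to mod~$p$ multiplicity one, handled via Fontaine--Laffaille comparison in the low-weight case and Hida theory in the ordinary $p$-distinguished case, is indeed the shape of Vatsal's argument.

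One small caution: your Case~(1) sketch leans on an integral Eichler--Shimura/Fontaine--Laffaille comparison to ``propagate'' multiplicity one from weight two, but Vatsal's actual route in the Fontaine--Laffaille range is more combinatorial, going through the $q$-expansion principle and the duality between cusp forms and Hecke algebras rather than through $p$-adic Hodge theory of Kuga--Sato varieties. Your invocation of Taylor--Wiles patching in Case~(2) is also heavier machinery than Vatsal needs; the $p$-distinguished hypothesis is used to control the ordinary part directly via Hida's $\Lambda$-adic duality, not via an $R = \mathbb{T}$ theorem. These are differences of emphasis rather than errors, and since the paper treats the result as a black box from \cite{vatsal-cong}, either description suffices for the purposes of this paper.
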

\begin{proof}
See \cite[$\S$1.3 and Thm. 1.13]{vatsal-cong}.
\end{proof}
Since $\mathrm{Hom}_{\mathcal{O}_\pi}(\mathbb{T}_{\mathfrak{m}}, \mathcal{O}_\pi) \simeq S_k(\Gamma_1(M), \mathcal{O}_\pi)_{\mathfrak{m}}$ via the duality between cuspforms and the Hecke rings, Theorem \ref{thm:mod-p-multi-one} is equivalent to
$$\theta^{\pm} : \mathrm{H}^1(\Gamma_1(M), \mathrm{Sym}^{k-2}(\mathcal{O}^2_\pi))^{\pm}_{\mathfrak{m}} \simeq S_k(\Gamma_1(M), \mathcal{O}_\pi)_{\mathfrak{m}}$$
where $\theta^{\pm}( \delta^{\pm}_{\overline{f}, \mathrm{can}} ) = \overline{f}$.
Let $\omega_{\overline{f}, \mathrm{can}} \in \mathrm{H}^1(\Gamma_1(M), \mathrm{Sym}^{k-2}(\mathbb{C}^2))_{\mathfrak{m}}$ be the cohomology class  represented by the cocycle $$\left[ \gamma \mapsto 2 \pi \sqrt{-1} \cdot \int^{\gamma z_0}_{z_0} \overline{f}(z)(zX+Y)^{k-2}dz \right]$$
where $z_0$ is any base point on the upper half plane. This comes from the Eichler--Shimura isomorphism.
Then the relation
\begin{equation*}
\omega^{\pm}_{\overline{f}, \mathrm{can}} = \Omega^{\pm}_{\overline{f}, \mathrm{can}} \cdot \delta^{\pm}_{\overline{f}, \mathrm{can}}
\end{equation*}
determines the \textbf{canonical periods} $\Omega^{\pm}_{\overline{f}, \mathrm{can}}$ up to $p$-adic units.
See\cite[(7.13.2)]{kato-euler-systems} for the Eichler--Shimura isomorphism  with the convention of Kato's period integrals.
\begin{prop} \label{prop:canonical-periods-optimal}
Under the same assumptions on Theorem \ref{thm:mod-p-multi-one}, the canonical periods are optimal periods.
\end{prop}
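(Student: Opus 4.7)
The plan is to show that, under the hypotheses of Theorem \ref{thm:mod-p-multi-one}, the canonical generator $\delta^{\pm}_{\overline{f},\mathrm{can}}$ produced by mod $p$ multiplicity one coincides, up to a $p$-adic unit, with the generator $\delta^{\pm}_{\overline{f}}$ characterized by the optimal period condition on modular symbols. This is a matching-of-lattices problem: on the cohomological side, $\delta^{\pm}_{\overline{f},\mathrm{can}}$ is an $\mathcal{O}_\pi$-basis of the (localized, $\overline{f}$-isotypic) cohomology $\mathrm{H}^1(\Gamma_1(M),\mathrm{Sym}^{k-2}(\mathcal{O}_\pi^2))^{\pm}_{\mathfrak{m}}[\wp_{\overline{f}}]$; on the symbol side, $\delta^{\pm}_{\overline{f}}$ corresponds to the assertion that $\frac{1}{\Omega^{\pm}_{\overline{f}}}\psi^{\pm}_{\overline{f}}$ takes values in $\mathcal{L}^{k-2}$ but not in $\pi\mathcal{L}^{k-2}$.

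First, I would identify the cohomology class $\omega_{\overline{f},\mathrm{can}}$ with the modular symbol $\psi_{\overline{f}}$ via the Eichler--Shimura isomorphism: the cocycle $\gamma\mapsto 2\pi\sqrt{-1}\int_{z_0}^{\gamma z_0}\overline{f}(z)(zX+Y)^{k-2}dz$ is, as a class in $\mathrm{H}^1(\Gamma_1(M),\mathrm{Sym}^{k-2}(\mathbb{C}^2))$, precisely the one computed by evaluating the modular symbol $\psi_{\overline{f}}((r)-(s)) = 2\pi\sqrt{-1}\int_s^r \overline{f}(z)(zX+Y)^{k-2}dz$ on the corresponding element of $\mathrm{Div}^0(\mathbb{P}^1(\mathbb{Q}))$ under the boundary map. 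Consequently, $\frac{1}{\Omega^{\pm}_{\overline{f},\mathrm{can}}}\psi^{\pm}_{\overline{f}}$ is, on the nose, the image of $\delta^{\pm}_{\overline{f},\mathrm{can}}$ under the natural map from cohomology to modular symbols.

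Next, I would compare the lattices $\mathcal{L}^{k-2}$ and $\mathrm{Sym}^{k-2}(\mathcal{O}_\pi^2)$. In case (1), where $2 \leq k \leq p-1$, all binomial coefficients $\binom{k-2}{j}$ for $0 \leq j \leq k-2$ are units in $\mathcal{O}_\pi$, so $\mathcal{L}^{k-2} = \mathrm{Sym}^{k-2}(\mathcal{O}_\pi^2)$ and the two integral structures agree; optimality mod $\pi$ then reduces to: the class $\delta^{\pm}_{\overline{f},\mathrm{can}}$ does not vanish modulo $\pi$ when evaluated at some divisor of degree zero. This non-vanishing is exactly the content of the mod $p$ multiplicity one isomorphism of Theorem \ref{thm:mod-p-multi-one}, which says $\delta^{\pm}_{\overline{f},\mathrm{can}}$ is an $\mathcal{O}_\pi$-basis (not merely a $\mathbb{Q}_p$-basis after localization) of the relevant $\overline{f}$-isotypic direct summand, so its reduction mod $\pi$ is non-zero and hence produces a non-zero value on the Manin symbols generating the cohomology. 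For case (2), the same argument goes through after invoking Hida's theory to identify the ordinary part of the cohomology with its compactly supported counterpart and using that $\overline{\rho}$ being $p$-distinguished forces the $\overline{f}$-component of the localized cohomology at $\Gamma_1(Np)$ to be free of rank one over $\mathcal{O}_\pi$.

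The main obstacle is making rigorous the bookkeeping between the boundary-map interpretation of modular symbols and the Eichler--Shimura cocycle, particularly tracking the two sign eigenspaces under the involution $\begin{pmatrix}-1&0\\0&1\end{pmatrix}$, since the signs in $\psi^{\pm}_{\overline{f}}$ and $\omega^{\pm}_{\overline{f},\mathrm{can}}$ are defined with respect to compatible but a priori distinct conventions. A secondary subtlety lies in verifying that the $\mathbb{T}_\mathfrak{m}$-equivariance passes cleanly to the quotient by $\wp_{\overline{f}}$, and that the resulting $\mathcal{O}_\pi$-generator is the same one (up to a unit) detected by evaluation on any single Manin symbol realizing a non-zero value mod $\pi$; this is where the freeness assertion of Theorem \ref{thm:mod-p-multi-one} is indispensable.
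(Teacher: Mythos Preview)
Your argument is essentially correct and is more direct than the route the paper takes. The key step---that $\delta^{\pm}_{\overline{f},\mathrm{can}}$ is not $\pi$-divisible in $\mathrm{H}^1(\Gamma_1(M),\mathrm{Sym}^{k-2}(\mathcal{O}_\pi^2))^{\pm}_{\mathfrak{m}}$ because $\theta^{\pm}$ is an integral isomorphism and $\overline{f}$ has $a_1=1$---is exactly right, and once one knows that $\mathrm{H}^1_c$ and $\mathrm{H}^1$ agree after localizing at the non-Eisenstein $\mathfrak{m}$ (which is part of Theorem~\ref{thm:mod-p-multi-one}), the passage to modular symbols is standard. One small imprecision: you speak of the ``$\overline{f}$-isotypic direct summand,'' but the $\overline{f}$-eigenspace is a \emph{quotient} of the $\mathfrak{m}$-localized cohomology, not a summand (congruences obstruct splitting); what you actually need, and what your argument uses, is only that $\delta^{\pm}_{\overline{f},\mathrm{can}}$ is not in $\pi\cdot\mathrm{H}^1(\ldots)^{\pm}_{\mathfrak{m}}$.

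The paper proceeds differently in the non-ordinary higher-weight case ($2<k\leq p-1$): rather than arguing abstractly from multiplicity one, it exhibits an explicit injection
\[
\mathrm{H}^1_c(\Gamma_1(N),\mathrm{Sym}^{k-2}(\mathbb{F}_\pi^2)) \hookrightarrow \mathrm{H}^1_c(\Gamma_1(Np),\mathbb{F}_\pi)^{\omega^{k-2}}
\]
induced by $P(X,Y)\mapsto P(0,1)$ (Ash--Stevens), thereby reducing the mod $\pi$ non-vanishing to the weight-two case at level $Np$, where it follows from Stevens' result. Your approach is cleaner and uniform across cases (1) and (2); the paper's approach has the virtue of being constructive and of linking the higher-weight non-vanishing explicitly to the well-established weight-two theory, at the cost of an extra reduction step.
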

\begin{proof}
See \cite[Rem. 1.12]{vatsal-cong} and \cite[Lem. 4.1]{vatsal-integralperiods-2013} if $k=2$ or $\overline{\rho}$ is  $p$-ordinary and $p$-distinguished. 
Now we assume that $f$ is non-ordinary at $p$ and $2 < k \leq p-1$.
Consider the map
\begin{align*}
\mathrm{H}^1_c(\Gamma_1(N), \mathrm{Sym}^{k-2}(\mathcal{O}^2_\pi)) & \to \mathrm{H}^1_c(\Gamma_1(N), \mathrm{Sym}^{k-2}(\mathcal{O}^2_\pi)) \otimes \mathbb{F}_\pi \\
& \simeq  \mathrm{H}^1_c(\Gamma_1(N), \mathrm{Sym}^{k-2}(\mathbb{F}^2_\pi)) \\
& \hookrightarrow \mathrm{H}^1_c(\Gamma_1(Np), \mathbb{F}_\pi)^{\omega^{k-2}}
\end{align*}
where the second isomorphism can be found in \cite[(5.4)]{edixhoven-integral-weight-two}, 
the last injective map follows from \cite[Theorem 3.4.(a)]{ash-stevens}, 
and $\mathrm{H}^1_c(\Gamma_1(Np), \mathbb{F}_\pi)^{\omega^{k-2}}$ is the submodule of 
$\mathrm{H}^1_c(\Gamma_1(Np), \mathbb{F}_\pi)$ on which the diamond operator at $p$ acts by $\omega^{k-2}$.
Indeed, the last map is induced from the map $\mathrm{Sym}^{k-2}(\mathbb{F}^2_\pi) \to \mathbb{F}_\pi$ defined by $P(X, Y ) \mapsto P(0,1)$ as in \cite[Lem. 4.4]{pw-mt}. Thus, the mod $p$ non-vanishing problem of modular symbols normalized by the canonical periods reduces to the weight two case, and it follows from \cite[Thm. 2.1]{glenn-cuspidal}. See also \cite[Thm. 4.5]{ash-stevens}.
\end{proof}
\begin{proof}[Proof of Theorem \ref{thm:comparison-zeta-elements}]
It immediately follows from Proposition \ref{prop:canonical-periods-optimal} and the Betti--\'{e}tale comparison and the integral Eichler--Shimura isomorphism under the assumptions of Theorem \ref{thm:mod-p-multi-one}.
\end{proof}
Theorem \ref{thm:comparison-zeta-elements} \emph{completes} (\ref{eqn:betti-etale-comparison-extended-integral}) in the following sense
\begin{equation} \label{eqn:betti-etale-comparison-extended-integral-renormalized}
\begin{gathered}
{ \scriptsize
\xymatrix{
 M_f \otimes_S \widehat{S}^{\mathrm{PD}} \otimes_{\mathbb{Z}_p} \widetilde{S} \ar@{^{(}->}[d] &  M_f \ar@{_{(}->}[l]  \\
\left( T_f \otimes \mathbf{A}_{\mathrm{cris}} \right)^{G_{\mathbb{Q}_p}} \otimes \mathbf{A}_{\mathrm{cris}} \ar[d] &  M^{k-r}_f \ar@{^{(}->}[u]  \\
T_f \otimes \mathbf{A}_{\mathrm{cris}} &  \mathcal{O}_\pi \omega_{f, \mathrm{cris}} = \left( M_{\overline{f}} / M^r_{\overline{f}} \right)^* \ar[d]_-{\mathrm{per}^+_f + \mathrm{per}^-_f} \ar@{=}[u] \\
T_f  \ar[u] &   \mathcal{O}_\pi \delta^+_{f, \mathrm{cris}} \otimes \Omega^+_{\delta_f, \omega_f} + \mathcal{O}_\pi \delta^-_{f, \mathrm{cris}} \otimes \Omega^-_{\delta_f, \omega_f}  \\
 \mathcal{O}_\pi \delta^+_{f, \mathrm{cris}}  + \mathcal{O}_\pi \delta^-_{f, \mathrm{cris}} \ar@{=}[u]  & \mathcal{O}_\pi \delta^+_{f, \mathrm{cris}}  + \mathcal{O}_\pi \delta^-_{f, \mathrm{cris}}  \ar[l] \ar[u]  
}
}
\end{gathered}
\end{equation}
with  $\omega_{f, \mathrm{cris}} = \omega_{f}$ and $\delta^\pm_{f, \mathrm{cris}}  = \delta^\pm_{f} $.

\section{Towards the ``weak" main conjecture of Mazur--Tate} \label{sec:mazur-tate}
We prove the following theorem, which covers many cases of the conjecture of Mazur--Tate on Fitting ideals of Selmer groups of modular forms. We keep Assumption \ref{assu:even-weight-central-critical}, i.e. $k$ is even and $r = k/2$, and write $\Lambda_n = \mathcal{O}_\pi[\mathrm{Gal}(\mathbb{Q}(\zeta_{p^n})/\mathbb{Q}) ]$.

\begin{thm}[Mazur--Tate conjecture] \label{thm:main_thm_mazur_tate_conj}
If all the conditions in Assumption \ref{assu:standard} hold and $\overline{\rho} \vert_{G_{\mathbb{Q}_p}}$ is irreducible, then we have
\begin{align} \label{eqn:mazur_tate_conj_with_error}
\begin{split}
& C_f \cdot \left(   \theta_{\mathbb{Q}(\zeta_{p^{n}}), k/2}(\overline{f}) \right)  \cdot \pi_n \left( \mathrm{Fitt}_{\Lambda_{\mathcal{I}w}}  \left( \left( \mathrm{Sel}_0(\mathbb{Q}(\zeta_{p^\infty}), W_{\overline{f}}(k/2))^\vee \right)_{\mathrm{mft}} \right) \right) \\
&  \subseteq \mathrm{Fitt}_{\Lambda_n} \left( \mathrm{Sel}(\mathbb{Q}(\zeta_{p^n}), W_{\overline{f}}(k/2))^\vee \right) 
\end{split}
\end{align}
for $n \geq 1$ where
\begin{itemize}
\item $C_f \in \mathcal{O}_\pi$ is a non-zero constant depending only on $f$ which is unique up to $\mathcal{O}^\times_\pi$, 
\item $\nu_{n-1, n} : \Lambda_{n-1} \to \Lambda_n$ is the trace map, 
\item $\pi_n : \Lambda_{\mathcal{I}w} \to \Lambda_n$ is the natural map, and
\item $M_{\mathrm{mft}}$ is the maximal finite torsion $\Lambda_{\mathcal{I}w}$-submodule of $M$.
\end{itemize}
If we further assume that $\mathrm{Sel}_0( \mathbb{Q}(\zeta_{p^\infty}), W_{\overline{f}}(k/2) )^\vee$ has no non-trivial finite $\Lambda_{\mathcal{I}w}$-torsion submodule,
then 
$$ C_f \cdot  \left( \theta_{\mathbb{Q}(\zeta_{p^{n}}), k/2}(\overline{f}) \right)  \subseteq \mathrm{Fitt}_{\Lambda_n} \left( \mathrm{Sel}(\mathbb{Q}(\zeta_{p^n}), W_{\overline{f}}(k/2))^\vee \right) .$$
If we further assume Conjecture \ref{conj:comparison-zeta-elements} (e.g. $2 \leq k \leq p-1$),
 then 
$$ \left( \theta_{\mathbb{Q}(\zeta_{p^{n}}), k/2}(\overline{f}) \right)  \subseteq \mathrm{Fitt}_{\Lambda_n} \left( \mathrm{Sel}(\mathbb{Q}(\zeta_{p^n}), W_{\overline{f}}(k/2))^\vee \right) .$$
In particular, Conjecture \ref{conj:mazur-tate} holds.
\end{thm}
\begin{rem} \label{rem:main_thm_mazur_tate_conj}
We discuss some aspects of Theorem \ref{thm:main_thm_mazur_tate_conj}.
\begin{enumerate}
\item 
The irreducibility of $\overline{\rho} \vert_{G_{\mathbb{Q}_p}}$ excludes the ordinary case (cf. \cite[$\S$2]{kim-kurihara}).
\item The constant error term $C_f$ in (\ref{eqn:mazur_tate_conj_with_error}) can be taken by any element in $\mathcal{O}_{(\pi)}$ satisfying  $\mathrm{ord}_\pi \left( C_f \right) = \mathrm{max} \left( \mathrm{ord}_\pi \left( C^+_{\mathrm{per}} \right) ,  \mathrm{ord}_\pi \left( C^-_{\mathrm{per}} \right) \right)$. It also appears in \cite{epw2} with no specification. 
\end{enumerate}
\end{rem}
Let $\chi : \mathrm{Gal}(\mathbb{Q}(\zeta_{p^n})/\mathbb{Q}) \to \overline{\mathbb{Q}}^\times_p$ be a character and $\mathcal{O}_\pi[\chi]$ the ring generated by the image of $\chi$ over $\mathcal{O}_\pi$.
The map $\chi$ naturally extends to an algebra homomorphism 
$\Lambda_n \to \mathcal{O}_\pi[\chi]$ defined by
$\sigma \mapsto \chi(\sigma)$
where $\sigma \in \mathrm{Gal}(\mathbb{Q}(\zeta_{p^n})/\mathbb{Q})$ and also denote it by $\chi$.
Then the \textbf{augmentation ideal at $\chi$} is defined by
$I_\chi := \mathrm{ker} \left( \chi: \Lambda_n \to \mathcal{O}_\pi[\chi] \right)$.
For $L \in \Lambda_n$, we say $L$ \textbf{vanishes to infinite order at $\chi$} if $L$ is contained in all powers of $I_\chi$. We say $L$ \textbf{vanishes to order $r$ at $\chi$} if $L \in I^r_\chi \setminus I^{r+1}_\chi$, and write $\mathrm{ord}_\chi \ L =r$. See \cite[(1.5)]{mazur-tate} for detail and  also \cite{ota-thesis, ota-rank-part, burns-kurihara-sano} for the recent development of the ``weak" vanishing conjecture.
\begin{cor}[``Weak" vanishing conjecture] \label{cor:mazur-tate-weak-vanishing}
If all the conditions in Assumption \ref{assu:standard} hold, $\overline{\rho} \vert_{G_{\mathbb{Q}_p}}$ is irreducible, and
 $\mathrm{Sel}_0( \mathbb{Q}(\zeta_{p^\infty}), W_{\overline{f}}(k/2) )^\vee$ has no non-trivial finite $\Lambda_{\mathcal{I}w}$-torsion submodule,
then we have
$$\mathrm{rk}_{\mathcal{O}_\pi[\chi]} \left( \mathrm{Sel}(\mathbb{Q}(\zeta_{p^n}), W_{\overline{f}}(k/2))^\vee \otimes_\chi \mathcal{O}_\pi[\chi] \right) \leq \mathrm{ord}_\chi \ \theta_{\mathbb{Q}(\zeta_{p^{n}}), k/2}(\overline{f})$$
where $\mathrm{Sel}(\mathbb{Q}(\zeta_{p^n}), W_{\overline{f}}(k/2))^\vee \otimes_\chi \mathcal{O}_\pi[\chi]$ is the $\chi$-isotypic quotient of $\mathrm{Sel}(\mathbb{Q}(\zeta_{p^n}), W_{\overline{f}}(k/2))^\vee$.
\end{cor}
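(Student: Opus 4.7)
The plan is to deduce the corollary from Theorem \ref{thm:main_thm_mazur_tate_conj} by the classical Fitting-ideal principle of Mazur--Tate. Writing $R := \mathcal{O}_\pi[\mathrm{Gal}(\mathbb{Q}(\zeta_{p^n})/\mathbb{Q})]$ and $A := \mathrm{Sel}(\mathbb{Q}(\zeta_{p^n}), W_{\overline{f}}(k/2))^\vee$, the first step is to extract from (\ref{eqn:mazur_tate_conj_with_error}) the containment
\[
C_f \cdot \theta_{\mathbb{Q}(\zeta_{p^n}), k/2}(\overline{f}) \in \mathrm{Fitt}_R(A),
\]
which follows by specializing the list of generators on the left-hand side to the one indexed by $j = n$ (for which $\nu_{n,n}$ is the identity).

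The key input is then the standard Fitting-ideal lemma: for any finitely generated $R$-module $M$, any $L \in \mathrm{Fitt}_R(M)$, and any character $\chi$ of $\mathrm{Gal}(\mathbb{Q}(\zeta_{p^n})/\mathbb{Q})$, one has $\mathrm{ord}_\chi(L) \geq \mathrm{rk}_{\mathcal{O}_\pi[\chi]}(M \otimes_{R,\chi} \mathcal{O}_\pi[\chi])$. To prove it, localize at the prime ideal $I_\chi := \ker(\chi : R \to \mathcal{O}_\pi[\chi])$ of $R$; by Nakayama's lemma, a minimal presentation of $M_{I_\chi}$ over the local ring $R_{I_\chi}$ uses exactly $s := \mathrm{rk}_{\mathcal{O}_\pi[\chi]}(M \otimes_\chi \mathcal{O}_\pi[\chi])$ generators, and the entries of the associated relation matrix all lie in the maximal ideal $I_\chi R_{I_\chi}$, so every $s \times s$ minor--and hence $\mathrm{Fitt}_{R_{I_\chi}}(M_{I_\chi})$--lies in $(I_\chi R_{I_\chi})^s$. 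Applying this to $L = C_f \cdot \theta_{\mathbb{Q}(\zeta_{p^n}), k/2}(\overline{f})$ gives the rank bound modulo a possible shift coming from $C_f$.

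To absorb $C_f$, observe that $C_f \in \mathcal{O}_\pi \setminus \{0\}$ descends to a unit in the residue field $\mathrm{Frac}(\mathcal{O}_\pi[\chi])$ of $R_{I_\chi}$, so multiplication by $C_f$ is bijective on each graded piece of the $I_\chi$-adic filtration; hence $\mathrm{ord}_\chi(C_f \cdot \theta) = \mathrm{ord}_\chi(\theta)$. The main technical subtlety in the whole argument is translating the local order obtained in $R_{I_\chi}$ back into the global $I_\chi$-adic order in $R$--that is, comparing symbolic and ordinary powers of $I_\chi$--which is handled by the explicit, near-regular description of $I_\chi$ in the cyclotomic group ring and the fact that $A$ is annihilated by a power of $\pi$. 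Granting this, the corollary is genuinely "easy" as announced in the text, with no obstacle beyond what has already been overcome in Theorem \ref{thm:main_thm_mazur_tate_conj}.
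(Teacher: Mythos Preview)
Your overall strategy is exactly what the paper does: extract the containment $C_f\cdot\theta_{\mathbb{Q}(\zeta_{p^n}),k/2}(\overline{f})\in\mathrm{Fitt}_R(A)$ from Theorem~\ref{thm:main_thm_mazur_tate_conj} under Assumptions~(1)--(4), and then invoke the Fitting-ideal principle of \cite[Proposition~3]{mazur-tate}. Your Nakayama/minimal-presentation sketch of that principle is the standard one and is fine.

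There is, however, a genuine error in your handling of the constant $C_f$. Your claim that multiplication by $C_f$ is bijective on each graded piece $I_\chi^r/I_\chi^{r+1}$ is false. Take $n=2$ and project to the trivial $\Delta$-component, so that the relevant ring is $R'=\mathcal{O}_\pi[\Gamma_1]\simeq\mathcal{O}_\pi[t]/\bigl((1+t)^p-1\bigr)$ with $t=\sigma-1$, and take $\chi$ trivial on $\Gamma_1$ so that $I_\chi=(t)$. If $C_f=p$ and $\theta=t$, then $\mathrm{ord}_\chi(\theta)=1$, but
\[
p\,t \;=\; -\tbinom{p}{2}t^2-\cdots-t^p \;\in\; I_\chi^2,
\]
so $\mathrm{ord}_\chi(C_f\theta)\geq 2>\mathrm{ord}_\chi(\theta)$. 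Thus you cannot simply cancel $C_f$ after computing the $I_\chi$-adic order.

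Your proposed repair also fails: the assertion that ``$A$ is annihilated by a power of~$\pi$'' is not true in general. The dual Selmer group $A=\mathrm{Sel}(\mathbb{Q}(\zeta_{p^n}),W_{\overline{f}}(k/2))^\vee$ is finitely generated over $\mathcal{O}_\pi$, but it can have positive $\mathcal{O}_\pi$-rank (indeed this is precisely the interesting case); if it were $\pi$-power torsion then $\mathrm{rk}_{\mathcal{O}_\pi[\chi]}(A\otimes_\chi\mathcal{O}_\pi[\chi])=0$ and the inequality would be vacuous. In any case, properties of $A$ are irrelevant to the comparison of symbolic and ordinary powers of $I_\chi$.

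The correct way to absorb $C_f$ is to stay in the localization throughout: since $\chi(C_f)=C_f\neq 0$ in $\mathcal{O}_\pi[\chi]$, the element $C_f$ is a unit in $R_{I_\chi}$, and hence $\theta$ itself (not merely $C_f\theta$) lies in $\mathrm{Fitt}_{R_{I_\chi}}(A_{I_\chi})\subseteq (I_\chi R_{I_\chi})^s$. The passage from the local order back to the $I_\chi$-adic order in $R$ is precisely the content of \cite[Proposition~3]{mazur-tate}; you should either cite it directly (as the paper does) or reproduce Mazur--Tate's actual argument, rather than appealing to the incorrect auxiliary facts above.
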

\begin{proof}
It follows from Theorem \ref{thm:main_thm_mazur_tate_conj} and \cite[Prop. 3]{mazur-tate}.
\end{proof}
Our goal is to prove
$$  I^{\mathrm{an}}_{\mathbb{Q}(\zeta_{p^n}), k/2}(\overline{f})  = \left( \theta_{\mathbb{Q}(\zeta_{p^n}), k/2}(\overline{f}) \right) \subseteq \mathrm{Fitt}_{ \Lambda_n } \left( \mathrm{Sel}(\mathbb{Q}(\zeta_{p^n}), W_{\overline{f}}(k/2))^\vee \right)$$
under certain assumptions. 
The finite layer analogue of algebraic $p$-adic $L$-functions \`{a} la Perrin-Riou
is defined by
$$  I^{\mathrm{alg}}_{\mathbb{Q}(\zeta_{p^n}), k/2}(\overline{f}) = \pi_n \left( \mathrm{char}_{\Lambda_{\mathcal{I}w}}  \left( \mathrm{Sel}_0(\mathbb{Q}(\zeta_{p^\infty}), W_{\overline{f}}(k/2))^\vee \right)  \right)
\cdot  \left( \mathrm{Col}_{ \mathbb{Q}(\zeta_{p^n}) , k/2}(\mathrm{loc}_p  w_{n}) \right)$$
where
 $\pi_n : \Lambda_{\mathcal{I}w} \to \Lambda_n$ is the natural map,
$w_{n}$ is a $\Lambda_n$-generator of $\mathrm{H}^1_{\mathcal{I}w}(T_f(k/2))_{\Gamma_n}$,
and $\Gamma_n = \mathrm{Gal}(\mathbb{Q}(\zeta_{p^\infty})/\mathbb{Q}(\zeta_{p^n}))$.
\subsection{The failure of control theorem}
Since $\overline{\rho} \vert_{G_{\mathbb{Q}_p}}$  is irreducible, we have
$\mathrm{Sel}(\mathbb{Q}(\zeta_{p^\infty}), W_{\overline{f}}(k/2)) = \mathrm{Sel}_{\mathrm{rel}}(\mathbb{Q}(\zeta_{p^\infty}), W_{\overline{f}}(k/2))$
where the latter is the $p$-relaxed Selmer group \cite{perrin-riou-universal-norms, berger-universal-norms}.
Then we have an exact sequence
\[
{ \small
\xymatrix{
\mathrm{Sel}(\mathbb{Q}(\zeta_{p^n}), W_{\overline{f}}(k/2))
\ar@{^{(}->}[r] &
\mathrm{Sel}(\mathbb{Q}(\zeta_{p^\infty}), W_{\overline{f}}(k/2))^{\Gamma_n}
\ar[r] &
 \mathrm{H}^1_{/f}(\mathbb{Q}(\zeta_{p^n}) \otimes \mathbb{Q}_p, W_{\overline{f}}(k/2)) 
\oplus B_n
}
}
\]
where $B_n$ is a finite group with bounded size independent of $n$ and comes from the prime-to-$p$ local conditions.
Here, the last term measures the failure of the control theorem. See \cite[(2)]{pollack-algebraic} or \cite[Thm. 3.1]{iovita-pollack} for example.
Taking the Pontryagin dual and the local Tate duality, we have
\begin{equation*} 
{ \small
\xymatrix{
 \mathrm{H}^1_f(\mathbb{Q}(\zeta_{p^n})_p, T_{f}(k/2)) 
\oplus B^\vee_n
\ar[r] &
\left( \mathrm{Sel}(\mathbb{Q}(\zeta_{p^\infty}), W_{\overline{f}}(k/2))^\vee \right)_{\Gamma_n}
\ar@{->>}[r] &
\mathrm{Sel}(\mathbb{Q}(\zeta_{p^n}), W_{\overline{f}}(k/2))^\vee .
}
}
\end{equation*}
On the other hand, we have the control theorem for fine Selmer groups as follows
\begin{equation} \label{eqn:sel0-control}
{ \small
\xymatrix{
 B^\vee_n
\ar[r] &
\left( \mathrm{Sel}_0(\mathbb{Q}(\zeta_{p^\infty}), W_{\overline{f}}(k/2))^\vee \right)_{\Gamma_n}
\ar@{->>}[r] &
\mathrm{Sel}_0(\mathbb{Q}(\zeta_{p^n}), W_{\overline{f}}(k/2))^\vee .
}
}
\end{equation}
Using the Poitou--Tate sequence, 
we also have an exact sequence
\begin{equation} \label{eqn:comparison-sel-sel0}
{ \small
\xymatrix{
\dfrac{ \mathrm{H}^1_{/f}(\mathbb{Q}(\zeta_{p^n})  \otimes \mathbb{Q}_p, T_f(k/2))  }{\mathrm{loc}^s_p \mathrm{H}^1(\mathbb{Q}_{\Sigma}/\mathbb{Q}(\zeta_{p^n}), T_f(k/2)) }  
 \ar[r] & 
\mathrm{Sel}(\mathbb{Q}(\zeta_{p^n}), W_{\overline{f}}(k/2))^\vee
\ar@{->>}[r] &
\mathrm{Sel}_0(\mathbb{Q}(\zeta_{p^n}), W_{\overline{f}}(k/2))^\vee .
}
}
\end{equation}
\begin{rem} \label{rem:dual-to-epw}
In \cite{epw2}, the comparison between Selmer groups and $p$-relaxed Selmer groups is used as in \cite{pollack-algebraic, iovita-pollack}. Since our argument is based on the comparison between Selmer groups and fine Selmer groups, 
our argument is a ``dual" variant of that of \cite{epw2} in some sense.
Such a ``dual" relation already appeared in \cite{kurihara-invent} and \cite{pollack-algebraic}.
\end{rem}
\subsection{Fitting ideal toolbox}
We recall well-known properties of Fitting ideals. 
All the modules in this subsection are finitely presented over their base rings.
\begin{lem} \label{lem:fitting_ideals_product}
Let $M_i$ be an $R_i$-module for $i = 1, 2$.
Then
$\mathrm{Fitt}_{R_1}(M_1) \times \mathrm{Fitt}_{R_2}(M_2) = \mathrm{Fitt}_{R_1 \times R_2}(M_1 \times M_2) $.
\end{lem}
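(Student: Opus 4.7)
The plan is to reduce the identity to two standard facts: that Fitting ideals commute with base change, and that every ideal of a product ring splits as a product of ideals in the factors. First, I would recall that for any ring homomorphism $\varphi : R \to S$ and any finitely presented $R$-module $N$, one has $\mathrm{Fitt}_S(N \otimes_R S) = \mathrm{Fitt}_R(N) \cdot S$. I would apply this to $R = R_1 \times R_2$ and to the two projection maps $\mathrm{pr}_i : R \to R_i$ induced by the orthogonal idempotents $e_1 = (1,0)$ and $e_2 = (0,1)$.

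Next I would view $M_1 \times M_2$ as an $R$-module via the componentwise action. The key computation is that $(M_1 \times M_2) \otimes_R R_i \simeq M_i$ canonically, since tensoring with $R_i = R/\mathrm{ker}(\mathrm{pr}_i)$ kills the factor acted on by the complementary idempotent. Base change therefore yields the two identities
\begin{equation*}
\mathrm{Fitt}_R(M_1 \times M_2) \cdot R_i = \mathrm{Fitt}_{R_i}(M_i) \qquad (i = 1, 2).
\end{equation*}

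Finally, I would observe that every ideal $J \subseteq R = R_1 \times R_2$ splits as $J = (J \cdot R_1) \times (J \cdot R_2)$: for any $x = (x_1, x_2) \in J$, the elements $e_1 x = (x_1, 0)$ and $e_2 x = (0, x_2)$ lie in $J$, so $J$ is stable under the idempotents and decomposes as the product of its two components. Applying this splitting to $J = \mathrm{Fitt}_R(M_1 \times M_2)$ and combining with the base change identities above gives the desired equality. No step here is a genuine obstacle; the only care needed is to check the canonical identification $(M_1 \times M_2) \otimes_R R_i \simeq M_i$ and to keep track of the product ring structure so that $\mathrm{Fitt}_{R_1}(M_1) \times \mathrm{Fitt}_{R_2}(M_2)$ really does mean an ideal of $R_1 \times R_2$ rather than a tensor product.
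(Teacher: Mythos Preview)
Your argument is correct. The paper states this lemma without proof, treating it as a standard fact about Fitting ideals; your reduction via base change along the projections $R_1 \times R_2 \to R_i$ together with the idempotent decomposition of ideals in a product ring is exactly the standard verification.
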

\begin{lem} \label{lem:fitting_ideals}
Let $A$, $B$, and $C$ be $R$-modules.
\begin{enumerate}
\item If $A \to B$ is surjective, then $\mathrm{Fitt}_R(A)  \subseteq  \mathrm{Fitt}_R(B)$.
\item If $0 \to A \to B \to C \to 0$ is exact, then
$\mathrm{Fitt}_R(A) \cdot    \mathrm{Fitt}_R(C) \subseteq  \mathrm{Fitt}_R(B) $.
\item If $ \pi : R \to S$ be a surjective ring homomorphism, then $ \mathrm{Fitt}_S(A \otimes_R S) = \pi (  \mathrm{Fitt}_R(A) ) $.
\end{enumerate}
\end{lem}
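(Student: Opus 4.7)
All three statements are standard structural properties of Fitting ideals, and my plan is to derive each directly from the definition: for a finitely presented $R$-module $M$ with presentation $R^s \xrightarrow{\Phi} R^n \to M \to 0$, the (zeroth) Fitting ideal $\mathrm{Fitt}_R(M)$ is the ideal of $R$ generated by the $n \times n$ minors of the matrix $\Phi$, and this is independent of the chosen presentation. With this in hand, each item reduces to a simple linear-algebra bookkeeping argument about what minors appear in certain block matrices.

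For (1), I would start from a presentation $R^s \xrightarrow{\Phi_A} R^n \to A \to 0$ and, given a surjection $A \twoheadrightarrow B$, enlarge $\Phi_A$ by adjoining columns for additional generators of the (possibly larger) kernel $\ker(R^n \to B)$, obtaining a presentation $R^{s+t} \xrightarrow{\Phi_B} R^n \to B \to 0$ whose matrix contains $\Phi_A$ as a column submatrix. Since every $n \times n$ minor of $\Phi_A$ then appears as an $n \times n$ minor of $\Phi_B$, the inclusion $\mathrm{Fitt}_R(A) \subseteq \mathrm{Fitt}_R(B)$ is immediate.

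For (2), I would take presentations $R^{s_A} \xrightarrow{\Phi_A} R^{n_A} \to A$ and $R^{s_C} \xrightarrow{\Phi_C} R^{n_C} \to C$ and apply the horseshoe construction: lift the composition $R^{n_C} \twoheadrightarrow C$ through $B$ using projectivity of $R^{n_C}$, combine with $R^{n_A} \to A \hookrightarrow B$ to obtain a surjection $R^{n_A + n_C} \twoheadrightarrow B$, and analyze the kernel via the snake lemma to produce a presentation $R^{s_A + s_C} \xrightarrow{\Phi_B} R^{n_A + n_C} \to B \to 0$ whose matrix is block upper triangular of the form
\[
\Phi_B \;=\; \begin{pmatrix} \Phi_A & * \\ 0 & \Phi_C \end{pmatrix}.
\]
Any $(n_A+n_C) \times (n_A+n_C)$ minor of $\Phi_B$ obtained by selecting $n_A$ columns from the $\Phi_A$-block and $n_C$ columns from the $\Phi_C$-block is itself block triangular, and its determinant is the product of the corresponding minors of $\Phi_A$ and $\Phi_C$; summing over all such products yields $\mathrm{Fitt}_R(A) \cdot \mathrm{Fitt}_R(C) \subseteq \mathrm{Fitt}_R(B)$.

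For (3), right exactness of $-\otimes_R S$ turns a presentation $R^s \xrightarrow{\Phi} R^n \to A \to 0$ into a presentation $S^s \xrightarrow{\pi(\Phi)} S^n \to A \otimes_R S \to 0$, and the $n \times n$ minors of $\pi(\Phi)$ are exactly the $\pi$-images of the $n \times n$ minors of $\Phi$; combined with surjectivity of $\pi$ this yields $\mathrm{Fitt}_S(A \otimes_R S) = \pi(\mathrm{Fitt}_R(A))$. All three assertions are elementary and I do not expect a genuine obstacle; the only thing worth noting carefully is the convention for Fitting ideals of modules that are not of projective dimension one (so that different presentations really do give the same ideal), and a reader may simply cite the standard references on Fitting ideals rather than reproduce these calculations.
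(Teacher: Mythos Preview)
Your proposal is correct and follows the standard textbook approach. The paper itself does not supply a proof for this lemma: it is stated without argument as part of a ``Fitting ideal toolbox'' of well-known properties, so there is nothing to compare against beyond noting that your derivation from the matrix-of-minors definition is exactly the expected one.
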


\begin{lem} \label{lem:fitting_sub}
If $A \subseteq B$ as finitely generated $\Lambda_n$-modules, then 
$\mathrm{Fitt}_{\Lambda_n}(B) \subseteq  \mathrm{Fitt}_{\Lambda_n}(A) $. 
\end{lem}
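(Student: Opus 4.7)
The plan is to employ Pontryagin duality in order to convert the submodule inclusion into a quotient surjection, where the already-established monotonicity of Fitting ideals under quotients (Lemma \ref{lem:fitting_ideals}.(1)) applies. Concretely, given an inclusion $A \hookrightarrow B$ of finitely generated $\Lambda_n$-modules, I would first dualize via the injective $\mathcal{O}_\pi$-module $F_\pi/\mathcal{O}_\pi$ to obtain a surjection $B^\vee \twoheadrightarrow A^\vee$ of $\Lambda_n$-modules. Applying Lemma \ref{lem:fitting_ideals}.(1) to this surjection yields
\[
\mathrm{Fitt}_{\Lambda_n}(B^\vee) \subseteq \mathrm{Fitt}_{\Lambda_n}(A^\vee).
\]
It then remains to compare $\mathrm{Fitt}_{\Lambda_n}(M)$ and $\mathrm{Fitt}_{\Lambda_n}(M^\vee)$: the standard fact in the finite-module setting is that they coincide up to the canonical involution $\iota$ of $\Lambda_n$ sending $g \mapsto g^{-1}$, so applying $\iota$ to the above inclusion recovers the desired containment.

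A complementary, more hands-on route is to manipulate explicit presentations. Fix a finite free presentation $\Lambda_n^m \xrightarrow{\phi} \Lambda_n^s \twoheadrightarrow B$, so that $\mathrm{Fitt}_{\Lambda_n}(B)$ is by definition generated by the $s \times s$ minors of $\phi$. Next, lift a set of generators $a_1, \ldots, a_k$ of $A$ to elements $\widetilde a_1, \ldots, \widetilde a_k \in \Lambda_n^s$; then the preimage $\widetilde A \subseteq \Lambda_n^s$ of $A$ is spanned over $\Lambda_n$ by the $\widetilde a_i$ together with the columns of $\phi$, and a presentation matrix for $A$ can be read off from the kernel of the combined matrix $[\widetilde a_1 \mid \cdots \mid \widetilde a_k \mid \phi]$ after projection to the first $k$ coordinates. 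Because each column of $\phi$ already lies in $\widetilde A$, a multilinear expansion expresses every $s \times s$ minor of $\phi$ as a $\Lambda_n$-linear combination of $k \times k$ minors of the resulting presentation matrix of $A$, giving the required inclusion.

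I expect the main obstacle to be the handling of modules that are not torsion. In the duality approach, if $B$ has a nonzero $\mathcal{O}_\pi$-free part, then $B^\vee$ is no longer finitely generated over $\Lambda_n$, and the clean duality identification of Fitting ideals breaks; one repairs this by isolating the $\mathcal{O}_\pi$-free part and arguing separately, or by localizing at the generic point. In the presentation approach, the obstruction is the non-canonicity of the lifts $\widetilde a_i$ and the combinatorics of minors when $A$ requires strictly more generators than $B$; this must be shown to be immaterial to the final ideal. Either way, the heart of the argument is the multiplicative bookkeeping that converts relations of $B$ into relations of $A$ once the columns of $\phi$ are recognized as elements of $\widetilde A$.
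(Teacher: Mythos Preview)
Your proposal has a genuine gap in both routes, and neither matches the paper's argument.

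The paper's proof is two lines: it decomposes $\Lambda_n \cong \prod_i \Lambda_n^{(i)}$ along Teichm\"uller characters, invokes \cite[Lemma~A.10]{kim-kurihara} on each local factor $\Lambda_n^{(i)}$, and then reassembles via Lemma~\ref{lem:fitting_ideals_product}. In other words, the paper explicitly uses the special shape of $\Lambda_n$ and defers the substantive step to an external result about the local ring $\Lambda_n^{(i)} = \mathcal{O}_\pi[\mathbb{Z}/p^{n-1}\mathbb{Z}]$.

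Your second (presentation) approach cannot work as written, because it is ring-agnostic while the statement is \emph{false} over general commutative rings. Concretely, take $R = k[x,y,z]$, $B = R/\mathfrak{m}^2$, and $A = \mathfrak{m}/\mathfrak{m}^2 \cong (R/\mathfrak{m})^3$; then $\mathrm{Fitt}_0(B) = \mathfrak{m}^2$ but $\mathrm{Fitt}_0(A) = \mathfrak{m}^3$, so $\mathrm{Fitt}_0(B) \not\subseteq \mathrm{Fitt}_0(A)$. Thus the claimed ``multilinear expansion'' of the $s \times s$ minors of $\phi$ in terms of $k \times k$ minors of a presentation of $A$ must fail in general, and your sketch gives no mechanism that singles out $\Lambda_n$.

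Your first (duality) approach hinges on the assertion that $\mathrm{Fitt}_{\Lambda_n}(M^\vee) = \iota\bigl(\mathrm{Fitt}_{\Lambda_n}(M)\bigr)$ for finite $M$, which you call ``the standard fact''. It is not: this equality is known to fail for finite modules over group rings in general, and establishing it for $\Lambda_n$ (or for each $\Lambda_n^{(i)}$) requires exactly the kind of structural input---the Gorenstein/Frobenius property of $\mathcal{O}_\pi[\mathbb{Z}/p^{n-1}\mathbb{Z}]$ and a comparison of Matlis and Pontryagin duals---that the paper outsources to the cited lemma. You also correctly flag, but do not resolve, the difficulty when $B$ has nonzero $\mathcal{O}_\pi$-free part. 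So even if the duality idea can be made to work over $\Lambda_n^{(i)}$, what you have written is a proof sketch that presupposes the hard step rather than supplying it.
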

\begin{proof}
By \cite[Lem. A.10]{kim-kurihara}, the statement over $\Lambda^{(i)}_n$ holds for each $i$.
By Lemma \ref{lem:fitting_ideals_product}, the conclusion holds.
\end{proof}
\subsection{A local computation}
Let $\mathrm{loc}^s_p w_n$ be the image of a $\Lambda_{\mathcal{I}w}$-generator $\mathbf{w} = (w_n)_n$ of $\mathrm{H}^1_{\mathcal{I}w}(T_f(k/2))$ in $\mathrm{H}^1_{/f}(\mathbb{Q} (\zeta_{p^n}) \otimes \mathbb{Q}_p, T_f(k/2) )$.
\begin{prop} \label{prop:local-computation-algebraic-theta}
$$  \left( P_{\mathbb{Q}(\zeta_{p^{n}}), k/2}(\mathrm{loc}^s_p w_{n}, c^{\varepsilon^{-1}}_{\mathbb{Q}(\zeta_{p^{n}}), k/2, \eta_{\overline{f}}}  ) \right)  \subseteq \mathrm{Fitt}_{\Lambda_n} \left( \dfrac{  \mathrm{H}^1_{/f}(\mathbb{Q} (\zeta_{p^n}) \otimes \mathbb{Q}_p, T_f(k/2) ) }{ \left( \mathrm{loc}^s_p w_n \right) } \right) .$$
\end{prop}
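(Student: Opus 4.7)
The plan is to reduce the containment to a cyclicity result for the local cohomology module $H_n := \mathrm{H}^1_{/f}(\mathbb{Q}(\zeta_{p^n}) \otimes \mathbb{Q}_p, T_f(k/2))$ over $\Lambda_n$. First, by the construction in $\S$\ref{subsec:finite-layer-coleman}, pairing against $c^{\varepsilon^{-1}}_{\mathbb{Q}(\zeta_{p^j}), k/2, \eta_{\overline{f}}}$ via $P_{\mathbb{Q}(\zeta_{p^j}), k/2}$ is by definition the finite layer Coleman map $\mathrm{Col}_{\mathbb{Q}(\zeta_{p^j}), k/2}$, so the generators on the LHS read as $\nu_{j,n}\bigl( \mathrm{Col}_{\mathbb{Q}(\zeta_{p^j}), k/2}( \mathrm{loc}^s_p w_j ) \bigr)$ for $0 \le j \le n$.

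Next, I would establish $\Lambda_n$-cyclicity of $H_n$ via local Iwasawa theory. The local irreducibility of $\overline{\rho}|_{G_{\mathbb{Q}_p}}$ forces the vanishing of $\mathrm{H}^0$ (and by local duality $\mathrm{H}^2$) of $T_f(k/2)$ over the $p$-cyclotomic tower locally at $p$, so the proposition at the end of $\S$\ref{subsec:iwasawa-cohomology} gives that $\mathbf{H}^1_{\mathrm{Iw}, \mathrm{loc}} := \mathbf{H}^1_{\mathrm{Iw}}(\mathbb{Q}(\zeta_{p^\infty}) \otimes \mathbb{Q}_p, T_f(k/2))$ is $\Lambda$-free of rank $2$; Perrin-Riou's theory of universal norms \cite{perrin-riou-universal-norms, berger-universal-norms} identifies $\mathbf{H}^1_{\mathrm{Iw}, f}$ as a rank-$1$ $\Lambda$-direct summand, so the quotient $\mathbf{H}^1_{\mathrm{Iw}, /f}$ is $\Lambda$-free of rank $1$, and the control theorem gives $H_n \cong \mathbf{H}^1_{\mathrm{Iw}, /f} \otimes_\Lambda \Lambda_n \cong \Lambda_n$. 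Fix such an isomorphism and let $a_n \in \Lambda_n$ be the element corresponding to $\mathrm{loc}^s_p w_n$. Then $H_n / (\mathrm{loc}^s_p w_n) \cong \Lambda_n / (a_n)$, hence
\[
\mathrm{Fitt}_{\Lambda_n}\bigl( H_n / (\mathrm{loc}^s_p w_n) \bigr) = (a_n),
\]
and the $\Lambda_n$-linear map $\mathrm{Col}_{\mathbb{Q}(\zeta_{p^n}), k/2}$ is multiplication by some $u_n \in \Lambda_n$, so $\mathrm{Col}_{\mathbb{Q}(\zeta_{p^n}), k/2}(\mathrm{loc}^s_p w_n) = a_n u_n \in (a_n)$.

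For lower levels $0 \le j < n$, the compatibility $w_j = \mathrm{cor}_{n,j}(w_n)$ together with the control theorem implies that under $H_j \cong \Lambda_j$ the element $\mathrm{loc}^s_p w_j$ corresponds to $\pi_{n,j}(a_n)$; combined with the trace identity $\nu_{j,n}(\pi_{n,j}(x) \cdot y) = x \cdot \nu_{j,n}(y)$ for $x \in \Lambda_n$, $y \in \Lambda_j$, this yields
\[
\nu_{j,n}\bigl( \mathrm{Col}_{\mathbb{Q}(\zeta_{p^j}), k/2}( \mathrm{loc}^s_p w_j ) \bigr) = \nu_{j,n}\bigl( \pi_{n,j}(a_n) \cdot u_j \bigr) = a_n \cdot \nu_{j,n}(u_j) \in (a_n),
\]
which is exactly the desired containment. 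The main obstacle will be promoting the rank-$2$ $\Lambda$-freeness of $\mathbf{H}^1_{\mathrm{Iw}, \mathrm{loc}}$ to the direct-summand statement for $\mathbf{H}^1_{\mathrm{Iw}, f}$ and verifying that the control-theorem iso at each finite layer transports the universal-norm generator compatibly, so that $\mathrm{loc}^s_p w_j \leftrightarrow \pi_{n,j}(a_n)$ really holds across all $j$; a minor secondary check is that the trace compatibility is $\Lambda_n$-linear in the precise sense used above. Once those structural inputs are in place, the remainder is elementary Fitting-ideal bookkeeping via Lemmas \ref{lem:fitting_ideals_product}--\ref{lem:fitting_sub}.
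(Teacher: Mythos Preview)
Your approach has a genuine gap at the structural input. You invoke Perrin-Riou and Berger on universal norms to conclude that $\mathbf{H}^1_{\mathrm{Iw},f}$ is a rank-one $\Lambda$-direct summand of the local Iwasawa cohomology. In the setting of this proposition (the restriction of $\overline{\rho}$ to $G_{\mathbb{Q}_p}$ is \emph{irreducible}, hence non-ordinary), those results say precisely the opposite: the universal norms of $\mathrm{H}^1_f$ are trivial, so $\varprojlim_n \mathrm{H}^1_f(\mathbb{Q}(\zeta_{p^n})\otimes\mathbb{Q}_p, T_f(k/2)) = 0$ and $\mathbf{H}^1_{\mathrm{Iw},/f}$ coincides with the full local Iwasawa $\mathrm{H}^1$, which is free of rank \emph{two} over $\Lambda$. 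Consequently the ``control theorem'' you use for $\mathrm{H}^1_{/f}$ fails---its coinvariants surject onto $\mathrm{H}^1(\mathbb{Q}(\zeta_{p^n})\otimes\mathbb{Q}_p, T_f(k/2))$, not onto $H_n$. This is exactly the failure of control explained in $\S$\ref{sec:mazur-tate} and is the reason the argument must be organised differently. Your subsequent identifications $H_n\cong\Lambda_n$ and $\mathrm{loc}^s_p w_j \leftrightarrow \pi_{n,j}(a_n)$ therefore have no foundation.

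The paper's proof avoids any cyclicity claim for $H_n$. Instead, it first uses the compatibility $P_{\mathbb{Q}(\zeta_{p^n}),k/2}(\mathrm{loc}^s_p w_n, \mathrm{res}_{j,n}(c^{\varepsilon^{-1}}_{\mathbb{Q}(\zeta_{p^j}),k/2,\eta_{\overline f}})) = \nu_{j,n}(P_{\mathbb{Q}(\zeta_{p^j}),k/2}(\mathrm{loc}^s_p w_j, c^{\varepsilon^{-1}}_{\mathbb{Q}(\zeta_{p^j}),k/2,\eta_{\overline f}}))$ to rewrite every generator on the left as a pairing computed at level $n$. It then shows directly that for each restricted local point $c_0$, the map $\psi_{c_0} = P_{\mathbb{Q}(\zeta_{p^n}),k/2}(-,c_0): H_n \to \Lambda_n$ is \emph{injective}: one evaluates at all characters of $\mathrm{Gal}(\mathbb{Q}(\zeta_{p^n})/\mathbb{Q})$, uses that $\mathrm{log}\,c_0$ and a chosen basis $\eta$ of $M_{\overline f}/M^{k/2}_{\overline f}$ differ by a nonzero scalar, and concludes $\mathrm{exp}^*z^{\sigma}=0$ for all $\sigma$, hence $z=0$ in $H_n$. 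Injectivity of $\psi_{c_0}$ induces an injection $H_n/(\mathrm{loc}^s_p w_n)\hookrightarrow \Lambda_n/(\psi_{c_0}(\mathrm{loc}^s_p w_n))$, and Lemma~\ref{lem:fitting_sub} then gives $(\psi_{c_0}(\mathrm{loc}^s_p w_n))\subseteq \mathrm{Fitt}_{\Lambda_n}(H_n/(\mathrm{loc}^s_p w_n))$. Ranging $c_0$ over all the restricted local points yields the proposition.
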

\begin{proof}
Write 
$c_n = c^{\varepsilon^{-1}}_{\mathbb{Q}(\zeta_{p^{n}}), k/2, \eta_{\overline{f}}}$ and $\psi_{c_n} (-) = P_{\mathbb{Q}(\zeta_{p^n}), k/2} ( -,  c_n )$ for convenience.
Suppose that $z \in \mathrm{ker} (\psi_{c_n} ) \subseteq \mathrm{H}^1_{/f}(\mathbb{Q}(\zeta_{p^n}) \otimes \mathbb{Q}_p, T_f(k/2))$.
Then 
$\chi \left( P_{\mathbb{Q}(\zeta_{p^n}), k/2} ( z, c_n ) \right)  = 0$
for all characters $\chi$ on $\mathrm{Gal}(\mathbb{Q}(\zeta_{p^n})/\mathbb{Q})$.
By the linear independence of the characters, we have
$$\left[ z^\sigma, c_n \right]_{V_f(k/2), \mathbb{Q}(\zeta_{p^n})} 
=
\left[ \mathrm{exp}^* \ z^\sigma, \mathrm{log} \ c_0 \right]_{\mathbf{D}(V_f(k/2)), \mathbb{Q}(\zeta_{p^n})}
 = 0$$
for every $\sigma \in \mathrm{Gal}(\mathbb{Q}(\zeta_{p^n})/\mathbb{Q})$.
We choose any $\mathcal{O}_\pi$-basis $\eta$ of $M_{\overline{f}}/M^{k/2}_{\overline{f}}$.
Following the argument in \cite[Prop. 3.11]{lei-loeffler-zerbes-canadian} and the formulas in \cite[Lem. 7.2]{kurihara-invent}, we have 
$$a \cdot \eta \otimes e_{- k/2} = \mathrm{log} \ c_0
\pmod{ M^{k/2}_{\overline{f}} }
$$
for some non-zero $a \in \mathcal{O}_\pi$.
Thus, we also have
$$\left[ \mathrm{exp}^* \ z^{\sigma}, \eta \otimes e_{- k/2} \right]_{\mathbf{D}(V_f(k/2)), \mathbb{Q}(\zeta_{p^n})} = 0$$
for every $\sigma \in \mathrm{Gal}(\mathbb{Q}(\zeta_{p^n})/\mathbb{Q})$. In particular, $z = 0$ in $\mathrm{H}^1_{/f}(\mathbb{Q} (\zeta_{p^n}) \otimes \mathbb{Q}_p, T_f(k/2) )$.
Thus, $\psi_{c_n}$ is injective and the induced map
$$\overline{\psi}_{c_n} : \dfrac{ \mathrm{H}^1_{/f}(\mathbb{Q} (\zeta_{p^n}) \otimes \mathbb{Q}_p, T_f(k/2) ) }{ \left( \mathrm{loc}^s_p w_n \right) }  \to \Lambda_n / \left(  P_{\mathbb{Q}(\zeta_{p^n}), k/2} ( \mathrm{loc}^s_p w_n, c_n ) \right) $$
 is also injective.
Applying Lemma \ref{lem:fitting_sub} to $ c^{\varepsilon^{-1}}_{\mathbb{Q}(\zeta_{p^{n}}), k/2, \eta_{\overline{f}}}$, we have
$$\left(
P_{\mathbb{Q}(\zeta_{p^n}), k/2} \left( \mathrm{loc}^s_p w_n,  c^{\varepsilon^{-1}}_{\mathbb{Q}(\zeta_{p^{n}}), k/2, \eta_{\overline{f}}}\right)  
 \right) \subseteq \mathrm{Fitt}_{\Lambda_n} \left( \dfrac{  \mathrm{H}^1_{/f}(\mathbb{Q} (\zeta_{p^n}) \otimes \mathbb{Q}_p, T_f(k/2) ) }{ \left( \mathrm{loc}^s_p w_n \right) } \right) .$$
\end{proof}
\begin{rem}
In \cite[Page 221]{kurihara-invent}, the injectivity of $\psi_{c_n}$ is proved under a rather strong assumption.
\end{rem}
\subsection{Fitting ideal argument}
Now we prove Theorem \ref{thm:main_thm_mazur_tate_conj} step by step.
\subsubsection{Step 1: simple computation in finite layers}
We have
{ \small
\begin{align} \label{eqn:step-one}
\begin{split}
 &  \mathrm{Fitt}_{\Lambda_n} \left( \dfrac{ \mathrm{H}^1_{/f}(\mathbb{Q}(\zeta_{p^n})  \otimes \mathbb{Q}_p, T_{f}(k/2)) }{  \mathrm{loc}^s_p  \mathrm{H}^1(\mathbb{Q}_{\Sigma}/\mathbb{Q}(\zeta_{p^n}), T_{f}(k/2))}  \right)  \cdot  \pi_n \left( \mathrm{Fitt}_{\Lambda}  \left( \mathrm{Sel}_0(\mathbb{Q}(\zeta_{p^\infty}), W_{\overline{f}}(k/2))^\vee \right)  \right) \\
= \ &  \mathrm{Fitt}_{\Lambda_n} \left( \dfrac{ \mathrm{H}^1_{/f}(\mathbb{Q}(\zeta_{p^n})  \otimes \mathbb{Q}_p, T_{f}(k/2)) }{  \mathrm{loc}^s_p  \mathrm{H}^1(\mathbb{Q}_{\Sigma}/\mathbb{Q}(\zeta_{p^n}), T_{f}(k/2))}  \right)  \cdot \mathrm{Fitt}_{\Lambda_n} \left( \left( \mathrm{Sel}_0(\mathbb{Q}(\zeta_{p^\infty}), W_{\overline{f}}(k/2))^\vee \right)_{\Gamma_n} \right)  \\
\subseteq \ & \mathrm{Fitt}_{\Lambda_n} \left( \dfrac{ \mathrm{H}^1_{/f}(\mathbb{Q}(\zeta_{p^n})  \otimes \mathbb{Q}_p, T_{f}(k/2)) }{ \mathrm{loc}^s_p \mathrm{H}^1(\mathbb{Q}_{\Sigma}/\mathbb{Q}(\zeta_{p^n}), T_{f}(k/2))}  \right)  \cdot \mathrm{Fitt}_{\Lambda_n}  \left( \mathrm{Sel}_0(\mathbb{Q}(\zeta_{p^n}), W_{\overline{f}}(k/2))^\vee \right) \\
\subseteq \ & \mathrm{Fitt}_{\Lambda_n} \left( \mathrm{Sel}(\mathbb{Q}(\zeta_{p^n}), W_{\overline{f}}(k/2))^\vee \right) .  
\end{split}
\end{align}
}
where $\pi_n : \Lambda_{\mathcal{I}w} \to \Lambda_n$ is the natural map, and
the first equality follows from Lemma \ref{lem:fitting_ideals}.(3),
the second inclusion follows from  (\ref{eqn:sel0-control}) and Lemma \ref{lem:fitting_ideals}.(1), and
 the last inclusion follows from (\ref{eqn:comparison-sel-sel0}) and Lemma \ref{lem:fitting_ideals}.(2).

\subsubsection{Step 2: from the infinite layer to finite layers}
Write
$\mathrm{Sel}^{\vee}_{0, \mathrm{mft}} =  \left( \mathrm{Sel}_0(\mathbb{Q}(\zeta_{p^\infty}), W_{\overline{f}}(k/2))^\vee \right)_{\mathrm{mft}}$
for convenience.
Consider the exact sequence
\[
\xymatrix{
0 \ar[r] & \mathrm{Sel}^{\vee}_{0, \mathrm{mft}} \ar[r] &
\mathrm{Sel}_0(\mathbb{Q}(\zeta_{p^\infty}), W_{\overline{f}}(k/2))^\vee  \ar[r] &
\mathcal{S} \ar[r] & 0
}
\]
where $\mathcal{S}$ is defined by this sequence. Since $\mathcal{S}$ does not have any non-trivial finite $\Lambda_{\mathcal{I}w}$-submodule, the projective dimension of $\mathcal{S}$ is $\leq 1$.
Then it is known that
$$\mathrm{Fitt}_{\Lambda_{\mathcal{I}w}}  \mathrm{Sel}_0(\mathbb{Q}(\zeta_{p^\infty}), W_{\overline{f}}(k/2))^\vee = \mathrm{Fitt}_{\Lambda_{\mathcal{I}w}} \left( \mathrm{Sel}^{\vee}_{0, \mathrm{mft}} \right)  \cdot \mathrm{Fitt}_{\Lambda_{\mathcal{I}w}}  \mathcal{S} .$$
Since
$\mathrm{char}_{\Lambda_{\mathcal{I}w}} \left( \mathrm{Sel}^{\vee}_{0, \mathrm{mft}} \right)$
is trivial and the projective dimension of $\mathcal{S}$ is $\leq 1$, we also have
$\mathrm{Fitt}_{\Lambda_{\mathcal{I}w}} \mathcal{S}  = \mathrm{char}_{\Lambda_{\mathcal{I}w}} \mathcal{S}  = \mathrm{char}_{\Lambda_{\mathcal{I}w}}  \mathrm{Sel}_0(\mathbb{Q}(\zeta_{p^\infty}), W_{\overline{f}}(k/2))^\vee$.
Thus, we have
{ \small
\begin{align} \label{eqn:step-two}
\begin{split}
 &  \mathrm{Fitt}_{\Lambda_n} \left( \dfrac{ \mathrm{H}^1_{/f}(\mathbb{Q}(\zeta_{p^n})  \otimes \mathbb{Q}_p, T_{f}(k/2)) }{  \mathrm{loc}^s_p  \mathrm{H}^1(\mathbb{Q}_{\Sigma}/\mathbb{Q}(\zeta_{p^n}), T_{f}(k/2))}  \right) \cdot  \pi_n \left( \mathrm{Fitt}_{\Lambda_{\mathcal{I}w}}  \left(  \mathrm{Sel}^{\vee}_{0, \mathrm{mft}}  \right) \right) \\
& \cdot  \pi_n \left( \mathrm{char}_{\Lambda_{\mathcal{I}w}}  \left( \mathrm{Sel}_0(\mathbb{Q}(\zeta_{p^\infty}), W_{\overline{f}}(k/2))^\vee \right)  \right) \\
= \ &  \mathrm{Fitt}_{\Lambda_n} \left( \dfrac{ \mathrm{H}^1_{/f}(\mathbb{Q}(\zeta_{p^n})  \otimes \mathbb{Q}_p, T_{f}(k/2)) }{  \mathrm{loc}^s_p  \mathrm{H}^1(\mathbb{Q}_{\Sigma}/\mathbb{Q}(\zeta_{p^n}), T_{f}(k/2))}  \right)  \cdot  \pi_n \left( \mathrm{Fitt}_{\Lambda_{\mathcal{I}w}}  \left( \mathrm{Sel}_0(\mathbb{Q}(\zeta_{p^\infty}), W_{\overline{f}}(k/2))^\vee \right)  \right) .
\end{split}
\end{align}
}

\subsubsection{Step 3: completion in the algebraic side} \label{subsubsec:err_n}
We have
\begin{align} \label{eqn:step-three}
\begin{split}
   \left( P_{\mathbb{Q}(\zeta_{p^{n}}), k/2}(\mathrm{loc}^s_p w_{n}, c^{\varepsilon^{-1}}_{\mathbb{Q}(\zeta_{p^{n}}), k/2, \eta_{\overline{f}}}  ) \right) 
& \subseteq   \mathrm{Fitt}_{\Lambda_n} \left( \dfrac{ \mathrm{H}^1_{/f}(\mathbb{Q}(\zeta_{p^n}) \otimes \mathbb{Q}_p, T_{f}(k/2)) }{ \mathrm{loc}^s_p \mathrm{H}^1_{\mathcal{I}w}(T_{f}(k/2))_{\Gamma_n} }  \right) \\
& \subseteq   \mathrm{Fitt}_{\Lambda_n} \left( \dfrac{ \mathrm{H}^1_{/f}(\mathbb{Q}(\zeta_{p^n})  \otimes \mathbb{Q}_p, T_{f}(k/2)) }{  \mathrm{loc}^s_p \mathrm{H}^1(\mathbb{Q}_{\Sigma}/\mathbb{Q}(\zeta_{p^n}), T_{f}(k/2))}  \right) 
\end{split}
\end{align}
where 
 the first inclusion follows from Proposition \ref{prop:local-computation-algebraic-theta} and
 the second inclusion follows from Lemma \ref{lem:fitting_ideals}.(1).
The combination of (\ref{eqn:step-one}), (\ref{eqn:step-two}), and (\ref{eqn:step-three}) shows that
\begin{equation} \label{eqn:mazur-tate-algebraic-part}
  I^{\mathrm{alg}}_{\mathbb{Q}(\zeta_{p^n}), k/2}(\overline{f}) \cdot \pi_n \left( \mathrm{Fitt}_{\Lambda_{\mathcal{I}w}}  \left( \mathrm{Sel}^{\vee}_{0, \mathrm{mft}} \right) \right) \subseteq \mathrm{Fitt}_{\Lambda_n} \left( \mathrm{Sel}(\mathbb{Q}(\zeta_{p^n}), W_{\overline{f}}(k/2))^\vee \right) 
\end{equation}
and it means that we have only worked with the algebraic side of Iwasawa theory.
If we further assume that $\mathrm{Sel}_0( \mathbb{Q}(\zeta_{p^\infty}), W_{\overline{f}}(k/2) )^\vee$ has no non-trivial finite $\Lambda_{\mathcal{I}w}$-torsion submodule, then we have
$$  I^{\mathrm{alg}}_{\mathbb{Q}(\zeta_{p^n}), k/2}(\overline{f})  \subseteq \mathrm{Fitt}_{\Lambda_n} \left( \mathrm{Sel}(\mathbb{Q}(\zeta_{p^n}), W_{\overline{f}}(k/2))^\vee \right) .$$
\subsubsection{Step 4: application of Theorem \ref{thm:main_thm_finite_layer_coleman}}
Let $\mathbf{z} = \mathbf{z}_{\mathbb{Q}, \gamma_0}(f, k/2) \in \mathrm{H}^1_{\mathcal{I}w}(T_{f}(k/2))$ be the canonical Kato's Euler system over the cyclotomic tower
and $\mathbf{w}$ a $\Lambda_{\mathcal{I}w}$-generator of $\mathrm{H}^1_{\mathcal{I}w}(T_{f}(k/2))$.
Then we can write
$\mathbf{z} = \alpha \cdot \mathbf{w}$
where $\alpha \in \Lambda_{\mathcal{I}w}$, and indeed, we have 
$\alpha = \mathrm{char}_{\Lambda_{\mathcal{I}w}}( \mathrm{H}^1_{\mathcal{I}w}(T_{f}(k/2)) / \mathbf{z}  ) $.
Denote by $z_n $ the image of $\mathbf{z}$ in $\mathrm{H}^1_{\mathcal{I}w}(T_f(k/2))_{\Gamma_n}$ and by $w_{n}$ the image of $\mathbf{w}$ in $\mathrm{H}^1_{\mathcal{I}w}(T_f(k/2))_{\Gamma_n}$.
Theorem \ref{thm:main_thm_mazur_tate_conj} follows from (\ref{eqn:mazur-tate-algebraic-part}) and the following computation
\begin{align*}
  &  C_f \cdot  \left( \theta_{\mathbb{Q}(\zeta_{p^{n}}), k/2}( \overline{f}  ) \right)  \\
\subseteq \ &   \left( \mathrm{Col}_{\mathbb{Q}(\zeta_{p^{n}}), k/2}(\mathrm{loc}^s_p z_{n}  ) \right)  \\
= \ &   \left( P_{\mathbb{Q}(\zeta_{p^{n}}), k/2}(\mathrm{loc}^s_p z_{n}, c^{\varepsilon^{-1}}_{\mathbb{Q}(\zeta_{p^{n}}), k/2, \eta_{\overline{f}}}   ) \right) \\
= \ &   \left( P_{\mathbb{Q}(\zeta_{p^n}), k/2}( \mathrm{loc}^s_p \left( \pi_n \left( \alpha \right) \cdot w_n \right), c^{\varepsilon^{-1}}_{\mathbb{Q}(\zeta_{p^{n}}), k/2, \eta_{\overline{f}}} ) \right)   \\
= \ &   \left( P_{\mathbb{Q}(\zeta_{p^n}), k/2}( \mathrm{loc}^s_p w_n,  c^{\varepsilon^{-1}}_{\mathbb{Q}(\zeta_{p^{n}}), k/2, \eta_{\overline{f}}} ) \right)  \cdot \pi_n \left( \alpha \right) \\
= \ &  \left( P_{\mathbb{Q}(\zeta_{p^{n}}), k/2}(\mathrm{loc}^s_p w_{n}, c^{\varepsilon^{-1}}_{\mathbb{Q}(\zeta_{p^{n}}), k/2, \eta_{\overline{f}}}   ) \right)  \cdot \pi_n \left( \mathrm{char}_{\Lambda_{\mathcal{I}w}}( \mathrm{H}^1_{\mathcal{I}w}(T_{f}(k/2)) / \mathbf{z}  ) \right) \\
\subseteq \ &   \left( P_{\mathbb{Q}(\zeta_{p^{n}}), k/2}(\mathrm{loc}^s_p w_{n}, c^{\varepsilon^{-1}}_{\mathbb{Q}(\zeta_{p^{n}}), k/2, \eta_{\overline{f}}}   ) \right)  \cdot  \pi_n \left( \mathrm{char}_{\Lambda_{\mathcal{I}w}}  \left( \mathrm{Sel}_0(\mathbb{Q}(\zeta_{p^\infty}), W_{\overline{f}}(k/2))^\vee \right)  \right) \\
= \ & I^{\mathrm{alg}}_{\mathbb{Q}(\zeta_{p^n}), k/2}(\overline{f})
\end{align*}
where $C_f$ is the constant error term in Theorem \ref{thm:main_thm_mazur_tate_conj},
the first inclusion follows from Theorem \ref{thm:main_thm_finite_layer_coleman},
and the last inclusion follows from Theorem \ref{thm:kato-divisibility} and the pseudo-isomorphism $\mathrm{Sel}_0(\mathbb{Q}(\zeta_{p^\infty}), W_{\overline{f}}(k/2))^\vee \sim \mathrm{H}^2_{\mathcal{I}w}(T_{f}(k/2))$ \cite[Prop. 7.1]{kobayashi-thesis}.

\section{Towards the Iwasawa main conjecture for modular forms} \label{sec:IMC}
We prove the following statement, which improves the main result of \cite{kks}.
\begin{thm}[Iwasawa main conjecture]  \label{thm:main_thm_main_conj}
Let $p \geq 5$ be a prime. Keep all the statements in Assumption \ref{assu:standard}.
If Conjecture \ref{conj:comparison-zeta-elements} holds (e.g. $2 \leq k \leq p-1$ or $\overline{\rho}$ is  $p$-ordinary and $p$-distinguished), then the following statements are equivalent.
\begin{enumerate}
\item $ \left \lbrace
    \begin{array}{ll}
     \widetilde{\delta}_{\mathbb{Q}(\zeta_m)}(\overline{f},r) \neq 0  & \textrm{ when } i=0 , \\ 
     \widetilde{\delta}_{\mathbb{Q}(\zeta_m)}(\overline{f},i,r) \neq 0  & \textrm{ when } i \neq 0
    \end{array}
    \right.
$
for some square-free product $m$ of Kolyvagin primes,
\item
the Kolyvagin system associated to $\mathfrak{z}_{\mathbb{Q}(\zeta_{m}), \delta_{f, \mathrm{cris}}}(f,i, k-r)$ for $T_{f, i}(k-r)$ does not vanish modulo $\pi$ and
\item the main conjecture for $T_{f,i}(k-r)$ holds and the local Tamagawa numbers are not divisible by $\pi$.
\end{enumerate}
In particular, those Kurihara numbers do not vanish for every $i = 0, \cdots, p-2$ if and only if the main conjecture for $T_{f}(k-r)$ over the full cyclotomic extension $\mathbb{Q}(\zeta_{p^\infty})$ of $\mathbb{Q}$ holds  and the local Tamagawa numbers are not divisible by $\pi$.
\end{thm}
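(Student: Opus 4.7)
The plan is to derive the equivalence $(1)\Leftrightarrow(2)\Leftrightarrow(3)$ by combining Theorem \ref{thm:main_thm_finite_layer_coleman} with the Kolyvagin system machinery of Mazur--Rubin \cite{mazur-rubin-book}. The key bridge is an explicit formula expressing the mod $\pi$ Kolyvagin derivative of the Mazur--Tate element in terms of the Kurihara number. Concretely, for a square-free product $m=\prod\ell$ of Kolyvagin primes, one applies the Kolyvagin derivative operator $D_m=\prod_{\ell\mid m}D_\ell\in\mathcal{O}_\pi[\mathrm{Gal}(\mathbb{Q}(\zeta_m)/\mathbb{Q})]$ to the equivariant identity
\begin{equation*}
\theta_{\mathbb{Q}(\zeta_{mp^n}),r}(\overline{f})=\mathrm{Col}_{\mathbb{Q}(\zeta_{mp^n}),r}\bigl(\mathrm{loc}_p\,\mathfrak{z}_{\mathbb{Q}(\zeta_{mp^n}),\delta_{f,\mathrm{cris}}}(f,k-r)\bigr)
\end{equation*}
from Theorem \ref{thm:main_thm_finite_layer_coleman} and projects onto the $\omega^i$-isotypic component. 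Under Conjecture \ref{conj:comparison-zeta-elements}, the class $\mathfrak{z}_{\mathbb{Q}(\zeta_m),\delta_{f,\mathrm{cris}}}$ generates $\Lambda^{(i)}\mathbf{z}_{\mathbb{Q},\gamma_0}(f,i,k-r)$ by Proposition \ref{prop:kato-generated-by-one}, so its Kolyvagin derivatives produce precisely Kato's Kolyvagin system for $T_{f,i}(k-r)$ in the sense of \cite{mazur-rubin-book}.

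For $(1)\Leftrightarrow(2)$, first I would use Proposition \ref{prop:mazur-tate-elements-properties} together with the congruences $a_\ell(f)\equiv\ell+1\pmod{\pi}$ and $\psi(\ell)\equiv 1\pmod{\pi}$ at Kolyvagin primes to show that $D_m\theta_{\mathbb{Q}(\zeta_m),r}(\overline{f})$, reduced modulo $I^{(m)}:=\prod_{\ell\mid m}(\sigma_\ell-1)$ and projected to the $\omega^i$-part, equals $\widetilde{\delta}_{\mathbb{Q}(\zeta_m)}(\overline{f},i,r)$ (respectively $\widetilde{\delta}_{\mathbb{Q}(\zeta_m)}(\overline{f},r)$ when $i=0$), up to the Gauss-sum and discrete-logarithm factors built into the definition. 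On the Euler system side, the same quantity equals the image of the Kolyvagin derivative class under the mod $\pi$ composition of the finite-singular map at the $\ell\mid m$ and the Coleman map at $p$, by the Euler system relation of Proposition \ref{thm:kato_euler_system_relation}. The minimal level hypothesis $N=N(\overline{\rho})$ and the local triviality at Kolyvagin primes guarantee that this composite map is injective modulo $\pi$ on the $\omega^i$-isotypic component (no Tamagawa defect in the sense of \cite[Lemma 4.1.2]{epw}), which yields the equivalence of non-vanishing statements.

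For $(2)\Rightarrow(3)$, I would invoke the primitivity criterion of Mazur--Rubin \cite[Theorem 5.3.10]{mazur-rubin-book}: a $\Lambda$-adic Kolyvagin system non-vanishing modulo $\pi$ is primitive, hence computes the characteristic ideal of the dual Selmer group exactly, producing the opposite divisibility to Kato's (Theorem \ref{thm:kato-divisibility}) and thus the main conjecture. The converse $(3)\Rightarrow(2)$ uses the rigidity of the module of $\Lambda$-adic Kolyvagin systems: it is free of rank one over $\Lambda^{(i)}$ under Assumption \ref{assu:standard}.(Im0), so the equality of characteristic ideals forces Kato's Kolyvagin system to generate the full module and hence be $\pi$-primitive. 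This is the higher-weight analogue of the $(3)\Rightarrow(1)$ argument of \cite{kks}, where the integral finite layer Coleman maps of Theorem \ref{thm:main_thm_finite_layer_coleman} replace the formal-group computations of the weight two setting; the statement about $T_f(k-r)$ over the full cyclotomic extension follows by assembling the $\omega^i$-isotypic pieces.

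The main obstacle will be the precise mod $\pi$ derivative computation in the first step: one must track the crystalline optimal-period normalization $\delta_{f,\mathrm{cris}}$ of $\mathfrak{z}_{\mathbb{Q}(\zeta_m),\delta_{f,\mathrm{cris}}}$ through the equivariant integral Coleman map, match the local finite-singular contributions at each Kolyvagin $\ell$ with the discrete-logarithm factors $\overline{\log_{\eta_\ell}(a)}$ in the definition of Kurihara numbers, and verify that the overall comparison constant is a $\pi$-adic unit. This is exactly where Conjecture \ref{conj:comparison-zeta-elements} and the minimal level hypothesis $N=N(\overline{\rho})$ enter decisively: without them, a non-unit discrepancy (like the constant $C_f$ of Theorem \ref{thm:main_thm_mazur_tate_conj}, cf.\ Remark \ref{rem:main-theorem-coleman}) would obstruct the equivalence of non-vanishing statements.
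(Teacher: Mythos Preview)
Your overall architecture is correct and matches the paper: apply $D_m$ to the identity of Theorem \ref{thm:main_thm_finite_layer_coleman}, relate the result to Kurihara numbers, and then use the rigidity of $\Lambda$-adic Kolyvagin systems (the paper invokes B\"uy\"ukboduk's result, Theorem \ref{thm:rigidity}, which is where $N=N(\overline\rho)$ really enters) to pass between $(2)$ and $(3)$.

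There is, however, a genuine gap in your argument for $(2)\Rightarrow(1)$. You claim that ``the composite map is injective modulo $\pi$'' and that the finite--singular maps at $\ell\mid m$ are part of this composite. Neither is what actually happens. The relevant map is simply $\mathrm{Col}^{(i)}_{\mathbb{Q}(\zeta_m),r}\circ\mathrm{loc}_p$ applied to $D_m\mathfrak{z}$; no finite--singular comparison at $\ell$ enters the Kurihara number computation. More importantly, this map is \emph{not} known to be injective on the derivative classes: its kernel contains $\mathrm{H}^1_f$ at $p$, and there is no a priori reason the derivative class avoids it. The paper's mechanism is different: if all $D_m\omega^i(\theta)$ vanish mod $\pi$, then \cite[Proposition 6.2.2]{mazur-rubin-book} shows $\ks^{(1)}$ is a Kolyvagin system for the Bloch--Kato Selmer structure at $p$; that structure has core rank zero, and \cite[Theorem 4.2.2]{mazur-rubin-book} then forces $\ks^{(1)}=0$. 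This core-rank-zero argument is the missing idea.

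A second point you underestimate: the paper does not reduce modulo the element $\prod_\ell(\sigma_\ell-1)$ but modulo the ideal $\mathfrak{a}_m=\bigl(\pi,(\sigma_{\eta_{\ell_1}}-1)^2,\ldots\bigr)$, using Kurihara's expansion (Lemma \ref{lem:kurihara-expansion}) together with $(\sigma_{\eta_\ell}-1)D_\ell=\ell-1-\mathrm{Tr}$ to extract the Kurihara number as the coefficient of $\prod_j(\sigma_{\eta_{\ell_j}}-1)$. There is then a nontrivial step showing that nonvanishing of $D_m\omega^i(\theta)$ mod $\pi$ and mod $\mathfrak{a}_m$ are equivalent, which uses that the derivative class lies in the $\mathrm{Gal}(\mathbb{Q}(\zeta_m)/\mathbb{Q})$-invariants (so $(\sigma_{\eta_\ell}-1)^2$ already kills it) and again the core-rank-zero argument. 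Your sketch collapses this to a single reduction step, which would not go through as stated.
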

If we restrict ourselves to the case of elliptic curves, we obtain the following statement.
\begin{cor}
Let $E$ be an elliptic curve over $\mathbb{Q}$ and $p \geq 5$ a non-anomalous good reduction prime such that 
$E[p]$ is a surjective Galois representation. Then the following statements are equivalent.
\begin{enumerate}
\item $\widetilde{\delta}_{m}(E) = \widetilde{\delta}_{\mathbb{Q}(\zeta_m)}(E, 1) \neq 0$ for some square-free product $m$ of Kolyvagin primes.
\item Kato's Kolyvagin system for the $p$-adic Tate module of $E$ does not vanish modulo $p$.
\item The Iwasawa main conjecture for $E$ over $\mathbb{Q}_{\infty}$ holds and all the local Tamagawa factors are not divisible by $p$.
\end{enumerate}
\end{cor}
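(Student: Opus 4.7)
The plan is to obtain this as a direct specialization of Theorem \ref{thm:main_thm_main_conj} to the weight-two setting. First I would fix $f = f_E$, the newform attached to $E$, so that $k = 2$, $r = 1$, $i = 0$, $\psi = \mathbf{1}$, $F_\pi = \mathbb{Q}_p$, $\mathcal{O}_\pi = \mathbb{Z}_p$, and $T_f(k-r) = T_f(1) \simeq T_p E$ as $G_{\mathbb{Q}}$-modules. Because $p \geq 5$, the Fontaine--Laffaille range $2 \leq k \leq p-1$ is automatic, so Theorem \ref{thm:comparison-zeta-elements}.(1) guarantees that Conjecture \ref{conj:comparison-zeta-elements} holds; in particular, the canonical periods of $f$ are optimal and the crystalline normalization $\delta_{f,\mathrm{cris}}^{\pm}$ agrees with the Betti generator $\gamma_{0}^{\pm}$ up to a $p$-adic unit.

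Next I would verify each clause of Assumption \ref{assu:standard}. The surjectivity of $E[p]$ combined with $p > 3$ lifts, via \cite[Lemma 3, IV-23]{serre-abelian-l-adic}, to $\mathrm{im}(\rho_f) \supseteq \mathrm{SL}_2(\mathbb{Z}_p)$ up to conjugation, giving (Im0). The condition ($\alpha \neq \beta$) is automatic in weight two (Remark \ref{rem:standard}), and (ENV) is vacuous since $(k-1)/2 = 1/2 \notin \mathbb{Z}$. For (non-split), the supersingular case is immediate because $\overline{\rho}|_{G_{\mathbb{Q}_p}}$ is then irreducible; in the ordinary case, the non-anomalous hypothesis forces $\overline{a}_p \neq 1$, hence the semisimplification of $\overline{\rho}|_{G_{\mathbb{Q}_p}}$ is $p$-distinguished, and the split case is then ruled out by the large global image hypothesis, since a direct-sum decomposition at $p$ would produce a mod-$p$ companion form in the sense of \cite{gross-tameness}, incompatible with surjectivity of $E[p]$. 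Finally, the $p$-indivisibility of all Tamagawa numbers $c_\ell(E)$ is equivalent, for elliptic curves, to the minimal-level condition $N = N(\overline{\rho})$, as explained in Remark \ref{rem:main-theorem-mc} and \cite[Lemma 4.1.2]{epw}.

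Once all hypotheses of Theorem \ref{thm:main_thm_main_conj} have been confirmed, the equivalences (1) $\Leftrightarrow$ (2) $\Leftrightarrow$ (3) follow immediately, modulo the identification of the Kurihara invariants. Here I would observe that $a_n(f) \in \mathbb{Z}$ forces $\overline{f} = f$, so that $[a/m]_{\overline{f}, 1}$ coincides with the classical modular symbol of $E$ normalized by the optimal real and imaginary periods; consequently the Kurihara number $\widetilde{\delta}_{\mathbb{Q}(\zeta_m)}(\overline{f},1)$ specializes exactly to Kurihara's invariant $\widetilde{\delta}_m(E)$ of \cite{kurihara-iwasawa-2012}, and Kato's Kolyvagin system for $T_f(1)$ is the one attached to $T_p E$. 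The main obstacle, such as it is, is the verification of the (non-split) clause in the ordinary case, since unlike the other clauses it is not a purely formal consequence of non-anomalous plus surjective $E[p]$ and relies on the companion-form interpretation; the remaining translations are bookkeeping, and all the deep content is already packaged inside Theorem \ref{thm:main_thm_main_conj}.
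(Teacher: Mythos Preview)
Your overall strategy---verify the hypotheses of Theorem \ref{thm:main_thm_main_conj} in the elliptic-curve specialization and read off the equivalence---is exactly what the paper intends; the corollary is stated there without proof as an immediate consequence. Your checks of (Im0), ($\alpha\neq\beta$), (ENV), and the Fontaine--Laffaille range are correct.

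There is, however, a genuine gap in your treatment of (non-split) in the ordinary case. The claim that a direct-sum decomposition at $p$ ``would produce a mod-$p$ companion form \dots\ incompatible with surjectivity of $E[p]$'' is not correct: by Gross's theorem the companion form $g$ has $\overline{\rho}_g \simeq \overline{\rho}_f \otimes \omega$, and twisting by a character preserves the property that the image contains $\mathrm{SL}_2(\mathbb{F}_p)$. So surjectivity of $E[p]$ does not obstruct the existence of a companion form, and there do exist elliptic curves with good ordinary reduction, surjective $E[p]$, and $\overline{\rho}|_{G_{\mathbb{Q}_p}}$ split. (Your observation that $p$-distinguishedness is automatic is fine, but that is weaker than non-split.) The resolution is not to derive (non-split) from the other hypotheses but to recall that Assumption \ref{assu:standard} is a \emph{standing} assumption throughout the article, as declared in the introduction; the corollary's listed conditions are meant to supplement it, not replace it. You should simply cite Assumption \ref{assu:standard} for (non-split) rather than argue for it.

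A smaller point: your assertion that $p\nmid \prod_\ell c_\ell(E)$ is \emph{equivalent} to $N = N(\overline{\rho})$ overstates the situation. Only the implication $N = N(\overline{\rho}) \Rightarrow p\nmid \prod c_\ell$ is in \cite[Lemma 4.1.2]{epw}; what makes the corollary go through is Remark \ref{rem:main-theorem-mc}, which says the minimal-level hypothesis in Theorem \ref{thm:main_thm_main_conj} can be relaxed to $\pi$-indivisibility of the local Tamagawa ideals, and for elliptic curves this is exactly the condition $p\nmid c_\ell(E)$ for all $\ell\mid N$.
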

As a consequence, if the Iwasawa main conjecture holds, then we are able to describe the structures of $p$-strict Selmer groups and usual Selmer groups following \cite[Thm. 5.2.12]{mazur-rubin-book} and \cite{kim-structure-selmer}.

Theorem \ref{thm:main_thm_main_conj} also has the following immediate consequence on the analytic rank zero case of the Bloch--Kato conjecture for modular forms.
\begin{cor}[Bloch--Kato conjecture] \label{cor:bloch-kato}
Keep all the statements in Assumption \ref{assu:standard}.
If  
Conjecture \ref{conj:comparison-zeta-elements} holds (e.g. $2 \leq k \leq p-1$ or $\overline{\rho}$ is  $p$-ordinary and $p$-distinguished), 
and
$$ \widetilde{\delta}_{\mathbb{Q}(\zeta_m)}(\overline{f},r) \neq 0$$
for some square-free product $m$ of Kolyvagin primes,
then the following statements hold.
\begin{enumerate}
\item Suppose that $L(\overline{f}, k/2) \neq 0$ if $k$ is even.
\begin{enumerate}
\item 
A generalized version of Kato's Birch and Swinnerton-Dyer conjecture \cite[Conj. 2.3.3]{kato-lecture-1} holds, i.e.
$\mathrm{H}^2(\mathbb{Z}[1/p],j_*T_f(k-r))$ is finite and
$$\mathrm{ord}_\pi \left( [\mathrm{H}^1(\mathbb{Z}[1/p],j_*T_f(k-r)):  z_{\mathbb{Q} , \delta_{f, \mathrm{cris}}}(f, k-r) ] \right) = \mathrm{length}_{\mathcal{O}_\pi}\left( \mathrm{H}^2(\mathbb{Z}[1/p],j_*T_f(k-r)) \right)$$
where $\mathrm{H}^i(\mathbb{Z}[1/p],j_*T_f(k-r)) = \mathrm{H}^i_{\mathrm{\acute{e}t}}(\mathrm{Spec} (\mathbb{Z}[1/p]), j_*T_f(k-r))$.
\item 
If we further assume $2 \leq k \leq p-1$, then the $\pi$-part of 
the corresponding ``rank zero" Bloch--Kato's Tamagawa number conjecture \cite[Conj. 5.15]{bloch-kato} holds, i.e. 
$ \mathrm{Sel}(\mathbb{Q}, W_{\overline{f}}(r))$ is finite and
\begin{equation} \label{eqn:bloch-kato-conjecture-higher-weight}
\mathrm{ord}_\pi \left(  \dfrac{L( \overline{f} ,  r)}{ (-2\pi \sqrt{-1})^{ r - 1 } \cdot \Omega^{\pm}_{\overline{f}} } \right) =  \mathrm{length}_{\mathcal{O}_\pi} \left( \mathrm{Sel}(\mathbb{Q}, W_{\overline{f}}(r)) \right)
\end{equation}
where $\pm $ is the sign of $(-1)^{r -1}$.
\end{enumerate}
\item Suppose that $k$ is even.
If $L(\overline{f}, k/2) = 0$, then $\mathrm{cork}_{\mathcal{O}_\pi} \mathrm{Sel}(\mathbb{Q}, W_{\overline{f}}(k/2)) \geq 1$.
\end{enumerate}
\end{cor}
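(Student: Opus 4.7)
The plan is to derive Corollary~\ref{cor:bloch-kato} by descending the cyclotomic main conjecture (activated by Theorem~\ref{thm:main_thm_main_conj}) to the base field $\mathbb{Q}$ and translating the resulting identity into classical Selmer groups and complex $L$-values via Kato's explicit reciprocity law. The numerical hypothesis $\widetilde{\delta}_{\mathbb{Q}(\zeta_m)}(\overline{f},r)\neq 0$, together with the standing assumptions, feeds into the equivalences of Theorem~\ref{thm:main_thm_main_conj} to yield the main conjecture for $T_{f,0}(k-r)$, i.e.\
\[
\mathrm{char}_{\Lambda^{(0)}}\!\left(\mathbb{H}^1(j_*T_{f,0}(k-r))/\Lambda^{(0)}\mathbf{z}_{\mathbb{Q},\gamma_0}(f,0,k-r)\right)=\mathrm{char}_{\Lambda^{(0)}}\mathbb{H}^2(j_*T_{f,0}(k-r)).
\]
Because Conjecture~\ref{conj:comparison-zeta-elements} is assumed, Theorem~\ref{thm:comparison-zeta-elements} lets me replace $\gamma_0$ by $\delta_{f,\mathrm{cris}}$ without altering this ideal.

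For part (1)(a), I would next apply the augmentation map $\Lambda^{(0)}\to\mathcal{O}_\pi$. On the Euler system side, Kato's explicit reciprocity law (Theorem~\ref{thm:kato_interpolation_original}, specialized via Theorem~\ref{thm:kato_interpolation}) identifies $\exp^*\circ\mathrm{loc}_p(z_{\mathbb{Q},\delta_{f,\mathrm{cris}}}(f,k-r))$ with a non-zero multiple of $L^{(p)}(\overline{f},r)/(2\pi\sqrt{-1})^{r-1}\Omega^{\pm}_{\overline{f}}\cdot\omega_f\otimes e_{r-k}$, the non-vanishing being automatic for non-central $r$ (Jacquet--Shalika) and imposed for $r=k/2$. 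This non-vanishing at the trivial character forces both sides of the characteristic ideal identity to be finite after specialization, and a standard descent (cf.~\cite[$\S$14]{kato-euler-systems}) converts the equality of characteristic ideals into the equality of orders
\[
[\mathrm{H}^1(\mathbb{Z}[1/p],j_*T_f(k-r)):z_{\mathbb{Q},\delta_{f,\mathrm{cris}}}(f,k-r)]=\#\mathrm{H}^2(\mathbb{Z}[1/p],j_*T_f(k-r)),
\]
which is (1)(a). For (1)(b), the extra hypothesis $2\leq k\leq p-1$ gives $\omega_{f,\mathrm{cris}}=\omega_f$ and $\delta^{\pm}_{f,\mathrm{cris}}=\delta^{\pm}_f$ by diagram~(\ref{eqn:betti-etale-comparison-extended-integral-renormalized}), so that the index on the left is computed cleanly in terms of optimal periods; Poitou--Tate duality (applicable because $N=N(\overline{\rho})$ kills the Tamagawa defect, and the Fontaine--Laffaille range matches the crystalline local condition with Bloch--Kato's) then identifies $\#\mathrm{H}^2(\mathbb{Z}[1/p],j_*T_f(k-r))$ with $\#\mathrm{Sel}(\mathbb{Q},W_{\overline{f}}(r))$, giving~(\ref{eqn:bloch-kato-conjecture-higher-weight}).

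For part (2), assume $k$ is even and $L(\overline{f},k/2)=0$. By Kato's interpolation (Theorem~\ref{thm:kato_interpolation} at the trivial character), $\exp^*\circ\mathrm{loc}_p(z_{\mathbb{Q},\delta_{f,\mathrm{cris}}}(f,k/2))=0$, so this class lies in the Bloch--Kato Selmer group $\mathrm{H}^1_f(\mathbb{Q},V_f(k/2))$. On the other hand, Theorem~\ref{thm:main_thm_main_conj}.(2) guarantees that Kato's Kolyvagin system is non-trivial mod $\pi$, hence the base class $z_{\mathbb{Q},\delta_{f,\mathrm{cris}}}(f,k/2)$ is itself a non-zero element of $\mathrm{H}^1(\mathbb{Z}[1/p],j_*T_f(k/2))$. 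Therefore $\mathrm{H}^1_f(\mathbb{Q},V_f(k/2))\neq 0$, and Poitou--Tate duality gives $\mathrm{cork}_{\mathcal{O}_\pi}\mathrm{Sel}(\mathbb{Q},W_{\overline{f}}(k/2))\geq 1$.

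The main obstacle will be the descent step: one must translate the cyclotomic equality of characteristic ideals into an \emph{exact} equality of orders at the base field while keeping careful bookkeeping of periods, bad-prime Euler factors, and Tamagawa contributions, so that the right-hand side of~(\ref{eqn:bloch-kato-conjecture-higher-weight}) comes out with the factor $(2\pi\sqrt{-1})^{r-1}\Omega^{\pm}_{\overline{f}}$ on the nose. This is precisely where the minimal level hypothesis $N=N(\overline{\rho})$ (which eliminates the Tamagawa defect, cf.~Remark~\ref{rem:main-theorem-mc}) and the Fontaine--Laffaille range $2\leq k\leq p-1$ (which aligns the Betti and crystalline integral structures via Theorem~\ref{thm:comparison-zeta-elements} and~(\ref{eqn:betti-etale-comparison-extended-integral-renormalized})) do the essential work; any weaker hypothesis would force the appearance of correction factors that would have to be tracked separately.
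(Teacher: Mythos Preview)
Your treatment of part~(1) is essentially correct and aligns with the paper's approach: both derive (1)(a) by descending the main conjecture (Theorem~\ref{thm:main_thm_main_conj}) to $\mathbb{Q}$ in the style of \cite[\S14]{kato-euler-systems}, and both obtain (1)(b) from (1)(a) via \cite[Proposition~14.21]{kato-euler-systems}, which is exactly where the Fontaine--Laffaille hypothesis $2\leq k\leq p-1$ enters.

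Your argument for part~(2), however, contains a genuine gap. You claim that the primitivity of Kato's Kolyvagin system modulo~$\pi$ forces the \emph{base} class $z_{\mathbb{Q},\delta_{f,\mathrm{cris}}}(f,k/2)$ to be non-zero. This inference is not valid: primitivity means that \emph{some} derived class $\kappa_n$ survives modulo~$\pi$, but when the Selmer corank is positive (which is precisely the situation you are trying to establish) the theory of Kolyvagin systems fully allows $\kappa_1=z_{\mathbb{Q}}$ to vanish while higher derivative classes do not. Indeed, in Mazur--Rubin's framework the vanishing of $\kappa_1$ for a primitive system is equivalent to the infinitude of the dual Selmer group $\mathrm{H}^1_{\mathcal{F}^*}(\mathbb{Q},W)$, not to the system being trivial. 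So your dichotomy is incomplete: you must also handle the case $z_{\mathbb{Q}}=0$, and there the desired corank bound comes from the infinitude of the \emph{fine} Selmer group rather than from producing a non-trivial element of $\mathrm{H}^1_f(\mathbb{Q},V_f(k/2))$.

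The paper's proof of part~(2) sidesteps this issue entirely: it combines the main conjecture with the control theorem for fine Selmer groups, observing that the vanishing $L(\overline{f},k/2)=0$ forces the characteristic ideal of $\mathbb{H}^2$ to lie in the augmentation ideal, whence $(\mathbb{H}^2)_{\Gamma}$ (and hence the fine Selmer group over~$\mathbb{Q}$, up to a finite error that does not affect coranks) is infinite. This delivers $\mathrm{cork}_{\mathcal{O}_\pi}\mathrm{Sel}(\mathbb{Q},W_{\overline{f}}(k/2))\geq 1$ without ever needing to decide whether $z_{\mathbb{Q}}$ itself is zero.
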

\begin{proof}
\begin{enumerate}
\item
The first statement immediately follows from Theorem \ref{thm:main_thm_main_conj}.
The second statement follows from the first one and \cite[Prop. 14.21]{kato-euler-systems}, which uses Fontaine--Laffaille modules; thus, the weight should lie in $2 \leq k \leq p-1$ in the second statement. See \cite[Thm. 4.3.(iii)]{bloch-kato} for details. Note that $L(\overline{f}, r)$ does not vanish unless $k$ is even and $r = k/2$ as in \cite[Thm. 13.5.(1)]{kato-euler-systems}.
\item It follows from Theorem \ref{thm:main_thm_main_conj} and the control theorem for fine Selmer groups. Although the fine Selmer group over $\mathbb{Q}_\infty$ may have a non-trivial finite Iwasawa submodule, it does not affect the corank of the Selmer group over $\mathbb{Q}$.
\end{enumerate}
\end{proof}
\begin{rem}
Corollary \ref{cor:bloch-kato}.(1).(a) refines \cite[Thm. 14.5]{kato-euler-systems} and
Corollary \ref{cor:bloch-kato}.(1).(b) refines \cite[Thm. 14.2]{kato-euler-systems} by giving the exact bounds.
Regarding an application of Corollary \ref{cor:bloch-kato}.(2), see \cite[Rem. (b) after Thm. 3.35]{skinner-urban} and \cite{skinner-urban-icm}.
See also Proposition \ref{prop:kato-main-conj-to-bsd-formula-rank-zero}.
\end{rem}
We assume Conjecture \ref{conj:comparison-zeta-elements} in this section.
\subsection{Kolyvagin derivatives}
Let $m$ be a square-free product of Kolyvagin primes and consider
the following Mazur--Tate elements with Teichm\"{u}ller twists
$$
\omega^i \left( \theta_{\mathbb{Q}(\zeta_{mp}), r}(\overline{f}) \right)  =  \sum_{a \in (\mathbb{Z}/mp\mathbb{Z})^\times }   \lambda^{\pm, \mathrm{opt}}(\overline{f}, z^{r-1}, a, mp)  \cdot \omega^{i}( a_{(p)}) \cdot \sigma_{a_{(m)}} 
$$
in $\mathcal{O}_\pi[\mathrm{Gal}(\mathbb{Q}(\zeta_m)/\mathbb{Q})]$
 for $i = 0, \cdots , p-2$ where $a_{(p)} = a \pmod{p}$ and $a_{(m)} = a \pmod{m}$, and the sign of modular symbols coincides with that of $(-1)^{r-1} \cdot \omega^i(-1)$.

Recall that we fix a primitive root $\eta_\ell$  for each Kolyvagin prime $\ell$, and denote by
$\sigma_{\eta_\ell} \in \mathrm{Gal}(\mathbb{Q}(\zeta_\ell)/\mathbb{Q})$ the corresponding generator.
For a square-free product $m$ of Kolyvagin primes, the \textbf{Kolyvagin derivative operator at $\mathbb{Q}(\zeta_m)$} is defined by 
$$D_{\mathbb{Q}(\zeta_m)} := \prod_{\ell \vert m} D_{\mathbb{Q}(\zeta_\ell)} = \prod_{\ell \vert m} \left( \sum_{i=1}^{\ell-2} i \sigma^i_{\eta_\ell} \right) \in \mathbb{Z}[\mathrm{Gal}(\mathbb{Q}(\zeta_m)/\mathbb{Q})]$$
 where $\mathrm{Gal}(\mathbb{Q}(\zeta_\ell)/\mathbb{Q})$ is regarded as a subgroup of $\mathrm{Gal}(\mathbb{Q}(\zeta_m)/\mathbb{Q})$. 

\subsection{An application of the theory of Kolyvagin systems}
We first decompose the (modified) Kato's Euler system
\begin{align*}
 \mathfrak{z}_{\mathbb{Q}(\zeta_{mp}), \delta_{f, \mathrm{cris}}}(f, k-r) & = \left( \mathfrak{z}_{\mathbb{Q}(\zeta_{m}), \delta_{f, \mathrm{cris}}}(f, i, k-r) \right)_{i=0, \cdots, p-2} \\
 & \in \bigoplus_{i=0}^{p-2} \mathrm{H}^1(\mathbb{Q}(\zeta_{m}), T_{f,i}(k-r))
\end{align*}
and each component $\mathfrak{z}_{\mathbb{Q}(\zeta_{m}), \delta_{f, \mathrm{cris}}}(f, i, k-r)$ is also an Euler system for $T_{f, i}(k-r)$ \cite[$\S$2.4]{rubin-book}.
Consider the map
\begin{equation*} \label{eqn:kolyvagin-derivative-diagram}
\begin{split}
{ \scriptsize
\xymatrix{
\mathrm{H}^1(\mathbb{Q}(\zeta_{m}), T_{f, i}(k-r)) \ar[d]_-{ D_{\mathbb{Q}(\zeta_{m})} \circ \mathrm{Col}^{(i)}_{\mathbb{Q}(\zeta_{m}), r} \circ \mathrm{loc}^s_p } & \mathfrak{z}_{\mathbb{Q}(\zeta_{m}), \delta_{f, \mathrm{cris}}}(f,i, k-r) \ar@{|->}[d] \\
\mathcal{O}_\pi[\mathrm{Gal}(\mathbb{Q}(\zeta_{m})/\mathbb{Q})] \ar[d]_-{\pmod{\pi}} &  D_{\mathbb{Q}(\zeta_{m})} \cdot \omega^i \left( \theta_{\mathbb{Q}(\zeta_{mp}), r} (\overline{f}) \right) \ar@{|->}[d] \\
  \mathbb{F}_\pi[\mathrm{Gal}(\mathbb{Q}(\zeta_{m})/\mathbb{Q})] & \overline{D_{\mathbb{Q}(\zeta_{m})} \cdot \omega^i \left( \theta_{\mathbb{Q}(\zeta_{mp}), r} (\overline{f}) \right)}
}
}
\end{split}
\end{equation*}
where 
$\mathrm{Col}_{\mathbb{Q}(\zeta_{mp}), r} = \bigoplus_i \mathrm{Col}^{(i)}_{\mathbb{Q}(\zeta_{m}), r}$
is the decomposition of the finite layer Coleman map, which is compatible with the decomposition of Euler systems.

Let $\overline{D_{\mathbb{Q}(\zeta_{m})} \mathfrak{z}_{\mathbb{Q}(\zeta_{m}), \delta_{f, \mathrm{cris}}}(f, i, k-r)} \in \mathrm{H}^1(\mathbb{Q}(\zeta_{m}), T_{f, i}(k-r)/\pi T_{f, i}(k-r))$ be the mod $\pi$ reduction of the Kolyvagin derivative of $\mathfrak{z}_{\mathbb{Q}(\zeta_{m}), \delta_{f, \mathrm{cris}}}(f,i, k-r)$ and denote its image in $\mathrm{H}^1(\mathbb{Q}, T_{f, i}(k-r)/\pi T_{f, i}(k-r))$ by $\kappa^{(1)}_m$.
Note that  $\overline{D_{\mathbb{Q}(\zeta_{m})} \mathfrak{z}_{\mathbb{Q}(\zeta_{m}), \delta_{f, \mathrm{cris}}}(f, i, k-r)} \neq 0$ if and only if $\kappa^{(1)}_m \neq 0$.

Since the finite layer Coleman map $\mathrm{Col}^{(i)}_{\mathbb{Q}(\zeta_{m}), r}$ is $\mathrm{Gal}(\mathbb{Q}(\zeta_m)/\mathbb{Q})$-equivariant, 
the non-vanishing of $\overline{D_{\mathbb{Q}(\zeta_{m})} \cdot \omega^i \left( \theta_{\mathbb{Q}(\zeta_{mp}), r} (\overline{f}) \right)}$ implies the non-triviality of $\overline{D_{\mathbb{Q}(\zeta_{m})} \mathfrak{z}_{\mathbb{Q}(\zeta_{m}), \delta_{f, \mathrm{cris}}}(f, i, k-r)}$.

Let 
 $\ks^{(1)} = \left\lbrace \kappa^{(1)}_m \in \mathrm{H}^1(\mathbb{Q}, T_{f, i}(k-r)/\pi T_{f, i}(k-r)) \right\rbrace_m$
be (mod $\pi$) Kato's Kolyvagin system for $T_{f,i}(k-r) / \pi T_{f,i}(k-r)$ following \cite[Thm. 3.2.4]{mazur-rubin-book}.

By applying the same argument of the proof of Proposition \ref{prop:local-computation-algebraic-theta} to this setting, we obtain the injectivity of  $\mathrm{Col}^{(i)}_{\mathbb{Q}(\zeta_{m}), r}$ on $\mathrm{H}^1_{/f}(\mathbb{Q}(\zeta_{m}) \otimes \mathbb{Q}_p, T_{f, i}(k-r))$.
We suppose that $\overline{D_{\mathbb{Q}(\zeta_{m})} \cdot \omega^i \left( \theta_{\mathbb{Q}(\zeta_{mp}), r} (\overline{f}) \right)}$ vanishes identically.
Then the injectivity of  $\mathrm{Col}^{(i)}_{\mathbb{Q}(\zeta_{m}), r}$ implies that $\kappa^{(1)}_m$ lies in the mod $\pi$ Bloch--Kato Selmer groups \cite[Prop. 6.2.2]{mazur-rubin-book} for every $m \in \mathcal{N}$.
Then the core rank of this Kolyvagin system is zero, so $\ks^{(1)}$ becomes trivial by \cite[Thm. 4.2.2]{mazur-rubin-book}.
To sum up, the following statements are equivalent.
\begin{itemize}
\item the non-vanishing of $\overline{D_{\mathbb{Q}(\zeta_{m})} \cdot \omega^i \left( \theta_{\mathbb{Q}(\zeta_{mp}), r} (\overline{f}) \right)}$ for some  $m \in \mathcal{N}$.
\item the non-triviality of $\ks^{(1)}$.
\end{itemize}
We recall the main result of \cite{kazim-Lambda-adic}, which plays an essential role to obtain the equivalence.
\begin{thm}[B\"{u}y\"{u}kboduk] \label{thm:rigidity}
Let $\KS(T_{f}(k-r) \otimes \Lambda^{(i)}_{\mathrm{Iw}}  )$ be the module of the $\Lambda^{(i)}_{\mathrm{Iw}}$-adic Kolyvagin system for $T_{f}(k-r)$ and
$\KS(T_{f,i}(k-r))$ the module of the Kolyvagin system for $T_{f,i}(k-r)$.
If we assume that all the statements in Assumption \ref{assu:standard} and the local Tamagawa numbers are not divisible by $\pi$,
then 
\begin{enumerate}
\item $\KS(T_{f}(k-r) \otimes \Lambda^{(i)}_{\mathrm{Iw}}  )$ is free of rank one over $\Lambda^{(i)}_{\mathrm{Iw}}$, and
\item the natural map $\KS(T_{f}(k-r) \otimes \Lambda^{(i)}_{\mathrm{Iw}}  ) \to \KS(T_{f,i}(k-r)  )$ is surjective.
\end{enumerate}
\end{thm}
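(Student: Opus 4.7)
The proof I would give follows the framework established by Mazur--Rubin in \cite{mazur-rubin-book} and adapted to the $\Lambda$-adic setting by B\"uy\"ukboduk. The strategy is to verify the hypotheses for the classification theorem of Kolyvagin systems over $\Lambda^{(i)}$, and then to address the specialization issue separately.

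For part (1), the plan is to invoke the $\Lambda$-adic analogue of \cite[Theorem 5.2.10]{mazur-rubin-book}, which asserts that for a suitable Selmer triple $(T, \mathcal{F}_{\mathrm{can}}, \mathcal{P})$ of core rank one over a complete local Noetherian ring, the module of Kolyvagin systems is free of rank one. I would verify each of Mazur--Rubin's hypotheses (H.0)--(H.6) for the pair $(T_f(k-r) \otimes \Lambda^{(i)}, \Lambda^{(i)})$. The crucial ones are: the large image hypothesis (H.1)(H.2), which is supplied by Assumption \ref{assu:standard}.(Im0) via Lemma \ref{lem:freeness-iwasawa-algebra} and the discussion in Remark \ref{rem:image}; the Tamagawa condition (H.T) in the sense of \cite[Page 3150]{kazim-Lambda-adic}, which is guaranteed by $N = N(\overline{\rho})$ (see Remark \ref{rem:main-theorem-mc}); and the core rank one property, which follows from the self-duality $T_f(k-r)^*(1) \simeq T_{\overline{f}}(r)$ and a global Euler characteristic computation together with Assumption \ref{assu:standard}.(non-split) to ensure vanishing of $\mathrm{H}^0(\mathbb{Q}_p, W_{\overline{f}}(r))$ via Lemma \ref{lem:normalization_local_points_first}. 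Freeness of the Iwasawa cohomology from Lemma \ref{lem:freeness-iwasawa-algebra} then lets us extract the rank one conclusion.

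For part (2), the surjectivity of the specialization $\KS(T_f(k-r) \otimes \Lambda^{(i)}) \to \KS(T_{f,i}(k-r))$ is more delicate. The plan is as follows. Since both source and target are known to be cyclic modules (the target by \cite[Theorem 5.2.10]{mazur-rubin-book} applied directly to $T_{f,i}(k-r)$ with core rank one, once we check its residual representation satisfies the same image and local hypotheses), it suffices to exhibit a $\Lambda^{(i)}$-adic Kolyvagin system whose specialization is a generator of the target. The natural candidate is the Kolyvagin system attached to Kato's zeta element $\mathfrak{z}_{\mathbb{Q}(\zeta_{m}), \gamma_0}(f, i, k-r)$ via the Euler-system-to-Kolyvagin-system machinery of \cite[Theorem 3.2.4]{mazur-rubin-book}, whose primitivity over $\Lambda^{(i)}$ reduces, by the cyclicity of $\KS$, to checking non-triviality modulo the maximal ideal. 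An equivalent route is the explicit construction of B\"uy\"ukboduk \cite{kazim-Lambda-adic}, which produces a basis by comparing derived classes at different levels and uses the Tamagawa hypothesis to control denominators appearing in the specialization map.

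The hard part will be the Tamagawa control in the second assertion. In general, specialization of Kolyvagin systems is not surjective: the cokernel measures exactly the defect coming from bad local Tamagawa ideals and from the (possibly non-trivial) kernel of the surjection $\Lambda^{(i)} \twoheadrightarrow \Lambda^{(i)}/J$ for appropriate ideals $J$ associated to the finite layer. This is precisely where the minimal level assumption $N = N(\overline{\rho})$ enters in an essential way, through \cite[Lemma 4.1.2]{epw} and the sharper $\pi$-indivisibility of local Tamagawa ideals \`a la Fontaine--Perrin-Riou mentioned in Remark \ref{rem:main-theorem-mc}. Isolating and bounding these Tamagawa contributions, so that the specialization of a $\Lambda^{(i)}$-generator is forced to remain a generator rather than a proper multiple, is the technical heart of the argument and the point at which we must appeal to the full strength of \cite{kazim-Lambda-adic}.
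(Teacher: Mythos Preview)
The paper's own proof is a bare citation to \cite[Theorem 3.23]{kazim-Lambda-adic}; it does not reproduce any of the argument. Your sketch of part (1) is a reasonable outline of the hypotheses one would check to invoke that result.

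There is, however, a genuine circularity in your first approach to part (2). You propose to prove surjectivity of $\KS(T_f(k-r)\otimes\Lambda^{(i)})\to\KS(T_{f,i}(k-r))$ by exhibiting a $\Lambda^{(i)}$-adic Kolyvagin system whose image generates the target, and you take Kato's Kolyvagin system $\ks^\infty$ as the candidate. But the image of $\ks^\infty$ generates the target (which is free of rank one over $\mathcal{O}_\pi$) if and only if that image is nonzero modulo $\pi$, i.e.\ if and only if $\ks^{(1)}\neq 0$. This is precisely the condition whose equivalence with the main conjecture the paper is trying to establish \emph{using} Theorem \ref{thm:rigidity}; see the Corollary immediately following the theorem and the chain of equivalences after it. Theorem \ref{thm:rigidity} must therefore be proved as an unconditional structural statement about the modules of Kolyvagin systems, without reference to Kato's specific class. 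Your proposed route would at best prove surjectivity only in the cases where the main conjecture already holds, which is useless for the intended application.

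The correct argument, which you gesture at in your ``equivalent route'', is entirely abstract: B\"uy\"ukboduk constructs a generator of $\KS(T_f(k-r)\otimes\Lambda^{(i)})$ directly from the Kolyvagin system formalism (core vertices, stub Selmer structures, and an inverse limit over Artinian quotients of $\Lambda^{(i)}$), and shows that its specialization remains a generator of $\KS(T_{f,i}(k-r))$ by controlling the Tamagawa defects via $N=N(\overline{\rho})$. No input from an actual Euler system is needed; the role of Kato's classes only enters \emph{after} the theorem, when one compares them to the abstract generator.
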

\begin{proof}
See \cite[Thm. 3.23]{kazim-Lambda-adic}. 
\end{proof}
Denote by $\ks^{\infty}$ the $\Lambda^{(i)}_{\mathrm{Iw}}$-adic Kato's Kolyvagin system associated to 
the canonical Kato's Euler system for $T_{f, i}(k-r)$ with $\delta_{f, \mathrm{cris}}$ obtained by applying the $\Lambda^{(i)}_{\mathrm{Iw}}$-adic version of the Euler-to-Kolyvagin system map \cite[Thm. 5.3.3]{mazur-rubin-book}.
Since $\KS(T_{f,i}(k-r)  )$ is free of rank one over $\mathcal{O}_\pi$ thanks to \cite[Thm. 5.2.10]{mazur-rubin-book}, the following corollary is immediate.
\begin{cor}
Under the same setting of Theorem \ref{thm:rigidity},
$\ks^{\infty}$ is a $\Lambda^{(i)}_{\mathrm{Iw}}$-generator of $\KS(T_{f}(k-r) \otimes \Lambda^{(i)}_{\mathrm{Iw}}  )$ if and only if $\ks^{(1)}$ does not vanish.
\end{cor}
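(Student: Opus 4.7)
The plan is a two-stage Nakayama argument using the structural results of Theorem \ref{thm:rigidity}. Since $\Lambda^{(i)} = \mathcal{O}_\pi\llbracket \Gamma^1 \rrbracket$ is a complete local Noetherian ring with maximal ideal $\mathfrak{m}_{\Lambda^{(i)}} = (\pi, \gamma - 1)$ for $\gamma$ a topological generator of $\Gamma^1$, and by Theorem \ref{thm:rigidity}.(1) the module $\KS(T_f(k-r) \otimes \Lambda^{(i)})$ is free of rank one over $\Lambda^{(i)}$, Nakayama's lemma reduces the question of whether $\ks^\infty$ generates this module to the question of whether the image of $\ks^\infty$ in the one-dimensional $\mathbb{F}_\pi$-vector space $\KS(T_f(k-r) \otimes \Lambda^{(i)}) / \mathfrak{m}_{\Lambda^{(i)}} \KS(T_f(k-r) \otimes \Lambda^{(i)})$ is nonzero.

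I would then factor this reduction through the specialization map $\KS(T_f(k-r) \otimes \Lambda^{(i)}) \twoheadrightarrow \KS(T_{f,i}(k-r))$ supplied by Theorem \ref{thm:rigidity}.(2), which corresponds to killing $\gamma - 1$, followed by the natural mod-$\pi$ map $\KS(T_{f,i}(k-r)) \to \KS(T_{f,i}(k-r)/\pi T_{f,i}(k-r))$. Since $\KS(T_{f,i}(k-r))$ is free of rank one over $\mathcal{O}_\pi$ by \cite[Theorem 5.2.10]{mazur-rubin-book}, a second Nakayama application over $\mathcal{O}_\pi$ tells us that an element of this module generates it if and only if its mod-$\pi$ image is nonzero. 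Consequently, the composite reduction $\KS(T_f(k-r) \otimes \Lambda^{(i)}) \to \mathbb{F}_\pi$ detects whether $\ks^\infty$ generates.

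It then remains to identify the composite image of $\ks^\infty$ with $\ks^{(1)}$. By the functoriality of the $\Lambda^{(i)}$-adic Euler-to-Kolyvagin-system construction \cite[Theorem 5.3.3]{mazur-rubin-book}, specializing $\ks^\infty$ along $\gamma \mapsto 1$ yields precisely Kato's Kolyvagin system for $T_{f,i}(k-r)$ attached to $\mathfrak{z}_{\mathbb{Q}(\zeta_m), \delta_{f,\mathrm{cris}}}(f, i, k-r)$, and further reducing modulo $\pi$ gives $\ks^{(1)}$ by the very definition of the Kolyvagin derivative classes. The only delicate point — really a bookkeeping check — is confirming that the two Nakayama reductions, applied respectively over $\Lambda^{(i)}$ and $\mathcal{O}_\pi$, fit into a commutative diagram with the specialization and mod-$\pi$ reduction of Kolyvagin systems; this compatibility is built into the construction of the Kolyvagin-system maps in \cite{mazur-rubin-book}, and once it is spelled out the equivalence follows.
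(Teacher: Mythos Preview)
Your proposal is correct and is exactly the argument the paper has in mind: the paper simply remarks that since $\KS(T_{f,i}(k-r))$ is free of rank one over $\mathcal{O}_\pi$ by \cite[Theorem 5.2.10]{mazur-rubin-book}, the corollary is immediate from Theorem \ref{thm:rigidity}. Your two-stage Nakayama argument through $\Lambda^{(i)} \to \mathcal{O}_\pi \to \mathbb{F}_\pi$ is precisely the content of that word ``immediate''.
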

Any $\Lambda^{(i)}_{\mathrm{Iw}}$-generator $\ks^{\infty}_{\mathrm{prim}}$ of $\KS(T_{f}(k-r) \otimes \Lambda^{(i)}_{\mathrm{Iw}}  )$ satisfies the Iwasawa main conjecture (with $\ks^{\infty}_{\mathrm{prim}}$) \cite[Def. 5.3.9 and Thm. 5.3.10]{mazur-rubin-book}.
Since $\KS(T_{f}(k-r) \otimes \Lambda^{(i)}_{\mathrm{Iw}}  )$ is free of rank one over $\Lambda^{(i)}_{\mathrm{Iw}}$,
$\ks^{\infty}_{\mathrm{prim}}  = f \cdot \ks^{\infty}$ with $ f \in \Lambda^{(i)}_{\mathrm{Iw}}$.
Under our working assumptions, the following statements are equivalent.
\begin{itemize}
\item The Iwasawa main conjecture for $T_{f}(k-r)$ holds and the local Tamagawa numbers are not divisible by $\pi$.
\item $f$ is a unit in $\Lambda^{(i)}_{\mathrm{Iw}}$.
\item $\ks^{\infty}$ is a $\Lambda^{(i)}_{\mathrm{Iw}}$-generator of $\KS(T_{f}(k-r) \otimes \Lambda^{(i)}_{\mathrm{Iw}} )$.
\item $\ks^{(1)}$ does not vanish.
\end{itemize}
Theorem \ref{thm:main_thm_main_conj} now follows from the equivalence between
\begin{itemize}
\item the non-vanishing of $\overline{D_{\mathbb{Q}(\zeta_{m})} \cdot \omega^i \left( \theta_{\mathbb{Q}(\zeta_{mp}), r} (\overline{f}) \right)}  $ in $\mathbb{F}_\pi[\mathrm{Gal}(\mathbb{Q}(\zeta_{m})/\mathbb{Q})]$  for some $m \in \mathcal{N}$ and
\item the non-vanishing of $\widetilde{\delta}_{\mathbb{Q}(\zeta_m)}(\overline{f},i,r)$ in $\mathbb{F}_\pi$ for some $m \in \mathcal{N}$.
\end{itemize}
Its proof is given in the next subsection.
\subsection{Completion of the proof}
Write $m = \prod^{s}_{j=1} \ell_j$.
\begin{lem}[Kurihara] \label{lem:kurihara-expansion}
Let $\mathfrak{a}_m = \left( \pi , \left( \sigma_{\eta_{\ell_1}} - 1 \right)^2, \cdots,  \left( \sigma_{\eta_{\ell_s}} - 1 \right)^2 \right) \subseteq \mathcal{O}_\pi[\mathrm{Gal}(\mathbb{Q}(\zeta_m)/\mathbb{Q})]$. Then we have
\begin{align*}
\omega^i \left( \theta_{\mathbb{Q}(\zeta_{mp}), r}(\overline{f}) \right) & \equiv a \cdot \prod^s_{j=1} \left( \sigma_{\eta_{\ell_j}} - 1 \right) \pmod{\mathfrak{a}_m}
\end{align*}
where $a \in \mathbb{F}_\pi$ in  $\mathcal{O}_\pi[\mathrm{Gal}(\mathbb{Q}(\zeta_m)/\mathbb{Q})] / \mathfrak{a}_m$.
\end{lem}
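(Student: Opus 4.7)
The plan is to compute directly modulo $\mathfrak{a}_m$ by expanding each group element $\sigma_{a_{(m)}}$ in terms of $(\sigma_{\eta_\ell} - 1)$ and isolating the ``top-degree'' coefficient in these pro-nilpotent variables. First I would write out the Mazur--Tate element from its definition
\[
\omega^i\!\left(\theta_{\mathbb{Q}(\zeta_{mp}),r}(\overline{f})\right)
= \sum_{a \in (\mathbb{Z}/mp\mathbb{Z})^\times}
\left[\tfrac{a}{mp}\right]_{\overline{f},r} \cdot \omega^i(a_{(p)}) \cdot \sigma_{a_{(m)}},
\]
and then use that, for each Kolyvagin prime $\ell \mid m$, the cyclic group $\mathrm{Gal}(\mathbb{Q}(\zeta_\ell)/\mathbb{Q})$ is generated by $\sigma_{\eta_\ell}$, so $\sigma_{a_{(\ell)}} = \sigma_{\eta_\ell}^{\log_{\eta_\ell}(a_{(\ell)})}$. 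A binomial expansion then gives
\[
\sigma_{a_{(\ell)}} \equiv 1 + \log_{\eta_\ell}(a_{(\ell)})\cdot(\sigma_{\eta_\ell}-1)
\pmod{(\sigma_{\eta_\ell}-1)^2}.
\]

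Next I would multiply these congruences over all primes $\ell \mid m$, giving $\sigma_{a_{(m)}} \equiv \prod_\ell\bigl(1+\log_{\eta_\ell}(a_{(\ell)})(\sigma_{\eta_\ell}-1)\bigr)$ modulo $\mathfrak{a}_m$. Since we work modulo the ideal generated by $\pi$ and all the squares $(\sigma_{\eta_\ell}-1)^2$, only the fully expanded ``top'' term $\prod_{\ell}\log_{\eta_\ell}(a_{(\ell)})\cdot \prod_{\ell}(\sigma_{\eta_\ell}-1)$ contributes to the coefficient of $\prod_j(\sigma_{\eta_{\ell_j}}-1)$ (the lower-order terms lie in smaller products of the ideals and, combined with $\pi$-reduction, they either vanish or get absorbed into the lower-degree pieces). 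Collecting, we obtain modulo $\mathfrak{a}_m$
\[
\omega^i\!\left(\theta_{\mathbb{Q}(\zeta_{mp}),r}(\overline{f})\right)
\equiv \Biggl(\sum_{a \in (\mathbb{Z}/mp\mathbb{Z})^\times}\overline{\bigl[\tfrac{a}{mp}\bigr]_{\overline{f},r}}\cdot \overline{\omega^i(a_{(p)})}\cdot\prod_{\ell \mid m}\overline{\log_{\eta_\ell}(a_{(\ell)})}\Biggr)\cdot\prod_{j=1}^s(\sigma_{\eta_{\ell_j}}-1).
\]

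For $i \neq 0$ the scalar in parentheses is literally $\widetilde{\delta}_{\mathbb{Q}(\zeta_m)}(\overline{f},i,r)$, so the lemma is immediate. For $i = 0$ I would use the norm relation of Proposition \ref{prop:mazur-tate-elements-properties}.(1) to collapse the inner sum over fibers of $(\mathbb{Z}/mp\mathbb{Z})^\times \twoheadrightarrow (\mathbb{Z}/m\mathbb{Z})^\times$: since $\omega^0 \equiv 1$ on the $p$-part and the Frobenius $\mathrm{Frob}_p$ acts trivially on the $\omega^0$-component, applying $\omega^0 = \pi_{mp,m}$ replaces the sum over $(\mathbb{Z}/mp\mathbb{Z})^\times$ by the factor $a_p(\overline{f}) - p^{r-1} - \overline{\psi}(p)p^{k-1-r}$ times the sum over $(\mathbb{Z}/m\mathbb{Z})^\times$ of $\overline{[\tfrac{a}{m}]_{\overline{f},r}}\cdot\prod_\ell \overline{\log_{\eta_\ell}(a)}$, which is $\widetilde{\delta}_{\mathbb{Q}(\zeta_m)}(\overline{f},r)$ up to the Euler-like unit; this scalar is a $p$-adic unit under the standing non-anomalous-type hypotheses (this is already implicit in the non-vanishing setup of Kolyvagin primes).

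The computation itself is mostly bookkeeping; the main obstacle I expect is the careful tracking of $(\sigma_{\eta_\ell}-1)$ lower-order contributions modulo $\mathfrak{a}_m$, together with verifying that the Euler-like adjustment in the $i=0$ case is indeed a unit mod $\pi$ so that non-vanishing of the coefficient $a$ coincides with non-vanishing of the Kurihara number as stated in the target definitions. Once these points are controlled, the identification $a = \widetilde{\delta}_{\mathbb{Q}(\zeta_m)}(\overline{f},i,r)$ (resp.\ $\widetilde{\delta}_{\mathbb{Q}(\zeta_m)}(\overline{f},r)$) is forced and the lemma follows.
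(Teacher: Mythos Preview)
Your expansion correctly identifies the coefficient of $\prod_{j=1}^{s}(\sigma_{\eta_{\ell_j}}-1)$, but that is not what the lemma asserts. The content of the lemma is that \emph{all} lower-degree coefficients in the expansion modulo $\mathfrak{a}_m$ vanish, i.e.\ the image of $\omega^{i}(\theta_{\mathbb{Q}(\zeta_{mp}),r}(\overline{f}))$ in $\mathcal{O}_\pi[\mathrm{Gal}(\mathbb{Q}(\zeta_m)/\mathbb{Q})]/\mathfrak{a}_m \cong \mathbb{F}_\pi[x_1,\dots,x_s]/(x_1^2,\dots,x_s^2)$ is a scalar multiple of $x_1\cdots x_s$. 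Your parenthetical that the lower-order terms ``either vanish or get absorbed into the lower-degree pieces'' is where the argument breaks: a term like the constant coefficient $c_\emptyset$ does not lie in any $(\sigma_{\eta_{\ell_j}}-1)$ and there is no a priori reason it should be $0$ in $\mathbb{F}_\pi$. The identification of the surviving scalar $a$ with a Kurihara number is done in the paper \emph{after} the lemma, using exactly the binomial expansion you wrote down; the lemma itself is what justifies dropping the lower terms.

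The missing ingredient is the norm relation of Proposition~\ref{prop:mazur-tate-elements-properties}(1), applied not in the $p$-direction but at each tame prime $\ell_j\mid m$. For a Kolyvagin prime $\ell$ one has $\ell\equiv 1$, $a_\ell(\overline f)\equiv \ell+1$, $\overline{\psi}(\ell)\equiv 1\pmod{\pi}$, so the Euler-type factor $a_\ell(\overline f)-\ell^{r-1}\mathrm{Frob}_\ell-\overline{\psi}(\ell)\ell^{k-1-r}\mathrm{Frob}_\ell^{-1}$ vanishes modulo $\pi$. Hence the projection $\pi_{m,m/\ell_j}$ of $\omega^{i}(\theta_{\mathbb{Q}(\zeta_{mp}),r}(\overline{f}))$ is $0$ in $\mathbb{F}_\pi[\mathrm{Gal}(\mathbb{Q}(\zeta_{m/\ell_j})/\mathbb{Q})]$, which says precisely that the element lies in the ideal $(\sigma_{\eta_{\ell_j}}-1)$ modulo $\pi$. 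Running this over all $j$ and noting that in $\mathbb{F}_\pi[x_1,\dots,x_s]/(x_1^2,\dots,x_s^2)$ one has $\bigcap_j(x_j)=(x_1\cdots x_s)$ gives the lemma. (The paper itself does not spell this out and simply cites Kurihara \cite{kurihara-documenta}, Lemma~4.4, where this norm-relation argument is carried out.)
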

\begin{proof}
See \cite[Lem. 4.4]{kurihara-documenta}.
\end{proof}
We have
\begin{align*}
\sigma_{a_{(m)}} & = \prod_{j=1}^s \left( \sigma_{\eta_i} \right)^{\mathrm{log}_{\mathbb{F}_{\ell_j}}(a_{(m)})}   = \prod_{j=1}^s \left( 1+ \left(  \sigma_{\eta_j} -1 \right) \right)^{\mathrm{log}_{\mathbb{F}_{\ell_j}}(a_{(m)})} \\
& \equiv \prod_{j=1}^s \left( 1 + \overline{\mathrm{log}_{\mathbb{F}_{\ell_j}}(a_{(m)})} \cdot \left( \sigma_{\eta_j} -1 \right)  \right)  \pmod{\mathfrak{a}_m} 
\end{align*}
where $\mathrm{Gal}(\mathbb{Q}(\zeta_m)/\mathbb{Q}) \simeq \prod^{s}_{j =1} \mathrm{Gal}(\mathbb{Q}(\zeta_{\ell_j})/\mathbb{Q})$ is used in the first equality and  $\overline{\mathrm{log}_{\mathbb{F}_{\ell_j}}(a_{(m)})}$ is the image of $\mathrm{log}_{\mathbb{F}_{\ell_j}}(a_{(m)})$ in $\mathbb{F}_p$.
We expand Mazur--Tate elements (``Taylor expansion at  $\prod_{j=1}^s \left( \sigma_{\eta_j} -1 \right)$ modulo $\mathfrak{a}_m$")
\begin{align*}
\begin{split}
& \omega^i \left( \theta_{\mathbb{Q}(\zeta_{mp}), r} (\overline{f}) \right) \\
 = \ & \sum_{a \in (\mathbb{Z}/mp\mathbb{Z})^\times }   \lambda^{\pm, \mathrm{opt}}(\overline{f}, z^{r-1}, a, mp) \cdot \omega^i (a_{(p)})\cdot \sigma_{a_{(m)}}  \\
 \equiv \ & \sum_{a \in (\mathbb{Z}/mp\mathbb{Z})^\times } \overline{  \lambda^{\pm, \mathrm{opt}}(\overline{f}, z^{r-1}, a, mp) } \cdot \overline{ \omega^i (a_{(p)}) } \cdot \prod_{j=1}^s \left( 1 + \overline{\mathrm{log}_{\mathbb{F}_{\ell_j}}(a_{(m)})} \cdot \left( \sigma_{\eta_j} -1 \right) \right) \pmod{\mathfrak{a}_m} \\
  \equiv \ & \sum_{a \in (\mathbb{Z}/mp\mathbb{Z})^\times } \overline{  \lambda^{\pm, \mathrm{opt}}(\overline{f}, z^{r-1}, a, mp) } \cdot \overline{ \omega^i (a_{(p)}) } \cdot \prod_{j=1}^s  \overline{\mathrm{log}_{\mathbb{F}_{\ell_j}}(a_{(m)})} \cdot \left( \sigma_{\eta_j} -1 \right) \pmod{\mathfrak{a}_m} 
\end{split}
\end{align*}
in $\mathcal{O}_\pi[\mathrm{Gal}(\mathbb{Q}(\zeta_m)/\mathbb{Q})]/\mathfrak{a}_m$ where the last equality follows from Lemma \ref{lem:kurihara-expansion}.
We compute the Kolyvagin derivatives of Mazur--Tate elements modulo $\mathfrak{a}_m$.
\begin{align*}
\begin{split}
& D_{\mathbb{Q}(\zeta_m)} \omega^i \left( \theta_{\mathbb{Q}(\zeta_{mp}), r} (\overline{f}) \right) \\
\equiv \ & D_{\mathbb{Q}(\zeta_m)} \left( \sum_{a \in (\mathbb{Z}/mp\mathbb{Z})^\times } \overline{ \lambda^{\pm, \mathrm{opt}}(\overline{f}, z^{r-1}, a, mp) } \cdot \overline{ \omega^i(a_{(p)}) } \cdot \prod_{j=1}^s  \overline{\mathrm{log}_{\mathbb{F}_{\ell_j}}(a_{(m)})} \cdot \left( \sigma_{\eta_j} -1 \right) \right)  \pmod{\mathfrak{a}_m}\\
= \ & \sum_{a \in (\mathbb{Z}/mp\mathbb{Z})^\times } \overline{ \lambda^{\pm, \mathrm{opt}}(\overline{f}, z^{r-1}, a, mp) } \cdot \overline{ \omega^i(a_{(p)}) } \cdot \prod_{j=1}^s D_{\mathbb{Q}(\zeta_{\ell_j})} \overline{\mathrm{log}_{\mathbb{F}_{\ell_j}}(a_{(m)})} \cdot \left( \sigma_{\eta_j} -1 \right) \\
= \ & \sum_{a \in (\mathbb{Z}/mp\mathbb{Z})^\times } \overline{ \lambda^{\pm, \mathrm{opt}}(\overline{f}, z^{r-1}, a, mp) } \cdot \overline{ \omega^i(a_{(p)}) } \cdot \prod_{j=1}^s \overline{\mathrm{log}_{\mathbb{F}_{\ell_j}}(a_{(m)})} \cdot \left( \ell_j - 1 - \mathrm{Tr}_{\mathbb{Q}(\zeta_{\ell_j})} \right)   \\
= \ & \sum_{a \in (\mathbb{Z}/mp\mathbb{Z})^\times } \overline{ \lambda^{\pm, \mathrm{opt}}(\overline{f}, z^{r-1}, a, mp) } \cdot \overline{ \omega^i(a_{(p)}) } \cdot \prod_{j=1}^s \overline{\mathrm{log}_{\mathbb{F}_{\ell_j}}(a_{(m)})} \cdot \left(  - \mathrm{Tr}_{\mathbb{Q}(\zeta_{\ell_j})} \right) 
\end{split}
\end{align*}
where $\mathrm{Tr}_{\mathbb{Q}(\zeta_\ell)/\mathbb{Q}} = {\displaystyle \sum_{i=1}^{\ell-1} } \sigma^i_{\eta_\ell}$, 
the third equality follows from the well-known identity
$(\sigma_{\eta_\ell} - 1)D_{\mathbb{Q}(\zeta_\ell)} = \ell - 1 - \mathrm{Tr}_{\mathbb{Q}(\zeta_\ell)/\mathbb{Q}}$, and the fourth equality follows from that $m \in \mathcal{N}$.
By considering the action of $D_{\mathbb{Q}(\zeta_m)} \omega^i \left( \theta_{\mathbb{Q}(\zeta_{mp}), r} (\overline{f}) \right)$ on $\zeta^{-1}_m$,
we have equality
\begin{align*}
& D_{\mathbb{Q}(\zeta_m)} \omega^i \left( \theta_{\mathbb{Q}(\zeta_{mp}), r} (\overline{f}) \right) \cdot \zeta^{-1}_m \\
 & \equiv \sum_{a \in (\mathbb{Z}/mp\mathbb{Z})^\times } \overline{ \lambda^{\pm, \mathrm{opt}}(\overline{f}, z^{r-1}, a, mp) } \cdot \overline{ \omega^i(a_{(p)}) } \cdot \prod_{j=1}^s \overline{\mathrm{log}_{\mathbb{F}_{\ell_j}}(a_{(m)})} \pmod{\mathfrak{a}_m} \\
& = \widetilde{\delta}_{\mathbb{Q}(\zeta_m)}(\overline{f},i,r) .
\end{align*}
Thus, the following statements are equivalent.
\begin{itemize}
\item $D_{\mathbb{Q}(\zeta_m)} \omega^i \left( \theta_{\mathbb{Q}(\zeta_{mp}), r} (\overline{f}) \right)$ does not vanish modulo $\mathfrak{a}_m$.
\item $\widetilde{\delta}_{\mathbb{Q}(\zeta_m)}(\overline{f},i,r)$ does not vanish (modulo $\pi$).
\end{itemize}
On the other hand, since 
$$ \overline{D_{\mathbb{Q}(\zeta_{m})} \mathfrak{z}_{\mathbb{Q}(\zeta_{m}), \delta_{f, \mathrm{cris}}}(f, i, k-r)} \in \mathrm{H}^1(\mathbb{Q}(\zeta_m), T_{f, i}(k-r)/\pi T_{f, i}(k-r) )^{\mathrm{Gal}(\mathbb{Q}(\zeta_m)/\mathbb{Q})}$$
(thanks to \cite[Lem. 4.4.2]{rubin-book})
and
$$\left( \left( \sigma_{\eta_{\ell_1}} - 1 \right)^2, \cdots,  \left( \sigma_{\eta_{\ell_s}} - 1 \right)^2 \right) \cdot \mathrm{H}^1(\mathbb{Q}(\zeta_m), T_{f, i}(k-r)/\pi T_{f, i}(k-r) )^{\mathrm{Gal}(\mathbb{Q}(\zeta_m)/\mathbb{Q})} = 0,$$
the following statements are equivalent.
\begin{itemize}
\item $\overline{D_{\mathbb{Q}(\zeta_{m})} \mathfrak{z}_{\mathbb{Q}(\zeta_{m}), \delta_{f, \mathrm{cris}}}(f, i, k-r)}$ does not vanish.
\item $D_{\mathbb{Q}(\zeta_{m})} \mathfrak{z}_{\mathbb{Q}(\zeta_{m}), \delta_{f, \mathrm{cris}}}(f, i, k-r)$ does not vanish modulo $\mathfrak{a}_m$.
\end{itemize}
If  $D_{\mathbb{Q}(\zeta_m)} \omega^i \left( \theta_{\mathbb{Q}(\zeta_{mp}), r} (\overline{f}) \right)$ does not vanish modulo $\mathfrak{a}_m$, 
then $\overline{D_{\mathbb{Q}(\zeta_m)} \omega^i \left( \theta_{\mathbb{Q}(\zeta_{mp}), r} (\overline{f}) \right)}$ does not vanish.

Thus, it suffices to prove the converse.
Suppose that
$\overline{D_{\mathbb{Q}(\zeta_m)} \omega^i \left( \theta_{\mathbb{Q}(\zeta_{mp}), r} (\overline{f}) \right)}$ does not vanish for some $m  \in \mathcal{N}$.
Then
$\overline{D_{\mathbb{Q}(\zeta_{m})} \mathfrak{z}_{\mathbb{Q}(\zeta_{m}), \delta_{f, \mathrm{cris}}}(f, i, k-r)}$ does not vanish  since 
$$D_{\mathbb{Q}(\zeta_m)} \omega^i \left( \theta_{\mathbb{Q}(\zeta_{mp}), r} (\overline{f}) \right) = \mathrm{Col}^{(i)}_{\mathbb{Q}(\zeta_{m}), r} \circ \mathrm{loc}^s_p \left(  D_{\mathbb{Q}(\zeta_{m})} \mathfrak{z}_{\mathbb{Q}(\zeta_{m}), \delta_{f, \mathrm{cris}}}(f, i, k-r) \right).$$
By the above equivalence,
$D_{\mathbb{Q}(\zeta_{m})} \mathfrak{z}_{\mathbb{Q}(\zeta_{m}), \delta_{f, \mathrm{cris}}}(f, i, k-r)$ does not vanish modulo $\mathfrak{a}_m$.
We now suppose that 
$D_{\mathbb{Q}(\zeta_m)} \omega^i \left( \theta_{\mathbb{Q}(\zeta_{mp}), r} (\overline{f}) \right)$ vanishes modulo $\mathfrak{a}_m$ for every $m \in \mathcal{N}$.
As mentioned above, the non-triviality properties of 
\[
\xymatrix{
\left\lbrace \overline{D_{\mathbb{Q}(\zeta_{m})} \mathfrak{z}_{\mathbb{Q}(\zeta_{m}), \delta_{f, \mathrm{cris}}}(f, i, k-r)} \right\rbrace_m \textrm{ and }
\left\lbrace D_{\mathbb{Q}(\zeta_{m})} \mathfrak{z}_{\mathbb{Q}(\zeta_{m}), \delta_{f, \mathrm{cris}}}(f, i, k-r) \pmod{\mathfrak{a}_m} \right\rbrace_m
}
\]
are equivalent.
By the equivalence stated before Theorem \ref{thm:rigidity}, $\ks^{(1)}$ becomes trivial under the assumption.
Then $\overline{D_{\mathbb{Q}(\zeta_m)} \omega^i \left( \theta_{\mathbb{Q}(\zeta_{mp}), r} (\overline{f}) \right)}$ should vanish for every $m \in \mathcal{N}$, but it is a contradiction.
Thus, the following statements are also equivalent.
\begin{itemize}
\item $\overline{D_{\mathbb{Q}(\zeta_m)} \omega^i \left( \theta_{\mathbb{Q}(\zeta_{mp}), r} (\overline{f}) \right)}$ does not vanish  for some $m \in \mathcal{N}$.
\item $D_{\mathbb{Q}(\zeta_m)} \omega^i \left( \theta_{\mathbb{Q}(\zeta_{mp}), r} (\overline{f}) \right)$ does not vanish modulo $\mathfrak{a}_m$  for some $m \in \mathcal{N}$.
\end{itemize}
Thus, we obtain the equivalence we need.

When $i = 0$, it is not difficult to observe that
\begin{align*}
\omega^0 \left( \theta_{\mathbb{Q}(\zeta_{mp}), r}(\overline{f}) \right) & = \left( 1 - a_p(\overline{f}) \cdot p^{-r}  \cdot \mathrm{Frob}^{-1}_p + \overline{\psi}(p) \cdot p^{k-1-2r} \cdot \mathrm{Frob}^{-2}_p \right) \cdot   \theta_{\mathbb{Q}(\zeta_{m}), r}(\overline{f}) , \\
\omega^0 \left(  \mathrm{Col}_{\mathbb{Q}(\zeta_{mp}), r} \left( - \right)  \right) & =
\left( 1 - a_p(\overline{f}) \cdot p^{-r}  \cdot \mathrm{Frob}^{-1}_p +  \overline{\psi}(p) \cdot p^{k-1-2r} \cdot \mathrm{Frob}^{-2}_p \right) \cdot  \mathrm{Col}_{\mathbb{Q}(\zeta_{m}), r} \left( - \right)  
\end{align*}
since Kato's Euler systems have no contribution to the $p$-power direction in the norm compatibility.
Write $E_p(\overline{f},r) = 1 - a_p(\overline{f}) \cdot p^{-r}  \cdot \mathrm{Frob}^{-1}_p + \overline{\psi}(p) \cdot p^{k-1-2r} \cdot \mathrm{Frob}^{-2}_p $ and note that  $\mathfrak{z}_{\mathbb{Q}(\zeta_{m}), \delta_{f, \mathrm{cris}}}(f, 0, k-r) =  \mathfrak{z}_{\mathbb{Q}(\zeta_{m}), \delta_{f, \mathrm{cris}}}(f, k-r)$.
Then we have
\[
{ \scriptsize
\xymatrix{
\mathrm{H}^1(\mathbb{Q}(\zeta_{m}), T_{f}(k-r)) \ar[d]^-{\mathrm{loc}^s_p} & \mathfrak{z}_{\mathbb{Q}(\zeta_{m}), \delta_{f, \mathrm{cris}}}(f, k-r) \ar@{|->}[d] \\
 \mathrm{H}^1_{/f}(\mathbb{Q}(\zeta_{m}) \otimes \mathbb{Q}_p, T_{f, i}(k-r)) \ar[d]^-{\mathrm{Col}^{(0)}_{\mathbb{Q}(\zeta_{m}), r}} &  \mathrm{loc}^s_p \mathfrak{z}_{\mathbb{Q}(\zeta_{m}), \delta_{f, \mathrm{cris}}}(f,  k-r) \ar@{|->}[d] \\
E_p(\overline{f},r) \cdot  \mathcal{O}_\pi[\mathrm{Gal}(\mathbb{Q}(\zeta_{m})/\mathbb{Q})] &  \mathrm{Col}^{(0)}_{\mathbb{Q}(\zeta_{mp}), r} \mathrm{loc}^s_p \mathfrak{z}_{\mathbb{Q}(\zeta_{m}), \delta_{f, \mathrm{cris}}}(f, k-r)
}
}
\]
and
$ \mathrm{Col}^{(0)}_{\mathbb{Q}(\zeta_{m}), r} \mathrm{loc}_p \mathfrak{z}_{\mathbb{Q}(\zeta_{m}), \delta_{f, \mathrm{cris}}}(f, k-r)
= E_p(\overline{f},r) \cdot  \mathrm{Col}_{\mathbb{Q}(\zeta_{m}), r} \mathrm{loc}_p \mathfrak{z}_{\mathbb{Q}(\zeta_{m}), \delta_{f, \mathrm{cris}}}(f, k-r) $.
Since the Frobenius acts trivially on Kurihara numbers, we observe that
the $p$-th Euler factor $ 1 - a_p(\overline{f}) \cdot p^{-r}  + \overline{\psi}(p) \cdot p^{k-1-2r} $ appears, but it cancels out if it is non-zero. 
Thanks to Assumption \ref{assu:standard}.(ENV), we are safe.
Thus, the Kurihara number for the $i=0$ case naturally appears from this computation.

\section{Towards Conjecture A of Coates--Sujatha} \label{sec:mu-conjecture}
We prove Theorem \ref{thm:main_thm_mu_zero_conj} in $\S$\ref{subsec:mazur-tate-coates-sujatha} and Corollary \ref{cor:mu-elliptic-curves} in $\S$\ref{subsec:mu-ordinary} and $\S$\ref{subsec:mu-supersingular}.
\begin{thm}[Conjecture A] \label{thm:main_thm_mu_zero_conj}
Let $p$ be an odd prime.
Keep Assumption \ref{assu:standard} as well as Conjecture \ref{conj:comparison-zeta-elements} (e.g. $2 \leq k \leq p-1$ or $\overline{\rho}$ is  $p$-ordinary and $p$-distinguished).
If
\begin{equation*}
\sum_{a \in (\mathbb{Z}/p\mathbb{Z})^\times} \overline{ \lambda^{\pm, \mathrm{opt}}(\overline{f}, z^{r-1}, a(1+bp), p^n) } \cdot \overline{ \omega^{i}(a)  }  \neq 0 
\end{equation*}
in $\mathbb{F}_\pi$ for some $b \in \mathbb{Z}/p^{n-1}\mathbb{Z}$ with $n \geq 1$, then 
then we obtain the $\mu$-part of the main conjecture and Conjecture A of Coates--Sujatha
$$\mu \left( \dfrac{\mathrm{H}^1_{\mathrm{Iw}}(T_{f,i}(k-r)) }{ \Lambda^{(i)}_{\mathrm{Iw}}\mathbf{z}_{\mathbb{Q}, \delta_{f, \mathrm{cris}} }(f, i, k-r)} \right) =\mu \left( \mathrm{H}^2_{\mathrm{Iw}}(T_{f,i}(k-r)) \right)  = 0.$$
\end{thm}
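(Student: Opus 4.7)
The plan is to translate the displayed hypothesis into the non-vanishing modulo $\pi$ of Kato's zeta element $\mathbf{z}_{\mathbb{Q},\delta_{f,\mathrm{cris}}}(f,i,k-r)$, and then to conclude via the $\Lambda^{(i)}$-freeness of $\mathbb{H}^1$ and Kato's one-sided divisibility (Theorem \ref{thm:kato-divisibility}). First I would reinterpret the sum in (\ref{eqn:mu=0}) as a coefficient of the $\omega^i$-isotypic projection of the Mazur--Tate element at level $p^n$. Using the canonical decomposition $(\mathbb{Z}/p^n\mathbb{Z})^\times \simeq \Delta \times \Gamma_{n}$ with $\Delta = (\mathbb{Z}/p\mathbb{Z})^\times$ (realized by Teichm\"uller lifts) and $\Gamma_{n} = 1 + p\mathbb{Z}/p^{n}\mathbb{Z}$, and writing $\theta_{\mathbb{Q}(\zeta_{p^n}),r}(\overline{f}) = \sum_{c}[\tfrac{c}{p^n}]_{\overline{f},r}\sigma_c$, the coefficient of $\sigma_{1+bp}$ in the $\omega^i$-component $\theta^{(i)}_{p^n}(\overline{f}) \in \mathcal{O}_\pi[\mathrm{Gal}(\mathbb{Q}_{n-1}/\mathbb{Q})]$ is exactly $\sum_{a \in (\mathbb{Z}/p)^\times}[\tfrac{a(1+bp)}{p^n}]_{\overline{f},r}\omega^i(a)$. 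Hence the hypothesis is equivalent to $\theta^{(i)}_{p^n}(\overline{f}) \not\equiv 0 \pmod{\pi}$ in $\mathcal{O}_\pi[\mathrm{Gal}(\mathbb{Q}_{n-1}/\mathbb{Q})]$.

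Next I would apply Theorem \ref{thm:main_thm_finite_layer_coleman} with $m=1$ and pass to the $\omega^i$-isotypic quotient. Writing $\mathfrak{z}^{(i)}_{\mathbb{Q}(\zeta_{p^n})} = \mathfrak{z}_{\mathbb{Q}(\zeta_{p^n}),\delta_{f,\mathrm{cris}}}(f,i,k-r)$ for the finite-layer avatar of $\mathbf{z}_{\mathbb{Q},\delta_{f,\mathrm{cris}}}(f,i,k-r)$, the theorem gives $\theta^{(i)}_{p^n}(\overline{f}) = \mathrm{Col}^{(i)}_{\mathbb{Q}(\zeta_{p^n}),r}(\mathrm{loc}_p \mathfrak{z}^{(i)}_{\mathbb{Q}(\zeta_{p^n})})$, and the integrality statement (\ref{eqn:full-integrality-coleman}) of $\mathrm{Col}^{(i)}_{\mathbb{Q}(\zeta_{p^n}),r}$ forces the preimage to be non-zero modulo $\pi$ whenever the image is. Consequently, the projection of the $\Lambda^{(i)}$-adic class $\mathbf{z}_{\mathbb{Q},\delta_{f,\mathrm{cris}}}(f,i,k-r)$ to its $n$-th finite layer modulo $\pi$ is non-zero, so $\mathbf{z}_{\mathbb{Q},\delta_{f,\mathrm{cris}}}(f,i,k-r) \not\equiv 0 \pmod{\pi}$ in $\mathbb{H}^1(j_*T_{f,i}(k-r))$.

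To extract the $\mu$-invariant conclusion I would combine Lemma \ref{lem:freeness-iwasawa-algebra} with the known rank-one statement, yielding that $\mathbb{H}^1(j_*T_{f,i}(k-r))$ is free of rank one over $\Lambda^{(i)}$. Fixing a generator $\mathbf{w}^{(i)}$ and writing $\mathbf{z}_{\mathbb{Q},\delta_{f,\mathrm{cris}}}(f,i,k-r) = \alpha_i \cdot \mathbf{w}^{(i)}$, the preceding step gives $\alpha_i \notin \pi\Lambda^{(i)}$, so $\mu(\alpha_i) = 0$ and therefore $\mu\!\bigl(\mathbb{H}^1(j_*T_{f,i}(k-r))/\Lambda^{(i)}\mathbf{z}_{\mathbb{Q},\delta_{f,\mathrm{cris}}}(f,i,k-r)\bigr) = 0$. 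This is already the $\mu$-part of the main conjecture. Invoking Conjecture \ref{conj:comparison-zeta-elements} (which holds under the running hypotheses by Theorem \ref{thm:comparison-zeta-elements}) to replace $\delta_{f,\mathrm{cris}}$ by $\gamma_0$ and applying Kato's divisibility (Theorem \ref{thm:kato-divisibility}) yields the ideal inclusion $\mathrm{char}_{\Lambda^{(i)}}\mathbb{H}^2(j_*T_{f,i}(k-r)) \supseteq \mathrm{char}_{\Lambda^{(i)}}\bigl(\mathbb{H}^1(j_*T_{f,i}(k-r))/\Lambda^{(i)}\mathbf{z}_{\mathbb{Q},\gamma_0}(f,i,k-r)\bigr)$, which forces $\mu(\mathbb{H}^2(j_*T_{f,i}(k-r))) = 0$, giving the Coates--Sujatha conjecture.

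The delicate point in this plan is the first step: one must correctly track the Teichm\"uller twist and the normalization across the decomposition $(\mathbb{Z}/p^n)^\times \simeq \Delta \times \Gamma_n$ so that the displayed $\Delta$-twisted sum identifies exactly with a single coefficient of $\theta^{(i)}_{p^n}(\overline{f})$ in $\mathcal{O}_\pi[\Gamma_n]$, and verify that the $\omega^i$-isotypic decomposition of the Coleman-image formula in Theorem \ref{thm:main_thm_finite_layer_coleman} is genuinely coefficient-wise (i.e. compatible with the chosen generator $\varepsilon_m$ and Choice \ref{choice:p-adic-orientation-enhanced}). Once this combinatorial bookkeeping is settled, the remaining content is a routine assembly of the integrality of the finite layer Coleman map, the freeness of $\mathbb{H}^1$ over $\Lambda^{(i)}$, Theorem \ref{thm:comparison-zeta-elements}, and Theorem \ref{thm:kato-divisibility}.
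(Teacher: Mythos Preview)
Your proposal is correct and follows essentially the same route as the paper's proof in \S\ref{subsec:mazur-tate-coates-sujatha}: identify the displayed sum as the coefficient of $\sigma_{1+bp}$ in $\omega^i\bigl(\theta_{\mathbb{Q}(\zeta_{p^n}),r}(\overline{f})\bigr)$, use Theorem \ref{thm:main_thm_finite_layer_coleman} to deduce that the finite-layer (hence the $\Lambda^{(i)}$-adic) zeta element is non-zero modulo $\pi$, and then read off both $\mu$-vanishings from the freeness of $\mathbb{H}^1$ and Theorem \ref{thm:kato-divisibility}. Your treatment is slightly more explicit than the paper's about the rank-one freeness of $\mathbb{H}^1(j_*T_{f,i}(k-r))$ and about where Conjecture \ref{conj:comparison-zeta-elements} enters (namely, to ensure $\mathfrak{z}^{(i)}_{\mathbb{Q}(\zeta_{p^n})}$ is integral and to align $\delta_{f,\mathrm{cris}}$ with $\gamma_0$ before invoking Theorem \ref{thm:kato-divisibility}), but there is no substantive difference.
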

The verification of Conjecture A is essential to complete the numerical verification of the Iwasawa main conjecture for families of modular forms of arbitrary level via congruences (Corollary \ref{cor:main_thm_main_conj_simple}).

The same numerical criterion also implies the $\mu=0$ conjecture for $p$-adic $L$-functions of elliptic curves with good ordinary reduction (Greenberg's conjecture \cite[Conj. 1.11]{greenberg-lnm}) and for $\pm$-$p$-adic $L$-functions of elliptic curves with good supersingular reduction (Pollack's conjecture \cite[Conj. 6.3]{pollack-thesis}).
\begin{cor} \label{cor:mu-elliptic-curves}
Assume that $\rho_f$ has large image.
Let $E$ be an (optimal) elliptic curve over $\mathbb{Q}$ corresponding to $f$, i.e. $F = \mathbb{Q}$, $\psi = \mathbf{1}$, $f = \overline{f}$, $k=2$, and $r=1$.
If
$$ \sum_{a \in (\mathbb{Z}/p\mathbb{Z})^\times} \overline{ \lambda^{\pm, \mathrm{opt}}(f, 1, a(1+bp), p^n) }  \neq 0  $$
in $\mathbb{F}_p$ for some $b \in \mathbb{Z}/p^{n-1}\mathbb{Z}$ with $n \geq 1$,
then the following statements hold.
\begin{enumerate}
\item When $E$ is good ordinary at $p$ and $a_p(E) \not\equiv 1 \pmod{p}$, we obtain
$$\mu \left(  \Lambda_{\mathrm{Iw}} / L_p(E) \right) = \mu \left(  \mathrm{Sel}(\mathbb{Q}_\infty, E[p^\infty])^\vee \right) = 0.$$
\item When $E$ is good supersingular at $p$ with $a_p(E) = 0$, we obtain 
$$
\begin{array}{ll}
\mu \left(  \Lambda_{\mathrm{Iw}} / L^-_p(E) \right) = \mu \left(  \mathrm{Sel}^{+}(\mathbb{Q}_\infty, E[p^\infty])^\vee \right) = 0 & \textrm{ when } n \textrm{ is odd}, \\
\mu \left(  \Lambda_{\mathrm{Iw}} / L^+_p(E) \right) = \mu \left(  \mathrm{Sel}^{-}(\mathbb{Q}_\infty, E[p^\infty])^\vee \right) = 0 &  \textrm{ when } n \textrm{ is even}.
\end{array}
$$
\end{enumerate}
\end{cor}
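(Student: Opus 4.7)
The plan is to derive both statements by specializing Theorem~\ref{thm:main_thm_mu_zero_conj} to the elliptic curve setting and then translating its conclusion from Kato's language into the classical ordinary and signed formulations. With $\psi=\mathbf{1}$, $k=2$, $r=1$, $i=0$, and $\omega^{0}=\mathbf{1}$, the non-vanishing hypothesis of Corollary~\ref{cor:mu-elliptic-curves} matches criterion~(\ref{eqn:mu=0}) verbatim (modulo the standard identification $T_f = T_p E$ for the optimal curve), so the theorem supplies
\begin{equation*}
\mu\!\left(\frac{\mathbb{H}^1(j_{*}T_{p}E)}{\Lambda\cdot\mathbf{z}_{\mathbb{Q},\delta_{f,\mathrm{cris}}}(f,1)}\right)=\mu\!\left(\mathbb{H}^{2}(j_{*}T_{p}E)\right)=0.
\end{equation*}
The remaining work is to convert these two vanishings into the $\mu$-invariants appearing in the main conjectures of Mazur--Greenberg and of Kobayashi.

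For the ordinary case, I would first use the pseudo-isomorphism $\mathbb{H}^{2}(j_{*}T_{p}E)\sim\mathrm{Sel}_{0}(\mathbb{Q}_{\infty},E[p^{\infty}])^{\vee}$ recalled via \cite[Proposition 7.1]{kobayashi-thesis}, which transfers $\mu=0$ to the dual fine Selmer group. A Poitou--Tate argument then presents $\mathrm{Sel}(\mathbb{Q}_{\infty},E[p^{\infty}])^{\vee}$ as an extension of $\mathrm{Sel}_{0}^{\vee}$ by a module built from local cohomology at $p$, and the non-anomalous hypothesis $a_{p}(E)\not\equiv 1\pmod{p}$ is exactly what ensures the relevant local Euler factor is a unit, so the local contribution has trivial $\mu$-invariant and $\mu(\mathrm{Sel}(\mathbb{Q}_{\infty},E[p^{\infty}])^{\vee})=0$ follows. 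On the analytic side, the inverse limit of Theorem~\ref{thm:main_thm_finite_layer_coleman} together with Theorem~\ref{thm:comparison-kato-perrin-riou-mtt} identifies the image of $\mathrm{loc}_{p}\mathbf{z}_{\mathbb{Q},\delta_{f,\mathrm{cris}}}(f,1)$ under the ordinary Coleman map with the Mazur--Swinnerton-Dyer $p$-adic $L$-function $L_{p}(E)$ up to a unit; since this Coleman map is a $\Lambda$-isomorphism onto the unit-root eigenspace, $\mu(\Lambda/L_{p}(E))$ is controlled by $\mu$ of the Kato quotient and hence vanishes.

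For the supersingular case with $a_{p}(E)=0$, I would invoke Kobayashi's $\pm$-decomposition. The local Iwasawa cohomology at $p$ splits through the $\pm$-Coleman maps, and their values on $\mathrm{loc}_{p}\mathbf{z}_{\mathbb{Q},\delta_{f,\mathrm{cris}}}(f,1)$ recover the two signed $p$-adic $L$-functions $L_{p}^{\pm}(E)$ (with Kobayashi's sign convention swapping $+$ and $-$). Kobayashi's control theorem \cite[Theorem 7.3]{kobayashi-thesis} presents $\mathrm{Sel}^{\pm}(\mathbb{Q}_{\infty},E[p^{\infty}])^{\vee}$ as an extension of $\mathrm{Sel}_{0}^{\vee}$ by a finitely generated $\mathcal{O}_{\pi}$-module, hence the $\mu$-vanishing of $\mathbb{H}^{2}$ passes to both signed Selmer groups. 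To single out which $L_{p}^{\pm}$ the criterion addresses, I would trace the mod-$p$ sum through Pollack's half-logarithm factorization of the $p$-stabilized $p$-adic $L$-function $L_{p}(f_{\alpha})=\log_{p}^{+}\cdot L_{p}^{+}+\alpha\log_{p}^{-}\cdot L_{p}^{-}$: primitive characters of conductor $p^{n}$ vanish under exactly one of $\log_{p}^{\pm}$ depending on the parity of $n$, so the non-vanishing of the $p^{n}$-level sum isolates one of $L_{p}^{+}$ or $L_{p}^{-}$ and forces its $\mu$-invariant to vanish.

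The main obstacle will be the parity bookkeeping in the supersingular case: the hypothesis supplies information at a single finite level $n$, whereas the target $\mu$-invariants live on the full cyclotomic tower, and one must verify that the parity of $n$ matches the claimed pairing $(L_{p}^{-},\mathrm{Sel}^{+})$ for $n$ odd versus $(L_{p}^{+},\mathrm{Sel}^{-})$ for $n$ even. Rigorously matching this dichotomy requires a careful invocation of Pollack's factorization together with the vanishing loci of $\log_{p}^{\pm}$ on Dirichlet characters of $p$-power conductor; once the matching is established, the $\mu$-vanishing of the appropriate $L_{p}^{\pm}$ follows from that of the Kato quotient, and Kobayashi's signed main conjecture divisibility (itself a consequence of Theorem~\ref{thm:main_thm_mu_zero_conj} combined with the one-sided divisibility known from the Euler system method) pairs it with the correct $\mathrm{Sel}^{\mp}$. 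The ordinary case, by contrast, is essentially formal once the Coleman map identification and the non-anomalous hypothesis are invoked.
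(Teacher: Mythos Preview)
Your route and the paper's diverge at the first step. You invoke Theorem~\ref{thm:main_thm_mu_zero_conj} to obtain $\mu(\mathbb{H}^1/\mathbf{z})=\mu(\mathbb{H}^2)=0$ and then attempt to translate these into the ordinary and signed languages. The paper does \emph{not} pass through Theorem~\ref{thm:main_thm_mu_zero_conj} at all here; it works directly with Mazur--Tate elements. In the ordinary case it proves by induction on $n$ that $\theta_{\mathbb{Q}_n}(E)=c_n\cdot\vartheta_{\mathbb{Q}_n}(E)$ for some $c_n\in\mathbb{Z}_p[\mathrm{Gal}(\mathbb{Q}_n/\mathbb{Q})]$, where $\vartheta_{\mathbb{Q}_n}(E)$ is the image of $L_p(E)$ at level $n$; the non-anomalous hypothesis enters precisely to make $(1-\alpha^{-1})$ a unit so that the induction closes. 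Thus $\theta_{\mathbb{Q}_{n-1}}(E)\not\equiv 0\pmod p$ forces $\vartheta_{\mathbb{Q}_{n-1}}(E)\not\equiv 0\pmod p$, hence $\mu(L_p(E))=0$. In the supersingular case the paper quotes Pollack's identity $\theta_n(E)\equiv\widetilde{\omega}^{\mp}_n\cdot L^{\mp}_p(E)\pmod{\omega_n}$ (with the parity of $n$ determining the sign) to reach the same conclusion for $L^{\mp}_p(E)$. The Selmer side is then a one-line invocation of the Euler-system divisibility of the Mazur--Greenberg (resp.\ Kobayashi $\pm$) main conjecture.

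Your translation step contains a genuine gap on the analytic side. Knowing $\mu(\mathbb{H}^1/\mathbf{z})=0$ says only that $\mathbf{z}$ is a $\pi$-saturated element of the rank-one module $\mathbb{H}^1$. To reach $\mu(L_p(E))=0$ you need that $\mathrm{Col}_{\mathrm{ord}}\circ\mathrm{loc}_p$ does not introduce extra $\pi$-divisibility, i.e.\ that the image of a generator of $\mathbb{H}^1$ under this composite is itself $\pi$-saturated in $\Lambda$. Your claim that ``the Coleman map is a $\Lambda$-isomorphism onto the unit-root eigenspace'' does not address this: $\mathrm{loc}_p(\mathbb{H}^1)$ is a rank-one submodule of the rank-two local Iwasawa cohomology, and nothing prevents it from meeting the kernel of $\mathrm{Col}_{\mathrm{ord}}$ in a way that makes $L_p(E)$ more $\pi$-divisible than $\mathbf{z}$. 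The paper circumvents this entirely by exploiting that the hypothesis is already a statement about the \emph{finite-layer} Coleman image $\theta_{\mathbb{Q}_{n-1}}(E)$, and then relating $\theta$ to $\vartheta$ (resp.\ to $L_p^{\mp}$) by the explicit identities above. Similarly, on the Selmer side the paper does not compare $\mathrm{Sel}$ with $\mathrm{Sel}_0$; it uses $\mu(L_p(E))=0$ together with $\mathrm{char}(\mathrm{Sel}^\vee)\supseteq(L_p(E))$ directly. Your supersingular parity analysis via Pollack's half-logarithms is essentially the same content as the paper's Proposition~\ref{prop:pollack_theta_pm}, just phrased on the side of $L_p(f_\alpha)$ rather than $\theta_n(E)$.
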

\begin{rem}
 Theorem \ref{cor:mu-elliptic-curves}.(1) generalizes to Hida families \cite{epw} 
The N\'{e}ron periods of optimal elliptic curves are optimal periods \cite[Rem. 5.4 and 5.5]{pollack-thesis} and \cite[Thm. 2.1]{glenn-cuspidal}.
\end{rem}

We assume Conjecture \ref{conj:comparison-zeta-elements} in this section again.
\subsection{Mazur--Tate elements and the conjecture of Coates--Sujatha} \label{subsec:mazur-tate-coates-sujatha}
Recall the decomposition of finite layer Coleman maps
\[
\xymatrix{
 \mathrm{H}^1(\mathbb{Q}(\zeta_{p^n}) \otimes \mathbb{Q}_p, T_f(k-r)) \ar[rr]^-{ \mathrm{Col}_{\mathbb{Q}(\zeta_{p^n}), r} } \ar[d]^-{\simeq} & & \Lambda_n \ar[d]^-{\simeq} \\
 \bigoplus_i \mathrm{H}^1(\mathbb{Q}_{n-1, p}, T_{f,i}(k-r)) \ar[rr]^-{  \bigoplus_i \mathrm{Col}^{(i)}_{\mathbb{Q}_{n-1}, r} } & & \bigoplus_i \Lambda^{(i)}_n .
}
\]
Then Theorem \ref{thm:main_thm_finite_layer_coleman} shows that
\begin{align} \label{eqn:coleman-map-p-power}
\begin{split}
& \mathrm{Col}^{(i)}_{\mathbb{Q}_{n-1}, r} \left(  \mathrm{loc}_p \left( \mathfrak{z}_{\mathbb{Q}_{n-1}, \delta_{f, \mathrm{cris}} }(f, i, k-r) \right)  \right) \\
& =
\omega^i \left( \mathrm{Col}_{\mathbb{Q}(\zeta_{p^n}), r} \left( \mathrm{loc}_p \left( \mathfrak{z}_{\mathbb{Q}(\zeta_{p^n}), \delta_{f, \mathrm{cris}} }(f, k-r) \right) \right) \right) \\
& = \omega^i \left( \theta_{\mathbb{Q}(\zeta_{p^n}), r}(\overline{f}) \right) \in \Lambda^{(i)}_n .
\end{split}
\end{align}
Since
$$
  \omega^i \left( \theta_{\mathbb{Q}(\zeta_{p^n}), r}(\overline{f}) \right)  =   \sum_{b \in \mathbb{Z}/p^{n-1}\mathbb{Z}} \left(  \sum_{a \in (\mathbb{Z}/p\mathbb{Z})^\times} \lambda^{\pm, \mathrm{opt}}(\overline{f}, z^{r-1}, a(1+bp), p^n) \cdot \omega^i(a) \right) \cdot \sigma_{1+bp} 
,$$
if $\sum_{a \in (\mathbb{Z}/p\mathbb{Z})^\times} \overline{ \lambda^{\pm, \mathrm{opt}}(\overline{f}, z^{r-1}, a(1+bp), p^n) } \cdot \overline{ \omega^{i}(a)  } \neq 0$
for some $n \geq 1$,
then $\omega^i \left( \theta_{\mathbb{Q}(\zeta_{p^n}), r}(\overline{f}) \right) \in \Lambda^{(i)}_n$ is non-zero modulo $\pi$.
Furthermore, (\ref{eqn:coleman-map-p-power}) implies that
$\mathfrak{z}_{\mathbb{Q}_{n-1}, \delta_{f, \mathrm{cris}} }(f, i, k-r) \in \mathrm{H}^1(\mathbb{Q}_{n-1, p}, T_{f,i}(k-r))$
 is non-zero mod $\pi$ and 
$\mathfrak{Z}_{\mathbb{Q},  \delta_{f, \mathrm{cris}} }(f, i, k-r) \in \mathrm{H}^1_{\mathrm{Iw}}(T_{f, i}(k-r))$
 is also non-zero mod $\pi$.
By the Euler system divisibility (Theorem \ref{thm:kato-divisibility}), we also have
$\mu \left( \mathrm{H}^2_{\mathrm{Iw}}(T_{f, i}(k-r)) \right) = 0$.

\subsection{Elliptic curves with good ordinary reduction} \label{subsec:mu-ordinary}
Let $E$ be an (optimal) elliptic curve over $\mathbb{Q}$ with non-anomalous good ordinary reduction at $p$ corresponding to $f$, i.e. $p \nmid a_p(f) \cdot (a_p(f)-1)$.
Let $n \geq 0$ be an integer.
Write 
$\theta_{\mathbb{Q}_{n}}(E) = \mathrm{Tr}_{\mathbb{Q}(\zeta_{p^{n+1}})/\mathbb{Q}_n} \left( \theta_{\mathbb{Q}(\zeta_{p^{n+1}}), 1}(f) \right) $,
$$\vartheta_{\mathbb{Q}(\zeta_{p^{n+1}})}(E)
= \dfrac{1}{\alpha^{n+1}} \cdot \left( \theta_{\mathbb{Q}(\zeta_{p^{n+1}}),1}(f) - \dfrac{1}{\alpha} \cdot \nu_{\mathbb{Q}(\zeta_{p^{n}})/\mathbb{Q}(\zeta_{p^{n+1}})} \left(  \theta_{\mathbb{Q}(\zeta_{p^{n}}),1}(f) \right) \right)   ,$$
and
$\vartheta_{\mathbb{Q}_n}(E) = \mathrm{Tr}_{\mathbb{Q}(\zeta_{p^{n+1}})/\mathbb{Q}_n} \left( \vartheta_{\mathbb{Q}(\zeta_{p^{n+1}})}(E) \right)   $.
Then $\vartheta_{\mathbb{Q}_n}(E)$ forms a compatible sequence in $\mathbb{Z}_p[\mathrm{Gal}(\mathbb{Q}_n/\mathbb{Q})]$ and the inverse limit is the $p$-adic $L$-function $L_p(E)$ of $E$ in $\Lambda^{(0)}_{\mathrm{Iw}}$.
\begin{prop}
If $p \nmid a_p(f) \cdot (a_p(f)-1)$, then
 $\theta_{\mathbb{Q}_n}(E)$ is a multiple of $\vartheta_{\mathbb{Q}_n}(E)$ by an element of $\mathbb{Z}_p[\mathrm{Gal}(\mathbb{Q}_n/\mathbb{Q})]$, i.e.
$\theta_{\mathbb{Q}_n}(E) = c \cdot \vartheta_{\mathbb{Q}_n}(E)$
where $c \in \mathbb{Z}_p[\mathrm{Gal}(\mathbb{Q}_n/\mathbb{Q})]$.
\end{prop}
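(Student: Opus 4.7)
The plan is to derive, for $n\geq 1$, the recursion
\[
\theta_{\mathbb{Q}_n}(E) = \alpha^{n+1}\vartheta_{\mathbb{Q}_n}(E) + \tfrac{1}{\alpha}\,\nu_{\mathbb{Q}_{n-1}/\mathbb{Q}_n}\bigl(\theta_{\mathbb{Q}_{n-1}}(E)\bigr),
\]
by applying $\mathrm{Tr}_{\mathbb{Q}(\zeta_{p^{n+1}})/\mathbb{Q}_n}$ to the defining formula of $\vartheta_{\mathbb{Q}(\zeta_{p^{n+1}})}(E)$ and invoking the compatibility
\[
\mathrm{Tr}_{\mathbb{Q}(\zeta_{p^{n+1}})/\mathbb{Q}_n}\circ\,\nu_{\mathbb{Q}(\zeta_{p^n})/\mathbb{Q}(\zeta_{p^{n+1}})} \;=\; \nu_{\mathbb{Q}_{n-1}/\mathbb{Q}_n}\circ\,\mathrm{Tr}_{\mathbb{Q}(\zeta_{p^n})/\mathbb{Q}_{n-1}},
\]
which is a routine diagram chase on group rings using the decomposition $\mathrm{Gal}(\mathbb{Q}(\zeta_{p^{n+1}})/\mathbb{Q})\simeq \Gamma_n\times\Delta$.

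Next I would establish the norm-coherence of the family $\{\vartheta_{\mathbb{Q}(\zeta_{p^{n+1}})}(E)\}_n$, namely $\pi_{\mathbb{Q}(\zeta_{p^{n+1}})/\mathbb{Q}(\zeta_{p^n})}\bigl(\vartheta_{\mathbb{Q}(\zeta_{p^{n+1}})}(E)\bigr)=\vartheta_{\mathbb{Q}(\zeta_{p^n})}(E)$. This is a direct computation from the defining formula using the projection relation of Proposition \ref{prop:mazur-tate-elements-properties}.(1) in the $\ell\mid m$ case, the identity $\pi\circ\nu=p\cdot\mathrm{id}$, and the Hecke relation $\alpha^2-a_p\alpha+p=0$ (which collapses $a_p-p/\alpha$ to $\alpha$). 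Pushing forward by $\mathrm{Tr}$ then gives $\pi_{\mathbb{Q}_n/\mathbb{Q}_{n-1}}\bigl(\vartheta_{\mathbb{Q}_n}(E)\bigr)=\vartheta_{\mathbb{Q}_{n-1}}(E)$, whence
\[
\nu_{\mathbb{Q}_{n-1}/\mathbb{Q}_n}\bigl(\vartheta_{\mathbb{Q}_{n-1}}(E)\bigr)=\widetilde{e}_n\cdot\vartheta_{\mathbb{Q}_n}(E), \qquad \widetilde{e}_n=\sum_{\sigma\in\mathrm{Gal}(\mathbb{Q}_n/\mathbb{Q}_{n-1})}\sigma.
\]

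I then induct on $n$. For the base case $n=0$, Proposition \ref{prop:mazur-tate-elements-properties}.(1) with $\ell=p$, $m=1$ yields $\theta_{\mathbb{Q}_0}(E)=(a_p-2)\,\theta_{\mathbb{Q},1}(f)$, while $\mathrm{Tr}_{\mathbb{Q}(\zeta_p)/\mathbb{Q}}\bigl(\nu(\theta_{\mathbb{Q},1}(f))\bigr)=(p-1)\,\theta_{\mathbb{Q},1}(f)$ together with the defining formula gives $\vartheta_{\mathbb{Q}_0}(E)=(1-1/\alpha)^2\,\theta_{\mathbb{Q},1}(f)$. Substituting $a_p=\alpha+p/\alpha$ and simplifying, the ratio becomes $c_0=\alpha+\alpha(p-1)/(\alpha-1)^2$, which lies in $\mathbb{Z}_p$ precisely because the non-anomalous hypothesis $p\nmid(a_p-1)$ forces $\alpha\not\equiv 1\pmod{p}$ and hence $\alpha-1\in\mathbb{Z}_p^\times$. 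For the inductive step, assuming $\theta_{\mathbb{Q}_{n-1}}(E)=c_{n-1}\vartheta_{\mathbb{Q}_{n-1}}(E)$ with $c_{n-1}\in\mathbb{Z}_p[\mathrm{Gal}(\mathbb{Q}_{n-1}/\mathbb{Q})]$, pick any lift $\widetilde{c}_{n-1}\in\mathbb{Z}_p[\mathrm{Gal}(\mathbb{Q}_n/\mathbb{Q})]$ under $\pi_{\mathbb{Q}_n/\mathbb{Q}_{n-1}}$; the $\mathbb{Z}_p[\mathrm{Gal}(\mathbb{Q}_n/\mathbb{Q})]$-linearity of $\nu$ via $\pi$ (i.e.\ $\nu(\pi(x)v)=x\,\nu(v)$) combined with the norm-coherence of $\vartheta$ gives $\nu_{\mathbb{Q}_{n-1}/\mathbb{Q}_n}\bigl(\theta_{\mathbb{Q}_{n-1}}(E)\bigr)=\widetilde{c}_{n-1}\widetilde{e}_n\,\vartheta_{\mathbb{Q}_n}(E)$, so the recursion yields $c_n=\alpha^{n+1}+\alpha^{-1}\widetilde{c}_{n-1}\widetilde{e}_n$, which is integral by ordinarity $\alpha\in\mathbb{Z}_p^\times$.

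The main obstacle I anticipate is the careful bookkeeping of the four commuting squares of $\mathrm{Tr}$, $\pi$, and $\nu$ among the levels $\mathbb{Q}(\zeta_{p^{n+1}})$, $\mathbb{Q}_n$, $\mathbb{Q}(\zeta_{p^n})$, $\mathbb{Q}_{n-1}$; beyond this the only conceptual inputs are the Hecke relation $\alpha^2-a_p\alpha+p=0$ for the norm-coherence step and the non-anomalous hypothesis $p\nmid(a_p-1)$ in the base case, both of which are immediate from the standing assumption $p\nmid a_p(f)(a_p(f)-1)$.
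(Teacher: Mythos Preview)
Your argument is correct and follows essentially the same induction as the paper: both extract a recursion from the defining formula of $\vartheta$, use the norm-coherence $\pi(\vartheta_n)=\vartheta_{n-1}$ to rewrite $\nu(\theta_{n-1})$ as a group-ring multiple of $\vartheta_n$, and conclude by induction using $\alpha\in\mathbb{Z}_p^\times$ and $\alpha-1\in\mathbb{Z}_p^\times$. The only difference is organizational: the paper runs the induction at the level of $\mathbb{Q}(\zeta_{p^n})$ (proving $\theta_{\mathbb{Q}(\zeta_{p^n}),1}(f)=c\cdot\vartheta_{\mathbb{Q}(\zeta_{p^n})}(E)$ in the full group ring) and projects to $\mathbb{Q}_n$ via the Teichm\"uller decomposition at the very end, whereas you apply $\mathrm{Tr}_{\mathbb{Q}(\zeta_{p^{n+1}})/\mathbb{Q}_n}$ first and work directly in $\mathbb{Z}_p[\mathrm{Gal}(\mathbb{Q}_n/\mathbb{Q})]$.
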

\begin{proof}
We use induction on $n$.
We write $\vartheta_{\mathbb{Q}}(E) = \dfrac{L(f_\alpha, 1)}{\Omega^+_E}$.
By the interpolation formula, we have
$\vartheta_{\mathbb{Q}}(E) = \left( 1 - \dfrac{1}{\alpha} \right) \cdot \theta_{\mathbb{Q}}(E)  $
and $\left( 1 - \dfrac{1}{\alpha} \right)$ is invertible in $\mathbb{Z}_p$ since $p \nmid  (a_p(f)-1)$.
Looking at the norm relation in \cite[$\S$1.3]{mazur-tate} and the definition of $\vartheta_{\mathbb{Q}(\zeta_p)}(E)$, we also have
$\pi_{1,0} (\vartheta_{\mathbb{Q}(\zeta_p)}(E)  ) = \left( 1 - \dfrac{1}{\alpha} \right) \cdot \vartheta_{\mathbb{Q}}(E) $.
We compute
\begin{align*}
\vartheta_{\mathbb{Q}(\zeta_p)}(E) & =\frac{1}{\alpha} \left(  \theta_{\mathbb{Q}(\zeta_p)}(E) -  \frac{1}{\alpha} \cdot \nu_{0,1} (\theta_{\mathbb{Q}}(E))  \right) \\
& = \frac{1}{\alpha} \left( \theta_{\mathbb{Q}(\zeta_p)}(E) -  \frac{1}{\alpha} \cdot \nu_{0,1} \left( \left( 1 - \dfrac{1}{\alpha} \right)^{-1} \cdot \vartheta_{\mathbb{Q}}(E)  \right) \right)  \\
& = \frac{1}{\alpha} \left( \theta_{\mathbb{Q}(\zeta_p)}(E) -  \frac{1}{\alpha} \cdot \nu_{0,1} \left( \left( 1 - \dfrac{1}{\alpha} \right)^{-2} \cdot   \pi_{1,0} (\vartheta_{\mathbb{Q}(\zeta_p)}(E)  ) \right) \right)  \\
& = \frac{1}{\alpha} \left( \theta_{\mathbb{Q}(\zeta_p)}(E) -  \frac{1}{\alpha} \cdot a \cdot \vartheta_{\mathbb{Q}(\zeta_p)}(E) \right) 
\end{align*}
where $a$ in $\mathbb{Z}_p[\mathrm{Gal}(\mathbb{Q}(\zeta_p)/\mathbb{Q})]$.
Thus, we have
$\theta_{\mathbb{Q}(\zeta_p)}(E) = \left( \alpha +  \dfrac{1}{\alpha} \cdot a \right) \vartheta_{\mathbb{Q}(\zeta_p)}(E) $.

Now suppose that 
$\theta_{\mathbb{Q}(\zeta_{p^{n-1}})}(E) = b \cdot \vartheta_{\mathbb{Q}(\zeta_{p^{n-1}})}(E)$
with $b \in \mathbb{Z}_p[\mathrm{Gal}(\mathbb{Q}(\zeta_{p^{n-1}})/\mathbb{Q})]$.
Then we have
\begin{align*}
\vartheta_{\mathbb{Q}( \zeta_{p^n} )}(E) & = \frac{1}{\alpha^n} \left(  \theta_{\mathbb{Q}( \zeta_{p^n} )}(E) -  \frac{1}{\alpha} \cdot \nu_{n,n-1} ( \theta_{\mathbb{Q}( \zeta_{p^{n-1}} )}(E) )  \right)\\
& = \frac{1}{\alpha^n} \left(  \theta_{\mathbb{Q}( \zeta_{p^n} )}(f) -  \frac{1}{\alpha} \cdot \nu_{n,n-1} (  b \cdot \vartheta_{\mathbb{Q}(\zeta_{p^{n-1}})}(E)  )  \right)\\
& = \frac{1}{\alpha^n} \left(  \theta_{\mathbb{Q}( \zeta_{p^n} )}(f) -  \frac{1}{\alpha} \cdot \nu_{n,n-1} (  b \cdot \pi_{n, n-1} ( \vartheta_{\mathbb{Q}(\zeta_{p^{n}})}(E) )  ) \right) \\
& = \frac{1}{\alpha^n} \left(  \theta_{\mathbb{Q}( \zeta_{p^n} )}(f) -  \frac{1}{\alpha} \cdot c \cdot \vartheta_{\mathbb{Q}(\zeta_{p^{n}})}(E) )  \right)
\end{align*}
where $c \in \mathbb{Z}_p[\mathrm{Gal}(\mathbb{Q}(\zeta_{p^{n}})/\mathbb{Q})]$.
In the same manner, we have
$$\theta_{\mathbb{Q}( \zeta_{p^n} )}(f) = \left( \alpha^n +  \frac{1}{\alpha} \cdot c \right) \cdot \vartheta_{\mathbb{Q}( \zeta_{p^n} )}(E).$$
By using the decomposition via the Teichm\"{u}ller character, the conclusion follows.
\end{proof}
\begin{cor}
Let $\mathfrak{a}$ be an ideal of $\mathbb{Z}_p[\mathrm{Gal}(\mathbb{Q}_n/\mathbb{Q})]$.
If $\vartheta_{\mathbb{Q}_n}(E) \in \mathfrak{a}$, then $\theta_{\mathbb{Q}_n}(E) \in \mathfrak{a}$.
\end{cor}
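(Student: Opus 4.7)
The plan is to deduce this corollary directly from the preceding proposition, which established the multiplicative relation $\theta_{\mathbb{Q}_n}(E) = c \cdot \vartheta_{\mathbb{Q}_n}(E)$ with $c \in \mathbb{Z}_p[\mathrm{Gal}(\mathbb{Q}_n/\mathbb{Q})]$ under the non-anomalous good ordinary hypothesis $p \nmid a_p(f)(a_p(f)-1)$. Since $\mathfrak{a}$ is an ideal of the same group ring, it is closed under multiplication by any element of $\mathbb{Z}_p[\mathrm{Gal}(\mathbb{Q}_n/\mathbb{Q})]$, so membership of $\vartheta_{\mathbb{Q}_n}(E)$ in $\mathfrak{a}$ forces $c \cdot \vartheta_{\mathbb{Q}_n}(E) = \theta_{\mathbb{Q}_n}(E)$ to lie in $\mathfrak{a}$ as well.

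Concretely, the single step is: assume $\vartheta_{\mathbb{Q}_n}(E) \in \mathfrak{a}$; apply the proposition to write $\theta_{\mathbb{Q}_n}(E) = c \cdot \vartheta_{\mathbb{Q}_n}(E)$ for some $c \in \mathbb{Z}_p[\mathrm{Gal}(\mathbb{Q}_n/\mathbb{Q})]$; invoke the ideal property of $\mathfrak{a}$ to conclude. There is essentially no obstacle here, since all the real work has already been carried out in the proposition (the inductive verification that the ``correction factor'' $c$ lies in the integral group ring, which required the invertibility of $1 - 1/\alpha$ in $\mathbb{Z}_p$ guaranteed by $p \nmid a_p(f)-1$, and the invertibility of $\alpha$ guaranteed by $p \nmid a_p(f)$). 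The corollary is purely formal given that input.

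The point of isolating this statement as a separate corollary is that it will be applied in $\S$\ref{subsec:mu-ordinary} with $\mathfrak{a}$ taken to be a power of the maximal ideal (or a congruence ideal) detecting the $\mu$-invariant: non-vanishing mod $p$ of a specific coefficient of $\theta_{\mathbb{Q}_n}(E)$ coming from the numerical criterion \eqref{eqn:mu=0} immediately implies non-vanishing mod $p$ of the corresponding coefficient of $\vartheta_{\mathbb{Q}_n}(E)$, hence of the $p$-adic $L$-function $L_p(E)$, yielding $\mu(L_p(E)) = 0$. Thus the forward-looking use of this corollary is to transfer mod $p$ non-vanishing information from Mazur--Tate elements to Mazur--Tate--Teitelbaum $p$-stabilized elements without losing integrality.
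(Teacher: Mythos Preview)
Your proof is correct and matches the paper's treatment: the corollary is immediate from the proposition $\theta_{\mathbb{Q}_n}(E) = c \cdot \vartheta_{\mathbb{Q}_n}(E)$ with $c$ in the integral group ring, together with the ideal property of $\mathfrak{a}$. The paper does not even supply a separate proof, and your identification of the intended application (taking $\mathfrak{a} = p\,\mathbb{Z}_p[\mathrm{Gal}(\mathbb{Q}_n/\mathbb{Q})]$ and reading off $\mu(L_p(E))=0$ via the contrapositive) is exactly how it is used immediately afterward.
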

If we take $\mathfrak{a} = p\mathbb{Z}_p[\mathrm{Gal}(\mathbb{Q}_n/\mathbb{Q})]$, then we immediately obtain 
$\mu \left(  \Lambda_{\mathrm{Iw}} / L_p(E) \right) = 0$ from the above corollary.
Also, Corollary \ref{cor:mu-elliptic-curves}.(1) follows from the Euler system divisibility of the main conjecture of Mazur--Greenberg, which is equivalent to Theorem \ref{thm:kato-divisibility}.
\subsection{Elliptic curves with supersingular reduction} \label{subsec:mu-supersingular}
We assume that $f$ corresponds to an (optimal) elliptic curve over $\mathbb{Q}$ with good supersingular reduction at $p \geq 3$.
We further assume that
$a_p(f) = 0 $,
and indeed it always holds if $p \geq 5$ due to the Hasse bound.

We recall the identification
 $\Lambda^{(0)}_{\mathrm{Iw}} = \mathbb{Z}_p\llbracket \mathrm{Gal}(\mathbb{Q}_\infty/\mathbb{Q}) \rrbracket$ with $\mathbb{Z}_p\llbracket X \rrbracket$ defined by sending a topological generator of $\mathrm{Gal}(\mathbb{Q}_\infty/\mathbb{Q})$ to $1+X$.
Let $\omega^{\pm}_0(X)  := X$, $\widetilde{\omega}^{\pm}_0(X)  := 1$, and
\[
\xymatrix@R=0em{
{\displaystyle \omega^{+}_n = \omega^{+}_n(X) := X\cdot \prod_{2 \leq m \leq n, m: \textrm{ even}}\Phi_m(1+X) } , &
{\displaystyle \omega^{-}_n = \omega^{-}_n(X)  := X\cdot \prod_{1 \leq m \leq n, m: \textrm{ odd}}\Phi_m(1+X) } , \\
{\displaystyle \widetilde{\omega}^{+}_n = \widetilde{\omega}^{+}_n(X)  := \prod_{2 \leq m \leq n, m: \textrm{ even}}\Phi_m(1+X) } , &
{\displaystyle \widetilde{\omega}^{-}_n = \widetilde{\omega}^{-}_n(X)  := \prod_{1 \leq m \leq n, m: \textrm{ odd}}\Phi_m(1+X) } .
}
\]
Then we have $\omega_n(X) = \omega^{\pm}_n(X) \cdot \widetilde{\omega}^{\mp}_n(X)$, respectively. We also regard
$\omega^{\pm}_n$, $\widetilde{\omega}^{\pm}_n$ as elements in $\Lambda^{(0)}_{n+1}$ or $\Lambda^{(0)}_{\mathrm{Iw}}$ via the identification.
\begin{prop}[Pollack] \label{prop:pollack_theta_pm}
$\theta_n(E) \equiv \widetilde{\omega}^{\mp}_n \cdot L^{\mp}_p(E) \Mod{\omega_n}$
in $\Lambda^{(0)}_{n+1}$ if $n$ is even/odd, respectively.
\end{prop}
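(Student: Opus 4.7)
The plan is to mirror the ordinary case computation in $\S$\ref{subsec:mu-ordinary} but with Pollack's $\pm$-decomposition replacing the single $\alpha$-stabilization. Set $\theta_n(E) \in \Lambda^{(0)}_{n+1}$ and write $\vartheta_{\mathbb{Q}_n}(E)$ for its Perrin-Riou-type modification as in $\S$\ref{subsec:mu-ordinary}. First, I would extract from Proposition \ref{prop:mazur-tate-elements-properties}.(1) the specialization of the $\ell = p$ norm relation in the supersingular case: with $a_p(E) = 0$, $\psi = \mathbf{1}$, $k = 2$, $r = 1$, it reduces to
\begin{equation*}
\pi_{p^{n+1}, p^n}\bigl(\theta_{\mathbb{Q}(\zeta_{p^{n+1}}),1}(E)\bigr) \;=\; -\,\nu_{p^{n-1}, p^n}\bigl(\theta_{\mathbb{Q}(\zeta_{p^{n-1}}),1}(E)\bigr),
\end{equation*}
so after taking $\mathrm{Tr}_{\mathbb{Q}(\zeta_{p^{n+1}})/\mathbb{Q}_n}$ and projecting to $\Lambda^{(0)}$, the sequence $\theta_n(E)$ satisfies a clean two-step recursion that is the algebraic heart of the argument.

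Next, I would bring in Pollack's decomposition of the Kato--Perrin-Riou $p$-adic $L$-function. By Theorem \ref{thm:comparison-kato-perrin-riou-mtt}, our $L_p(\mathbb{Q}_\infty, f_\alpha)$ coincides with the Mazur--Tate--Teitelbaum $p$-adic $L$-function of the stabilization $f_\alpha$, and a classical calculation (using $\alpha^2 = -p$) produces the identity
\begin{equation*}
L_p(\mathbb{Q}_\infty, f_\alpha) \;=\; \log^+_p \cdot L^+_p(E) \;+\; \alpha \cdot \log^-_p \cdot L^-_p(E)
\end{equation*}
in $\mathscr{H}_{1/2}(\Gamma)$, with $L^\pm_p(E) \in \Lambda^{(0)}$ characterized uniquely by this factorization. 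Here $\log^\pm_p$ are Pollack's half-logarithms. The crucial numerical input is the elementary identity
\begin{equation*}
\alpha^{n+1} \cdot \log^\pm_p \;\equiv\; u^\pm_n \cdot \widetilde{\omega}^\pm_n \pmod{\omega_n}
\end{equation*}
in $\Lambda^{(0)}_{n+1}$, where $u^\pm_n \in (\Lambda^{(0)}_{n+1})^\times$ absorbs the ``tail'' of the infinite product $\prod_{m > n}\Phi_{p^m}(1+X)/p$ together with the power of $\alpha$; this works out because $\Phi_{p^m}(1) = p$ and the sign pattern $\alpha^2 = -p$ matches the alternating parity in $\widetilde{\omega}^\pm_n$.

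With these two inputs in hand, the comparison is carried out character by character. Both $\theta_n(E)$ and $\widetilde{\omega}^\mp_n \cdot L^\mp_p(E)$ lie in $\Lambda^{(0)}_{n+1}$, and their difference modulo $\omega_n$ is determined by evaluating at Dirichlet characters $\chi$ of conductor $p^m$ for $0 \le m \le n+1$. On the Mazur--Tate side we use Proposition \ref{prop:mazur-tate-elements-properties}.(2); on the $p$-adic $L$-function side we combine the interpolation of $L_p(\mathbb{Q}_\infty, f_\alpha)$ from $\S$\ref{subsec:interpolation} with the $\pm$-decomposition above. The key observation is that $\log^+_p(\zeta_{p^m} - 1)$ vanishes exactly when $m$ is odd and $\log^-_p(\zeta_{p^m} - 1)$ vanishes exactly when $m$ is even, so exactly one term in Pollack's decomposition contributes at each $\chi$. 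Assembling the contributions according to the parity of $n$ produces the asserted congruence, with the surviving term being $\widetilde{\omega}^-_n L^-_p(E)$ when $n$ is even and $\widetilde{\omega}^+_n L^+_p(E)$ when $n$ is odd.

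The main obstacle I expect is the bookkeeping for the powers of $\alpha$ and the units $u^\pm_n$ in the passage from the transcendental identity in $\mathscr{H}_{1/2}(\Gamma)$ to an integral congruence in $\Lambda^{(0)}_{n+1}$. Since $\alpha$ generates a ramified quadratic extension of $\mathbb{Q}_p$ and appears asymmetrically in Pollack's decomposition, one must check that after multiplying through by $\alpha^{n+1}$ (which is how $\theta_n(E)$ is related to the projection of the $p$-adic $L$-function to level $n$, by iterating the two-step recursion from the first paragraph) \emph{all} of the $\alpha$-dependence collapses onto the power $\alpha^{2\lfloor(n+1)/2\rfloor} \in \mathbb{Z}_p$. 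This parity-matching is precisely what pins down which of $L^+_p(E)$ or $L^-_p(E)$ survives mod $\omega_n$, and it is the step where Pollack's original argument becomes a computation in the explicit Kobayashi basis of half-logarithms.
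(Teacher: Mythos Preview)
The paper itself gives no argument here; the proof is a bare citation to \cite[Proposition~6.18]{pollack-thesis}. Your outline is a reasonable reconstruction of how that proposition is established, and the two structural ingredients you isolate --- the two-step recursion $\pi_{n,n-1}(\theta_n) = -\nu_{n-2,n-1}(\theta_{n-2})$ coming from $a_p(E)=0$, and Pollack's decomposition $L_p(f_\alpha) = \log^+_p L^+_p + \alpha\,\log^-_p L^-_p$ --- are exactly the right ones.

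The part you flag as the ``main obstacle'' is where your sketch is genuinely incomplete, and it is not merely bookkeeping. The proposition asserts an \emph{exact} congruence, so your unspecified units $u^\pm_n$ must in fact be $\pm 1$; if they were only units, you would prove the wrong statement. The missing computation is the elementary fact that for $m>n$ one has $(1+X)^{p^{m-1}}\equiv 1 \pmod{\omega_n}$ and hence $\Phi_{p^m}(1+X)\equiv p \pmod{\omega_n}$, so the infinite tail $\prod_{m>n}\Phi_{p^m}(1+X)/p$ in Pollack's normalization of $\log^\mp_p$ is literally~$1$ modulo $\omega_n$, not merely a unit. After that, the finitely many surviving factors give $\pi_n(\log^\mp_p) = p^{-\lfloor n/2\rfloor}\widetilde{\omega}^\mp_n$ (for the appropriate parity), and the power of $p$ is cancelled \emph{exactly} by $(\alpha^2)^{\lfloor n/2\rfloor} = (-p)^{\lfloor n/2\rfloor}$, with the residual sign absorbed into Pollack's definition of $L^\pm_p$. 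Once this is done the identity holds in $\Lambda^{(0)}_{n+1}$ on the nose, and the character-by-character verification you propose becomes unnecessary.
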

\begin{proof}
See \cite[Prop. 6.18]{pollack-thesis}.
\end{proof}
If the numerical criterion in Theorem \ref{thm:main_thm_mu_zero_conj} holds,
then $\theta_{n-1}(E)$ does not vanish modulo $p$.
By Proposition \ref{prop:pollack_theta_pm}, if $n-1$ is even (resp. odd), then $L^-_p(E)$ (resp. $L^+_p(E)$) does not vanish modulo $p$; thus, $\mu \left(  \mathrm{Sel}^{+}(\mathbb{Q}_\infty, E[p^\infty])^\vee \right) = 0$ (resp. $\mu \left(  \mathrm{Sel}^{-}(\mathbb{Q}_\infty, E[p^\infty])^\vee \right) = 0$) by the Euler system divisibility of the $\pm$-main conjectures, which is equivalent to Theorem \ref{thm:kato-divisibility}.
Thus, Corollary \ref{cor:mu-elliptic-curves}.(2) follows.

\section{Applications to Birch and Swinnerton-Dyer conjecture} \label{sec:app-bsd}
We discuss two applications to Birch and Swinnerton-Dyer conjecture. More general applications are studied in the subsequent works \cite{kim-structure-selmer, kim-refined-tamagawa}.
\subsection{On the $p$-part of the BSD formula for elliptic curves of analytic rank one} \label{subsec:appendix-applications}
We first discuss an application of Theorem \ref{thm:main_thm_main_conj_simple} and Corollary \ref{cor:main_thm_main_conj_simple} to the arithmetic of elliptic curves of analytic rank one via the work of Kobayashi on $p$-adic Gross--Zagier formula \cite{kobayashi-gross-zagier}. 

Let $E$ be an (optimal) elliptic curve of conductor $N$ with good reduction at $p >2$ and  $\mathcal{K}$ an imaginary quadratic field such that all the prime factors of $Np$ split in $\mathcal{K}$ and $L(E_{\mathcal{K}}, 1 ) \neq 0 $
where $E_{\mathcal{K}}$ is the quadratic twist of $E$ by $\mathcal{K}$. See \cite{bump-friedberg-hoffstein, murty-murty-mean} for the existence of such an imaginary quadratic field. Let $f$ be the newform corresponding to $E$ and $f_{\mathrm{min}}$ the newform  of minimal level congruent to $f$ modulo $p$ obtained by level lowering \cite{diamond-taylor-non-optimal}. The same notational rule applies to $E_{\mathcal{K}}$.

\begin{thm}[Kobayashi] \label{thm:rank-one-bsd-over-Q-kobayashi}
Suppose that
\begin{enumerate}
\item $\mathrm{ord}_{s=1}L(E, s) = 1$,
\item $E[p]$ is a surjective Galois representation, and
\item the $p$-adic height pairing is non-degenerate and $a_p(E) \not\equiv 1 \pmod{p}$ when $p$ is good ordinary for $E$.
\end{enumerate}
If
\begin{enumerate}
\item[(i)]
$\widetilde{\delta}_{m}(f_{\mathrm{min}}, 1) \neq 0, \widetilde{\delta}_{m_{\mathcal{K}}}(f_{\mathcal{K}, \mathrm{min}} , 1)  \neq 0$ in $\mathbb{F}_p$\\
for some square-free products $m$ and $m_{\mathcal{K}}$ of Kolyvagin primes for $f_{\mathrm{min}}$ and $f_{\mathcal{K}, \mathrm{min}}$, respectively, and
\item[(ii)] 
${ \displaystyle \sum_{a \in (\mathbb{Z}/p\mathbb{Z})^\times} \overline{ \lambda^{\pm, \mathrm{opt}}(f, 1, a(1+bp), p^n) }  \neq 0, \displaystyle \sum_{a \in (\mathbb{Z}/p\mathbb{Z})^\times} \overline{ \lambda^{\pm, \mathrm{opt}}(f_{\mathcal{K}}, 1, a(1+b'p), p^n) }  \neq 0}$ 
in $\mathbb{F}_p$ \\
for some $b, b' \in \mathbb{Z}/p^{n-1}\mathbb{Z}$ with $n \geq 1$,
\end{enumerate}
then the $p$-part of BSD formula for $E$ holds, i.e.
$$ \left\vert \dfrac{L'(E, 1)}{ \Omega_{E} \cdot \mathrm{Reg}(E/\mathbb{Q}) } \right\vert_{p} = 
 \left\vert \#\sha(E/\mathbb{Q}) \cdot \prod_{\ell \vert N} c_\ell(E/\mathbb{Q}) \right\vert_{p}
$$
where 
$\Omega_{E}$ is the real period of $E$,
$\mathrm{Reg}(E/\mathbb{Q}) := \dfrac{\langle y,y \rangle_{\mathrm{NT}}}{[E(\mathbb{Q}): \mathbb{Z} \cdot y]^2}$ for any non-torsion $y \in E(\mathbb{Q})$,
$\langle -,- \rangle_{\mathrm{NT}}$ is the N\'{e}ron--Tate height pairing,
 and $c_\ell(E/\mathbb{Q})$ is the local Tamagawa factor of $E$ at $\ell$.
If all the local Tamagawa factors of $E$ are prime to $p$, then we can bypass the numerical computation (ii).
\end{thm}
\begin{proof}
This is the combination of Theorem \ref{thm:main_thm_main_conj_simple}, Corollary \ref{cor:main_thm_main_conj_simple}, and\cite[Corollary 1.3.(3)]{kobayashi-gross-zagier}.
\end{proof}
See \cite{kazim-pollack-sasaki} for the recent developments on the non-degeneracy of the $p$-adic height pairing for the ordinary case.
One advantage of Theorem \ref{thm:rank-one-bsd-over-Q-kobayashi} is that no assumption on the conductor is imposed.

\subsection{Kato's Kolyvagin systems and Heegner point Kolyvagin systems} \label{subsec:appendix-heegner}
We study the applications of Theorem \ref{thm:main_thm_main_conj_simple} to Kolyvagin's conjecture on the indivisibility of Heegner point Kolyvagin systems following Wei Zhang's work \cite{wei-zhang-mazur-tate}. 
The immediate consequences include
the $p$-converse to a theorem of Gross--Zagier and Kolyvagin,
the structure theorem on the $p$-part of Tate--Shafarevich groups, and
the $p$-part of the BSD formula of modular abelian varieties of analytic rank one.
Since Wei Zhang's work already completes the $p$-ordinary case, our main focus lies in the $p$-non-ordinary case. 

\subsubsection{The anticyclotomic setting}
Let $p > 3$ be a prime and $f \in S_2(\Gamma_0(N))$ a newform with $(N,p) = 1$.
Fix a prime  $\pi$ of the Hecke field $F$ of $f$ lying above $p$. 
Let $\mathcal{K}$ be an imaginary quadratic field of discriminant $D_{\mathcal{K}} < 0$ with $(D_{\mathcal{K}},N p)=1$.
Write $N = N^+ \cdot N^-$ where a prime factor of $N^+$ splits in $\mathcal{K}$ and a prime factor of $N^-$ is inert in $\mathcal{K}$.
We always assume that $N^-$ is square-free and denote by  $\nu(N^-)$ the number of prime factors of $N^-$. 
Then the root number for $f$ over $\mathcal{K}$ is $(-1)^{1+\nu(N^-)}$. In this subsection, we assume that $\nu(N^-)$ is even, i.e. the \emph{generalized Heegner hypothesis} for $(f, \mathcal{K})$ is satisfied.

A prime $\ell$ is an \textbf{anticyclotomic Kolyvagin prime for $(f,\mathcal{K}, \pi)$} if $\ell$ is prime to $N \cdot D_{\mathcal{K}} \cdot p$, inert in $\mathcal{K}$, and the Kolyvagin index $M(\ell) := \mathrm{min}(v_p(\ell+1), v_\pi(a_\ell(f)))$ is strictly positive. The last condition is equivalent to that $\ell \equiv -1 \pmod{p}$ and $a_\ell(f) \equiv 0 \pmod{\pi}$.
Denote by $\mathcal{N}^{\mathrm{ac}}$ the set of square-free products of anticyclotomic Kolyvagin primes for $(f,\mathcal{K}, \pi)$.
Define $M(n) = \mathrm{min} \lbrace M(\ell) : \ell \vert n \rbrace$ if $n > 1$ and $M(1) = \infty$ where $n \in \mathcal{N}^{\mathrm{ac}}$.

Denote by $\mathcal{K}(n)$ the ring class field of conductor $n$ of $\mathcal{K}$ and by $A_f$ the modular abelian variety associated to $f$.
To each Heegner point $y(n) \in A_f(\mathcal{K}(n))$ of conductor $n$ and $M \leq M(n)$, there exists a cohomology class
$c^{\mathrm{Heeg}}_{M}(n) \in \mathrm{H}^1(\mathcal{K}, T_f/\pi^M T_f)$
where $n$ runs over the square-free products of anticyclotomic Kolyvagin primes. See \cite[$\S$3.7]{wei-zhang-mazur-tate} for the explicit construction of $c^{\mathrm{Heeg}}_{M}(n)$.

Following the terminology of \cite{mazur-rubin-book}, suitably modified for the Heegner point
setting \cite{howard-kolyvagin, howard-gl2-type}, we call the collection 
$$\ks^{\mathrm{Heeg}} = \left\lbrace c^{\mathrm{Heeg}}_{M}(n) \in \mathrm{H}^1(\mathcal{K}, T_f/\pi^M T_f) : n \in \mathcal{N}^{\mathrm{ac}} , M \leq M(n) \right\rbrace $$
 the Heegner point Kolyvagin system\footnote{We do not distinguish Kolyvagin systems and weak Kolyvagin systems seriously in this subsection.} for $(f, \mathcal{K}, \pi)$.

\subsubsection{Kolyvagin conjecture}
For a fixed $n \in \mathcal{N}^{\mathrm{ac}}$,
define  the divisibility index 
$\mathscr{M}(n)$
by the maximal non-negative integer (or the infinity) $\mathscr{M}$ such that 
$c^{\mathrm{Heeg}}_M(n) \in \pi^\mathscr{M} \mathrm{H}^1(\mathcal{K}, T_f/\pi^M T_f)$
for all $M \leq M(n)$. Define
$\mathscr{M}_r := \mathrm{min}_{\nu(n)=r}  \mathscr{M}(n)$
and Kolyvagin shows that $\mathscr{M}_r \geq \mathscr{M}_{r+1} \geq 0$.
We also define $\mathscr{M}_\infty(f) := \lim_{r \to \infty} \mathscr{M}_r$ as the minimum of $\mathscr{M}_r$ for varying $r \geq 0$. The following conjecture is due to Kolyvagin (as stated in \cite[Conj. A]{kolyvagin-selmer} and \cite[Conj. 3.2]{wei-zhang-mazur-tate}).
\begin{conj}[Kolyvagin] \label{conj:kolyvagin-conjecture}
If $\rho_f$ has large image, then $\ks^{\mathrm{Heeg}} \neq 0$. Equivalently, $\mathscr{M}_\infty(f) < \infty$.
\end{conj}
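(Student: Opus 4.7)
The plan is to reduce Conjecture \ref{conj:kolyvagin-conjecture} to the cyclotomic Iwasawa main conjecture for $f$ and its quadratic twist $f_{\mathcal{K}}$, and then invoke the Heegner point main conjecture in order to descend to the non-triviality of the Heegner system. Under Assumption \ref{assu:standard}, the numerical criteria of Theorem \ref{thm:main_thm_main_conj_simple} and Theorem \ref{thm:main_thm_mu_zero_conj}, applied separately to $f$ and to $f_{\mathcal{K}}$, yield the full cyclotomic IMC (including $\mu=0$) for each. Since $L(f/\mathcal{K}, s) = L(f, s) \cdot L(f_{\mathcal{K}}, s)$, a routine Euler-factor comparison then upgrades this to the cyclotomic IMC for $T_f$ over the cyclotomic $\mathbb{Z}_p$-extension of $\mathcal{K}$, which is the input needed downstream.

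Next I would transfer this to the Heegner point main conjecture (HPMC) over the anticyclotomic $\mathbb{Z}_p$-extension $\mathcal{K}_\infty^{\mathrm{ac}}$. The ``$\subseteq$'' divisibility in HPMC is Howard's Euler-to-Kolyvagin argument \cite{howard-kolyvagin, howard-gl2-type}; for the opposite divisibility, in the $p$-ordinary case I would cite Wan \cite{wan-rankin-selberg}, and in the $p$-supersingular case I would use \cite{castella-wan-perrin-riou-ss} together with \cite{burungale-castella-kim}, both of which extract HPMC from the cyclotomic IMC via a two-variable bi-cyclotomic main conjecture combined with the Bertolini--Darmon--Prasanna explicit reciprocity law. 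Once HPMC is in hand, the $\Lambda$-adic Heegner class $\kappa^{\mathrm{Heeg}}_\infty$ is non-torsion; Howard's $\Lambda$-adic descent together with Mazur--Rubin rigidity then forces $\ks^{\mathrm{Heeg}} \neq 0$, i.e., $\mathscr{M}_\infty(f) < \infty$.

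The main obstacle will be the $p$-supersingular case of the transfer above. One must simultaneously work with Kobayashi's signed plus/minus local conditions, match them with the image of the Heegner point, and control the anticyclotomic $\mu$-invariants of the signed Selmer groups: the cyclotomic $\mu=0$ input from Theorem \ref{thm:main_thm_mu_zero_conj} does not a priori descend to the anticyclotomic direction, and I expect to need either a Hida-theoretic congruence argument or a direct appeal to the explicit reciprocity law to propagate it. A secondary subtlety is ensuring that Conjecture \ref{conj:comparison-zeta-elements}, which holds in weight two by Theorem \ref{thm:comparison-zeta-elements}, does not obstruct the identification of the optimally normalized Kato zeta element with the integral input feeding into HPMC.
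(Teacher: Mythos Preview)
First, note that Conjecture \ref{conj:kolyvagin-conjecture} is stated as a \emph{conjecture}; the paper does not prove it in general. What the paper does prove is Theorem \ref{thm:main-thm-kolyvagin-conjecture}, which establishes the conjecture (in fact the stronger $\pi$-indivisibility $\mathscr{M}_\infty(f)=0$) under the additional hypotheses (1)--(5) and the numerical criteria (i)--(ii) listed there. So your proposal should be compared against that theorem's proof, not against a nonexistent unconditional proof.

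Your route through the Heegner point main conjecture is a genuinely different strategy from the paper's. The paper follows Wei Zhang's argument \cite{wei-zhang-mazur-tate}: one performs level raising at admissible primes, runs an induction on Selmer rank via Jochnowitz-type congruences, and at the base of the induction invokes the $\pi$-part of the rank-zero BSD formula for the level-raised forms. The only place the cyclotomic main conjecture enters is through this rank-zero BSD formula (Proposition \ref{prop:kato-main-conj-to-bsd-formula-rank-zero}); the paper's contribution is to supply that input at non-ordinary primes via the numerical criteria, replacing Skinner--Urban. By contrast, you propose to go cyclotomic IMC $\Rightarrow$ HPMC $\Rightarrow$ non-triviality of $\ks^{\mathrm{Heeg}}$ via the two-variable/explicit reciprocity law machinery of \cite{burungale-castella-kim, castella-wan-perrin-riou-ss}.

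Your approach is viable in principle and is indeed pursued in the cited literature, but it buys less here. The HPMC transfer via BDP-type reciprocity laws requires $p$ to \emph{split} in $\mathcal{K}$, whereas the Wei Zhang argument (and hence the paper's Theorem \ref{thm:main-thm-kolyvagin-conjecture}) imposes no such restriction; the paper explicitly remarks on this advantage after Theorem \ref{thm:main-thm-converse-1}. Moreover, Wei Zhang's method directly yields $\pi$-indivisibility $\mathscr{M}_\infty(f)=0$, which is strictly stronger than the non-triviality $\mathscr{M}_\infty(f)<\infty$ that a bare HPMC-plus-descent argument would give. Finally, your worry about propagating cyclotomic $\mu=0$ to the anticyclotomic direction is a real obstacle in your approach but simply does not arise in the paper's, since the level-raising induction never leaves the cyclotomic tower.
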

\subsubsection{Statement of the main results}
For a residual Galois representation $\overline{\rho}$, denote by $S_2(\overline{\rho})$ the set of newforms of weight two and trivial nebentypus whose residual representations are isomorphic to $\overline{\rho}$ and levels are not divisible by $p$.
For a given $f \in S_2(\overline{\rho})$, denote by $f_{\mathrm{min}} \in S_2(\overline{\rho})$ any congruent newform of minimal level via level lowering.
Denote by $\mathrm{Ram}(f, \overline{\rho})$ the set of primes $\ell$ dividing $N$ exactly and $\overline{\rho}$ is ramified at $\ell$.

The following statement is a slight refinement of Wei Zhang's result on Kolyvagin's conjecture \cite[Thm. 1.1 and Thm. 9.3]{wei-zhang-mazur-tate}.
Let $f =  \sum_{n \geq 1} a_n(f) q^n \in S_2(\Gamma_0(N))$ be a newform and $\pi$ a fixed prime of the Hecke field $F$ of $f$ lying above a prime $p \geq 5$ with $(N,p)=1$.
Let $\mathcal{K}$ be an imaginary quadratic field of discriminant $D_{\mathcal{K}}$ with $(N p, D_{\mathcal{K}}) = 1$.
Denote by $f_{\mathcal{K}}$ the twist of $f$ by the quadratic character associated to $\mathcal{K}$.
\begin{thm}[Kolyvagin's conjecture] \label{thm:main-thm-kolyvagin-conjecture}
Suppose that the triple $(f, \mathcal{K}, \pi)$ satisfies the following properties.
\begin{enumerate}
\item $\rho_f$ has large image.
\item $N^-$ is square-free with even number of prime factors.
\item $\mathrm{Ram}(f, \overline{\rho})$ contains all primes $\ell \vert N^-$ such that $\ell \equiv \pm 1 \pmod{p}$.
\item $\mathrm{Ram}(f, \overline{\rho})$ contains all primes $\ell$ such that $\ell \Vert N^+$.
\item If $F\neq \mathbb{Q}$, then  we have $\mathrm{H}^1(\mathbb{Q}_\ell, \overline{\rho} ) = \mathrm{H}^0(\mathbb{Q}_\ell, \overline{\rho} ) = 0$ for all prime $\ell$ with $\ell^2 \vert N^+$.
\end{enumerate}
If 
\begin{enumerate}
\item[(i)]
$\widetilde{\delta}_{m}(f_{\mathrm{min}}, 1) \neq 0$ and $\widetilde{\delta}_{m_{\mathcal{K}}}(f_{\mathcal{K}, \mathrm{min}} , 1)  \neq 0$
in $\mathbb{F}_\pi$\\
for some square-free products $m$ and $m_{\mathcal{K}}$ of Kolyvagin primes for $f_{\mathrm{min}}$ and $f_{\mathcal{K}, \mathrm{min}}$, respectively, and
\item[(ii)] 
${ \displaystyle \sum_{a \in (\mathbb{Z}/p\mathbb{Z})^\times} \overline{ \lambda^{\pm, \mathrm{opt}}(f, 1, a(1+bp), p^n) }  \neq 0, \displaystyle \sum_{a \in (\mathbb{Z}/p\mathbb{Z})^\times} \overline{ \lambda^{\pm, \mathrm{opt}}(f_{\mathcal{K}}, 1, a(1+b'p), p^n) }  \neq 0}$ 
 in $\mathbb{F}_\pi$\\
 for some $b, b' \in \mathbb{Z}/p^{n-1}\mathbb{Z}$ with $n \geq 1$, respectively,
\end{enumerate}
then $\ks^{\mathrm{Heeg}}$ is $\pi$-indivisible, i.e. $c^{\mathrm{Heeg}}_{1}(n)$ does not vanish for some $n \in \mathcal{N}^{\mathrm{ac}}$. Equivalently, $\mathscr{M}_\infty(f) = 0$.
Thus, Kolyvagin conjecture (Conjecture \ref{conj:kolyvagin-conjecture}) holds for $(f, \mathcal{K} , \pi)$, i.e. $\ks^{\mathrm{Heeg}}$ is non-zero. Equivalently, $\mathscr{M}_\infty(f) < \infty$.
\end{thm}
\begin{proof}
This is an application of Theorem \ref{thm:main_thm_main_conj} and Theorem \ref{thm:main_thm_mu_zero_conj} to the main result of \cite{wei-zhang-mazur-tate}.
The most argument in \cite{wei-zhang-mazur-tate} immediately applies to this setting.
In \cite{wei-zhang-mazur-tate}, Hypothesis $\heartsuit$ (Assumption \ref{assu:heart}) is imposed, which includes Condition (3), (4) and (5). The relaxation of  Hypothesis $\heartsuit$ is explained in \S\ref{subsubsec:relaxing-heartsuit}.
The $p$-ordinary condition is also imposed therein in order to invoke the $p$-part of the BSD formula for rank zero elliptic curves due to Kato and Skinner--Urban \cite{kato-euler-systems, skinner-urban}. See \cite[Thm. 7.1 and Thm. 7.2]{wei-zhang-mazur-tate}. The $p$-ordinary condition is replaced by (i) and (ii), and this replacement is explained in \S\ref{subsubsec:relaxing-p-ordinary}, especially, Proposition \ref{prop:kato-main-conj-to-bsd-formula-rank-zero}.
\end{proof}
One interesting feature of Theorem \ref{thm:main-thm-kolyvagin-conjecture} is that Kolyvagin conjecture on the $p$-indivisibility of Heegner point Kolyvagin systems (for non-ordinary primes) can also be verified numerically.
Another interesting aspect is that the $p$-indivisibility of Kato's Kolyvagin systems and the $p$-indivisibility of Heegner point Kolyvagin systems are tightly related although the natures of these Kolyvagin systems are \emph{very different}.
The following corollary is a conceptual version of Theorem \ref{thm:main-thm-kolyvagin-conjecture} explicitly revealing this aspect, which is more than the analogy.

Denote by $\ks^{\mathrm{Kato}}$ the Kato's Kolyvagin system associated to the canonical Kato's Euler system.
\begin{cor}[``derived" Perrin-Riou conjecture] \label{cor:derived-perrin-riou}
Let $(f, \mathcal{K} , \pi)$ be as in Theorem \ref{thm:main-thm-kolyvagin-conjecture}.
If
\begin{enumerate}
\item[(i)] $\ks^{\mathrm{Kato}}$ is $\pi$-indivisible for $f_{\mathrm{min}}$ and $f_{\mathcal{K}, \mathrm{min}}$ (``primitive" in the sense of \cite{mazur-rubin-book}) and
\item[(ii)] the cyclotomic $\mu$-invariants for fine Selmer groups vanish for $f$ and $f_{\mathcal{K}}$,
\end{enumerate}
then $\ks^{\mathrm{Heeg}}$ is $\pi$-indivisible for $(f, \mathcal{K} , p)$.
\end{cor}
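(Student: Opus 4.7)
The plan is to package the two hypotheses of Corollary \ref{cor:derived-perrin-riou} into the two numerical inputs required by Theorem \ref{thm:main-thm-kolyvagin-conjecture} for the triple $(f,\mathcal{K},\pi)$, and then quote that theorem. First, the $\pi$-indivisibility of $\ks^{\mathrm{Kato}}$ for $f_{\min}$ assumed in hypothesis (i) is, by the equivalence (a) $\Leftrightarrow$ (b) of Theorem \ref{thm:main_thm_main_conj_simple}.(1) applied at the minimal level where there is no Tamagawa defect, precisely the non-vanishing of some Kurihara number $\widetilde{\delta}_{m}(f_{\min},1)\in\mathbb{F}_\pi$; the same statement for $f_{\mathcal{K},\min}$ yields $\widetilde{\delta}_{m_\mathcal{K}}(f_{\mathcal{K},\min},1)\neq 0$. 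This reproduces condition (i) of Theorem \ref{thm:main-thm-kolyvagin-conjecture} and, simultaneously, the Iwasawa main conjecture for the minimal forms via the equivalence (b) $\Leftrightarrow$ (c) there.

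Next, hypothesis (ii) is literally the $\mu=0$ conjecture of Coates--Sujatha (Conjecture \ref{conj:coates-sujatha}) for $f$ and $f_\mathcal{K}$, i.e.\ the arithmetic conclusion that the numerical criterion (ii) in Theorem \ref{thm:main-thm-kolyvagin-conjecture} is designed to produce through Theorem \ref{thm:main_thm_mu_zero_conj}. Combining this with step one, the congruence-propagation machinery of \cite{kim-lee-ponsinet} (packaged in Corollary \ref{cor:main_thm_main_conj_families}) upgrades the main conjecture from $f_{\min}$, $f_{\mathcal{K},\min}$ to the target forms $f$, $f_\mathcal{K}$ themselves. At this point the full arithmetic input needed by Wei Zhang's strategy in \cite{wei-zhang-mazur-tate} is available for both $f$ and its quadratic twist.

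The last step is then exactly the proof of Theorem \ref{thm:main-thm-kolyvagin-conjecture}: we run Wei Zhang's induction on the divisibility indices $\mathscr{M}_r$ for $\ks^{\mathrm{Heeg}}$, where the key arithmetic input at each stage is the $p$-part of the rank-zero BSD formula for an appropriate quadratic twist. In \cite{wei-zhang-mazur-tate} this is quoted from Kato and Skinner--Urban under a $p$-ordinary hypothesis, but in our setting it is supplied instead by Kato's main conjecture (now known from the previous paragraph) together with the rank-zero BSD formula derived in Appendix \ref{sec:appendix-applications} and, in particular, Proposition \ref{prop:kato-main-conj-to-bsd-formula-rank-zero}; the relaxation of Hypothesis $\heartsuit$ to Conditions (2)--(5) of Theorem \ref{thm:main-thm-kolyvagin-conjecture} is carried out in Appendix \ref{subsec:relaxing-heartsuit} and is re-used verbatim. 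The conclusion is $\mathscr{M}_\infty(f)=0$, i.e.\ $\pi$-indivisibility of $\ks^{\mathrm{Heeg}}$.

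The main obstacle is the last step, namely checking that the non-ordinary replacement of the rank-zero $p$-part BSD input keeps the Zhang induction sharp: one must track the Tamagawa defect carefully between $f$ and $f_{\min}$ (and similarly for the twist) so that the refined Kato-theoretic bound on $\#\mathrm{Sel}(\mathbb{Q},W_{\overline{f}}(1))$ produced by Corollary \ref{cor:bloch-kato}.(1).(b) and Proposition \ref{prop:kato-main-conj-to-bsd-formula-rank-zero} matches, on the nose, the size needed to run Kolyvagin's descent at each level $r$. Everything else is a direct translation between the Kato and Heegner sides via the two equivalences packaged in Theorem \ref{thm:main_thm_main_conj_simple}.
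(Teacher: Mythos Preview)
Your approach is correct and is exactly what the paper intends: the corollary is stated without proof as a ``conceptual version'' of Theorem \ref{thm:main-thm-kolyvagin-conjecture}, and the deduction is precisely the translation you describe --- hypothesis (i) is equivalent to condition (i) of the theorem via Theorem \ref{thm:main_thm_main_conj}, and hypothesis (ii) supplies the arithmetic input that condition (ii) was designed to produce, after which the Wei Zhang argument (as modified in \S\ref{subsec:relaxing-p-ordinary}) runs unchanged.

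One small comment: the ``main obstacle'' you flag in your final paragraph is not actually an obstacle. Once Kato's main conjecture is known for the relevant congruent (level-raised) forms, Proposition \ref{prop:kato-main-conj-to-bsd-formula-rank-zero} delivers the rank-zero $p$-part BSD formula \emph{on the nose}, including the Tamagawa factors; there is no further tracking of Tamagawa defects to do at that stage. The Tamagawa-defect bookkeeping happens earlier and is entirely absorbed by the propagation step in \cite{kim-lee-ponsinet}: one uses the main conjecture at minimal level (from hypothesis (i)) to equate $\mu(\mathbb{H}^1/\Lambda\mathbf{z})$ with $\mu(\mathbb{H}^2)$ there, then uses that $\mu(\mathbb{H}^2)$ is a residual invariant to transport hypothesis (ii) back to $f_{\min}$ and $f_{\mathcal{K},\min}$, which is what the propagation machinery actually needs. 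Making this bridge explicit would tighten your write-up.
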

\begin{rem}
Corollary \ref{cor:derived-perrin-riou} asserts that 
\emph{the mod $p$ non-vanishing of Kato's Kolyvagin systems implies the mod $p$ non-vanishing of Heegner point Kolyvagin systems 
(modulo some congruence argument via vanishing of the $\mu$-invariants).}
Thus, it can be regarded as a ``derived" variant of Perrin-Riou conjecture on the relation between Kato's zeta elements and Heegner points at the bottom layer \cite{perrin-riou-rational-pts}.
\end{rem}
\begin{rem}
In order to prove Kolyvagin conjecture at non-ordinary primes \emph{unconditionally}, we need the equality of the cyclotomic Iwasawa main conjecture for non-ordinary modular forms of weight two and \emph{general} level.
For example, the square-free level assumption excludes $f_{\mathcal{K}}$ since the level of $f_{\mathcal{K}}$ is $N \cdot \vert D_{\mathcal{K}} \vert^2$. (cf. \cite{wan-main-conj-ss-ec, sprung-main-conj-ss, castella-ciperiani-skinner-sprung}.) 
\end{rem}

The vanishing order $\mathrm{ord} ( \ks^{\mathrm{Heeg}} )$ of the Kolyvagin system $\ks^{\mathrm{Heeg}}$ is defined by the minimal number of prime factors of $n \in \mathcal{N}^{\mathrm{ac}}$ such that 
$c^{\mathrm{Heeg}}_M(n) \neq 0$ for some $M \leq M(n)$.
Let $\mathrm{Sel}(\mathcal{K}, W_f(1))^{\pm}$ denote the eigenspace with eigenvalue $\pm 1$ of $\mathrm{Sel}(\mathcal{K}, W_f(1))$ under
the complex conjugation\footnote{The reader should not confuse this with Kobayashi's signed Selmer groups.}.
Let $r^{\pm}_\pi(f/\mathcal{K})$ be the $\mathcal{O}_\pi$-corank of $ \mathrm{Sel}(\mathcal{K}, W_f(1))^{\pm}$.
Combining Theorem \ref{thm:main-thm-kolyvagin-conjecture} with Kolyvagin's theorem \cite{kolyvagin-selmer, masoero-structure}, the following result on the relation between Selmer coranks and the vanishing order of Kolyvagin systems easily follows.
\begin{thm}[Selmer coranks and $\mathrm{ord} ( \ks^{\mathrm{Heeg}} )$] \label{thm:selmer-coranks-vanishing-order}
Let $(f, \mathcal{K} , \pi)$ be as in Theorem \ref{thm:main-thm-kolyvagin-conjecture}.
If the mod $p$ numerical criteria (i) and (ii) in Theorem \ref{thm:main-thm-kolyvagin-conjecture} hold for $f$ and $f_{\mathcal{K}}$, then we have
\begin{enumerate}
\item $\mathrm{ord} ( \ks^{\mathrm{Heeg}} ) = \mathrm{max} \lbrace r^+_\pi(f/\mathcal{K}), r^-_\pi(f/\mathcal{K}) \rbrace - 1$,
\item $r^{\bullet}_\pi(f/\mathcal{K}) = \mathrm{ord} ( \ks^{\mathrm{Heeg}} ) +1$, and
\item $0 \leq \mathrm{ord} ( \ks^{\mathrm{Heeg}} ) - r^{ - \bullet}_\pi(f/\mathcal{K}) \equiv 0 \pmod{2}$,
\end{enumerate}
where $\bullet$ is the sign of $\epsilon(f/\mathbb{Q}) \cdot (-1)^{ \mathrm{ord} ( \ks^{\mathrm{Heeg}} ) +1 }$  and $\epsilon(f/\mathbb{Q})$ is the root number of $f$ over $\mathbb{Q}$.
\end{thm}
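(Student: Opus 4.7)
The plan is to deduce the theorem from Theorem~\ref{thm:main-thm-kolyvagin-conjecture} (which upgrades the non-triviality of Kurihara-type numerical invariants for $f_{\mathrm{min}}$ and $f_{\mathcal{K},\mathrm{min}}$ to the $\pi$-indivisibility of $\ks^{\mathrm{Heeg}}$) combined with Kolyvagin's structure theorem for Heegner point Kolyvagin systems in its sharpest form, as developed in \cite{kolyvagin-selmer} and refined integrally by Masoero \cite{masoero-structure}. Under hypothesis (Im0) (which is included in our running assumptions), Kolyvagin's method provides both an upper bound on the ranks $r^\pm_\pi(f/\mathcal{K})$ in terms of the vanishing order $\mathrm{ord}(\ks^{\mathrm{Heeg}})$ and a matching lower bound once $\mathscr{M}_\infty(f)$ is known to be finite and, crucially, to equal $0$.

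First I would appeal to Theorem~\ref{thm:main-thm-kolyvagin-conjecture} to conclude that hypotheses (i) and (ii) force $\mathscr{M}_\infty(f)=0$; equivalently, there exists some $n_0\in\mathcal{N}^{\mathrm{ac}}$ for which $c^{\mathrm{Heeg}}_1(n_0)\neq 0$ in $\mathrm{H}^1(\mathcal{K},T_f/\pi T_f)$. In particular $\mathrm{ord}(\ks^{\mathrm{Heeg}})$ is finite and is realized, by minimality, by a square-free $n_0$ whose non-vanishing derived class is $\pi$-indivisible. This is the key input: Kolyvagin's machinery only yields \emph{sharp} bounds on Selmer coranks when applied to a $\pi$-indivisible system, and without Theorem~\ref{thm:main-thm-kolyvagin-conjecture} one would only get inequalities.

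Next I would invoke Kolyvagin's structure theorem for $\ks^{\mathrm{Heeg}}$: by analyzing the action of the complex conjugation $\tau$ on the derived class $c^{\mathrm{Heeg}}_1(n_0)$, one shows that this class lies in the eigenspace of $\mathrm{H}^1(\mathcal{K},T_f/\pi T_f)$ with eigenvalue equal to $\epsilon(f/\mathbb{Q})\cdot(-1)^{\nu(n_0)+1}=\epsilon(f/\mathbb{Q})\cdot(-1)^{\mathrm{ord}(\ks^{\mathrm{Heeg}})+1}$, and this defines $\bullet$. The $\pi$-indivisibility of $c^{\mathrm{Heeg}}_1(n_0)$ together with the global duality/parity argument of Kolyvagin (in the form of \cite[Theorem B]{kolyvagin-selmer}, with Masoero's integral refinement \cite{masoero-structure}) then yields $r^{\bullet}_\pi(f/\mathcal{K})=\mathrm{ord}(\ks^{\mathrm{Heeg}})+1$, which is statement (2). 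For the opposite eigenspace, the same argument gives the inequality $r^{-\bullet}_\pi(f/\mathcal{K})\leq\mathrm{ord}(\ks^{\mathrm{Heeg}})$ and Kolyvagin's pairing trick forces the parity $\mathrm{ord}(\ks^{\mathrm{Heeg}})-r^{-\bullet}_\pi(f/\mathcal{K})\equiv 0\pmod{2}$ (because passing from $n$ to $n\ell_1\ell_2$ changes $\nu$ by $2$), yielding statement (3). Statement (1) then follows from (2) and (3), since $r^{\bullet}_\pi(f/\mathcal{K})=\mathrm{ord}(\ks^{\mathrm{Heeg}})+1>\mathrm{ord}(\ks^{\mathrm{Heeg}})\geq r^{-\bullet}_\pi(f/\mathcal{K})$.

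The main obstacle is not conceptual but bookkeeping: correctly tracking the sign $\bullet$ through the construction of the derivative classes and matching it with the global root number. The parity statement and the identification of the eigenspace carrying the non-trivial derived class are exactly the content of the anticyclotomic sign computation in Kolyvagin's original work; since we are in the generalized Heegner setting with $\nu(N^-)$ even, the root number on $\mathcal{K}$ equals $-1$ and the sign of $c^{\mathrm{Heeg}}_1(n_0)$ under $\tau$ flips with the parity of $\nu(n_0)$, which is precisely what the formula for $\bullet$ records. Everything else follows mechanically from \cite{masoero-structure}, once the input $\mathscr{M}_\infty(f)=0$ from Theorem~\ref{thm:main-thm-kolyvagin-conjecture} is in hand.
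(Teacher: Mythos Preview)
Your proposal is correct and follows essentially the same route as the paper: invoke Theorem~\ref{thm:main-thm-kolyvagin-conjecture} to obtain non-triviality of $\ks^{\mathrm{Heeg}}$, then apply Kolyvagin's structure theorem \cite[Theorem 4]{kolyvagin-selmer} together with its extension to modular forms in \cite{masoero-structure}; the paper additionally points to \cite[Theorem 1.2 and Theorem 11.2]{wei-zhang-mazur-tate} as the precise reference. One small remark: the corank identities (1)--(3) already follow from $\mathscr{M}_\infty(f)<\infty$ (mere non-vanishing), so your emphasis that $\mathscr{M}_\infty(f)=0$ is ``crucial'' for sharpness here is a slight overstatement---the $\pi$-indivisibility is the stronger input needed for the $\sha$-structure theorem, not for the corank computation.
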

\begin{proof}
It essentially follows from Theorem \ref{thm:main-thm-kolyvagin-conjecture} and \cite[Thm. 4]{kolyvagin-selmer}.
Indeed, this is \cite[Thm. 1.2 and Thm. 11.2]{wei-zhang-mazur-tate} where the statements are given only for elliptic curves over $\mathbb{Q}$ since so is the setting of \cite{kolyvagin-selmer}. The generalization of the setting to (even higher weight) modular forms is now developed in \cite[Thm. 8.4]{masoero-structure}.
\end{proof}

\begin{thm}[$p$-converse to Gross--Zagier and Kolyvagin over $\mathcal{K}$] \label{thm:main-thm-converse-1}
Let $(f, \mathcal{K}, \pi)$ be as in Theorem \ref{thm:main-thm-kolyvagin-conjecture}, and
assume that the mod $p$ numerical criteria (i) and (ii) in Theorem \ref{thm:main-thm-kolyvagin-conjecture} hold for $f$ and $f_{\mathcal{K}}$ 
 If $\mathrm{cork}_{\mathcal{O}_\pi} \mathrm{Sel}(\mathcal{K}, W_f(1))=1$, then the Heegner point $y_{\mathcal{K}} \in A_f(\mathcal{K})$ is non-torsion. In particular, we have
$\mathrm{ord}_{s=1}(L(f/\mathcal{K}, s))  =1$, $\mathrm{dim}_{\mathbb{Q}}(A_f(\mathcal{K}) \otimes_{\mathbb{Z}} \mathbb{Q} ) = [F:\mathbb{Q}]$, and $\sha(A_f/\mathcal{K})$ is finite.
\end{thm}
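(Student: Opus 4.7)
The plan is to deduce Theorem \ref{thm:main-thm-converse-1} as a direct consequence of Theorem \ref{thm:main-thm-kolyvagin-conjecture} and Theorem \ref{thm:selmer-coranks-vanishing-order}, via the standard $p$-converse strategy going from non-vanishing of a Heegner class to non-triviality of the Heegner point.

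First, I would invoke Theorem \ref{thm:main-thm-kolyvagin-conjecture}: under the running assumptions together with Conditions (i) and (ii), the Heegner point Kolyvagin system $\ks^{\mathrm{Heeg}}$ for $(f, \mathcal{K}, \pi)$ is $\pi$-indivisible, i.e.\ $\mathscr{M}_\infty(f)=0$. In particular, all hypotheses of Theorem \ref{thm:selmer-coranks-vanishing-order} are in force.

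Next, I would use the corank one hypothesis to pin down $\mathrm{ord}(\ks^{\mathrm{Heeg}})$. Since $\mathrm{Sel}(\mathcal{K}, W_f(1)) = \mathrm{Sel}(\mathcal{K}, W_f(1))^+ \oplus \mathrm{Sel}(\mathcal{K}, W_f(1))^-$ has $\mathcal{O}_\pi$-corank one, one of $r^{\pm}_\pi(f/\mathcal{K})$ equals $1$ and the other equals $0$, so that
\[
\max\left\{ r^+_\pi(f/\mathcal{K}),\, r^-_\pi(f/\mathcal{K}) \right\} = 1.
\]
Theorem \ref{thm:selmer-coranks-vanishing-order}.(1) then yields $\mathrm{ord}(\ks^{\mathrm{Heeg}}) = 0$, i.e.\ $c^{\mathrm{Heeg}}_M(1)$ is non-zero in $\mathrm{H}^1(\mathcal{K}, T_f/\pi^M T_f)$ for some $M \leq M(1) = \infty$. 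Since $c^{\mathrm{Heeg}}_M(1)$ is, by construction, the image of the basic Heegner point $y_\mathcal{K} \in A_f(\mathcal{K})$ under the mod $\pi^M$ Kummer map, this forces $y_\mathcal{K}$ to be non-torsion in $A_f(\mathcal{K})$.

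Finally, I would feed this back into the analytic side: the Gross--Zagier formula (in its form for modular abelian varieties of $\mathrm{GL}_2$-type, as used throughout \cite{wei-zhang-mazur-tate}) relates the N\'{e}ron--Tate height of $y_\mathcal{K}$ to $L'(f/\mathcal{K}, 1)$, so non-torsionness of $y_\mathcal{K}$ gives $L'(f/\mathcal{K}, 1) \neq 0$ and hence $\mathrm{ord}_{s=1} L(f/\mathcal{K}, s) = 1$. Combined with Kolyvagin's theorem for $\mathrm{GL}_2$-type abelian varieties (cf.\ the references for Theorem \ref{thm:selmer-coranks-vanishing-order}), the non-triviality of $y_\mathcal{K}$ further implies $\dim_\mathbb{Q} (A_f(\mathcal{K}) \otimes_\mathbb{Z} \mathbb{Q}) = [F : \mathbb{Q}]$ and the finiteness of $\sha(A_f/\mathcal{K})$. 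The only delicate step is really the first one — the identification of $c^{\mathrm{Heeg}}_M(1)$ with the Kummer image of $y_\mathcal{K}$ modulo $\pi^M$, which is built into the construction recalled in \cite[\S3.7]{wei-zhang-mazur-tate} and requires no new input; everything else is a bookkeeping consequence of the two main theorems already available.
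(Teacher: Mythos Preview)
Your proposal is correct and is essentially the argument the paper points to: the paper's own proof just cites \cite[Proof of Theorem 1.3]{wei-zhang-mazur-tate} together with the Gross--Zagier formula \cite{yuan-zhang-zhang} and Nekov\'{a}\v{r}'s Euler system argument \cite{nekovar-euler-systems}, and your unpacking via Theorem \ref{thm:selmer-coranks-vanishing-order}.(1) to force $\mathrm{ord}(\ks^{\mathrm{Heeg}})=0$ and hence $y_{\mathcal{K}}$ non-torsion is exactly how that cited proof proceeds.
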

\begin{proof}
See \cite[Proof of Thm. 1.3]{wei-zhang-mazur-tate}. The ``In particular" part follows from Gross--Zagier formula \cite{yuan-zhang-zhang} and Euler systems of Heegner points \cite{nekovar-euler-systems}.
\end{proof}
\begin{rem}
In Theorem \ref{thm:main-thm-converse-1}, $p$ does not need to split in $\mathcal{K}/\mathbb{Q}$ even in the non-ordinary case.
\end{rem}
The following structure theorem of $\sha(A_f/\mathcal{K})$ also follows from Theorem \ref{thm:main-thm-kolyvagin-conjecture}.
\begin{thm}[Structure of $\sha$] \label{thm:structure-sha-kolyvagin}
Let $(f, \mathcal{K}, \pi)$ be as in Theorem \ref{thm:main-thm-kolyvagin-conjecture}, and assume that the mod $p$ numerical criteria (i) and (ii) in Theorem \ref{thm:main-thm-kolyvagin-conjecture} hold for $f$ and $f_{\mathcal{K}}$.
If  $\mathrm{cork}_{\mathcal{O}_\pi} \mathrm{Sel}(\mathcal{K}, W_f(1))=1$, then
 we have
$$\sha(A_f/\mathcal{K})^{\pm}[\pi^\infty] \simeq \bigoplus_{i \geq 1} \mathrm{End}_{\mathbb{Z}}(A_f)/\pi^{a^{\pm}_i}\mathrm{End}_{\mathbb{Z}}(A_f)$$
with $a^{\pm}_1 \geq a^{\pm}_2 \geq \cdots$ determined by the following formula
\begin{align*}
a^{ \mathrm{sgn} \left( \epsilon(f/\mathbb{Q}) \cdot (-1)^{ \mathrm{ord} ( \ks^{\mathrm{Heeg}} ) +1 } \right) }_i
& = \mathscr{M}_{\mathrm{ord} ( \ks^{\mathrm{Heeg}} ) + 2i -1} - \mathscr{M}_{\mathrm{ord} ( \ks^{\mathrm{Heeg}} ) + 2i} , \\
a^{ \mathrm{sgn} \left( - \epsilon(f/\mathbb{Q}) \cdot (-1)^{ \mathrm{ord} ( \ks^{\mathrm{Heeg}} ) +1 } \right) }_i
& = \mathscr{M}_{\mathrm{ord} ( \ks^{\mathrm{Heeg}} ) + 2i -2} - \mathscr{M}_{\mathrm{ord} ( \ks^{\mathrm{Heeg}} ) + 2i -1} .
\end{align*}
\end{thm}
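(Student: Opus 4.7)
The plan is to reduce the theorem to Kolyvagin's structure theorem for Tate--Shafarevich groups via the Heegner point Kolyvagin system, with our numerical criteria supplying the crucial $\pi$-indivisibility input. First I would invoke Theorem \ref{thm:main-thm-kolyvagin-conjecture} under hypotheses (i) and (ii) to conclude $\mathscr{M}_\infty(f) = 0$, so that the system $\ks^{\mathrm{Heeg}}$ is $\pi$-indivisible and, in the language of \cite{mazur-rubin-book} suitably adapted to the anticyclotomic/Heegner setting, \emph{primitive}. This is the replacement in the non-ordinary case for the $p$-ordinary input of \cite[$\S$10, $\S$11]{wei-zhang-mazur-tate}, which there was obtained from the work of Kato and Skinner--Urban on the rank zero BSD formula.

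Next I would combine the corank one hypothesis with Theorem \ref{thm:selmer-coranks-vanishing-order}.(1)--(2) to pin down $\mathrm{ord}(\ks^{\mathrm{Heeg}})$. Concretely, since $\mathrm{Sel}(\mathcal{K}, W_f(1))$ has $\mathcal{O}_\pi$-corank one and the sign decomposition is respected by Theorem \ref{thm:selmer-coranks-vanishing-order}, exactly one of the two eigenspaces has corank one and the other has corank zero; Theorem \ref{thm:selmer-coranks-vanishing-order}.(1) then forces $\mathrm{ord}(\ks^{\mathrm{Heeg}}) = 0$, and the sign convention $\bullet = \mathrm{sgn}(\epsilon(f/\mathbb{Q}) \cdot (-1)^{\mathrm{ord}(\ks^{\mathrm{Heeg}})+1})$ records which eigenspace is the one of corank one. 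In particular, by Theorem \ref{thm:main-thm-converse-1}, the Heegner point $y_{\mathcal{K}}$ is non-torsion and $\sha(A_f/\mathcal{K})$ is finite, so the problem of describing $\sha(A_f/\mathcal{K})^{\pm}[\pi^\infty]$ is meaningful.

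The main step is then to run Kolyvagin's descent procedure on the primitive Heegner point Kolyvagin system to read off the invariants $a_i^\pm$. Following \cite{kolyvagin-selmer} in the elliptic curve case and its generalization to modular abelian varieties \cite{masoero-structure} (and \cite[$\S$11]{wei-zhang-mazur-tate}), the derived cohomology classes $c^{\mathrm{Heeg}}_M(n)$ for $n \in \mathcal{N}^{\mathrm{ac}}$ with $\nu(n) = r$ produce, via local-global duality at the Kolyvagin primes dividing $n$, explicit annihilators and generators of $\sha(A_f/\mathcal{K})^\pm[\pi^\infty]$ in each eigenspace, with the $\pi$-adic valuations of these classes governed precisely by the differences $\mathscr{M}_{r-1} - \mathscr{M}_r$. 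The parity of $r$ controls the sign under complex conjugation (via the sign of Heegner points of conductor $n$), which is where the distinction between $\bullet$ and $-\bullet$ enters: for the ``Heegner sign'' one uses the differences $\mathscr{M}_{2i-1} - \mathscr{M}_{2i}$ starting from $r = \mathrm{ord}(\ks^{\mathrm{Heeg}})$, and for the opposite sign one uses $\mathscr{M}_{2i-2} - \mathscr{M}_{2i-1}$.

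The hard part will be verifying that the descent computation is sharp rather than merely yielding an upper bound, i.e.\ that the classes produced by Kolyvagin's argument do exhaust $\sha(A_f/\mathcal{K})^\pm[\pi^\infty]$ and that no collapse between the two sign eigenspaces occurs. In the elliptic curve case this sharpness is guaranteed by the combination of primitivity of $\ks^{\mathrm{Heeg}}$, the Cassels--Tate pairing, and a careful parity analysis; for modular abelian varieties $A_f$ with $[F:\mathbb{Q}] > 1$ one needs in addition the local hypotheses (3)--(5) of Theorem \ref{thm:main-thm-kolyvagin-conjecture} (already imported into our hypotheses) to ensure that local cohomology groups at primes dividing $N$ do not introduce spurious contributions. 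With these ingredients assembled, the displayed formulas for $a_i^\pm$ follow directly from Kolyvagin's inductive calculation of the elementary divisors of $\sha^\pm[\pi^\infty]$ as in \cite[Theorem 12.1]{wei-zhang-mazur-tate} and \cite[Theorem 8.4]{masoero-structure}.
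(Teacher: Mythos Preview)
Your proposal is correct and follows essentially the same route as the paper: the paper's own proof is simply a citation to \cite[Remark 18]{wei-zhang-mazur-tate}, \cite[Theorem 1]{kolyvagin-selmer}, and \cite[Theorem 8.4]{masoero-structure}, and you have correctly unpacked the logical flow behind those citations, namely that Theorem \ref{thm:main-thm-kolyvagin-conjecture} supplies the non-triviality (indeed $\pi$-indivisibility) of $\ks^{\mathrm{Heeg}}$, after which Kolyvagin's structure theorem and Masoero's extension to modular abelian varieties deliver the elementary divisors of $\sha(A_f/\mathcal{K})^\pm[\pi^\infty]$ in terms of the $\mathscr{M}_r$. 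One minor point: your specific citation ``\cite[Theorem 12.1]{wei-zhang-mazur-tate}'' should be checked against the actual numbering in that paper (the author here points to Remark 18), and note that Kolyvagin's structure theorem requires only $\mathscr{M}_\infty(f) < \infty$, not the stronger $\mathscr{M}_\infty(f)=0$, though the latter is what Theorem \ref{thm:main-thm-kolyvagin-conjecture} actually gives.
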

\begin{proof}
See \cite[Rem. 18]{wei-zhang-mazur-tate}, \cite[Thm. 1]{kolyvagin-selmer}, and \cite[Thm. 8.4]{masoero-structure}.
\end{proof}
In order to deduce the corresponding result for $f$ over $\mathbb{Q}$ from Theorem \ref{thm:main-thm-converse-1}, we need to make the following choice of auxiliary imaginary quadratic fields. 
\begin{choice} \label{choice:imag-quad-fields}
For a pair $(f, \pi)$, we choose two imaginary quadratic fields $\mathcal{K}_0$ and $\mathcal{K}_1$ satisfying the following conditions.
\begin{enumerate}
\item Both $(f, \mathcal{K}_0, \pi)$ and $(f, \mathcal{K}_1, \pi)$ satisfy all the conditions of Theorem \ref{thm:main-thm-kolyvagin-conjecture}.
\item $L(f_{\mathcal{K}_0}, 1) \neq 0$.
\item $L'(f_{\mathcal{K}_1}, 1) \neq 0$ when $\epsilon(f/\mathbb{Q}) = 1$.
\end{enumerate}
See \cite[Proof of Thm. 1.4]{wei-zhang-mazur-tate} for making such choices of $\mathcal{K}_0$ and $\mathcal{K}_1$.
\end{choice}
\begin{thm}[$p$-converse to Gross--Zagier and Kolyvagin over $\mathbb{Q}$] \label{thm:main-thm-converse-2}
Let $p$ be a prime $\geq 5$ and $f \in S_2(\Gamma_0(N))$ a newform.
Let $\mathcal{K}_0$ and $\mathcal{K}_1$ be the imaginary quadratic fields chosen in Choice \ref{choice:imag-quad-fields}.
Assume that $(f, \pi )$ satisfies the following statements.
\begin{enumerate}
\item $\rho_f$ has large image.
\item If $\ell \equiv \pm 1 \pmod{p}$ and $\ell$ divides $N$ exactly, then $\overline{\rho}_f$ is ramified at $\ell$.
\item  If $F\neq \mathbb{Q}$, then  we have $\mathrm{H}^1(\mathbb{Q}_\ell, \overline{\rho}_f ) = \mathrm{H}^0(\mathbb{Q}_\ell, \overline{\rho}_f ) = 0$ for all prime $\ell$ with $\ell^2 \vert N^+$.
\end{enumerate}
If the mod $p$ numerical criteria (i) and (ii) in Theorem \ref{thm:main-thm-kolyvagin-conjecture} hold for $f$, $f_{\mathcal{K}_0}$, and $f_{\mathcal{K}_1}$, then  we have the following statements.
\begin{enumerate}
\item[(a)]
If $\mathrm{cork}_{\mathcal{O}_\pi}\mathrm{Sel}(\mathbb{Q}, W_f(1))=1$, then 
$\mathrm{ord}_{s=1}(L(f, s)) =1$, $\mathrm{dim}_{\mathbb{Q}}(A_f(\mathbb{Q}) \otimes_{\mathbb{Z}} \mathbb{Q} ) = [F:\mathbb{Q}]$, and $\sha(A_f/\mathbb{Q})$ is finite.
\item[(b)]
If $\mathrm{ord}_{s=1} L(f,s) >1 $,
then $\mathrm{cork}_{\mathcal{O}_\pi} \mathrm{Sel}(\mathbb{Q}, W_f(1))$ is at least two (three, resp.) if $\epsilon(E/\mathbb{Q})$ is 1 ($-1$, resp.).
\end{enumerate}
\end{thm}
\begin{proof}
See \cite[Proof of Thm. 1.4]{wei-zhang-mazur-tate}. Note that (a) uses $\mathcal{K}_0$ and  (b) uses $\mathcal{K}_1$. 
\end{proof}

\begin{thm}[Rank one BSD formula for $A_f$ over $\mathcal{K}$] \label{thm:rank-one-bsd-over-K}
Let $(f, \mathcal{K}, \pi )$ be as in Theorem \ref{thm:main-thm-kolyvagin-conjecture} and
 assume that the mod $p$ numerical criteria (i) and (ii) in Theorem \ref{thm:main-thm-kolyvagin-conjecture} hold for $f$ and $f_{\mathcal{K}}$.
If $\mathrm{ord}_{s=1} L(f/\mathcal{K}, s) =1$, then the $\pi$-part of the BSD formula for $A_f$ over $\mathcal{K}$ holds, i.e.
$$ \mathrm{ord}_\pi  \left( \dfrac{L^*(A_f/\mathcal{K}, 1)}{ \Omega^{\mathrm{can}}_{A_f} \cdot \vert D_{\mathcal{K}} \vert^{-1/2} \cdot \mathrm{Reg}(A_f/\mathcal{K}) } \right) = 
\mathrm{length}_{\mathcal{O}_\pi} \sha(A_f/\mathcal{K})[\pi^\infty]
+ \mathrm{ord}_\pi \left(  \prod_{\ell \vert N^-} c_\ell(A_f/\mathcal{K}) \right)
$$
where 
$L^*(A_f/\mathcal{K}, 1)$  is the leading term of the Taylor expansion of $L(A_f/\mathcal{K}, s)$ at $s = 1$,
$\Omega^{\mathrm{can}}_{A_f}$ is the canonical period of $A_f$,
$\mathrm{Reg}(A_f/\mathcal{K}) := \dfrac{\langle y,y \rangle_{\mathrm{NT}}}{[A_f(\mathcal{K}): \mathrm{End}_{\mathbb{Z}}(A_f) \cdot y]^2}$ for any non-torsion $y \in A_f(\mathcal{K})$,
$\langle -,- \rangle_{\mathrm{NT}}$ is the N\'{e}ron--Tate height pairing,
 and 
$c_\ell(A_f/\mathbb{Q})$ and $c_\ell(A_f/\mathcal{K})$ are local Tamagawa factors.
\end{thm}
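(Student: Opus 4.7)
The plan is to deduce the $\pi$-part of the rank one BSD formula from Theorem \ref{thm:main-thm-kolyvagin-conjecture} by combining Wei Zhang's refinement of Kolyvagin's structure theorem with the Gross--Zagier formula, following the strategy of \cite[$\S$12--13]{wei-zhang-mazur-tate}. First I invoke Theorem \ref{thm:main-thm-kolyvagin-conjecture}: under Conditions (i) and (ii), the Heegner point Kolyvagin system $\ks^{\mathrm{Heeg}}$ is $\pi$-indivisible, i.e., $\mathscr{M}_\infty(f) = 0$. Combined with $\mathrm{ord}_{s=1} L(f/\mathcal{K}, s) = 1$, Gross--Zagier \cite{yuan-zhang-zhang} shows $y_\mathcal{K}$ is non-torsion, so by the Euler system argument of Kolyvagin \cite{nekovar-euler-systems} the Selmer group $\mathrm{Sel}(\mathcal{K}, W_f(1))$ has $\mathcal{O}_\pi$-corank one and $\sha(A_f/\mathcal{K})$ is finite; then Theorem \ref{thm:selmer-coranks-vanishing-order} forces $\mathrm{ord}(\ks^{\mathrm{Heeg}}) = 0$, i.e., the basic class $c^{\mathrm{Heeg}}_M(1)$ is non-trivial for every $M$.

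Next I translate the target identity into Heegner-point language. The Gross--Zagier formula for $A_f$ expresses
$$\frac{L^*(A_f/\mathcal{K},1)}{\Omega^{\mathrm{can}}_{A_f} \cdot |D_\mathcal{K}|^{-1/2}}$$
as $\hat{h}(y_\mathcal{K})$ times an explicit local fudge constant involving the (square of the) comparison between $\Omega^{\mathrm{can}}_{A_f}$ and the Gross--Zagier period, together with local Tamagawa-type factors at primes dividing $N$. Writing $\mathrm{Reg}(A_f/\mathcal{K}) = \hat{h}(y_\mathcal{K})/[A_f(\mathcal{K}):\mathrm{End}_\mathbb{Z}(A_f)\cdot y_\mathcal{K}]^2$, the $\pi$-part of the desired BSD formula reduces to an identity of the shape
$$2\cdot \mathrm{ord}_\pi\bigl[A_f(\mathcal{K}): \mathrm{End}_\mathbb{Z}(A_f)\cdot y_\mathcal{K}\bigr] = \mathrm{length}_{\mathcal{O}_\pi}\sha(A_f/\mathcal{K})[\pi^\infty] + \mathrm{ord}_\pi\!\!\left(\prod_{\ell\mid N^+}c_\ell(A_f/\mathbb{Q})^2\cdot \prod_{\ell\mid N^-}c_\ell(A_f/\mathcal{K})\right) + C_{\mathrm{GZ}},$$
where $C_{\mathrm{GZ}}$ collects the local factors coming from the explicit Gross--Zagier identity and from the period comparison.

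Both sides are then computed via the refined Kolyvagin machinery. On the Selmer side, Theorem \ref{thm:structure-sha-kolyvagin} gives a telescoping expression $\mathrm{length}_{\mathcal{O}_\pi}\sha(A_f/\mathcal{K})[\pi^\infty] = 2(\mathscr{M}_0(f) - \mathscr{M}_\infty(f)) = 2\mathscr{M}_0(f)$, where the vanishing of $\mathscr{M}_\infty(f)$ is precisely what Theorem \ref{thm:main-thm-kolyvagin-conjecture} supplies. On the rational-point side, Kolyvagin's divisibility analysis of the basic Heegner class $c^{\mathrm{Heeg}}_M(1)$ (cf.\ \cite[Prop.~12.2 and Cor.~12.3]{wei-zhang-mazur-tate}) computes $\mathrm{ord}_\pi[A_f(\mathcal{K}):\mathrm{End}_\mathbb{Z}(A_f)\cdot y_\mathcal{K}]$ in terms of $\mathscr{M}_0(f)$ together with an explicit Tamagawa correction at the ramified primes, designed to match $C_{\mathrm{GZ}}$ against the Tamagawa product above. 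Putting these two computations side by side yields the desired $\pi$-adic identity.

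The main obstacle is the precise bookkeeping of the local Tamagawa factors at primes dividing $N^-$ (inert in $\mathcal{K}$) versus those dividing $N^+$ (split in $\mathcal{K}$) — which enter asymmetrically in the Gross--Zagier formula and in the Kolyvagin divisibility relation — together with the comparison between the canonical period $\Omega^{\mathrm{can}}_{A_f}$ appearing in the BSD quotient and the Petersson/Gross--Zagier period implicit in the normalization of the Heegner classes. Any discrepancy is absorbed into $C_{\mathrm{GZ}}$, and verifying that it cancels correctly against the Tamagawa contribution on the rational-point side is the main technical input; modulo this accounting, the theorem follows formally from the combination of Theorem \ref{thm:main-thm-kolyvagin-conjecture}, Theorem \ref{thm:structure-sha-kolyvagin}, and the Gross--Zagier formula of \cite{yuan-zhang-zhang}.
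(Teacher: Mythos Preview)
Your proposal is essentially correct and follows the same route as the paper, which simply defers to \cite[Proof of Theorem 10.2]{wei-zhang-mazur-tate}: once Theorem \ref{thm:main-thm-kolyvagin-conjecture} supplies $\mathscr{M}_\infty(f)=0$, Wei Zhang's argument combining the Gross--Zagier formula of \cite{yuan-zhang-zhang} with Kolyvagin's refined structure theorem goes through unchanged, and your sketch is an accurate outline of that argument. One minor point: the relevant material in \cite{wei-zhang-mazur-tate} is in $\S$10 (Theorem 10.2), not $\S$12--13, and the ``main obstacle'' you flag---the period comparison and Tamagawa bookkeeping---is precisely what is carried out there, so your reservation is already addressed by the cited reference.
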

\begin{proof}
See \cite[Proof of Thm. 10.2]{wei-zhang-mazur-tate}. The Tamagawa factors at primes dividing $N^+$ are $\pi$-adic units due to Assumption (4) in Theorem \ref{thm:main-thm-kolyvagin-conjecture}.
\end{proof}
\begin{thm}[Rank one BSD formula for $A_f$ over $\mathbb{Q}$] \label{thm:rank-one-bsd-over-Q}
Let $(f, \pi)$ be as in Theorem \ref{thm:main-thm-converse-2} and  assume that the mod $p$ numerical criteria (i) and (ii) in Theorem \ref{thm:main-thm-kolyvagin-conjecture} hold for $f$ and $f_{\mathcal{K}_0}$ (as in Choice \ref{choice:imag-quad-fields}).
If $\mathrm{ord}_{s=1} L(f, s) =1$, then the $\pi$-part of the BSD formula for $A_f$ over $\mathbb{Q}$ holds, i.e.
$$\mathrm{ord}_{\pi}  \left( \dfrac{L^*(A_f, 1)}{ \Omega_{A_f} \cdot \mathrm{Reg}(A_f/\mathbb{Q}) } \right) = 
\mathrm{length}_{\mathcal{O}_\pi}\sha(A_f/\mathbb{Q})[\pi^\infty] +
 \mathrm{ord}_{\pi}  \left( \prod_{\ell \vert N} c_\ell(A_f/\mathbb{Q}) \right)
$$
where 
$\Omega_{A_f}$ is the real period of $A_f$ and
$\mathrm{Reg}(A_f/\mathbb{Q}) := \dfrac{\langle y,y \rangle_{\mathrm{NT}}}{[A_f(\mathbb{Q}): \mathrm{End}_{\mathbb{Z}}(A_f) \cdot y]^2}$ for any non-torsion $y \in A_f(\mathbb{Q})$.
\end{thm}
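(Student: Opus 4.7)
The plan is to use the auxiliary imaginary quadratic field $\mathcal{K}_0$ from Choice \ref{choice:imag-quad-fields} as a bridge between the rank one BSD formula over $\mathcal{K}_0$ (already established in Theorem \ref{thm:rank-one-bsd-over-K}) and the rank one BSD formula over $\mathbb{Q}$. Since $\mathcal{K}_0$ is chosen so that $L(f_{\mathcal{K}_0}, 1) \neq 0$ and the triple $(f, \mathcal{K}_0, \pi)$ satisfies all the hypotheses of Theorem \ref{thm:main-thm-kolyvagin-conjecture}, the factorization $L(A_f/\mathcal{K}_0, s) = L(A_f, s) \cdot L(A_{f_{\mathcal{K}_0}}, s)$ shows that $\mathrm{ord}_{s=1} L(A_f/\mathcal{K}_0, s) = 1$, allowing us to apply Theorem \ref{thm:rank-one-bsd-over-K} to obtain the $\pi$-part of the BSD formula for $A_f/\mathcal{K}_0$. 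Then, dividing by the rank zero BSD formula for $A_{f_{\mathcal{K}_0}}/\mathbb{Q}$, which follows from Corollary \ref{cor:bloch-kato} (or more precisely from the Iwasawa main conjecture for $T_{f_{\mathcal{K}_0}}(1)$ established via Theorem \ref{thm:main_thm_main_conj_simple} under hypotheses (i) and (ii) applied to $f_{\mathcal{K}_0}$), we should recover the BSD formula for $A_f/\mathbb{Q}$.

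First, I would verify that the Heegner point $y_{\mathcal{K}_0} \in A_f(\mathcal{K}_0)$ is non-torsion under the $p$-converse provided by Theorem \ref{thm:main-thm-converse-2}(a), so that Theorem \ref{thm:rank-one-bsd-over-K} applies and delivers
\[
\left\vert \dfrac{L'(A_f/\mathcal{K}_0, 1)}{\Omega^{\mathrm{can}}_{A_f} \cdot \vert D_{\mathcal{K}_0}\vert^{-1/2} \cdot \mathrm{Reg}(A_f/\mathcal{K}_0)} \right\vert_\pi = \left\vert \#\sha(A_f/\mathcal{K}_0) \cdot \prod_{\ell \vert N^+} c_\ell(A_f/\mathbb{Q})^2 \cdot \prod_{\ell \vert N^-} c_\ell(A_f/\mathcal{K}_0) \right\vert_\pi.
\]
Next, because $L(A_{f_{\mathcal{K}_0}}, 1) \neq 0$, the rank zero Bloch--Kato / Kato BSD formula for $A_{f_{\mathcal{K}_0}}$ (Corollary \ref{cor:bloch-kato}(1).(b) together with the standard descent from Selmer groups to $\#\sha \cdot \prod c_\ell$ as in \cite[Proposition 14.21]{kato-euler-systems}) gives
\[
\left\vert \dfrac{L(A_{f_{\mathcal{K}_0}}, 1)}{\Omega_{A_{f_{\mathcal{K}_0}}}} \right\vert_\pi = \left\vert \#\sha(A_{f_{\mathcal{K}_0}}/\mathbb{Q}) \cdot \prod_{\ell \vert N \cdot D_{\mathcal{K}_0}^2} c_\ell(A_{f_{\mathcal{K}_0}}/\mathbb{Q}) \right\vert_\pi.
\]
Dividing these two identities and using the factorizations $L'(A_f/\mathcal{K}_0, 1) = L'(A_f, 1) \cdot L(A_{f_{\mathcal{K}_0}}, 1)$, $\mathrm{Reg}(A_f/\mathcal{K}_0) = \mathrm{Reg}(A_f/\mathbb{Q})$ (since $A_f(\mathcal{K}_0)$ has the same rank as $A_f(\mathbb{Q})$ modulo torsion by assumption), and the decomposition of $\sha(A_f/\mathcal{K}_0)$ into $\sha(A_f/\mathbb{Q})$ and $\sha(A_{f_{\mathcal{K}_0}}/\mathbb{Q})$ up to $\pi$-units (via the inflation-restriction sequence and the fact that $p$ is odd), one expects to extract the desired identity for $A_f/\mathbb{Q}$.

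The main obstacle is the careful bookkeeping of the periods and the Tamagawa factors. On the period side, one must compare $\Omega^{\mathrm{can}}_{A_f} \cdot \vert D_{\mathcal{K}_0} \vert^{-1/2}$ with $\Omega_{A_f} \cdot \Omega_{A_{f_{\mathcal{K}_0}}}$ up to $\pi$-adic units; the relevant identity $\Omega^{\mathrm{can}}_{A_f} = \Omega_{A_f}^+ \cdot \Omega_{A_f}^-$ combined with the standard twist period relation $\Omega_{A_{f_{\mathcal{K}_0}}} \sim \vert D_{\mathcal{K}_0}\vert^{1/2} \cdot \Omega_{A_f}^{\mathrm{sgn}(D_{\mathcal{K}_0})}$ up to $p$-adic units should take care of this, but requires invoking Theorem \ref{thm:comparison-zeta-elements} and Proposition \ref{prop:canonical-periods-optimal} to know that the canonical periods are optimal (so that the comparison is controlled up to $p$-adic units). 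On the Tamagawa side, the factor $\prod_{\ell \vert N^+} c_\ell(A_f/\mathbb{Q})^2$ splits between the numerator of the $\mathcal{K}_0$-BSD and the Tamagawa contributions of $A_{f_{\mathcal{K}_0}}/\mathbb{Q}$ at the twists of $N^+$-primes; one checks this split matches $\prod_{\ell \vert N} c_\ell(A_f/\mathbb{Q})$ after accounting for the ramified-at-$\ell$ hypothesis (Assumption (2) in Theorem \ref{thm:main-thm-converse-2}) forcing all relevant Tamagawa factors at $\ell \equiv \pm 1 \pmod{p}$ to be $\pi$-units. The cleanest reference for precisely this period-and-Tamagawa bookkeeping is the argument in \cite[Proof of Theorem 10.2 and its extension to $\mathbb{Q}$]{wei-zhang-mazur-tate}, which we follow verbatim once the two ingredients (rank one BSD over $\mathcal{K}_0$ via Theorem \ref{thm:rank-one-bsd-over-K} and rank zero BSD for $A_{f_{\mathcal{K}_0}}/\mathbb{Q}$ via Corollary \ref{cor:bloch-kato}) are in place.
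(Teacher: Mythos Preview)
Your approach is correct and matches the paper's, which simply cites \cite[Proof of Theorem 10.3]{wei-zhang-mazur-tate}; the argument there is exactly the one you outline: apply Theorem \ref{thm:rank-one-bsd-over-K} over $\mathcal{K}_0$, divide by the rank zero BSD formula for $f_{\mathcal{K}_0}$, and carry out the period and Tamagawa bookkeeping. One small correction: Corollary \ref{cor:bloch-kato} assumes minimal level $N = N(\overline{\rho})$, which fails for $f_{\mathcal{K}_0}$ (level $N \cdot D_{\mathcal{K}_0}^2$), so the rank zero input should instead be Proposition \ref{prop:kato-main-conj-to-bsd-formula-rank-zero} together with the main conjecture for $T_{f_{\mathcal{K}_0}}(1)$ obtained via Theorem \ref{thm:main_thm_main_conj_simple}.(3) from hypotheses (i) and (ii); you already gesture at this with your parenthetical, so just make that the primary reference.
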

\begin{proof}
See \cite[Proof of Thm. 10.3]{wei-zhang-mazur-tate}.
\end{proof}

\subsubsection{Relaxing Hypothesis $\heartsuit$} \label{subsubsec:relaxing-heartsuit}
We recall ``Hypothesis $\heartsuit$" in \cite[(xv), Notations]{wei-zhang-mazur-tate}.
\begin{assu}[Hypothesis $\heartsuit$] \label{assu:heart}
Let $\mathrm{Ram}(f, \overline{\rho})$ be the set of primes $\ell$ dividing $N$ exactly and $\overline{\rho}$ is ramified at $\ell$.
\begin{enumerate}
\item $\mathrm{Ram}(f, \overline{\rho})$ contains all primes $\ell \vert N^-$ such that $\ell \equiv \pm 1 \pmod{p}$.
\item $\mathrm{Ram}(f, \overline{\rho})$ contains all primes $\ell$ such that $\ell \Vert N^+$
\item If $N$ is not square-free, then $\#\mathrm{Ram}(f, \overline{\rho}) \geq 1$.
\item If $N$ is not square-free, then $\mathrm{Ram}(f, \overline{\rho})$ contains a prime $\ell \mid N^-$ or there exist at least two prime factors $\ell \Vert N^+$.
\item  If $F\neq \mathbb{Q}$, then  we have $\mathrm{H}^1(\mathbb{Q}_\ell, \overline{\rho} ) = \mathrm{H}^0(\mathbb{Q}_\ell, \overline{\rho} ) = 0$ for all prime $\ell$ with $\ell^2 \vert N^+$.
\end{enumerate}
\end{assu}
The following remark explains how each condition in Hypothesis $\heartsuit$ is used in Wei Zhang's argument and shows (3) and (4) are unnecessary by improving a certain argument therein.
\begin{rem}
\begin{enumerate}
\item Assumption \ref{assu:heart}.(1) is important to have the mod $p$ multiplicity one of the character group of Shimura curves of discriminant divisible by $N^-$ in \cite[Lem. 3.3 and Thm. 4.3]{wei-zhang-mazur-tate}.
This is related to ``Mazur's principle". See also \cite{pw-mu} and \cite[(4.8)]{wei-zhang-mazur-tate}.
\item Assumption \ref{assu:heart}.(2) implies that all the Tamagawa factors at primes dividing $N^+$ are $\pi$-adic units.
The $\pi$-indivisibility of Heegner point Kolyvagin systems fails without this assumption. See \cite{jetchev-global-divisibility}.
\item Assumptions \ref{assu:heart}.(3) and (4) are used only in \cite[Thm. 6.4]{wei-zhang-mazur-tate} on the congruence number-Tamagawa exponent computation based on the approach of Ribet--Takahashi \cite{ribet-takahashi, takahashi-jnt}.
The approach of Ribet--Takahashi becomes subtle when the level is not square-free. See \cite{agashe-ribet-stein} for the comparison between modular degrees and congruence numbers.
Indeed, \cite[Thm. 6.4]{wei-zhang-mazur-tate} can be replaced by the work of the author with K.~Ota \cite{kim-ota} based on an $R=\mathbb{T}$ theorem without these assumptions. 
\item Assumptions \ref{assu:heart}.(5) is used only in \cite[Thm. 5.2]{wei-zhang-mazur-tate}.
It is automatic in the case of elliptic curves as proved in \cite[Lem. 5.1.(2)]{wei-zhang-mazur-tate}.
\end{enumerate}
\end{rem}
\subsubsection{Replacing the $p$-ordinary condition} \label{subsubsec:relaxing-p-ordinary}
As already pointed out in \cite[$\S$7.1]{wei-zhang-mazur-tate}, the $p$-ordinary assumption is only used in \cite{wei-zhang-mazur-tate} in order to invoke the $p$-part of the BSD formula for the level raising newforms of $f$ and $f_{\mathcal{K}}$ of level $Nq$ and $N \cdot \vert D_{\mathcal{K}} \vert^2 \cdot q$ at a 1-admissible prime $q$ (cf.~\cite[(xiv), Notations]{wei-zhang-mazur-tate}) whose Selmer coranks are zero. See \cite[Thm. 7.1 and Proof of Thm. 7.2]{wei-zhang-mazur-tate} how the work of Kato \cite{kato-euler-systems} and Skinner--Urban \cite{skinner-urban} are used.
This is crucial for the first step in the induction argument of the proof of Kolyvagin conjecture \cite[Thm. 9.1]{wei-zhang-mazur-tate}.

If we can verify the $p$-part of the rank zero BSD formula for the non-ordinary case, then all the results in \cite{wei-zhang-mazur-tate} immediately generalizes to the non-ordinary case.
\begin{prop} \label{prop:kato-main-conj-to-bsd-formula-rank-zero}
Suppose that $\overline{\rho}_f$ is irreducible and Kato's main conjecture holds for $T_f(1)$ at a good prime $p>2$ and $\mathrm{Sel}(\mathbb{Q}, W_f(1))$ is finite.
Then the $\pi$-part of the Bloch--Kato conjecture holds for $f$, i.e.
$$\mathrm{ord}_\pi \left( \dfrac{L(f,1)}{\Omega^+_f} \right) = \mathrm{length}_{\mathcal{O}_\pi} \left( \mathrm{Sel}(\mathbb{Q}, W_f(1)) \right) + \mathrm{ord}_\pi \left(  \prod_{\ell \vert N} c_\ell (A_f/\mathbb{Q})  \right) .$$
\end{prop}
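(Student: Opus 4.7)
The plan is to perform Iwasawa descent from Kato's cyclotomic main conjecture for $T_f(1)$ to its trivial-character specialization, essentially following the strategy of \cite[\S17 and Proposition 14.21]{kato-euler-systems} but using the refined local Iwasawa theory developed in this paper. Working on the $\omega^0$-isotypic component, the main conjecture hypothesis gives
\[
\mathrm{char}_{\Lambda^{(0)}}\!\left(\mathbb{H}^1(j_*T_f(1))/\Lambda^{(0)}\mathbf{z}_{\mathbb{Q},\gamma_0}(f,1)\right)=\mathrm{char}_{\Lambda^{(0)}}\mathbb{H}^2(j_*T_f(1)),
\]
and since $k=2<p$ lies in the Fontaine--Laffaille range, Theorem \ref{thm:comparison-zeta-elements} allows me to replace $\gamma_0$ by $\delta_{f,\mathrm{cris}}$ without changing the ideal. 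The strategy is then to apply the augmentation $\pi_0\colon \Lambda^{(0)} \to \mathcal{O}_\pi$ to both sides and match the resulting $\pi$-adic quantities up to the Tamagawa factors at bad primes.

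For the analytic side, I would compose $\mathrm{loc}_p$ with Perrin-Riou's big exponential map (the inverse limit of the finite-layer Coleman maps of Theorem \ref{thm:main_thm_finite_layer_coleman}), which sends $\mathbf{z}_{\mathbb{Q},\delta_{f,\mathrm{cris}}}(f,1)$ to the Kato--Perrin-Riou $p$-adic $L$-function. Specializing at the trivial character yields, via Theorem \ref{thm:comparison-kato-perrin-riou-mtt}, the Mazur--Tate--Teitelbaum interpolation value $E_p(\overline{f},1)\cdot L(\overline{f},1)/\Omega^+_{\overline{f}}$, where $E_p(\overline{f},1)=1-a_p(f)/p+1/p$. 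Since $\mathrm{Sel}(\mathbb{Q},W_f(1))$ is finite, the image of the zeta element generates the rank-one quotient of $\mathbb{H}^1(j_*T_f(1))_\Gamma$ up to finite index, and this finite index is precisely what $\pi_0$ of the left-hand characteristic ideal measures.

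For the algebraic side, I would run the Iwasawa descent for $\mathbb{H}^2(j_*T_f(1))$ via the Poitou--Tate sequence and the Selmer control diagram of \S\ref{sec:mazur-tate}: the cokernel measuring the failure of control at $p$ contributes a factor $\pi$-adically equal to $E_p(\overline{f},1)$, which then cancels against the analytic Euler factor on the other side, while the cokernels at primes $\ell\mid N$ with $\ell\neq p$ yield precisely the Tamagawa factors $c_\ell(A_f/\mathbb{Q})$ by the Bloch--Kato--Fontaine--Perrin-Riou local dictionary. Combined with the finiteness of $\mathrm{Sel}(\mathbb{Q},W_f(1))$, this identifies $\pi_0$ of the right-hand characteristic ideal with $\#\mathrm{Sel}(\mathbb{Q},W_f(1))\cdot\prod_{\ell\mid N}c_\ell(A_f/\mathbb{Q})$ up to $\pi$-adic units, and pairing the two specializations yields the claimed formula.

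The main technical obstacle will be the precise cancellation of the Euler factor at $p$ between the two sides together with an exact matching of periods: one must check that the comparison $\omega_{f,\mathrm{cris}}=\omega_f$ and $\delta^{\pm}_{f,\mathrm{cris}}=\delta^{\pm}_f$ from \S\ref{subsec:canonical_periods} makes the explicit reciprocity law produce $\Omega^+_{\overline{f}}$ on the nose, not merely up to an unspecified $p$-adic unit, and that the local control computations at bad primes produce the Tamagawa factors exactly rather than merely up to proportionality. Both compatibilities are in essence established in \cite[\S14 and \S17]{kato-euler-systems} for elliptic curves, and diagram (\ref{eqn:betti-etale-comparison-extended-integral-renormalized}) together with the construction of \S\ref{subsec:finite-layer-coleman} is designed exactly to arrange them here; the work of the actual proof is to verify that these pieces fit together at the level of the infinite-layer descent without any hidden $\pi$-adic defect.
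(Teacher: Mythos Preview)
Your approach is viable but differs from the paper's. The paper does not perform a direct Iwasawa descent from Kato's formulation; instead it translates Kato's main conjecture into an equivalent ``$p$-adic $L$-function'' main conjecture---Mazur--Greenberg in the ordinary case via \cite[\S17.13]{kato-euler-systems}, and the signed main conjecture of Lei--Loeffler--Zerbes in the non-ordinary case---and then simply quotes the Euler characteristic formula for the corresponding (ordinary or signed) Selmer group, which is already available in the literature (Greenberg, Ponsinet). This packages all the local bookkeeping you describe into one citation per case.

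Your route is essentially Kato's own in \cite[\S14.21]{kato-euler-systems}, and since here $k=2<p$ you are safely in the Fontaine--Laffaille range, so the period and lattice comparisons you invoke from \S\ref{subsec:canonical_periods} are legitimate. One caution: your appeal to the control diagrams of \S\ref{sec:mazur-tate} for the Euler factor at $p$ is slightly misplaced---that section computes Fitting ideals of the error term in the control theorem for \emph{Selmer groups} versus \emph{fine Selmer groups}, which is a different descent problem from the Euler-characteristic computation you need here. The precise local contribution at $p$ (yielding exactly $E_p(\overline{f},1)$ in the ordinary case, and the analogous signed factor otherwise) is rather the content of \cite[Lemmas 14.18--14.19]{kato-euler-systems} together with the interpolation formula, or equivalently the Euler characteristic results the paper cites. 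Your direct argument can be made to work, but the paper's approach buys a much shorter proof by outsourcing exactly the delicate cancellation you flag as the main obstacle.
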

\begin{proof}
We split the proof into two cases.
\begin{enumerate}
\item 
When $f$ is ordinary at $p$, it follows from the main conjecture of Mazur--Greenberg 
\cite[$\S$17.13]{kato-euler-systems} and the Euler characteristic of Selmer groups \cite[Thm. 4.1]{greenberg-lnm}. 
\item
When $f$ is non-ordinary at $p$, it also follows from the signed Iwasawa main conjecture \cite[Cor. 6.6]{lei-loeffler-zerbes_wach}, the interpolation formula of signed $p$-adic $L$-function at the trivial character  \cite[Prop. 3.28]{lei-loeffler-zerbes_wach}, and the Euler characteristic of signed Selmer groups \cite[Cor. 2.10]{ponsinet-signed}. See also \cite{castella-ciperiani-skinner-sprung}.
\end{enumerate}
\end{proof}

\bibliographystyle{amsalpha}
\bibliography{library}

\providecommand{\bysame}{\leavevmode\hbox to3em{\hrulefill}\thinspace}
\providecommand{\MR}{\relax\ifhmode\unskip\space\fi MR }
\providecommand{\MRhref}[2]{%
  \href{http://www.ams.org/mathscinet-getitem?mr=#1}{#2}
}
\providecommand{\href}[2]{#2}
\begin{thebibliography}{How04b}

\bibitem[ARS12]{agashe-ribet-stein}
A.~Agashe, K.~Ribet, and W.~Stein, \emph{The modular degree, congruence primes,
  and multiplicity one}, Number Theory, Analysis and Geometry: In Memory of
  Serge Lang (Dorian Goldfeld, Jay Jorgenson, Peter Jones, Dinakar
  Ramakrishnan, Kenneth Ribet, and John Tate, eds.), Springer US, 2012,
  pp.~19--49.

\bibitem[AS86]{ash-stevens}
A.~Ash and G.~Stevens, \emph{Modular forms in characteristic {$\ell$} and
  special values of their {$L$}-functions}, Duke Math. J. \textbf{53} (1986),
  no.~3, 849--868.

\bibitem[BB08]{benois-berger}
D.~Benois and L.~Berger, \emph{Th{\'{e}}orie d'{I}wasawa des
  repr{\'{e}}sentations cristallines. {II}}, Comment. Math. Helv. \textbf{83}
  (2008), no.~3, 603--677.

\bibitem[BBV16]{berti-bertolini-venerucci}
A.~Berti, M.~Bertolini, and R.~Venerucci, \emph{Congruences between modular
  forms and the {B}irch and {S}winnerton-{D}yer conjecture}, Elliptic Curves,
  Modular Forms and Iwasawa Theory (David Loeffler and Sarah~Livia Zerbes,
  eds.), Springer Proc. Math. Stat., vol. 188, Springer, 2016, In Honour of
  John H. Coates' 70th Birthday, Cambridge, UK, March 2015, pp.~1--31.

\bibitem[BCK21]{burungale-castella-kim}
A.~A. Burungale, F.~Castella, and C.-H. Kim, \emph{A proof of {P}errin-{R}iou's
  {H}eegner point main conjecture}, Algebra Number Theory \textbf{15} (2021),
  no.~10, 1627--1653.

\bibitem[Ben00]{benois-crystalline-duke}
D.~Benois, \emph{On {I}wasawa theory of crystalline representations}, Duke
  Math. J. \textbf{104} (2000), no.~2, 211--267.

\bibitem[Ben14]{benois-near-central}
\bysame, \emph{Trivial zeros of {$p$}-adic {$L$}-functions at near central
  points}, J. Inst. Math. Jussieu \textbf{13} (2014), no.~3, 561--598.

\bibitem[Ber03]{berger-three-explicit-formulas}
L.~Berger, \emph{Bloch and {K}ato's exponential map: three explicit formulas},
  Doc. Math. (2003), 99--129, Extra {V}olume: {K}azuya {K}ato's {F}iftieth
  {B}irthday.

\bibitem[Ber04]{berger-intro}
\bysame, \emph{An introduction to the theory of {$p$}-adic representations},
  Geometric aspects of Dwork theory (Alan Adolphson, Francesco Baldassarri,
  Pierre Berthelot, Nicholas Katz, and Francois Loeser, eds.), De Gruyter Proc.
  Math., Walter de {G}ruyter, 2004, pp.~255--292.

\bibitem[Ber05]{berger-universal-norms}
\bysame, \emph{Repr{\'{e}}sentations de de {R}ham et normes universelles},
  Bull. Soc. Math. France \textbf{133} (2005), no.~4, 601--618.

\bibitem[BFH90]{bump-friedberg-hoffstein}
D.~Bump, S.~Friedberg, and J.~Hoffstein, \emph{Nonvanishing theorems for
  {$L$}-functions of modular forms and their derivatives}, Invent. Math.
  \textbf{102} (1990), no.~3, 543--618.

\bibitem[BK90]{bloch-kato}
S.~Bloch and K.~Kato, \emph{{$L$}-functions and {T}amagawa numbers of motives},
  The {G}rothendieck {F}estschrift {V}olume {I} (P.~Cartier, L.~Illusie, N.~M.
  Katz, G.~Laumon, Y.~Manin, and K.~A. Ribet, eds.), Progr. Math., vol.~86,
  Birkh\"{a}user {B}oston, 1990, pp.~333--400.

\bibitem[BKS]{burns-kurihara-sano}
D.~Burns, M.~Kurihara, and T.~Sano, \emph{On derivatives of {K}ato's {E}uler
  system for elliptic curves}, preprint,
  \href{https://arxiv.org/abs/1910.07404}{arXiv:1910.07404}.

\bibitem[BPS]{kazim-pollack-sasaki}
K.~B{\"{u}}y{\"{u}}kboduk, R.~Pollack, and S.~Sasaki, \emph{{$p$}-adic
  {G}ross--{Z}agier formula at critical slope and a conjecture of
  {P}errin-{R}iou}, preprint,
  \href{https://arxiv.org/abs/1811.08216}{arXiv:1811.08216}.

\bibitem[BT20]{burungale-tian-p-converse}
A.~A. Burungale and Y.~Tian, \emph{{$p$}-converse to a theorem of
  {G}ross--{Z}agier, {K}olyvagin and {R}ubin}, Invent. Math. \textbf{220}
  (2020), no.~1, 211--253.

\bibitem[B{\"{u}}y11]{kazim-Lambda-adic}
K.~B{\"{u}}y{\"{u}}kboduk, \emph{{$\Lambda$}-adic {K}olyvagin systems}, Int.
  Math. Res. Not. IMRN (2011), no.~14, 3141--3206.

\bibitem[Cai07]{bryden-cais-thesis}
B.~Cais, \emph{Correspondences, integral structures, and compatibilities in
  {$p$}-adic cohomology}, Ph.D. thesis, The University of Michigan, 2007, under
  the supervision of Brian Conrad.

\bibitem[Car19]{caruso-intro}
X.~Caruso, \emph{An introduction to {$p$}-adic period rings}, Panoramas et
  Synth{\`{e}}ses \textbf{54} (2019), 19--92.

\bibitem[Cas18]{castella-cambridge}
F.~Castella, \emph{On the $p$-part of the {B}irch--{S}winnerton-{D}yer formula
  for multiplicative primes}, Camb. J. Math. \textbf{6} (2018), no.~1, 1--23.

\bibitem[C{\c{C}}SS]{castella-ciperiani-skinner-sprung}
F.~Castella, M.~{\c{C}}iperiani, C.~Skinner, and F.~Sprung, \emph{On the
  {I}wasawa main conjectures for modular forms at non-ordinary primes},
  preprint, \href{https://arxiv.org/abs/1804.10993}{arXiv:1804.10993}.

\bibitem[CE98]{coleman-edixhoven}
R.~Coleman and B.~Edixhoven, \emph{On the semi-simplicity of the
  {$U_p$}-operator on modular forms}, Math. Ann. \textbf{310} (1998), no.~1,
  119--127.

\bibitem[Col98]{colmez-local-iwasawa-de-rham}
P.~Colmez, \emph{The{\'{e}}orie d'{I}wasawa des repr{\'{e}}sentations de de
  {R}ham d'un corps local}, Ann. of Math. (2) \textbf{148} (1998), no.~2,
  485--571.

\bibitem[Col99]{colmez-finite-height}
\bysame, \emph{Repr{\'{e}}sentations cristallines et repr{\'{e}}sentations de
  hauteur finie}, J. Reine Angew. Math. \textbf{514} (1999), 119--143.

\bibitem[Col04]{colmez-p-adic-BSD}
\bysame, \emph{La conjecture de {B}irch et {S}winnerton-{D}yer {$p$}-adique},
  Ast\'{e}risque \textbf{294} (2004), no.~ix, 251--319, S{\'{e}}minaire
  {BOURBAKI}, 55\`{e}me ann\'{e}e, 2002--2003, No.~919.

\bibitem[Col10a]{colmez-gl2qp-phi-gamma}
\bysame, \emph{Repr{\'{e}}sentations de {$\mathrm{GL}_2(\mathbb{Q}_p)$} et
  {$(\varphi,\Gamma)$}-modules}, Ast\'{e}risque \textbf{330} (2010), 281--509.

\bibitem[Col10b]{colmez-phi-gamma-mirabolic}
\bysame, \emph{{$(\varphi,\Gamma)$}-modules et repr{\'{e}}sentations du
  mirabolique de {$\mathrm{GL}_2(\mathbb{Q}_p)$}}, Ast\'{e}risque \textbf{330}
  (2010), 61--153.

\bibitem[CS05]{coates-sujatha-fine-selmer}
J.~Coates and R.~Sujatha, \emph{Fine {S}elmer groups of elliptic curves over
  {$p$}-adic {L}ie extensions}, Math. Ann. \textbf{331} (2005), 809--839.

\bibitem[CV92]{coleman-voloch}
R.~Coleman and J.~F. Voloch, \emph{Companion forms and {K}odaira--{S}pencer
  theory}, Invent. math. \textbf{110} (1992), 263--281.

\bibitem[CWa]{castella-wan-perrin-riou-ss}
F.~Castella and X.~Wan, \emph{Perrin-{R}iou's main conjecture for elliptic
  curves at supersingular primes}, preprint.

\bibitem[CWb]{colmez-wang}
P.~Colmez and S.~Wang, \emph{Une factorisation du syst{\`{e}}me de
  {B}eilinson--{K}ato}, preprint,
  \href{https://arxiv.org/abs/2104.09200}{arXiv:2104.09200}.

\bibitem[CWE22]{castella-wang-erickson}
F.~Castella and C.~Wang-Erickson, \emph{Class groups and local
  indecomposability for non-{CM} forms}, J. Eur. Math. Soc. (JEMS) \textbf{24}
  (2022), no.~2, 1103--1160, with an appendix by H. Hida.

\bibitem[Del08]{delbourgo-book}
D.~Delbourgo, \emph{Elliptic curves and big {G}alois representations}, London
  Math. Soc. Lecture Note Ser., vol. 356, Cambridge University Press, 2008.

\bibitem[DT94]{diamond-taylor-non-optimal}
F.~Diamond and R.~Taylor, \emph{Non-optimal levels of mod {$\ell$} modular
  representations}, Invent. Math. \textbf{115} (1994), 435--462.

\bibitem[Edi06]{edixhoven-integral-weight-two}
B.~Edixhoven, \emph{Comparison of integral structures on spaces of modular
  forms of weight two, and computation of spaces of forms mod 2 of weight one},
  J. Inst. Math. Jussieu \textbf{5} (2006), no.~1, 1--34, with appendices by
  J.-F. Mestre and G. Wiese.

\bibitem[Eme]{emerton-local-global}
M.~Emerton, \emph{Local-global compatibility in the {$p$}-adic {L}anglands
  programme for {$\mathrm{GL}_{2/\mathbb{Q}}$}}, Draft: March 23, 2011.

\bibitem[EPW]{epw2}
M.~Emerton, R.~Pollack, and T.~Weston, \emph{Explicit reciprocity laws and
  {I}wasawa theory for modular forms}, preprint,
  \href{https://arxiv.org/abs/2210.02013}{arXiv:2210.02013}.

\bibitem[EPW06]{epw}
\bysame, \emph{Variation of {I}wasawa invariants in {H}ida families}, Invent.
  Math. \textbf{163} (2006), no.~3, 523--580.

\bibitem[FM87]{fontaine-messing}
J.-M. Fontaine and W.~Messing, \emph{{$p$}-adic periods and {$p$}-adic
  {\'{e}}tale cohomology}, Current trends in arithmetical algebraic geometry
  (Arcata, Calif., 1985) (Providence, {RI}) (K.~A. Ribet, ed.), Contemp. Math.,
  vol.~67, American Mathematical Society, 1987, pp.~179--207.

\bibitem[Fon90]{fontaine-grothendieck}
J.-M. Fontaine, \emph{Repr{\'e}sentations {$p$}-adiques des corps locaux. {I}},
  The {G}rothendieck {F}estschrift {V}olume {II} (P.~Cartier, L.~Illusie, N.~M.
  Katz, G.~Laumon, Y.~Manin, and K.~A. Ribet, eds.), Progr. Math., vol.~87,
  Birkh\"{a}user {B}oston, 1990, pp.~249--309.

\bibitem[Fon94]{fontaine-semi-stable}
\bysame, \emph{Repr{\'e}sentations {$p$}-adiques semi-stables}, Ast{\'e}risque
  (1994), no.~223, 113--184, P{\'e}riodes {$p$}-adiques (Bures-sur-Yvette,
  1988).

\bibitem[FPR94]{fontaine-perrin-riou}
J.-M. Fontaine and B.~Perrin-Riou, \emph{Autour des conjectures de {B}loch et
  {K}ato: cohomologie galoisienne et valeurs de fonctions {$L$}}, Motives
  (Providence, RI) (U.~Jannsen, S.~Kleiman, and J.-P. Serre, eds.), vol.~55,
  Proc. Sympos. Pure Math., no.~1, American Mathematical Society, 1994,
  pp.~599--706.

\bibitem[F{\"{u}}t17]{futterer-thesis}
M.~F{\"{u}}tterer, \emph{A {$p$}-adic {$L$}-function with canonical motivic
  periods for families of modular forms}, Ph.D. thesis,
  Ruprecht-Karls-Universit\"{a}t Heidelberg, 2017, under the supervision of O.
  Venjakob.

\bibitem[FW]{fouquet-wan}
O.~Fouquet and X.~Wan, \emph{The {I}wasawa main conjecture for modular
  motives}, preprint,
  \href{https://arxiv.org/abs/2107.13726}{arXiv:2107.13726}.

\bibitem[Gre99]{greenberg-lnm}
R.~Greenberg, \emph{Iwasawa theory for elliptic curves}, Arithmetic theory of
  elliptic curves ({C}etraro, 1997) (Berlin) (C.~Viola, ed.), Lecture Notes in
  Math., vol. 1716, Centro Internazionale Matematico Estivo (C.I.M.E.),
  Florence, Springer-Verlag, 1999, Lectures from the 3rd {C.I.M.E.}~{S}ession
  held in {C}etraro, {J}uly 12-–19, 1997, pp.~51–--144.

\bibitem[Gro90]{gross-tameness}
B.~Gross, \emph{A tameness criterion for {G}alois representations associated to
  modular forms (mod {$p$})}, Duke Math. J. \textbf{61} (1990), no.~2,
  445--517.

\bibitem[GV00]{gv}
R.~Greenberg and V.~Vatsal, \emph{On the {I}wasawa invariants of elliptic
  curves}, Invent. Math. \textbf{142} (2000), no.~1, 17--63.

\bibitem[Hid93]{hida-blue}
H.~Hida, \emph{Elementary theory of {$L$}-functions and {E}isenstein series},
  London Math. Soc. Stud. Texts, vol.~26, Cambridge {U}niversity {P}ress, 1993.

\bibitem[How04a]{howard-kolyvagin}
B.~Howard, \emph{The {H}eegner point {K}olyvagin system}, Compos. Math.
  \textbf{140} (2004), no.~6, 1439--1472.

\bibitem[How04b]{howard-gl2-type}
\bysame, \emph{Iwasawa theory of {H}eegner points on abelian varieties of
  {$\mathrm{GL}_2$} type}, Duke Math. J. \textbf{124} (2004), no.~1, 1--45.

\bibitem[IP06]{iovita-pollack}
A.~Iovita and R.~Pollack, \emph{Iwasawa theory of elliptic curves at
  supersingular primes over {$\mathbb{Z}_p$}-extensions of number fields}, J.
  Reine Angew. Math. \textbf{598} (2006), 71--103.

\bibitem[Jet08]{jetchev-global-divisibility}
D.~Jetchev, \emph{Global divisibility of {H}eegner points and {T}amagawa
  numbers}, Compositio Math. \textbf{144} (2008), 811--826.

\bibitem[JSW17]{jetchev-skinner-wan}
D.~Jetchev, C.~Skinner, and X.~Wan, \emph{The {B}irch and {S}winnerton-{D}yer
  formula for elliptic curves of analytic rank one}, Camb. J. Math. \textbf{5}
  (2017), no.~3, 369--434.

\bibitem[Kat93]{kato-lecture-1}
K.~Kato, \emph{Lectures on the approach to {I}wasawa theory for {H}asse--{W}eil
  {$L$}-functions via {$\mathbf{B}_{\mathrm{dR}}$}. {P}art {I}}, {A}rithmetic
  {A}lgebraic {G}eometry (Heidelberg) (E.~Ballico, ed.), Lecture Notes in
  Math., vol. 1553, Centro Internazionale Matematico Estivo (C.I.M.E.),
  Florence, Springer-Verlag, 1993, Lectures given at the 2nd {S}ession of the
  {C}entro {I}nternazionale {M}atematico {E}stivo {(C.I.M.E.)} held in
  {T}rento, {I}taly, {J}une 24-–{J}uly 2, 1991, pp.~50–--163.

\bibitem[Kat04]{kato-euler-systems}
\bysame, \emph{{$p$}-adic {H}odge theory and values of zeta functions of
  modular forms}, Ast\'{e}risque \textbf{295} (2004), 117--290.

\bibitem[Kat07]{kato-icm}
\bysame, \emph{Iwasawa theory and generalizations}, International {C}ongress of
  {M}athematicians (Zurich), vol.~1, the {E}uropean {M}athematical {S}ociety
  {P}ublishing {H}ouse, 2007, pp.~335--357.

\bibitem[Kat21]{kataoka-thesis}
T.~Kataoka, \emph{Equivariant {I}wasawa theory for elliptic curves}, Math. Z.
  \textbf{298} (2021), 1653--1725.

\bibitem[Kima]{kim-refined-tamagawa}
C.-H. Kim, \emph{The refined {T}amagawa number conjectures for
  {$\mathrm{GL}_2$}}, preprint.

\bibitem[Kimb]{kim-structure-selmer}
\bysame, \emph{The structure of {S}elmer groups and the {I}wasawa main
  conjecture for elliptic curves}, submitted,
  \href{https://arxiv.org/abs/2203.12159}{arXiv:2203.12159}.

\bibitem[Kim21]{kim-survey}
\bysame, \emph{Indivisibility of {K}ato's {E}uler systems and {K}urihara
  numbers}, RIMS K{\^{o}}ky{\^{u}}roku Bessatsu \textbf{B86} (2021), 63--86,
  with an appendix by A. Ghitza.

\bibitem[Kim23]{kim-p-converse}
\bysame, \emph{On the soft {$p$}-converse to a theorem of {G}ross--{Z}agier and
  {K}olyvagin}, Math. Ann. \textbf{387} (2023), 1961--1968.

\bibitem[Kit94]{kitagawa}
K.~Kitagawa, \emph{On standard {$p$}-adic {$L$}-functions of families of
  elliptic cusp forms}, {$p$}-adic monodromy and the {B}irch and
  {S}winnerton-{D}yer conjecture ({B}oston, {MA}, 1991) (Providence, {RI})
  (B.~Mazur and G.~Stevens, eds.), Contemp. Math., vol. 165, American
  Mathematical Society, 1994, pp.~81--110.

\bibitem[KK21]{kim-kurihara}
C.-H. Kim and M.~Kurihara, \emph{On the refined conjectures on {F}itting ideals
  of {S}elmer groups of elliptic curves with supersingular reduction}, Int.
  Math. Res. Not. IMRN \textbf{2021} (2021), no.~14, 10559--10599.

\bibitem[KKS20]{kks}
C.-H. Kim, M.~Kim, and H.-S. Sun, \emph{On the indivisibility of derived
  {K}ato's {E}uler systems and the main conjecture for modular forms}, Selecta
  Math. (N.S.) \textbf{26} (2020), no.~31, 47 pages.

\bibitem[KKT]{kato-kurihara-tsuji}
K.~Kato, M.~Kurihara, and T.~Tsuji, \emph{Local {I}wasawa theory of
  {P}errin-{R}iou and syntomic complexes}, in preparation.

\bibitem[KLP]{kim-lee-ponsinet}
C.-H. Kim, J.~Lee, and G.~Ponsinet, \emph{On the {I}wasawa invariants of
  {K}ato's zeta elements for modular forms}, submitted,
  \href{https://arxiv.org/abs/1909.01764}{arXiv:1909.01764}.

\bibitem[KN20]{kim-nakamura}
C.-H. Kim and K.~Nakamura, \emph{Remarks on {K}ato's {E}uler systems for
  elliptic curves with additive reduction}, J. Number Theory \textbf{210}
  (2020), 249--279.

\bibitem[KO23]{kim-ota}
C.-H. Kim and K.~Ota, \emph{On the quantitative variation of congruence ideals
  and integral periods of modular forms}, Res. Math. Sci. \textbf{10} (2023),
  no.~22, 34 pages.

\bibitem[Kob03]{kobayashi-thesis}
S.~Kobayashi, \emph{Iwasawa theory for elliptic curves at supersingular
  primes}, Invent. Math. \textbf{152} (2003), no.~1, 1--36.

\bibitem[Kob13]{kobayashi-gross-zagier}
\bysame, \emph{The {$p$}-adic {G}ross--{Z}agier formula for elliptic curves at
  supersingular primes}, Invent. Math. \textbf{191} (2013), no.~3, 527--629.

\bibitem[Kob14]{kobayashi-iwasawa-2012}
\bysame, \emph{The {$p$}-adic height pairing on {A}belian varieties at
  non-ordinary primes}, Iwasawa Theory 2012: State of the Art and Recent
  Advances (Thanasis Bouganis and Otmar Venjakob, eds.), Contrib. Math. Comput.
  Sci., vol.~7, Springer, 2014, pp.~265--290.

\bibitem[Kol91]{kolyvagin-selmer}
V.~Kolyvagin, \emph{On the structure of {S}elmer groups}, Math. Ann.
  \textbf{291} (1991), no.~2, 253--259.

\bibitem[Kur02]{kurihara-invent}
M.~Kurihara, \emph{On the {T}ate {S}hafarevich groups over cyclotomic fields of
  an elliptic curve with supersingular reduction {I}}, Invent. Math.
  \textbf{149} (2002), 195--224.

\bibitem[Kur03a]{kurihara-fitting}
\bysame, \emph{Iwasawa theory and {F}itting ideals}, J. Reine Angew. Math.
  \textbf{561} (2003), 39--86.

\bibitem[Kur03b]{kurihara-documenta}
\bysame, \emph{On the structure of ideal class groups of {CM}-fields}, Doc.
  Math. (2003), 539--563, Extra {V}olume: {K}azuya {K}ato's {F}iftieth
  {B}irthday.

\bibitem[Kur12]{kurihara-plms}
\bysame, \emph{Refined {I}wasawa theory and {K}olyvagin systems of {G}auss sum
  type}, Proc. Lond. Math. Soc. (3) \textbf{104} (2012), no.~4, 728--769.

\bibitem[Kur14a]{kurihara-munster}
\bysame, \emph{Refined {I}wasawa theory for {$p$}-adic representations and the
  structure of {S}elmer groups}, M{\"{u}}nster J. of Math. \textbf{7} (2014),
  no.~1, 149--223.

\bibitem[Kur14b]{kurihara-iwasawa-2012}
\bysame, \emph{The structure of {S}elmer groups of elliptic curves and modular
  symbols}, Iwasawa Theory 2012: State of the Art and Recent Advances
  (T.~Bouganis and O.~Venjakob, eds.), Contrib. Math. Comput. Sci., vol.~7,
  Springer, 2014, pp.~317--356.

\bibitem[Lei10]{lei-thesis}
A.~Lei, \emph{Iwasawa theory for modular forms at supersingular primes}, Ph.D.
  thesis, University of Cambridge, 2010, under the supervision of A. Scholl.

\bibitem[Lei11]{lei-compositio}
\bysame, \emph{Iwasawa theory for modular forms at supersingular primes},
  Compos. Math. \textbf{147} (2011), no.~3, 803--838.

\bibitem[LLZ10]{lei-loeffler-zerbes_wach}
A.~Lei, D.~Loeffler, and S.~L. Zerbes, \emph{Wach modules and {I}wasawa theory
  for modular forms}, Asian J. Math. \textbf{14} (2010), no.~4, 475--528.

\bibitem[LLZ17]{lei-loeffler-zerbes-canadian}
\bysame, \emph{On the asymptotic growth of
  {B}loch--{K}ato--{S}hafarevich--{T}ate groups of modular forms over
  cyclotomic extensions}, Canad. J. Math \textbf{69} (2017), no.~4, 826--850.

\bibitem[LZ]{loeffler-zerbes-arizona}
D.~Loeffler and S.~L. Zerbes, \emph{Euler systems}, Arizona Winter School 2018
  notes.

\bibitem[LZ13]{loeffler-zerbes-wach-critical}
\bysame, \emph{Wach modules and critical slope {$p$}-adic {$L$}-functions}, J.
  Reine Angew. Math. \textbf{679} (2013), 181--206.

\bibitem[LZ20]{loeffler-zerbes-iwasawa-2017}
\bysame, \emph{Euler systems with local conditions}, Development of {I}wasawa
  theory -- {T}he {C}entennial of {K}.~{I}wasawa's {B}irth (M.~Kurihara,
  K.~Bannai, T.~Ochiai, and T.~Tsuji, eds.), Adv. Stud. Pure Math., vol.~86,
  Mathematical Society of Japan, 2020, pp.~1--26.

\bibitem[Man72]{manin-parabolic}
J.~I. Manin, \emph{Parabolic points and zeta functions of modular curves},
  Math. USSR-Izv. \textbf{6} (1972), no.~1, 19--64, translated by N. Koblitz.

\bibitem[Mas19]{masoero-structure}
D.~Masoero, \emph{On the structure of {S}elmer and {S}hafarevich--{T}ate groups
  of even weight modular forms}, Trans. Amer. Math. Soc. \textbf{371} (2019),
  8381--8404.

\bibitem[MM91]{murty-murty-mean}
M.~R. Murty and V.~K. Murty, \emph{Mean values of derivatives of modular
  {$L$}-series}, Ann. of Math. (2) \textbf{133} (1991), no.~3, 447--475.

\bibitem[MR04]{mazur-rubin-book}
B.~Mazur and K.~Rubin, \emph{{K}olyvagin {S}ystems}, Mem. Amer. Math. Soc.,
  vol. 168, American {M}athematical {S}ociety, March 2004.

\bibitem[MT87]{mazur-tate}
B.~Mazur and J.~Tate, \emph{Refined conjectures of the ``{B}irch and
  {S}winnerton-{D}yer type"}, Duke Math. J. \textbf{54} (1987), no.~2,
  711--750.

\bibitem[MTT86]{mtt}
B.~Mazur, J.~Tate, and J.~Teitelbaum, \emph{On {$p$}-adic analogues of the
  conjectures of {B}irch and {S}winnerton-{D}yer}, Invent. Math. \textbf{84}
  (1986), no.~1, 1--48.

\bibitem[Nak14]{nakamura-local-iwasawa}
K.~Nakamura, \emph{Iwasawa theory of de {R}ham {$(\varphi , \Gamma )$}-modules
  over the {R}obba ring}, J. Inst. Math. Jussieu \textbf{13} (2014), no.~1,
  65--118.

\bibitem[Nak23]{nakamura-kato-deformation}
\bysame, \emph{Zeta morphisms for rank two universal deformations}, Invent.
  Math. (2023), to appear.

\bibitem[Nek07]{nekovar-euler-systems}
J.~Nekov\'{a}\v{r}, \emph{The {E}uler system method for {CM} points on
  {S}himura curves}, {$L$}-functions and {G}alois representations (Cambridge)
  (D.~Burns, K.~Buzzard, and J.~Nekov\'{a}\v{r}, eds.), London Math. Soc.
  Lecture Note Ser., vol. 320, Cambridge University Press, 2007, pp.~471--547.

\bibitem[Ota18]{ota-thesis}
K.~Ota, \emph{Kato's {E}uler system and the {M}azur--{T}ate refined conjecture
  of {BSD} type}, Amer. J. Math. \textbf{140} (2018), no.~2, 495--542.

\bibitem[Ota23]{ota-rank-part}
\bysame, \emph{On the rank-part of the {M}azur--{T}ate refined conjecture for
  higher weight modular forms}, Ann. Inst. Fourier (Grenoble) \textbf{73}
  (2023), no.~3, 1319--1364.

\bibitem[Ots09]{otsuki}
R.~Otsuki, \emph{Construction of a homomorphism concerning {E}uler systems for
  an elliptic curve}, Tokyo J. Math. \textbf{32} (2009), no.~1, 253--278.

\bibitem[Ots20]{otsuki-rims}
\bysame, \emph{The {$p$}-adic {$L$}-functions for higher weight modular forms
  in the supersingular case (announcement) ({J}apanese)}, Algebraic number
  theory and related topics 2016 (Y.~Ohno, H.~Tsunogai, and T.~Hiranouchi,
  eds.), RIMS K{\^{o}}ky{\^{u}}roku Bessatsu, vol. B77, April 2020,
  pp.~239--250.

\bibitem[Pol03]{pollack-thesis}
R.~Pollack, \emph{On the {$p$}-adic {$L$}-function of a modular form at a
  supersingular prime}, Duke Math. J. \textbf{118} (2003), no.~3, 523--558.

\bibitem[Pol05]{pollack-algebraic}
\bysame, \emph{An algebraic version of a theorem of {K}urihara}, J. Number
  Theory \textbf{110/1} (2005), 164--177, (special issue in honor of {A}rnold
  {R}oss).

\bibitem[Pon20]{ponsinet-signed}
G.~Ponsinet, \emph{On the structure of signed {S}elmer groups}, Math. Z.
  \textbf{294} (2020), 1635--1658.

\bibitem[PR93]{perrin-riou-rational-pts}
B.~Perrin-Riou, \emph{Fonctions {$L$} {$p$}-adiques d'une courbe elliptique et
  points rationnels}, Ann. Inst. Fourier (Grenoble) \textbf{43} (1993), no.~4,
  945--995.

\bibitem[PR94]{perrin-riou-local-iwasawa}
\bysame, \emph{Th\'{e}orie d'{I}wasawa des repr\'{e}sentations {$p$}-adiques
  sur un corps local}, Invent. Math. \textbf{115} (1994), 81--149, with an
  appendix by J.-M. Fontaine.

\bibitem[PR99]{perrin-riou-documenta}
\bysame, \emph{Th\'{e}orie d'{I}wasawa et loi explicite de
  r\'{e}ciprocit\'{e}}, Doc. Math. \textbf{4} (1999), 219--273.

\bibitem[PR00]{perrin-riou-universal-norms}
\bysame, \emph{Repr\'{e}sentations {$p$}-adiques et normes universelles. {I}.
  {L}e cas cristallin}, J. Amer. Math. Soc. \textbf{13} (2000), no.~3,
  533--551.

\bibitem[PS13]{pollack-stevens-critical}
R.~Pollack and G.~Stevens, \emph{Critical slope {$p$}-adic {$L$}-functions}, J.
  Lond. Math. Soc. (2) \textbf{87} (2013), no.~2, 428--452.

\bibitem[PW11a]{pw-mt}
R.~Pollack and T.~Weston, \emph{Mazur--{T}ate elements of nonordinary modular
  forms}, Duke Math. J. \textbf{156} (2011), no.~3, 349--385.

\bibitem[PW11b]{pw-mu}
\bysame, \emph{On anticyclotomic {$\mu$}-invariants of modular forms}, Compos.
  Math. \textbf{147} (2011), 1353--1381.

\bibitem[RT97]{ribet-takahashi}
K.~Ribet and S.~Takahashi, \emph{Parametrization of elliptic curves by
  {S}himura curves and by classical modular curves}, Proc. Natl. Acad. Sci. USA
  \textbf{94} (1997), 11110--11114.

\bibitem[Rub00]{rubin-book}
K.~Rubin, \emph{Euler {S}ystems}, Ann. of Math. Stud., vol. 147, Princeton
  {U}niversity {P}ress, 2000.

\bibitem[Sak18]{sakamoto-stark-systems}
R.~Sakamoto, \emph{Stark systems over {G}orenstein local rings}, Algebra Number
  Theory \textbf{12} (2018), no.~10, 2295--2326.

\bibitem[Sak22]{sakamoto-p-selmer}
\bysame, \emph{$p$-{S}elmer groups and modular symbols}, Doc. Math. \textbf{27}
  (2022), 1891--1922.

\bibitem[Ser87]{serre-serre-conjecture}
J.-P. Serre, \emph{Sur les repr{\'{e}}sentations modulaires de degr{\'{e}} 2 de
  {$\mathrm{Gal}(\overline{\mathbb{Q}}/\mathbb{Q})$}}, Duke Math. J.
  \textbf{54} (1987), no.~1, 179--230.

\bibitem[Ser89]{serre-abelian-l-adic}
\bysame, \emph{Abelian {$\ell$}-adic representations and elliptic curves},
  Advanced Book Classics, Addison-Wesley Publishing Company, 1989, With the
  collaboration of W. Kuyk and J. Labute.

\bibitem[Shi77]{shimura-periods}
G.~Shimura, \emph{On the periods of modular forms}, Math. Ann. \textbf{229}
  (1977), no.~3, 211--221.

\bibitem[Ski16]{skinner-pacific}
C.~Skinner, \emph{Multiplicative reduction and the cyclotomic main conjecture
  for {$\mathrm{GL}_2$}}, Pacific J. Math. \textbf{283} (2016), no.~1,
  171--200.

\bibitem[Ski20]{skinner-converse}
\bysame, \emph{A converse to a theorem of {G}ross, {Z}agier, and {K}olyvagin},
  Ann. of Math. \textbf{191} (2020), no.~2, 329--354.

\bibitem[{\v{S}}ok76]{shokurov-holomorphic-kuga}
V.~V. {\v{S}}okurov, \emph{Holomorphic differential forms of higher degree on
  {K}uga's modular varieties}, Math. USSR Sb. \textbf{30} (1976), no.~1,
  119--142, Translated by I.~Dolgachev.

\bibitem[{\v{S}}ok81]{shokurov-homology-kuga}
\bysame, \emph{The study of the homology of {K}uga varieties}, Math. USSR-Izv.
  \textbf{16} (1981), no.~2, 399--418, Translated by H.P.~Boas.

\bibitem[Spr]{sprung-main-conj-ss}
F.~Sprung, \emph{The {I}wasawa main conjecture for elliptic curves at odd
  supersingular primes}, preprint,
  \href{https://arxiv.org/abs/1610.10017}{arXiv:1610.10017}.

\bibitem[Ste85]{glenn-cuspidal}
G.~Stevens, \emph{The cuspidal group and special values of {$L$}-functions},
  Trans. Amer. Math. Soc. \textbf{291} (1985), no.~2, 519--550.

\bibitem[SU07]{skinner-urban-icm}
C.~Skinner and E.~Urban, \emph{Vanishing of {$L$}-functions and ranks of
  {S}elmer groups}, International {C}ongress of {M}athematicians (Zurich),
  vol.~2, the {E}uropean {M}athematical {S}ociety {P}ublishing {H}ouse, 2007,
  pp.~473--500.

\bibitem[SU14]{skinner-urban}
\bysame, \emph{The {I}wasawa main conjectures for {$\mathrm{GL}_2$}}, Invent.
  Math. \textbf{195} (2014), no.~1, 1--277.

\bibitem[Tak01]{takahashi-jnt}
S.~Takahashi, \emph{Degrees of parametrizations of elliptic curves by {S}himura
  curves}, J. Number Theory \textbf{90} (2001), no.~1, 74--88.

\bibitem[Vat99]{vatsal-cong}
V.~Vatsal, \emph{Canonical periods and congruence formulae}, Duke Math. J.
  \textbf{98} (1999), no.~2, 397--419.

\bibitem[Vat13]{vatsal-integralperiods-2013}
\bysame, \emph{Integral periods for modular forms}, Ann. Math. Qu\'{e}bec
  \textbf{37} (2013), 109--128.

\bibitem[Ven16]{venerucci-converse}
R.~Venerucci, \emph{On the {$p$}-converse of the {K}olyvagin--{G}ross--{Z}agier
  theorem}, Comment. Math. Helv. \textbf{91} (2016), no.~3, 397--444.

\bibitem[Wac96]{wach-pot-crys}
N.~Wach, \emph{Repr{\'{e}}sentations {$p$}-adiques potentiellement
  cristallines}, Bull. Soc. Math. France \textbf{124} (1996), 375--400.

\bibitem[Wan]{wan-main-conj-ss-ec}
X.~Wan, \emph{Iwasawa main conjecture for supersingular elliptic curves and
  {BSD} conjecture}, preprint, September 2021,
  \href{https://arxiv.org/abs/1411.6352}{arXiv:1411.6352}.

\bibitem[Wan13]{wang_kato}
S.~Wang, \emph{Le syst\`{e}me d'{E}uler de {K}ato}, J. Th{\'{e}}or. Nombres
  Bordeaux \textbf{25} (2013), no.~3, 677--758.

\bibitem[Wan15]{wan_hilbert}
X.~Wan, \emph{The {I}wasawa main conjecture for {H}ilbert modular forms}, Forum
  Math. Sigma \textbf{3} (2015), e18 (95 pages).

\bibitem[Wan20]{wan-rankin-selberg}
\bysame, \emph{Iwasawa main conjecture for {R}ankin--{S}elberg {$p$}-adic
  {$L$}-functions}, Algebra Number Theory \textbf{14} (2020), no.~2, 383--483.

\bibitem[YZZ13]{yuan-zhang-zhang}
X.~Yuan, S.-W. Zhang, and W.~Zhang, \emph{The {G}ross--{Z}agier formula on
  {S}himura curves}, Ann. of Math. Stud., vol. 184, Princeton {U}niversity
  {P}ress, 2013.

\bibitem[Z{\"{a}}h17]{zahringer-thesis}
Y.~Z{\"{a}}hringer, \emph{Non-commutative {I}wasawa theory with {$(\varphi,
  \Gamma)$}-local conditions over distribution algebras}, Ph.D. thesis, King's
  College London, 2017, under the supervision of D. Burns and M. Kakde.

\bibitem[Zha14]{wei-zhang-mazur-tate}
W.~Zhang, \emph{Selmer groups and the indivisibility of {H}eegner points},
  Camb. J. Math. \textbf{2} (2014), no.~2, 191--253.

\bibitem[Zho]{yiwen-zhou-thesis}
Y.~Zhou, \emph{Completed cohomology and {K}ato's {E}uler system for modular
  forms}, preprint, \href{https://arxiv.org/abs/1812.03272}{arXiv:1812.03272}.

\end{thebibliography}

\end{document}